\documentclass[a4paper, 11pt]{amsart}
\usepackage{graphicx, euscript, enumitem, kantlipsum}
\usepackage{marginnote}
\usepackage{amssymb}
\usepackage{amsmath}
\usepackage{amsthm}
\usepackage{amscd, float}
\usepackage{mathbbol}
\usepackage{dsfont}
\usepackage[all,2cell]{xy}

\usepackage{tikz}
\usetikzlibrary{cd}
\usetikzlibrary{calc}
\usetikzlibrary{decorations,decorations.pathreplacing,decorations.markings,decorations.pathmorphing}
\tikzstyle{snake}=[decorate, decoration={snake, segment length=1mm, amplitude=.5mm}]
\newcommand{\tikzmath}[2][]
{\vcenter{\hbox{\scalebox{0.7}{\begin{tikzpicture}[#1]#2\end{tikzpicture}}}}
}
\tikzset{super thick/.style={line width=3pt}}
\tikzstyle{knot}=[preaction={super thick, white, draw}]
\usepackage{geometry}
\usepackage[colorlinks=true,pagebackref,hyperindex]{hyperref}
\hypersetup{colorlinks = true, linkcolor = red, anchorcolor =red, citecolor = blue, filecolor = red, urlcolor = blue, pdfauthor=author}

\UseAllTwocells \SilentMatrices
\newtheorem{thm}{Theorem}[section]

\newtheorem{cor}[thm]{Corollary}
\newtheorem{lem}[thm]{Lemma}
\newtheorem{exm}[thm]{Example}

\newtheorem{thmy}{Theorem}

\newtheorem{prop}[thm]{Proposition}
\theoremstyle{definition}
\newtheorem{defn}[thm]{Definition}
\theoremstyle{remark}
\newtheorem{rem}[thm]{\bf Remark}
\numberwithin{equation}{section}

\newcommand{\Ext}{\mathrm{Ext}}

\newcommand{\Y}{\mathcal{Y}}

\newcommand{\LL}{\mathrm{l}}
\newcommand{\RR}{\mathrm{r}}
\newcommand{\BB}{\mathrm{bi}}

\newcommand{\Hom}{\mathrm{Hom}}

\newcommand{\op}{\mathrm{op}}
\newcommand{\opp}{\mathrm{opp}}

\newcommand{\id}{\mathbf{1}}
\newcommand{\EC}{\EuScript{C}}
\newcommand{\ED}{\EuScript{D}}

\newcommand{\EK}{\EuScript{K}}

\newcommand{\g}{\widetilde{g}}

\makeatletter
\newcommand{\smallotimes}{\mathbin{\mathpalette\make@small\otimes}}

\newcommand{\make@small}[2]{%
  \vcenter{\hbox{%
    $\m@th\ifx#1\displaystyle\scriptstyle\else\ifx#1\textstyle\scriptstyle
     \else\scriptscriptstyle\fi\fi#2$%
  }}%
}
\makeatother

\begin{document}

\title[Tensor products from $B_\infty$-algebras with applications to Hopf algebras]{Monoidal structures arising from $B_\infty$-algebras with applications to Hopf algebras}

\author{Gongxiang Liu}
	\address{School of Mathematics, Nanjing University, Nanjing 210093, Jiangsu, China}
	\email{gxliu@nju.edu.cn}
\author{Zhengfang Wang}
	\address{School of Mathematics, Nanjing University, Nanjing 210093, Jiangsu, China}
	\email{zhengfangw@nju.edu.cn}
\author{Mengdie Zhang}
	\address{School of Mathematics, Nanjing University, Nanjing 210093, Jiangsu, China}
	\email{mengdiezhang@smail.nju.edu.cn}

\subjclass[2010]{16E05, 13D03, 16G20, 16E60}

\keywords{$B_\infty$-algebra, $A_\infty$-module, derived category, monoidal triangulated category, Hopf algebra}

\date{\today}

\begin{abstract}
We give an explicit construction of monoidal structures on derived categories of right $A_\infty$-modules over an $A_\infty$-algebra $A$ equipped with a $B_\infty$-structure. Given such a $B_\infty$-algebra $A$, we construct an induction functor
\[
\iota\colon \ED^{\RR}_\infty(A) \longrightarrow \ED^{\BB}_\infty(A)
\]
from right $A_\infty$-modules to $A_\infty$-bimodules and define
\[
M\boxtimes_A N=M\overset{\infty}{\otimes}_A\iota(N).
\]
We prove that $(\ED^{\RR}_\infty(A),\boxtimes_A,A)$ is a monoidal triangulated category. The proof is entirely $A_\infty$-algebraic: the unit and associativity constraints are induced by explicit quasi-isomorphisms of $A_\infty$-bimodules, including
\[
\iota(A)\simeq A
\qquad\text{and}\qquad
\iota(M)\overset{\infty}{\otimes}_A\iota(N)\simeq \iota(M\boxtimes_A N).
\]
For brace $B_\infty$-algebras, we further prove that every brace $B_\infty$-module is $A_\infty$-quasi-isomorphic to its induced bimodule.

We apply this construction to finite-dimensional Hopf algebras. For a Hopf algebra $H$, the Yoneda dg algebra $\Y(\Bbbk,\Bbbk)$ of the trivial $H$-module carries a natural brace $B_\infty$-structure, and hence its derived category carries the monoidal structure constructed above. We show that the Koszul duality functor
\[
\Hom_H(\Y(H,\Bbbk),-)\colon \EK(\mathrm{Inj}\text{-}H)\longrightarrow \ED(\Y(\Bbbk,\Bbbk))
\]
is triangulated lax monoidal, and that its restriction to the localizing subcategory generated by the injective resolution $\Y(H,\Bbbk)$ is a monoidal triangulated equivalence. If $H$ is local, this localizing subcategory is all of $\EK(\mathrm{Inj}\text{-}H)$. In particular, this gives an alternative, purely algebraic proof of
the monoidal equivalence conjectured by Krause and established by
Benson--Krause through the classifying space $BG$. Finally, our examples recover the usual tensor product for graded commutative algebras and show that the resulting brace $B_\infty$ and monoidal structures can depend essentially on the chosen Hopf structure.
\end{abstract}

\maketitle
\vspace{-3em}
\tableofcontents

\section{Introduction}
Tensor triangular geometry, introduced by Balmer \cite{Bal05, Bal10}, has become an important language in many areas of mathematics, including $K$-theory, algebraic geometry, algebraic topology, and commutative algebra; see, for example, \cite{BIK08, Big, MT, Ste}. Noncommutative analogues were studied in \cite{NVY01, NVY02}.

The basic commutative example is the derived category of a commutative ring $R$, with tensor product given by the derived tensor product $-\otimes_R^{\mathbb L}-$. For a noncommutative algebra, there is no such tensor product on the category of right modules alone: to tensor over $A$, the second factor must also carry a compatible left $A$-action. Thus, a natural problem is to find conditions under which a category of right modules over a noncommutative algebra carries a canonical monoidal triangulated structure.

The purpose of this article is to solve this problem for $A_\infty$-algebras equipped with a $B_\infty$-structure. A $B_\infty$-algebra may be viewed as an $A_\infty$-algebra whose bar construction is a dg bialgebra, or informally as a bialgebra object up to coherent homotopy. Such structures appear naturally in deformation theory, string topology, and the study of Hochschild cochains \cite{Ba, GeVo, GJ, Kel03, LoVa1, Vor}. They also play a central role in the Deligne conjecture for Hochschild cohomology; see, for example, \cite{KoSo, MS, Ta, Vor}.

The first key point of the article is that a $B_\infty$-structure on $A$ produces, for every right $A_\infty$-module $N$, a canonical induced $A_\infty$-bimodule $\iota(N)$. 
Precisely, the antipode of the dg bialgebra $T^c(sA)$ gives a dg coalgebra morphism
\[
T^c(sA)^{\op}\otimes T^c(sA)\longrightarrow T^c(sA),
\]
and hence a dg functor
\[
\iota\colon \EC^{\RR}_\infty(A)\longrightarrow \EC^{\BB}_\infty(A)
\]
from right $A_\infty$-modules to $A_\infty$-bimodules; see Proposition \ref{prop:dualitybinfinity}. Passing to derived categories gives
\[
\iota\colon \ED^{\RR}_\infty(A)\longrightarrow \ED^{\BB}_\infty(A).
\]
We then use this induced bimodule structure on the second factor to define, for right $A_\infty$-modules $M$ and $N$,
\[
M\boxtimes_A N=M\overset{\infty}{\otimes}_A\iota(N).
\]

Our first main result is the following.

\begin{thmy}[= Theorem \ref{thm:monoidalcatofrightmod}] \label{thmA}
Let $A$ be a $B_\infty$-algebra.
Then the derived category $\ED^{\RR}_\infty(A)$ of right $A_\infty$-modules over $A$ is a monoidal triangulated category with unit object $A$.
The corresponding tensor product is
\[
M \boxtimes_A N = M \overset{\infty}{\otimes}_A \iota(N),
\]
where $\iota\colon \ED^{\RR}_\infty(A) \to \ED^{\BB}_\infty(A)$ is the induction functor described above.
\end{thmy}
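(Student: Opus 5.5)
The plan is to transport the monoidal structure of $(\ED^{\BB}_\infty(A),\overset{\infty}{\otimes}_A,A)$ to $\ED^{\RR}_\infty(A)$ along $\iota$. Everything will be phrased through the bar construction $T^c(sA)$, which is a dg bialgebra with antipode. Write $\mu\colon T^c(sA)^{\op}\otimes T^c(sA)\to T^c(sA)$ for the dg coalgebra map of Proposition~\ref{prop:dualitybinfinity}; informally $\mu(a\otimes b)=S(a)\cdot b$. A right $A_\infty$-module $N$ is a dg comodule over $T^c(sA)$, i.e.\ a twisting datum on $sN\otimes T^c(sA)$. Then $\iota(N)$ is the bimodule obtained by pulling this datum back along $\mu$, and
\[
M\boxtimes_A N=M\overset{\infty}{\otimes}_A\iota(N)
\]
is computed by the two-sided bar complex $M\otimes T^c(sA)\otimes \iota(N)$.

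\textbf{Step 1: $\boxtimes_A$ is a triangulated bifunctor on $\ED^{\RR}_\infty(A)$.} The functor $\iota$ is a dg functor between dg categories of $A_\infty$-modules, and restriction along a coalgebra map preserves cones and quasi-isomorphisms. Hence $\iota$ descends to a triangulated functor on derived categories. The bar tensor $M\overset{\infty}{\otimes}_A -$ with a bimodule is exact in each variable and preserves $A_\infty$-quasi-isomorphisms, since over a field every $A_\infty$-quasi-isomorphism is a homotopy equivalence. So $\boxtimes_A$ is exact in each variable. The compatibility of suspensions in the two variables is then the usual Koszul sign check.

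\textbf{Step 2: unit constraints.} For the right unit, I will construct an explicit $A_\infty$-bimodule quasi-isomorphism $\iota(A)\simeq A$. On bar constructions this amounts to comparing the comodule $T^c(sA)$ restricted along $\mu$ with the regular bicomodule. The comparison map uses the antipode: roughly $b\mapsto$ the composite of the coproduct with $S$ on one leg, and the identities $S(a_{(1)})a_{(2)}=\varepsilon(a)$ from the bialgebra axioms make it a morphism. That it is a quasi-isomorphism follows by filtering by tensor length, where it reduces to the identity of $A$. This gives
\[
M\boxtimes_A A\simeq M\overset{\infty}{\otimes}_A A\simeq M.
\]
For the left unit, $A\boxtimes_A N=A\overset{\infty}{\otimes}_A\iota(N)$ is, as a right module, the restriction of $\iota(N)$ along the right action. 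Composing $\mu$ with the inclusion $1\otimes T^c(sA)$ gives the identity because $S(1)=1$. So the underlying right module is $N$, and the standard quasi-isomorphism $A\overset{\infty}{\otimes}_A X\simeq X$ completes the unit constraint.

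\textbf{Step 3 (the main obstacle): associativity.} I will prove the quasi-isomorphism of $A_\infty$-bimodules
\[
\iota(M)\overset{\infty}{\otimes}_A\iota(N)\simeq \iota(M\boxtimes_A N),
\]
natural in $M$ and $N$. Granting it, associativity follows from associativity of the bimodule bar tensor:
\[
(L\boxtimes_A M)\boxtimes_A N=(L\overset{\infty}{\otimes}_A\iota M)\overset{\infty}{\otimes}_A\iota N\simeq L\overset{\infty}{\otimes}_A(\iota M\overset{\infty}{\otimes}_A\iota N)\simeq L\boxtimes_A(M\boxtimes_A N).
\]

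To build the map, both sides have underlying graded space of the form $M\otimes T^c(sA)\otimes N$ with extra bar factors on the outside. I will write down a map using the coproduct of $T^c(sA)$ together with the anti-multiplicativity $S(ab)=S(b)S(a)$ and anti-comultiplicativity of $S$. The verification that it commutes with the differentials is the hardest part. The plan is to reduce it to identities in the dg bialgebra $T^c(sA)$, namely that $\mu$ is a coalgebra map and compatibility of $\Delta$ with products, rather than checking brace/$A_\infty$ formulas componentwise. The quasi-isomorphism property again follows from a length filtration whose associated graded map is an identity.

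\textbf{Step 4: coherence.} I will then check the pentagon and triangle axioms. Since every constraint is induced from canonical bar-complex comparison maps, I expect both diagrams to commute already at the chain level, or at worst up to explicit homotopy. The reason is that the bar-complex associator is strictly coherent, and the maps from Steps 2 and 3 are natural and compatible with iterated coproducts by coassociativity. Naturality in all variables, together with compatibility with triangles, then makes $(\ED^{\RR}_\infty(A),\boxtimes_A,A)$ a monoidal triangulated category.
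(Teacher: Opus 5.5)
Your Steps 1--3 follow the paper's route. The paper also takes $\iota=e_\varphi$ with $\varphi=\widehat\mu(S\otimes\id)$, obtains the right unit from an explicit quasi-isomorphism $I_A\colon\iota(A)\to A$ (Theorem~\ref{thm:iotabracemodule}) and the left unit from $l_{\iota(N)}$, and defines the associator as $a_{M,N,K}=(\id_M\overset{\infty}{\otimes}_A\psi)\circ\widetilde a$ with $\psi\colon\iota(M)\overset{\infty}{\otimes}_A\iota(N)\to\iota(M\boxtimes_A N)$ from Lemma~\ref{lem:isomorphismmn}.

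The genuine gap is in Step~4. You are right that the pentagon commutes at chain level, but the reason you give does not prove it. Naturality, strict coherence of the bimodule associator and coassociativity are not enough. The maps $\psi$ and $I_A$ are not canonical bar-complex maps, since they involve $S$ and $\widehat\mu$. And because $\iota$ is not full, coherence cannot be transported from $\ED^{\BB}_\infty(A)$. After using naturality of $\widetilde a$ and the bimodule pentagon, what remains is a new identity saying that $(\iota,\psi)$ is coherent:
\[
\psi_{N,K\boxtimes_A W}\circ(\id\overset{\infty}{\otimes}_A\psi_{K,W})\circ\widetilde a
=\iota(a_{N,K,W})\circ\psi_{N\boxtimes_A K,W}\circ(\psi_{N,K}\overset{\infty}{\otimes}_A\id).
\]
Concretely, suppress Koszul signs and write $b_{(1)}\otimes b_{(2)}=\Delta(b)$ in $T^c(sA)$. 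Then $a_{M,N,K}$ is strict and acts on its two bar slots by $b\otimes c\mapsto b_{(1)}\otimes S(b_{(2)})c$. On the three bar slots $b,c,d$, the two-step path of the pentagon gives $b_{(1)}\otimes S(b_{(2)})c_{(1)}\otimes S(c_{(2)})d$. The three-step path gives
\[
b_{(1)}\otimes S(b_{(4)})c_{(1)}\otimes S(c_{(2)})\,S^2(b_{(3)})\,S(b_{(2)})\,d .
\]
To match these you need multiplicativity of $\Delta$ and anti-(co)multiplicativity of $S$. You then rewrite $S^2(b_{(3)})S(b_{(2)})=S\bigl(b_{(2)}S(b_{(3)})\bigr)$ and collapse it with the antipode axiom. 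This is exactly Lemma~\ref{lem:cofreehopfidentity}, which yields \eqref{equation:associativity}; coassociativity alone does not do it.

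The triangle axiom has the same problem. It needs the compatibility $\iota(l_{\iota(N)})\circ\psi=l_{\iota(N)}\circ(I_A\overset{\infty}{\otimes}_A\id)$ between your two comparison maps, which is Lemma~\ref{lemm:Hwithunit}. That lemma is a separate computation using \eqref{equ:mpq}, associativity of $\widehat\mu$ and anti-multiplicativity of $S$, and your proposal does not address it.

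A lesser point concerns Step~3, where you never write down $\psi$. The paper's map is simply $\psi^{p,0}(sa_{1,p}\otimes sx\otimes sc_{1,k}\otimes y)=\pm\,sx\otimes\widehat\mu(S(sa_{1,p})\otimes sc_{1,k})\otimes y$, with $\psi^{p,q}=0$ for $q>0$. Checking that it is a bimodule morphism already uses the antipode axiom (again via Lemma~\ref{lem:cofreehopfidentity}). It is not enough that $\varphi$ is a coalgebra map and $\Delta$ is multiplicative.

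Finally, no filtration argument is needed for the quasi-isomorphism claims, since $\psi^{0,0}=\id$ and $I_A^{0,0}=\id$.
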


The proof of Theorem \ref{thmA} is not merely a formal transfer of the monoidal structure on the derived category of $A_\infty$-bimodules. Indeed, the induction functor $\iota$ is faithful but is not full in general, so the monoidal axioms for $\boxtimes_A$ must be checked directly. The main technical work consists of constructing explicit $A_\infty$-bimodule quasi-isomorphisms
\[
\iota(A)\simeq A
\qquad\text{and}\qquad
\iota(M)\overset{\infty}{\otimes}_A\iota(N)\simeq \iota(M\boxtimes_A N),
\]
which provide the unit and associativity constraints. This explicit approach is one of the main features of the article. Although related monoidal structures can often be obtained abstractly using higher-categorical methods \cite{Lu}, our construction is given by concrete $A_\infty$-formulas and verified by graphical calculus.

A second key technical result concerns brace $B_\infty$-algebras. These are $B_\infty$-algebras whose underlying $A_\infty$-algebra is a dg algebra and whose higher operations are governed by brace operations. For such algebras, we show that if $M$ is a brace $B_\infty$-module, then the underlying dg bimodule of $M$ is $A_\infty$-quasi-isomorphic to the induced bimodule $\iota(M)$; see Theorem \ref{thm:bracemodulegeneral}. This comparison theorem is the bridge between the abstract construction of $\boxtimes_A$ and the Hopf-algebra applications below. In particular, when the underlying algebra is a dg algebra, the category $\ED^{\RR}_\infty(A)$ may be identified with the usual derived category $\ED^{\RR}(A)$ of right dg modules; see Remark \ref{rem:dgainfinitymodules}.

The main application of the article is to finite-dimensional Hopf algebras. Let $H$ be a Hopf algebra and let $\Y(\Bbbk,\Bbbk)$ be the Yoneda dg algebra of the trivial $H$-module $\Bbbk$, whose cohomology is $\Ext_H^*(\Bbbk,\Bbbk)$. We prove that $\Y(\Bbbk,\Bbbk)$ carries a natural brace $B_\infty$-structure, and more generally that $\Y(X,Y)$ is a brace $B_\infty$-module over $\Y(\Bbbk,\Bbbk)$ for complexes $X$ and $Y$ of right $H$-modules; see Proposition \ref{prop:bracestructure}. Therefore the derived category $\ED(\Y(\Bbbk,\Bbbk))$ carries the monoidal structure of Theorem \ref{thmA}.

We then compare this monoidal category with the homotopy category of complexes of injective $H$-modules. Let $\EK(\mathrm{Inj}\text{-}H)$ denote the homotopy category of complexes of injective right $H$-modules. The injective resolution $\Y(H,\Bbbk)$ of the trivial module generates the localizing subcategory on which the equivalence will be obtained. The associated Koszul duality functor is
\[
F=\Hom_H(\Y(H,\Bbbk),-)
 \colon \EK(\mathrm{Inj}\text{-}H)\longrightarrow \ED(\Y(\Bbbk,\Bbbk)).
\]
Our second main theorem identifies the monoidal behavior of this functor.

\begin{thmy}[= Theorem \ref{thm:monoidalfunctor} and Corollary \ref{cor:tensor-triangle}] \label{thmB}
Let $H$ be a finite-dimensional Hopf algebra over $\Bbbk$. There exists a triangulated lax monoidal functor
\[
F\colon \EK(\mathrm{Inj}\text{-}H) \longrightarrow \ED(\Y(\Bbbk, \Bbbk)).
\]
The restriction of $F$ to the localizing subcategory of $\EK(\mathrm{Inj}\text{-}H)$ generated by the injective resolution $\Y(H,\Bbbk)$ is a monoidal triangulated equivalence. In particular, if $H$ is local, for example if $H=\Bbbk G$ for a finite $p$-group $G$ and $\operatorname{char}(\Bbbk)=p$, then $F$ is a monoidal triangulated equivalence.
\end{thmy}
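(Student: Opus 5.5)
Throughout, write $E=\Y(\Bbbk,\Bbbk)=\Hom_H(\Y(H,\Bbbk),\Y(H,\Bbbk))$; this is the Yoneda dg algebra with its brace $B_\infty$-structure from Proposition \ref{prop:bracestructure}, and $\EK(\mathrm{Inj}\text{-}H)$ carries the tensor product $X\otimes_\Bbbk Y$ with diagonal $H$-action and unit $\Y(H,\Bbbk)$ (the injective model of $\Bbbk$).

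\textbf{Step 1: $F$ is triangulated and exact.} Since $\Y(H,\Bbbk)$ is a bounded-below complex of injectives, $F=\Hom_H(\Y(H,\Bbbk),-)$ is a dg functor with values in right dg $E$-modules. It sends homotopies to homotopies and cones to cones, so it descends to a triangulated functor. By Remark \ref{rem:dgainfinitymodules}, its target may be identified with $\ED(E)$.

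\textbf{Step 2: the lax structure.} For $X,Y$ in $\EK(\mathrm{Inj}\text{-}H)$ I want a natural map
\[
F(X)\boxtimes_E F(Y)=F(X)\overset{\infty}{\otimes}_E\iota(F(Y))\longrightarrow F(X\otimes Y).
\]
\begin{itemize}
\item[(a)] By Proposition \ref{prop:bracestructure}, $F(Y)=\Y(\Bbbk,Y)$ is a brace $B_\infty$-module. Theorem \ref{thm:bracemodulegeneral} then gives an $A_\infty$-quasi-isomorphism $\iota(F(Y))\simeq F(Y)$, where the right-hand side carries its natural dg bimodule structure. The left $E$-action on $F(Y)$ comes from the Hopf structure: $E$ acts on $\Y(\Bbbk,Y)$ through the coproduct/diagonal, with $\Y(H,\Bbbk)\to\Y(H,\Bbbk)\otimes\Y(H,\Bbbk)$ composed with the right factor.
\item[(b)] It therefore suffices to construct a dg map
\[
F(X)\otimes_E F(Y)\to F(X\otimes Y),\qquad f\otimes g\mapsto (f\otimes g)\circ\Delta,
\]
where $\Delta\colon \Y(H,\Bbbk)\to\Y(H,\Bbbk)\otimes\Y(H,\Bbbk)$ is a lift of $\Bbbk\cong\Bbbk\otimes\Bbbk$.
\item[(c)] Check that this map is $E$-balanced; this is exactly how the left action in (a) was defined.
\item[(d)] Compose with the canonical map $F(X)\overset{\infty}{\otimes}_E F(Y)\to F(X)\otimes_E F(Y)$.
\end{itemize}
The unit map $E\to F(\Y(H,\Bbbk))$ is the identity. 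Associativity and unitality of this lax structure reduce to coassociativity of $\Delta$ up to homotopy. These must be compared with the associator of Theorem \ref{thm:monoidalcatofrightmod}, which comes from $\iota(M)\overset{\infty}{\otimes}\iota(N)\simeq\iota(M\boxtimes N)$.

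\textbf{Step 3: equivalence on $\mathrm{Loc}(\Y(H,\Bbbk))$.} $\Y(H,\Bbbk)$ is compact in $\EK(\mathrm{Inj}\text{-}H)$ (Krause), and $F(\Y(H,\Bbbk))=E$. Hence $F$ is fully faithful on the generator, commutes with coproducts, and restricts to an equivalence from $\mathrm{Loc}(\Y(H,\Bbbk))$ onto $\mathrm{Loc}(E)=\ED(E)$ by the standard dévissage argument. The subcategory $\mathrm{Loc}(\Y(H,\Bbbk))$ is closed under $\otimes$, since tensoring with $\Y(H,\Bbbk)$ preserves coproducts and cones, so it is a tensor subcategory. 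To show the lax map is an isomorphism there, fix $X$ and consider the class of $Y$ for which the map is an isomorphism. This class is localizing because both sides are exact and coproduct-preserving in $Y$ (using compactness again). The same holds in $X$. It therefore suffices to check $X=Y=\Y(H,\Bbbk)$, where both sides are $E$ and the map is the identity up to the unit isomorphism $\iota(E)\simeq E$.

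\textbf{Step 4: the local case.} If $H$ is local, $\Bbbk$ is the only simple module. Every injective is then a sum of copies of $H^*\cong H$, and $\EK(\mathrm{Inj}\text{-}H)$ is compactly generated by injective resolutions of simples, hence by $\Y(H,\Bbbk)$. So the localizing subcategory is everything, and this covers $\Bbbk G$ for a $p$-group $G$.

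\textbf{Main obstacle.} The hard step is Step 2: showing that the explicit transferred left action (via Theorem \ref{thm:bracemodulegeneral}) matches the diagonal action, and that the monoidal coherence squares commute with the $A_\infty$ associator built in Theorem \ref{thm:monoidalcatofrightmod}. I would first try to choose $\Delta$ strictly coassociative, using a bar-type resolution $\Y(H,\Bbbk)=\mathrm{Bar}$ with the shuffle/Alexander–Whitney diagonal. Then everything is compared at the dg level, and the brace operations on $E$ are precisely those induced by that diagonal.
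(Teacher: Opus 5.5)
Your skeleton matches the paper's proof. You identify $\boxtimes_E$ with $\otimes^{\mathbb L}_E$ on the image of $F$ through a brace-module comparison $\iota(F(Y))\simeq F(Y)$. You build the comparison map from the multiplicative structure of $\mathbb 1=\Y(H,\Bbbk)$, use compactness of $\mathbb 1$ and a localizing-subcategory argument, and use $\operatorname{Loc}(\mathbb 1)=\EK(\mathrm{Inj}\text{-}H)$ in the local case. Steps 1, 3 and 4 are fine. Your two-variable d\'evissage down to $X=Y=\mathbb 1$ is a harmless variant of the paper's argument, which fixes an arbitrary $X$ and uses that $\eta_{X,\mathbb 1}$ is the right unit isomorphism.

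\textbf{First gap (Step 2a).} $F(Y)=\Hom_H(\Y(H,\Bbbk),Y)$ is not $\Y(\Bbbk,Y)$ for a general object of $\EK(\mathrm{Inj}\text{-}H)$. One has $\Y(\Bbbk,Y)=\Hom_H(B,Y)$ with $B$ the bar resolution of $\Bbbk$, a bounded-above complex of free modules, so $\Y(\Bbbk,Y)$ is acyclic whenever $Y$ is. But $F$ does not kill acyclic complexes of injectives. For $H=\Bbbk G$ with $G$ a nontrivial $p$-group and $Y$ a complete resolution of $\Bbbk$, $H^*F(Y)$ is Tate cohomology, and such $Y$ lies in $\operatorname{Loc}(\mathbb 1)=\EK(\mathrm{Inj}\text{-}\Bbbk G)$. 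So this breaks the argument even in the local case. The identification $\Y(\Bbbk,X)\simeq F(X)$ is only available for $X\in\ED^b(\mathrm{mod}\text{-}H)$. Hence Proposition \ref{prop:bracestructure} does not by itself make $F(Y)$ a brace module.

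\textbf{Second gap (Steps 2a and 2c).} As you suspect, the left $E$-action ``through the diagonal'' is never pinned down. There is no chain-level map $\mathbb 1\otimes Y\to Y$ to finish your composite. Declaring the action to be whatever makes $(f\cdot e\otimes g)\circ\Delta=(f\otimes e\cdot g)\circ\Delta$ hold is a constraint, not a definition. Theorem \ref{thm:bracemodulegeneral} identifies $\iota(F(Y))$ only with the bimodule underlying a brace-module structure. Without that match, $F(X)\boxtimes_E F(Y)\simeq F(X)\otimes^{\mathbb L}_E F(Y)$ is unjustified.

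Searching for a strictly coassociative $H$-linear diagonal on $\mathbb 1$ goes the wrong way. The strict structure $\mathbb 1$ actually carries is the associative product $\mu$ of Proposition \ref{prop:yhkasso}.

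The paper's fix (Lemma \ref{lem:bracemoduleyhk} and Proposition \ref{prop:naturaltrans}) runs as follows:
\begin{itemize}
\item Give $F(Y)$ the $E$-bimodule structure induced by precomposition from the cup-product bimodule $\mathbb 1$ of \eqref{align:cupproducts}.
\item Give it brace operations by precomposing with the braces $\mu^{1,q}_{H,\Bbbk}$ of $\mathbb 1$. This works for every $Y$.
\item Define the pairing by $f\otimes g\mapsto (f\otimes g)\circ\mu^{-1}$, with $\mu^{-1}$ taken in $\EK(\mathrm{Inj}\text{-}H)$. Your $\Delta$ is just a representative of $\mu^{-1}$.
\end{itemize}
Balancing then follows from associativity of $\mu$. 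Put $\bar e(a_0\otimes sa_{1,m})=\varepsilon(a_0)e(sa_{1,m})$. From the formulas, the cup actions $e\cup -$ and $-\cup e$ on $\mathbb 1$ are, up to sign, $\mu(-\otimes\bar e)$ and $\mu(\bar e\otimes -)$. Therefore $\mu\circ((e\cup-)\otimes\mathbf 1)=\pm\,\mu\circ(\mathbf 1\otimes(-\cup e))$.
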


Let $G$ be a finite $p$-group, let
$\operatorname{char}(\Bbbk)=p$, and let $\mathbb 1$ be an injective
resolution of the trivial $\Bbbk G$-module. In
\cite[Example 2.5]{Kra}, Krause observed that
\[
\Hom_{\Bbbk G}(\mathbb 1,-)\colon
\EK(\mathrm{Inj}\text{-}\Bbbk G)
\xrightarrow{\sim}
\ED\bigl(\mathcal E\mathrm{nd}_{\Bbbk G}(\mathbb 1)\bigr)
\]
is a triangulated equivalence. He conjectured that this equivalence
identifies the tensor product over $\Bbbk$, equipped with the diagonal
$G$-action, with the product on the target induced by an
$E_\infty$-structure on the derived endomorphism algebra.

Benson--Krause proved this conjecture by comparing the derived
endomorphism algebra with the singular cochain algebra
$C^*(BG;\Bbbk)$ of the classifying space $BG$. More precisely, they constructed an equivalence
\[
\Theta_{\mathrm{BK}}\colon
\EK(\mathrm{Inj}\text{-}\Bbbk G)
\xrightarrow{\sim}
\ED\bigl(C^*(BG;\Bbbk)\bigr)
\]
and showed that the diagonal tensor product corresponds to the
$E_\infty$-tensor product over $C^*(BG;\Bbbk)$; see
\cite[Theorems 4.2 and 7.8]{BeKr}.

Our construction gives a purely algebraic $B_\infty$-model for the
same monoidal category. Denote \(E=\Y(\Bbbk,\Bbbk)\). Then Corollary \ref{cor:tensor-triangle}, together with the
Benson--Krause equivalence, yields a monoidal triangulated equivalence
\[
\bigl(\ED(E),\boxtimes_E\bigr)
\simeq^\otimes
\left(
\ED\bigl(C^*(BG;\Bbbk)\bigr),
\otimes^{E_\infty}_{C^*(BG;\Bbbk)}
\right).
\]
Thus the two products are two realizations of the diagonal tensor
product on $\EK(\mathrm{Inj}\text{-}\Bbbk G)$. The comparison asserted
here is at the level of monoidal derived categories; we do not require
a direct comparison between the chosen brace $B_\infty$-structure on
$E$ and the $E_\infty$-structure on $C^*(BG;\Bbbk)$.

For a general finite group, the cochain model sees the localizing
subcategory generated by the trivial module. This parallels the
restriction in Theorem \ref{thmB}. Our result additionally constructs
a lax monoidal functor on the whole homotopy category of injective
complexes and extends the construction from group algebras to
arbitrary finite-dimensional Hopf algebras.

The final section contains examples that illustrate the scope and limitations of the construction. For graded commutative algebras, the $B_\infty$-structure is trivial and $\boxtimes_A$ recovers the usual derived tensor product. For a non-local Hopf algebra with radical square zero, the Koszul duality functor is shown to be lax monoidal but not monoidal. For elementary $2$-groups, we compute the resulting brace operations explicitly and show that different Hopf structures on the same algebra can lead to essentially different brace $B_\infty$-structures, and hence to different monoidal structures on the same underlying derived category.

\subsubsection*{The article is organized as follows.}
Section \ref{section:ainfinityalgebra} recalls the necessary background on $A_\infty$-algebras, modules, bimodules, and the tensor product of $A_\infty$-bimodules.

Section \ref{section:Binfinity} introduces $B_\infty$-algebras and constructs the induction functor $\iota$ from right $A_\infty$-modules to $A_\infty$-bimodules. The key inputs are Proposition \ref{prop:dualitybinfinity}, which gives the dg coalgebra morphism defining $\iota$, and Theorem \ref{thm:iotabracemodule}, which identifies $\iota(A)$ with $A$ up to $A_\infty$-quasi-isomorphism.

Section \ref{section:monoidaltri} proves Theorem \ref{thmA}. It constructs the tensor product $\boxtimes_A$, proves the associativity and unit constraints, and establishes the comparison theorem for brace $B_\infty$-modules, Theorem \ref{thm:bracemodulegeneral}.

Section \ref{section:koszulduality} applies the theory to Hopf algebras. It proves that $\Y(\Bbbk,\Bbbk)$ has a natural brace $B_\infty$-structure, establishes the lax monoidal structure on the Koszul duality functor, and proves Theorem \ref{thmB}.

Finally, Section \ref{section:example} treats the three families of examples described above.

\subsubsection*{Graphical calculus and sign conventions.}
We use the string-diagram calculus in the symmetric monoidal category
of complexes of $\Bbbk$-vector spaces. For the foundations and
coherence of this calculus, see \cite{JS91,Sel11} and
\cite[Chapter 2]{TV17}. Diagrams are read from bottom to top.
Vertical stacking represents composition, horizontal juxtaposition represents tensor product, and a box labelled by a morphism represents that morphism. 
We suppress associativity and unit constraints in
accordance with monoidal coherence. 

Specifically, in a monoidal category $\EC$, the identity morphism $\id_X$, a morphism $f:X\rightarrow Y$, the composition of two morphisms $f:X\rightarrow Y$ and $g: Y\rightarrow Z$, and the tensor product $f\otimes h$ are represented graphically as follows:
\[\begin{tikzpicture}
\draw (0, 0) --(0, 1.9);
\node at (.3, .2) {$X$};
\node at (-.8, .95) {$\id_X$ =};
\end{tikzpicture}
\quad
\begin{tikzpicture}
\draw (0, 0) --(0, .7);
\node at (.3, .2) {$X$};
\node at (.3, 1.7) {$Y$};

\draw (-.25, .7) rectangle (.25, 1.2);
\node at (0, .95) {$f$};
\node at (-.8, .95) {$f$ =};

\draw (0, 1.2) --(0, 1.9);
\end{tikzpicture}
\qquad
\begin{tikzpicture}
\draw (0, -.3) --(0, .2);
\draw (-.25, .2) rectangle (.25, .7);
\node at (0, .45) {$f$};
\draw (0, .7) --(0, 1.2);
\draw (-.25, 1.2) rectangle (.25, 1.7);
\node at (0, 1.45) {$g$};
\draw (0, 1.7) --(0, 2.2);
\node at (.3, -.1) {$X$};
\node at (.3, .95) {$Y$};
\node at (.3, 1.95) {$Z$};
\node at (-.9, .65) {$g\circ f=\quad$};
\end{tikzpicture}
\;
\begin{tikzpicture}
\draw (-1, 0) --(-1, .7);
\draw (-1.25, .7) rectangle (-.75, 1.2);
\node at (-1, .95) {$f$};
\draw (-1, 1.2) --(-1, 1.9);
\draw (0, 0) --(0, .7);
\node at (-.7, .2) {$X$};
\node at (-.7, 1.7) {$Y$};
\node at (.3, .2) {$U$};
\node at (.3, 1.7) {$V$};
\node at (-3.5, .95) {and};

\node at (-2, .95) {$f\otimes h= \quad$};

\draw (-.25, .7) rectangle (.25, 1.2);
\node at (0, .95) {$h$};
\node at (.8, .2) {.};

\draw (0, 1.2) --(0, 1.9);
\end{tikzpicture}
\]
We read diagrams from bottom to top, following a convention common in parts of the tensor category literature.

A crossing of two strands represents the Koszul symmetry. Thus, for
homogeneous elements $x$ and $y$,
\[
\tau(x\otimes y)=(-1)^{|x||y|}y\otimes x.
\]
For homogeneous morphisms $f$ and $g$, our convention for their tensor
product is
\[
(f\otimes g)(x\otimes y)
   =(-1)^{|g||x|}f(x)\otimes g(y).
\]
All signs arising from crossings or from moving homogeneous
operations past homogeneous inputs are determined by these
conventions.

Throughout this article, we assume that $\Bbbk$ is a field. When we write $\Hom$ and $\otimes$ without subscripts, we mean $\Hom_\Bbbk$ and $\otimes_\Bbbk$, respectively.

\section{\texorpdfstring{$A_{\infty}$}{}-algebras and modules}\label{section:ainfinityalgebra}

This section fixes the $A_\infty$-conventions used throughout the article. We work with the shifted convention, since it makes the signs in the tensor-product and induction constructions more transparent. We recall $A_\infty$-algebras, modules, bimodules, the $A_\infty$-tensor product of bimodules, and a general induction procedure from right modules to bimodules. For background, see \cite[Section 5]{GJ} and \cite[Section 2]{Vor}.

\subsection{\texorpdfstring{$A_{\infty}$}{}-algebras and \texorpdfstring{$A_{\infty}$}{}-morphisms}\label{subsection:Ainfinityalgebra}
Let $\Bbbk$ be a field. For a graded vector space $A=\bigoplus_{i\in \mathbb Z} A^i$, we write $sA$ for the $1$-shift of $A$, so that $(sA)^i=A^{i+1}$ for all $i\in \mathbb Z$.
\begin{defn}
An {\it $A_{\infty}$-algebra over $\Bbbk$} is a $\mathbb Z$-graded vector space $A$ together with graded maps
\[
m^n \colon (sA)^{\otimes n} \to sA,
\qquad n\geq 1,
\]
of degree $1$, such that, for each $n\geq 1$,
\begin{align}\label{align:A-infinity3}
 \sum_{j=0}^{n-1}\sum_{t=1}^{n-j} m^{n-t+1}\circ (\mathbf 1_{s A} ^{\otimes j} \otimes m^t \otimes \mathbf 1_{sA} ^{\otimes n-j-t}) = 0.
\end{align}
Here $\mathbf 1_{sA}\colon sA\to sA$ denotes the identity map. We often omit subscripts from identity maps when no confusion can arise.

In particular, $m^1\circ m^1=0$, so $(sA,m^1)$ is a cochain complex.
\end{defn}

\begin{rem}
The shifted operations $m^n$ correspond to the usual unshifted operations
\[
\nu^n \colon A^{\otimes n} \to A
\]
of degree $2-n$. They are determined by the following commutative diagram, equivalently by $s\circ \nu^n=m^n\circ s^{\otimes n}$:
\begin{align}\label{align:mnbn}
\xymatrix{
A^{\otimes n} \ar[r]^-{\nu^n} \ar[d]^-{s^{\otimes n}} & A \ar[d]^-{s}\\
(sA)^{\otimes n} \ar[r]^-{m^n} & sA
}
\end{align}
where $s\colon A\to sA$, $a\mapsto sa$, is the natural map of degree $-1$. Thus $m^n$ and $\nu^n$ are related by
\[
m^n(sa_1 \otimes \dotsb \otimes sa_n) = (-1)^{|a_1|(n-1) + |a_2|(n-2) + \dotsb + |a_{n-1}|} s \nu^n (a_1\otimes \dotsb \otimes a_n)
\]
for any $a_1,\dotsc, a_n \in A$. In particular, $m^1(sa_1) = s\nu^1(a_1)$.

Under this translation, identity \eqref{align:A-infinity3} is equivalent to
\begin{align}
\label{Ainfinity2}
\sum_{j=0}^{n-1}\sum_{t=1}^{n-j} (-1)^{j+t(n-j-t)} \nu^{n-t+1}\circ (\mathbf 1 ^{\otimes j} \otimes \nu^t \otimes \mathbf 1 ^{\otimes n-j-t}) = 0,
\end{align}
which is the usual unshifted form of the $A_\infty$-identities; see, for example, \cite[Section 3]{Kel01}. The extra signs $(-1)^{j+t(n-j-t)}$ in \eqref{Ainfinity2} are one reason for using the shifted convention throughout the article.
\end{rem}

\begin{exm}\label{exm:shiftedandunshifted}
Every dg $\Bbbk$-algebra $(A,d,\cdot)$, in the usual unshifted sense, determines an $A_\infty$-algebra $(A,m^n)$ by
 $$ m^1(sa_1)= s d(a_1), \qquad m^2(sa_1 \otimes sa_2)= (-1)^{|a_1|} s (a_1\cdot a_2)$$
 and $m^k =0$ for $k \geq 3.$
\end{exm}

\begin{rem}\label{rem:cofreereformation}
It is often useful to encode the $A_\infty$-operations coalgebraically. Consider the cofree tensor coalgebra
\[
\bigg(T^c(sA)=\bigoplus_{i\geq 0}(sA)^{\otimes i},\Delta\bigg),
\]
where $(sA)^{\otimes 0}=\Bbbk$ and the coproduct $\Delta$ is given by
\begin{align*}
\Delta(sa_1\otimes \dotsb \otimes sa_n) =& 1_\Bbbk \otimes (sa_{1,n})+ (sa_{1,n})\otimes 1_\Bbbk + \sum_{i=1}^{n-1} (sa_{1,i}) \otimes (sa_{i+1,n}).
\end{align*}
Throughout the article we use the shorthand
\[
sa_{i,j}=sa_i\otimes sa_{i+1}\otimes\dotsb\otimes sa_j
\]
for $i\leq j$; if $i>j$, the corresponding empty tensor is understood as $1_\Bbbk$.

Any collection of degree-$1$ maps $\{m^n\colon (sA)^{\otimes n}\to sA\}_{n\geq 1}$ has a unique extension
\begin{align}
\widehat m \colon T^c (sA) \to T^c (sA)
\end{align}
of degree $1$ such that $\widehat m(1)=0$ and, for $n\geq 1$,
\begin{align}\label{align:widehatm}
\widehat m (sa_1 \otimes \dotsb \otimes sa_n) = \sum_{t=1}^n\sum_{i=0}^{n-t} (-1)^{\epsilon_i} sa_{1,i} \otimes m^t(sa_{i+1, i+t}) \otimes sa_{i+t+1, n},
\end{align}
where $\epsilon_i=|a_1|+\dotsb+|a_i|-i$. The map $\widehat m$ is automatically a coderivation of degree $1$; that is,
\[
\Delta\circ \widehat m=(\widehat m\otimes \mathbf 1_{T^c(sA)}+\mathbf 1_{T^c(sA)}\otimes \widehat m)\circ \Delta.
\]
Moreover, identity \eqref{align:A-infinity3} is equivalent to $\widehat m\circ \widehat m=0$.
In this case, $(T^c(sA),\Delta,\widehat m)$ is a differential graded coalgebra, or dg coalgebra for short. Conversely, any dg coalgebra structure $(T^c(sA),\Delta,\widehat m)$ such that the image of $\widehat m$ lies in $T^{\geq 1}(sA)=\bigoplus_{i\geq 1}(sA)^{\otimes i}$ and $\widehat m(1)=0$ induces an $A_\infty$-algebra structure $(A,m^n)_{n\geq 1}$, where $m^n\colon (sA)^{\otimes n}\to sA$ is the composite
\[
m^n \colon (sA)^{\otimes n} \hookrightarrow T^c(sA) \xrightarrow{\widehat m} T^c(sA) \xrightarrow{\mathrm{proj}} sA.
\]

Thus the assignment $(A,m^n)\mapsto (T^c(sA),\Delta,\widehat m)$ embeds $A_\infty$-algebra structures on $A$ into dg coalgebra structures on $(T^c(sA),\Delta)$:
\[
\{\text{$A_\infty$ structures on $A$}\} \hookrightarrow \{\text{dg structures on $(T^c(sA), \Delta)$}\}.
\]

\end{rem}

\begin{defn}
An $A_\infty$-algebra $(A,m^n)$ is {\it strictly unital} if there is an element $e_A\in A$ of degree $0$ such that
\begin{enumerate}
\item $m^1(se_A) = 0,$
\item $m^2(se_A \otimes sa)=sa=(-1)^{|a|}m^2(sa\otimes se_A)$ for any $a\in A$,
\item $m^k(sa_{1,i} \otimes se_A \otimes sa_{i+1, k-1}) = 0$ for $k \geq 3$ and $a_1,\dotsc, a_{k-1} \in A$.
\end{enumerate}

\end{defn}
Throughout this article, all $A_\infty$-algebras are assumed to be strictly unital.

\begin{defn}\label{defn:ainfinitymorphism}
An {\it $A_{\infty}$-morphism} $\widehat f\colon A\to B$ between $A_\infty$-algebras $(A,m_A^n)$ and $(B,m_B^n)$ is a collection of degree-$0$ graded maps
\[
f_n\colon (sA)^{\otimes n}\to sB,
\qquad n\geq 1,
\]
such that, for each $n\geq 1$,
\begin{equation}\label{equationforAmorph2}
\begin{split}
\sum_{j=0}^{n-1}\sum_{t=1}^{n-j} {} & f_{n-t+1}\circ (\mathbf{1}^{\smallotimes
j}\smallotimes m_A^t\smallotimes \mathbf{1}^{\smallotimes n-t-j})=\sum_{\substack{r\geq 1\\ i_1+\dotsb + i_r=n}} m_{B}^{r} \circ (f_{i_1}\smallotimes \cdots\smallotimes f_{i_r}).
\end{split}
\end{equation}
In particular, $f_1\colon (sA,m_A^1)\to (sB,m_B^1)$ is a cochain map.
\end{defn}

\begin{rem}\label{rem:a-infinitytwo1}
Equivalently, a collection of graded maps $\{f_n\}_{n\geq 1}$ defines an $A_\infty$-morphism from $A$ to $B$ if and only if the extension
\[
\widehat f\colon (T^c(sA),\Delta,\widehat m_A)\to (T^c(sB),\Delta,\widehat m_B)
\]
given by
 \begin{equation*}
 \begin{split}
 \widehat{f} (sa_{1,n}) = & \sum_{\substack{r\geq 1\\ i_1+\dotsb + i_r=n}} f_{i_1}(sa_{1, i_1}) \otimes f_{i_2}(sa_{i_1+1, i_1+i_2}) \otimes \dotsb \otimes f_{i_r}(sa_{i_1+\dotsb+i_{r-1}+1,i_1+\cdots+i_r})
 \end{split}
 \end{equation*}
is a degree-$0$ homomorphism of dg coalgebras.
The composition $\widehat f\circ_\infty \widehat g$ of two $A_\infty$-morphisms is the usual composition of the corresponding dg coalgebra homomorphisms.
\end{rem}

\subsection{Left \texorpdfstring{$A_{\infty}$}{}-modules}
We next recall left modules, right modules, and $A_\infty$-bimodules. These objects form dg categories whose morphism complexes and differentials can be written explicitly; see \cite[Section 3]{Kel01a}.

\begin{defn}
Let $(A, m^n)$ be an $A_\infty$-algebra. A {\it left $A_\infty$-module} $M$ over $A$ is a $\mathbb Z$-graded vector space $M$ equipped with maps
\[
\lambda_M^n \colon (sA)^{\otimes n-1} \otimes sM \to sM, \ n\geq 1
\]
of degree $1$ satisfying, for each $n\geq 1$,
\begin{align}\label{align:leftmodule}
\begin{aligned}
\sum_{t=1}^{n}& \lambda_M^{n-t+1}\circ (\mathbf 1_{sA} ^{\otimes n-t} \otimes \lambda_M^t ) + \sum_{j=1}^{n-1}\sum_{t=1}^{n-j} \lambda_M^{n-t+1}\circ (\mathbf 1_{sA} ^{\otimes n-j-t } \otimes m_A^t \otimes \mathbf 1_{sA} ^{\otimes j}) = 0,
\end{aligned}
\end{align}
In particular, $\lambda_M^1\circ \lambda_M^1=0$, so $(sM,\lambda_M^1)$ is a complex. We write $(M,\lambda_M^n)$, or simply $(M,\lambda_M)$, when the superscript is not needed.
\end{defn}

\begin{exm}
Let $(A, m_A^n)$ be an $A_\infty$-algebra. Then $A$, endowed with the maps
\[
\lambda^n_A =m^n_A\colon (sA)^{\otimes n-1} \otimes sA \to sA, \quad \text{for $n\geq 1$}
\]
is a left $A_\infty$-module over itself, as follows by comparing \eqref{align:A-infinity3} with \eqref{align:leftmodule}.
\end{exm}

\begin{rem}
The left-module identities also have a coalgebraic form. For a graded vector space $M$, consider the left cofree comodule $(T^c(sA)\otimes sM,\Delta_M^{\LL})$ over $(T^c(sA),\Delta)$, where
\[
\Delta_M^{\LL}\colon T^c(sA)\otimes sM\to T^c(sA)\otimes T^c(sA)\otimes sM
\]
is defined by $\Delta_M^{\LL}=\Delta\otimes \mathbf 1_{sM}$.

As in Remark \ref{rem:cofreereformation}, the maps $\lambda_M^n$ can be packaged into a dg comodule structure on $(T^c(sA)\otimes sM,\Delta_M^{\LL})$. More precisely, define the extension
\begin{align}
\widehat \lambda_M \colon T^c(sA) \otimes sM \to T^c(sA) \otimes sM
\end{align}
of degree $1$ given by
\begin{align*}
\widehat \lambda_M (sa_{1,n-1} \otimes sx) ={} & \sum_{i=0}^{n-1} (-1)^{\epsilon_i} sa_{1,i} \otimes \lambda_M^{n-i}(sa_{i+1, n-1}\otimes sx)+ \\
& \sum_{t=1}^{n-1}\sum_{i=0}^{n-1-t} (-1)^{\epsilon_i} sa_{1,i}\otimes m_A^t(sa_{i+1, i+t}) \otimes sa_{i+t+1, n-1}\otimes sx
\end{align*}
for each $n\geq 1$, where $\epsilon_i=|a_1|+\dotsb+|a_i|-i$.
Equivalently,
\begin{align}\label{algin:leftwidehat}
\widehat \lambda_M = \widehat m \otimes \id_{sM} + (\id_{T} \otimes \lambda_M) \circ \Delta^{\LL}_M.
\end{align}

Identity \eqref{algin:leftwidehat}, together with the coassociativity of $\Delta$, yields $$(\widehat{m} \otimes \mathbf 1_T \otimes \mathbf 1_{sM} + \mathbf 1_T \otimes \widehat \lambda_M)\circ \Delta^{\LL}_M = \Delta^{\LL}_M \circ \widehat \lambda_M,$$ where $T=T^c(sA)$ and $\Delta^{\LL}_M=\Delta \otimes \id_{sM}$. Thus $\widehat\lambda_M$ is a coderivation of the comodule $(T^c(sA)\otimes sM,\Delta_M^{\LL})$ over the dg coalgebra $(T^c(sA),\Delta,\widehat m)$.

One checks that identity \eqref{align:leftmodule} is equivalent to $\widehat\lambda_M\circ \widehat\lambda_M=0$. Equivalently, $(T^c(sA)\otimes sM,\Delta_M^{\LL},\widehat\lambda_M)$ is a dg comodule over $(T^c(sA),\Delta,\widehat m)$; see \cite[Section 3]{Kel01a}.
\end{rem}

\begin{defn}\label{defn:unitalleft}
Let $(A, m_A^n)$ be a strictly unital $A_{\infty}$-algebra with unit $e_A$. A left $A_{\infty}$-module $(M,\lambda_M^n)$ over $A$ is {\it strictly unital} if, for every $x\in M$,
\begin{enumerate}
\item $\lambda_M^2(se_A \otimes sx)=sx$,
\item $\lambda_M^n(sa_{1, n-1} \otimes sx) = 0$ for $n > 2$ with $a_i=e_A$ for some $1\leq i <n$.
\end{enumerate}
\end{defn}

\begin{defn}
Let $(A,m^n)$ be an $A_\infty$-algebra. We denote by $\EC^{\LL}_\infty(A)$ the dg category of strictly unital left $A_\infty$-modules over $A$. Its objects are strictly unital left modules. For two such modules $(M,\lambda_M^n)$ and $(N,\lambda_N^n)$, the pre-morphism complex is
\[
(\Hom(T^c(sA)\otimes sM,sN),\delta),
\]
where
\[
\delta(\varphi)=\lambda_N\circ \widehat\varphi-(-1)^{|\varphi|}\varphi\circ \widehat\lambda_M.
\]
Here $\widehat\varphi\colon T^c(sA)\otimes sM\to T^c(sA)\otimes sN$ is the standard extension obtained by applying $\varphi\in \Hom(T^c(sA)\otimes sM,sN)$ after an arbitrary initial segment:
$$\widehat \varphi(sa_{1,k}\otimes sx) = \sum_{t=0}^k (-1)^{|\varphi|(|sa_1|+\dots+|sa_t|)} sa_{1, t}\otimes \varphi(sa_{t+1,k} \otimes sx).$$
Equivalently, $\widehat{\varphi}=(\id_T\otimes \varphi)\circ (\Delta\otimes \id_{sM})$.
The composition of two pre-morphisms $\varphi \in \Hom(T^c(sA) \otimes sM , sN)$ and $\psi \in \Hom(T^c(sA) \otimes sL , sM)$ is defined by
\[
 \varphi\overset{\infty}{\circ} \psi := \varphi \circ \widehat \psi .
\]

The dg category $\EC^{\LL}_\infty(A)$ is pretriangulated \cite{Kel01a}. We define the {\it derived category $\ED^{\LL}_{\infty}(A)$ of left $A_\infty$-modules} over $A$ to be its homotopy category; that is, \begin{align}\label{align:definderived}
\ED^{\LL}_{\infty}(A) = \mathrm H^0(\EC^{\LL}_\infty(A)).
\end{align}
\end{defn}

\begin{rem}
By a {\it morphism} $\varphi\colon (M,\lambda_M^n)\to (N,\lambda_N^n)$ of $A_\infty$-modules, we mean a pre-morphism satisfying $\delta(\varphi)=0$. Equivalently, $\varphi$ is a collection of degree-$0$ maps
\[
\varphi^n\colon (sA)^{\otimes n-1}\otimes sM\to sN,
\qquad n\geq 1,
\]
such that, for each $n\geq 1$,
\begin{align}\label{align:modulemorphism}
\begin{aligned}
&\sum_{t=1}^{n} \varphi^{n-t+1}\circ (\mathbf{1}^{\smallotimes
n-t}\smallotimes \lambda_M^t ) + \sum_{j=1}^{n-1}\sum_{t=1}^{n-j} \varphi^{n-t+1}\circ (\mathbf{1}^{\smallotimes
n-t-j}\smallotimes m^t\smallotimes \mathbf{1}^{\smallotimes j})\\
={} & \sum_{t=1}^{n} \lambda_N^{n-t+1} \circ (\mathbf 1^{\otimes n-t} \smallotimes \varphi^{t} ).
\end{aligned}
\end{align}
In particular, $\varphi^1\circ \lambda_M^1=\lambda_N^1\circ \varphi^1$, so $\varphi^1\colon sM\to sN$ is a morphism of complexes.
The morphism $\varphi$ is a {\it quasi-isomorphism} if $\varphi^1$ is a quasi-isomorphism. By \cite[Subsection 4.2]{Kel01}, the category $\ED^{\LL}_{\infty}(A)$ defined in \eqref{align:definderived} agrees with the usual derived category obtained by inverting quasi-isomorphisms; indeed, a morphism is a quasi-isomorphism if and only if it is a homotopy equivalence \cite[Theorem 4.2.1]{Kel01}.

For strictly unital modules, we require pre-morphisms to be compatible with the unit: namely,
\begin{align}
\varphi^n( sa_{1,n-1}\otimes sx)=0, \; n\geq 2
\end{align}
with $a_i=e_A$ for some $1\leq i <n$.
\end{rem}

\subsection{Right \texorpdfstring{$A_{\infty}$}{}-modules}
Right $A_\infty$-modules are defined analogously. We recall the convention only briefly and refer to \cite[Section 4]{Kel01} for further details.

A right $A_\infty$-module over $A$ is a $\mathbb Z$-graded vector space $M$ equipped with degree-$1$ maps
\[
\rho_M^n \colon sM \otimes (sA)^{\otimes n-1} \to sM, \quad n \geq 1
\]
satisfying the right-handed analogue of \eqref{align:leftmodule}. Equivalently, $(M,\rho_M^n)$ determines a right dg comodule $(sM\otimes T^c(sA),\Delta_M^{\RR},\widehat\rho_M)$ over the dg coalgebra $(T^c(sA),\Delta,\widehat m)$.

\begin{rem}\label{rem:unitalright}
 As in Definition \ref{defn:unitalleft}, a right $A_\infty$-module $(M,\rho_M)$ is {\it strictly unital} if
\[
\rho_M^2(sx\otimes se_A)=(-1)^{|x|}sx
\]
and $\rho_M^n(sx\otimes\dotsb\otimes se_A\otimes\dotsb)=0$ for $n>2$.
\end{rem}

We denote by $\EC^{\RR}_\infty(A)$ the dg category of strictly unital right $A_\infty$-modules over $A$, and we define the {\it derived category $\ED^{\RR}_{\infty}(A)$ of strictly unital right $A_\infty$-modules} to be $\mathrm H^0(\EC^{\RR}_\infty(A))$.

\begin{rem}
Let $(A,m_A^n)$ be an $A_\infty$-algebra. Recall from \cite[Remark 5.8]{CLW} that the {\it opposite $A_\infty$-algebra} of $A$ is $(A,m^n_{A^{\op}})$, where $m^n_{A^{\op}}\colon (sA)^{\otimes n}\to sA$ is given by
\[
m^n_{A^{\op}}(sa_1\otimes \dotsb \otimes sa_n) = (-1)^{\epsilon} m^n_A(sa_n \otimes \dotsb \otimes sa_1) , \quad n \geq 1
\]
where $\epsilon=n-1+\sum_{j=1}^{n-1}|sa_j|(|sa_{j+1}|+\dotsb+|sa_n|)$. We write this opposite algebra simply as $A^{\op}$.

A right $A_\infty$-module $(M,\rho_M^n)$ over $A$ can then be viewed as a left $A_\infty$-module over $A^{\op}$. Its structure maps
\[
\lambda^n_{M^{\op}} \colon (sA)^{\otimes n-1} \otimes sM \to sM,\quad n\geq 1
\]
are determined by
\begin{align*}
 \lambda^n_{M^{\op}}(sa_1\otimes \dotsb \otimes sa_{n-1} \otimes sx) = (-1)^{\epsilon}
 {\rho}_M^n(sx \otimes sa_{n-1} \otimes \dotsb \otimes sa_1)
\end{align*}
where
\[
\epsilon=(|x|-1)\left(\sum^{n-1}_{i=1}(|a_i|-1)\right)+\sum^{n-1}_{j=1}(|a_j|-1)\left(\sum^{j-1}_{k=1}(|a_k|-1)\right).
\]
This construction gives an isomorphism of dg categories
\begin{align}\label{align:leftrightopposite}
 \EC^{\RR}_\infty(A) \xrightarrow{\simeq} \EC^{\LL}_\infty(A^{\op}), \quad (M, \rho_M^n) \mapsto (M, \lambda_{M^{\op}}^n).
\end{align}
\end{rem}

\subsection{\texorpdfstring{$A_{\infty}$}{}-bimodules and the tensor product}
We now recall $A_\infty$-bimodules and the $A_\infty$-tensor product. This tensor product gives the standard monoidal triangulated structure on the derived category of $A_\infty$-bimodules.

\subsubsection{\textbf{\textup{$A_\infty$-bimodules}}}
\begin{defn}\label{defn:Abimodule}
Let $(A, m^n)$ be an $A_\infty$-algebra. An $A_\infty$-bimodule over $A$ is a $\mathbb Z$-graded vector space $M$ equipped with maps
\begin{align*}
\beta_M^{p,q} \colon (sA)^{\otimes p} \otimes sM \otimes (sA)^{\otimes q} \to sM, \quad p, q \geq 0
\end{align*}
of degree $1$ satisfying the following identity:
\begin{align*}
0={}& \sum_{i=0}^{p-1}\sum_{t=1}^{p-i} (-1)^{\epsilon_i } \beta_M^{p-t+1, q} \left(sa_{1,i }\smallotimes m^t(sa_{i+1, i+t}) \smallotimes sa_{i+t+1, p } \smallotimes sx \smallotimes sb_{1,q}\right)\\
&+ \sum_{u=0}^p\sum_{v=0}^q (-1)^{\epsilon_{p-u}} \beta_M^{p-u, q-v} \left(sa_{1, p-u} \smallotimes \beta_M^{u, v}(sa_{p-u+1, p} \smallotimes sx \smallotimes sb_{1, v}) \smallotimes sb_{v+1, q}\right)\\
&+ \sum_{i=0}^{q-1}\sum_{t=1}^{q-i} (-1)^{\epsilon_p + |sx|+ \eta_i} \beta_M^{p, q-t+1} \left(sa_{1, p} \smallotimes sx \smallotimes sb_{1, i} \smallotimes m^t (sb_{i+1, i+t}) \smallotimes sb_{i+t+1,q}\right)
\end{align*}
where $\epsilon_i=|a_1|+\dotsb+|a_i|-i$ and $\eta_i=|b_1|+\dotsb+|b_i|-i$. In particular, $(sM,\beta_M^{0,0})$ is a complex.
\end{defn}
Strict unitality for $A_\infty$-bimodules is defined in the analogous left-and-right sense.

\begin{rem}
Let $(A,m^n)$ be an $A_\infty$-algebra. A collection of graded maps
\[
\beta_M^{p,q}\colon (sA)^{\otimes p}\otimes sM\otimes (sA)^{\otimes q}\to sM,
\qquad p,q\geq 0,
\]
determines a map
\begin{align*}
\widehat \beta_M \colon T^c(sA) \otimes sM \otimes T^c(sA) \to T^c(sA) \otimes sM \otimes T^c(sA)
\end{align*}
of degree $1$ by
\begin{align}\label{align:betahat}
\widehat \beta_M = \widehat m \otimes \id_{sM} \otimes \id_{T} + (\id_T \otimes \beta_M \otimes \id_T) \circ (\Delta \otimes \id_{sM} \otimes \Delta) + \id_T \otimes \id_{sM} \otimes \widehat m.
\end{align}

The map $\widehat\beta_M$ satisfies the following compatibility diagram:
\begin{align*}
 \xymatrix@C=7em@R=5em{
 T\otimes sM \otimes T \ar[d]^{\widehat{\beta}_M}
 \ar[r]^{\begin{pmatrix}
 \Delta\smallotimes \id \smallotimes \id \\
 \id \smallotimes \id \smallotimes \Delta
 \end{pmatrix}\qquad\qquad\qquad} &(T^{\otimes 2} \otimes sM\otimes T) \oplus (T\otimes sM\otimes T^{\otimes 2}) \ar@<-5ex>[d]^{\begin{pmatrix}
 \id \smallotimes \widehat{\beta}_M + \widehat m\smallotimes \id^{\smallotimes 3}& 0 \\
 0 & \widehat{\beta}_M \smallotimes \id + \id^{\smallotimes 3}\smallotimes \widehat m
 \end{pmatrix}}\\
 T\otimes sM \otimes T \ar[r]_{\begin{pmatrix}
 \Delta\smallotimes \id \smallotimes \id \\
 \id \smallotimes \id \smallotimes \Delta
 \end{pmatrix}\qquad\qquad\qquad} &(T^{\otimes 2}\otimes sM\otimes T) \oplus (T\otimes sM\otimes T^{\otimes 2})
 }
\end{align*}
where $T=T^c(sA)$. 
This diagram shows that $\widehat\beta_M$ is a bicomodule coderivation with respect to both the left and right coactions.
The maps $(\beta_M^{p,q})$ define an $A_\infty$-bimodule structure if and only if $\widehat\beta_M\circ \widehat\beta_M=0$, equivalently $\beta_M\circ \widehat\beta_M=0$. Thus $(T^c(sA)\otimes sM\otimes T^c(sA),\Delta_M^{\LL}, \Delta_M^{\RR}, \widehat\beta_M)$ is a dg bicomodule over $(T^c(sA),\Delta,\widehat m)$; see \cite[Definition 2.6]{Tra}.
\end{rem}

This reformulation gives the dg category of $A_\infty$-bimodules.

\begin{defn}
Let $(A,m^n)$ be an $A_\infty$-algebra. Denote by $\EC^{\BB}_\infty(A)$ the dg category of strictly unital $A_\infty$-bimodules over $A$. For two objects $(M,\beta_M^{p,q})$ and $(N,\beta_N^{p,q})$, the pre-morphism complex is
\[
(\Hom(T^c(sA)\otimes sM\otimes T^c(sA),sN),\delta),
\]
with differential
\[
\delta(\varphi)=\beta_N\circ \widehat\varphi-(-1)^{|\varphi|}\varphi\circ \widehat\beta_M.
\]
The composition of pre-morphisms $\varphi\in \Hom(T^c(sA)\otimes sM\otimes T^c(sA),sN)$ and $\psi\in \Hom(T^c(sA)\otimes sL\otimes T^c(sA),sM)$ is
\begin{align}\label{align:compositionmorphisms}
 \varphi\overset{\infty}{\circ} \psi := \varphi \circ \widehat \psi
\end{align}
where $\widehat\psi\colon T^c(sA)\otimes sL\otimes T^c(sA)\to T^c(sA)\otimes sM\otimes T^c(sA)$ is defined by
\begin{align*}
\widehat \psi(sa_{1,p} \otimes sx \otimes sb_{1,q})= \sum_{u=0}^p\sum_{v=0}^q (-1)^{\epsilon_{p-u}} sa_{1,p-u} \otimes \psi^{u, v}(sa_{p-u+1, p} \otimes sx \otimes sb_{1,v}) \otimes sb_{v+1,q} ,
\end{align*}
where $\epsilon_{p-u} = |\psi| ( |sa_1|+\dotsb+|sa_{p-u}|)$ and $\psi^{u, v}$ is the restriction of $\psi$ to the component $(sA)^{\otimes u} \otimes s L \otimes (sA)^{\otimes v}.$
For any $A_\infty$-bimodule $M$, the {\it identity morphism} is the ordinary identity $\id_{sM}\colon sM\to sM$, with no higher components. In particular,
\[
 \varphi \overset{\infty}{\circ} \id_{sM}=\varphi=\id_{sN} \overset{\infty}{\circ} \varphi
\]

The dg category $\EC^{\BB}_\infty(A)$ is pretriangulated \cite{Kel01a}. Its homotopy category
\[
\ED^{\BB}_{\infty}(A)=\mathrm H^0(\EC^{\BB}_\infty(A))
\]
is the {\it derived category of $A_\infty$-bimodules} over $A$.
\end{defn}

\begin{rem}\label{rem:Ainfinitydgmodules}
Let $(A,m^n)$ be a dg algebra, so $m^n=0$ for $n>2$. Then every $A_\infty$-bimodule is $A_\infty$-quasi-isomorphic to a dg bimodule over $A$. Hence the canonical functor
\[
\ED^{\BB}(A)\to \ED^{\BB}_{\infty}(A)
\]
is a triangulated equivalence, where $\ED^{\BB}(A)$ is the classical derived category of dg bimodules; see \cite[Subsection 4.3]{Kel01} and \cite{Kel94}.

Thus, for a dg algebra, one may compute morphisms either classically, by inverting quasi-isomorphisms of dg bimodules, or $A_\infty$-categorically, by allowing $A_\infty$-morphisms and then passing to the homotopy category. The same observation applies to left and right $A_\infty$-modules.
\end{rem}

\begin{rem}\label{rem:forgetfulfunctor}
An $A_\infty$-bimodule $(M,\beta_M^{p,q})$ naturally induces a left $A_\infty$-module and a right $A_\infty$-module by restricting to one side; explicitly, $\lambda_M^n=\beta_M^{n-1,0}$ and $\rho_M^n=\beta_M^{0,n-1}$ for $n\geq 1$. Thus we have forgetful functors
\begin{align*}
F^{\LL} &\colon \EC^{\BB}_\infty(A) \to \EC^{\LL}_\infty(A), \quad (M, \beta_M^{p, q}) \mapsto (M, \{\beta_M^{n-1,0}\}_{n\geq 1}),\\
F^{\RR} &\colon \EC^{\BB}_\infty(A) \to \EC^{\RR}_\infty(A), \quad (M, \beta_M^{p, q}) \mapsto (M, \{\beta_M^{0,n-1}\}_{n\geq 1}).
\end{align*}
\end{rem}

\subsubsection{\textbf{\textup{Tensor product of $A_\infty$-bimodules and internal Hom}}}
We now recall the tensor product of $A_\infty$-bimodules; see \cite[Definition 10]{Fer12} for details.

Let $T=T^c(sA)$. We first introduce the isomorphism of degree $-1$
\begin{align*}
\zeta\colon T \otimes sM \otimes T \otimes N \otimes T &\longrightarrow T \otimes sM \otimes T \otimes sN \otimes T \\
sa_{1, k} \otimes sx \otimes sb_{1, l} \otimes y \otimes sc_{1, r} &\mapsto (-1)^{|sa_{1,k}|+|x|+|sb_{1,l}|} sa_{1, k}
\otimes sx \otimes sb_{1,l} \otimes sy \otimes sc_{1, r}.
\end{align*}

\begin{defn}
Let $(A, m^n)$ be an $A_\infty$-algebra. Let $(M, \beta_M^{p, q})$ and $(N, \beta_N^{p, q})$ be two $A_\infty$-bimodules.
The {\it tensor product} $M\overset{\infty}\otimes_A N$ is the following $A_\infty$-bimodule over $A$. Its underlying graded vector space is
\[
M\otimes T\otimes N.
\]
The structure maps
\[
\beta_{M \overset{\infty}\otimes_A N}^{p, q} \colon (sA)^{\otimes p}\otimes sM \otimes T \otimes N \otimes (sA)^{\otimes q} \to sM \otimes T \otimes N
\]
are defined by requiring that the extension $\widehat\beta_{M\overset{\infty}\otimes_A N}$ be the composite
\[
 \zeta^{-1} \circ \widetilde \beta_{M \overset{\infty}\otimes_A N} \circ \zeta \colon T \otimes sM \otimes T \otimes N \otimes T \to T \otimes sM \otimes T \otimes N \otimes T,
\]
where $\zeta$ is given as above and $\widetilde \beta_{M \overset{\infty}\otimes_A N} \colon T \otimes sM \otimes T \otimes sN \otimes T \to T \otimes sM \otimes T \otimes sN \otimes T$ is defined by
\[
 \widetilde \beta_{M \overset{\infty}\otimes_A N} = -\widehat \beta_M \otimes \mathbf 1_{sN \otimes T} - \mathbf 1_{T\otimes sM}\otimes \widehat \beta_N + \mathbf 1_{T\otimes sM} \otimes \widehat m_A \otimes \mathbf 1_{sN\otimes T} .
\]
\end{defn}

\begin{rem}
On components, the maps $\beta_{M\overset{\infty}\otimes_A N}^{p,q}$ are as follows. Let $x\in M$ and $y\in N$.
\begin{enumerate}
\item If $p=q=0$, then
\begin{align*}
&\beta_{M \overset{\infty}\smallotimes_A N}^{0, 0} \left( sx\smallotimes sb_{1,l}\smallotimes y\right)\\
= & \sum_{i=0}^l \beta_M^{i+1} (sx\smallotimes sb_{1,i}) \smallotimes sb_{i+1,l} \smallotimes y + \sum_{i=0}^l (-1)^{|x|+|sb_{1,l}|} sx\smallotimes sb_{1,i} \smallotimes s^{-1}\beta_N^{l-i+1}(sb_{i+1,l} \smallotimes sy)\\
& +\sum_{t=1}^l \sum_{i=0}^{l-t} (-1)^{|x|+|sb_{1,i}|+1} sx\smallotimes sb_{1,i} \smallotimes m^t(sb_{i+1, i+t}) \smallotimes sb_{i+t+1, l} \smallotimes y.
\end{align*}

\item If $q=0$ and $p>0$, then
\begin{align*}
\beta_{M \overset{\infty}\smallotimes _A N}^{p, 0} (sa_{1,p} \smallotimes sx\smallotimes sb_{1,l} \smallotimes y) = \sum_{i=0}^l \beta_M^{p, i} (sa_{1,p} \smallotimes sx\smallotimes sb_{1,i}) \smallotimes sb_{i+1, l} \smallotimes y.
\end{align*}

\item If $p=0$ and $q>0$, then the first and second summands vanish, and hence
\begin{align*}
\beta_{M \overset{\infty}\smallotimes _A N}^{0, q}(sx\smallotimes sb_{1,l} \smallotimes y\smallotimes sc_{1,q}) = \sum_{i=0}^l (-1)^{|x|+ |sb_{1,l}|} sx\smallotimes sb_{1,i} \smallotimes s^{-1}\beta^{l-i,q}_N(sb_{i+1,l} \smallotimes sy\smallotimes sc_{1,q}).
\end{align*}

\item If $p>0$ and $q>0$, then
\begin{align*}
\beta^{p,q}_{M \overset{\infty}\smallotimes _A N}(sa_{1,p} \smallotimes sx\smallotimes sb_{1,l} \smallotimes y\smallotimes sc_{1,q}) =0.
\end{align*}
\end{enumerate}
\end{rem}

\begin{rem}\label{rem:tensormorphism}
The tensor product also extends to pre-morphisms. Consider two pre-morphisms in $\EC^{\BB}_\infty(A)$:
\[
f\in \Hom(T \otimes sM \otimes T, sN), \quad f' \in \Hom(T \otimes sM' \otimes T, sN').
 \]
Then $f\overset{\infty}\otimes_A f'\in \Hom(T\otimes sM\otimes T\otimes M'\otimes T,sN\otimes T\otimes N')$ is defined by
 \begin{align}
 \begin{aligned}
& (f \overset{\infty}\otimes_A f') (sa_{1,p}\otimes sx\otimes sb_{1,q} \otimes y \otimes sc_{1,r})\\
 ={} & \sum_{i=0}^q\sum_{j=0}^{q-i} (-1)^{\epsilon} f(sa_{1,p} \otimes sx \otimes sb_{1,i}) \otimes sb_{i+1, i+j} \otimes s^{-1} f'(sb_{i+j+1, q} \otimes sy \otimes sc_{1,r}),
 \end{aligned}
 \end{align}
where $\epsilon=|f'|(|sx|+|sa_{1,p}|+|sb_{1,i+j}|)+|sb_{i+j+1,q}|$.

For $g\in \Hom(T\otimes sN\otimes T,sL)$ and $g'\in \Hom(T\otimes sN'\otimes T,sL')$, one checks that
\begin{align}\label{align:tensorcomposition}
 (f \overset{\infty}{\circ}g) \overset{\infty} \otimes_A (f' \overset{\infty}{\circ} g') = (f \overset{\infty} \otimes_A f') \overset{\infty}\circ (g \overset{\infty} \otimes_A g'),
\end{align}
where $\overset{\infty} \circ$ is the composition of morphisms in $\EC^{\BB}_\infty(A)$; see \eqref{align:compositionmorphisms}.

Consequently, $-\overset{\infty}\otimes_A-$ is a bifunctor on $\EC^{\BB}_\infty(A)$, and the induced tensor product makes $\ED^{\BB}_\infty(A)$ a monoidal category with unit object $A$; see, for example, \cite[Proposition 2]{Fer12}. Moreover, this monoidal triangulated category admits an internal Hom: for any $A_\infty$-bimodules $M$ and $N$, there is an $A_\infty$-bimodule $\overset{\infty}{\Hom}_A(M,N)$, with underlying graded space $\Hom(T^c(sA)\otimes sM,sN)$ and an induced bimodule structure, such that there is a natural isomorphism
\begin{align*}
\Hom_{\EC^{\BB}_\infty(A)}(M \overset{\infty}{\otimes}_A N, K) \xrightarrow{\simeq} \Hom_{\EC^{\BB}_\infty(A)}(M, \overset{\infty}{\Hom}_A(N, K))
\end{align*}
for any $A_\infty$-bimodules $M, N$ and $K$. It follows that $-\overset{\infty}{\otimes}_A-$ commutes with coproducts in $\ED^{\BB}_\infty(A)$. See \cite[Proposition 2.10]{Gan} for a similar statement.
\end{rem}

\begin{rem}\label{rem:tensorfunctor}
Let $(M,\beta_M^{p,q})$ be an $A_\infty$-bimodule over $A$. Tensoring with $M$ defines a dg functor
\[
- \overset{\infty}\otimes_A M \colon \EC^{\BB}_\infty(A) \to \EC^{\BB}_\infty(A)
\]
which sends an object $(N,\beta_N^{p,q})$ to $(N\overset{\infty}\otimes_A M,\beta_{N\overset{\infty}\otimes_A M}^{p,q})$.
On morphisms, a pre-morphism $f\in \Hom(T\otimes sN\otimes T,sK)$ is sent to
\[
f\overset{\infty}\otimes_A M=f\overset{\infty}\otimes_A \mathbf 1_{sM},
\]
where $\mathbf 1_{sM}\colon sM\to sM$ is viewed as a morphism in $\Hom(T\otimes sM\otimes T,sM)$.
The same construction gives a functor on right $A_\infty$-modules:
\[
 - \overset{\infty}\otimes_A M \colon \EC^{\RR}_\infty(A) \to \EC^{\RR}_\infty(A) , \quad N \mapsto N \overset{\infty}\otimes_A M
\]
where the underlying graded space of $N\overset{\infty}\otimes_A M$ is $N\otimes T^c(sA)\otimes M$ and the right $A_\infty$-module structure is induced in the standard way. See \cite[Subsection 6.3]{Kel01}.
\end{rem}

\subsection{An induction from right $A_\infty$-modules to $A_\infty$-bimodules}
Let $(A,m^n)$ be an $A_\infty$-algebra, and let $(T^c(sA),\Delta,\widehat m)$ be its associated dg coalgebra. We also use the opposite dg coalgebra $(T^c(sA),\Delta^{\op},\widehat m)$, where $\Delta^{\op}=\tau\circ\Delta$ and $\tau$ is the graded swap
\[
\tau\colon T^c(sA)\otimes T^c(sA)\xrightarrow{\simeq} T^c(sA)\otimes T^c(sA),
\qquad
x\otimes y\mapsto (-1)^{|x||y|}y\otimes x.
\]
The following lemma is the abstract mechanism behind the induction functor used later.

\begin{lem}\label{lem:bi-rightduality}
Let $(A, m^n)$ be an $A_{\infty}$-algebra.
Then any homomorphism $\varphi\colon T^c(sA)^{\op}\otimes T^c(sA)\to T^c(sA)$ of dg coalgebras induces a dg functor
\[
 e_{\varphi}\colon \EC^{\RR}_\infty(A) \to \EC^{\BB}_\infty(A).
\]
\end{lem}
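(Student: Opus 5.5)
The plan is to use the coalgebraic description of modules. A strictly unital right $A_\infty$-module $(N,\rho_N)$ corresponds to a dg right comodule $(sN\otimes T,\Delta^{\RR}_N,\widehat\rho_N)$ over $C=(T^c(sA),\Delta,\widehat m)$. A bimodule structure on $N$ corresponds to a dg bicomodule $(T\otimes sN\otimes T,\widehat\beta_N)$, which is a bicomodule coderivation. Equivalently, it is a dg right comodule over $C^{\op}\otimes C$ of the cofree form $sN\otimes(T\otimes T)$, after the Koszul swap moving the left factor to the right. So the first step is this dictionary: bimodule structures $\beta^{p,q}_N$ on $N$ correspond bijectively to coderivations of degree $1$ and square zero on the cofree right $C^{\op}\otimes C$-comodule $sN\otimes (T\otimes T)$. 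The point is that a left $C$-coaction on $T\otimes sN\otimes T$ becomes a right $C^{\op}$-coaction after rearranging, and the corestriction to $sN$ gives the family $\beta^{p,q}$.

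Given $\varphi\colon C^{\op}\otimes C\to C$, we define $e_\varphi(N)$ as follows. The bimodule operations are
\[
\beta^{p,q}_{e_\varphi(N)}(sa_{1,p}\otimes sx\otimes sb_{1,q})=\pm\sum_{k\ge 0}\rho_N^{k+1}\bigl(sx\otimes \varphi_k(sa_{1,p}\otimes sb_{1,q})\bigr),
\]
where $\varphi_k$ is the component of $\varphi$ landing in $(sA)^{\otimes k}$. Abstractly, $e_\varphi(N)$ is the corestriction of $N$ along $\varphi$. Define the map
\[
\Phi=(\id_{sN}\otimes\varphi)\circ(\Delta^{\RR}\text{-structure}).
\]
We then take the coderivation $\widehat\beta=(\text{coextension of }\rho_N\circ(\id\otimes\varphi))$ on $sN\otimes T\otimes T$. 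The two facts to check are these:
\begin{itemize}
\item[(a)] $\id_{sN}\otimes\varphi\colon sN\otimes(T\otimes T)\to sN\otimes T$ intertwines the cofree coactions, because $\varphi$ is a coalgebra map.
\item[(b)] It intertwines the differentials, because $\varphi$ commutes with differentials. Here $\widehat m_{C^{\op}\otimes C}=\widehat m\otimes 1+1\otimes\widehat m$.
\end{itemize}
Then $\beta\circ\widehat\beta=\rho_N\circ(\id\otimes\varphi)\circ\widehat\beta=\rho_N\circ\widehat\rho_N\circ(\id\otimes\varphi)=0$, since a comodule morphism into a cofree comodule is determined by its corestriction. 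Strict unitality should follow if $\varphi$ is counital and sends $1\otimes se_A$ and $se_A\otimes 1$ to $se_A$. If this is not automatic, I would add it as a standing hypothesis implicit in "homomorphism".

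On morphisms, a pre-morphism $f\in\Hom(sM\otimes T,sN)$ is sent to
\[
e_\varphi(f)=f\circ(\id_{sM}\otimes\varphi)\in\Hom(T\otimes sM\otimes T,sN),
\]
up to the same Koszul rearrangement. Three things remain to verify:
\begin{itemize}
\item[(1)] $\delta(e_\varphi f)=e_\varphi(\delta f)$. This follows from (a), (b) and the identity $\widehat{e_\varphi f}=\widehat f$ pulled back along $\id\otimes\varphi$, using $(\id\otimes\varphi)\circ\widehat{(\ )}=\widehat{(\ )}\circ(\id\otimes\varphi)$, which again uses that $\varphi$ is a coalgebra map.
\item[(2)] $e_\varphi(f\overset{\infty}{\circ}g)=e_\varphi(f)\overset{\infty}{\circ}e_\varphi(g)$, by the same extension identity.
\item[(3)] Identities are preserved, since $\varphi(1\otimes 1)=1$ by counitality, so $e_\varphi(\id_{sN})$ has no higher components.
\end{itemize}

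The main obstacle is the sign bookkeeping in identifying $T\otimes sN\otimes T$ with $sN\otimes T^{\op}\otimes T$. The left tensor factor must be moved past $sN$ with Koszul signs, and it is here that the \emph{opposite} coproduct $\Delta^{\op}$ appears, because the left coaction read on the right reverses order. I would handle this with string diagrams: define the rearrangement as a crossing, and verify that it converts the left coaction $\Delta\otimes\id$ into the right $\Delta^{\op}$-coaction. After that, everything reduces to naturality of the crossing.
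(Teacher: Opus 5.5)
Your proposal is correct and is essentially the paper's proof. The paper defines $\beta_M^{p,q}$ and $e_\varphi(f)$ exactly as $\rho_M$, resp.\ $f$, precomposed with the crossing and $\varphi$. Its string-diagram verifications of $\beta_M\circ\widehat\beta_M=0$, of $e_\varphi(g\overset{\infty}{\circ}f)=e_\varphi(g)\overset{\infty}{\circ}e_\varphi(f)$ and of $e_\varphi(\delta f)=\delta(e_\varphi(f))$ use only your inputs (a) and (b). Your identities $(\id\otimes\varphi)\circ\widehat\beta=\widehat\rho_M\circ(\id\otimes\varphi)$ and $(\id\otimes\varphi)\circ\widehat{e_\varphi(f)}=\widehat f\circ(\id\otimes\varphi)$, obtained from uniqueness of maps into cofree comodules, package those verifications in one step.

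Your caution about strict unitality is justified, and the paper's proof does not address it. Unitality genuinely fails for some $\varphi$: for example, $\varphi=\varepsilon\otimes\id$ is a dg coalgebra map that gives zero left action. Your normalization also needs a correction and an addition:
\begin{enumerate}
\item $\varphi(se_A\otimes 1)$ and $\varphi(1\otimes se_A)$ must be $\pm se_A$ with the sign dictated by the crossing; for $\varphi=\widehat\mu(S\otimes\id)$ one has $\varphi(se_A\otimes 1)=-se_A$, not $se_A$.
\item You also need the $sA$-component of $\varphi$ to vanish on inputs of total length $\geq 2$ containing $se_A$. This does hold for the paper's $\varphi$ under its unit assumption on the $\mu^{p,q}$.
\end{enumerate}
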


\begin{proof}
Let $(M,\rho_M^n)$ be a right $A_\infty$-module over $A$. We define $e_\varphi(M)=(M,\beta_M^{p,q})$, where the structure map $\beta_M^{p,q}$ is represented by
\[
\beta_M^{p, q}=\tikzmath{
\draw (-.8, .8) --(-.8, 1);
\draw (0, .8) --(0, 1);
\draw (.8, .8) --(.8, 2.5);

\node at (2, .5) {.};
\node at (-1, .5) {$(sA)^{\otimes p}$};
\node at (0, .5) {$sM$};
\node at (1, .5) {$(sA)^{\otimes q}$};

\draw [knot] (0, 1) to[out=90,in=-90](-.8, 2.5);
\draw [knot] (-.8, 1) to[out=90,in=-90](0, 2.5);

\draw (-.25, 2.5) rectangle (.95, 3);
\node at (0.35, 2.75) {$\varphi$};

\draw (-.8, 2.5) -- (-.8, 4);
\draw (.8, 3) -- (.8, 4);

\draw (-.95, 4) rectangle (.95, 4.5);
\node at (0, 4.25) {$\rho_M$};
}
\]
To see that $(M,\beta_M^{p,q})$ is a bimodule, it is enough to verify $\beta_M\circ \widehat\beta_M=0$. By \eqref{align:betahat},
\[
\widehat\beta_M=\widehat m\otimes \id_{sM}\otimes \id_T+(\id_T\otimes \beta_M\otimes \id_T)\circ(\Delta\otimes \id_{sM}\otimes \Delta)+\id_T\otimes \id_{sM}\otimes \widehat m.
\]
For the summands $\beta_M\circ(\widehat m\otimes \id_{sM}\otimes \id_T+\id_T\otimes \id_{sM}\otimes \widehat m)$ in $\beta_M\circ\widehat\beta_M$, we have
\\\\
$\tikzmath{
\draw (-1.2, 0) -- (-1.2, 1);
\draw (0, 0) -- (0, 1.5);
\draw (1.2, 0) -- (1.2, 3);
\node at (-1.2, -.5) {$(sA)^{\otimes p}$};
\node at (0, -.5) {$sM$};
\node at (1.2, -.5) {$(sA)^{\otimes q}$};

\draw (-1.45, 1) rectangle (-.95, 1.5);
\node at (-1.2, 1.25) {$\widehat{m}$};
\draw [knot] (0, 1.5) to[out=90,in=-90](-1.2, 3);
\draw [knot] (-1.2, 1.5) to[out=90,in=-90](0, 3);

\draw (1.45, 3) rectangle (-.25, 3.5);
\node at (.6, 3.25) {$\varphi$};
\draw (1.2, 3.5) -- (1.2, 4.3);
\draw (-1.2, 3) -- (-1.2, 4.3);

\draw (1.45, 4.3) rectangle (-1.45, 4.8);
\node at (0, 4.55) {$\rho_M$};
}$\hspace{1 mm} + \hspace{1 mm}
$\tikzmath{
\draw (-1.2, 0) -- (-1.2, 1.5);
\draw (0, 0) -- (0, 1.5);
\draw (1.2, 0) -- (1.2, 1);
\draw (1.2, 3) -- (1.2, 1.5);
\node at (-1.2, -.5) {$(sA)^{\otimes p}$};
\node at (0, -.5) {$sM$};
\node at (1.2, -.5) {$(sA)^{\otimes q}$};

\draw (1.45, 1) rectangle (.95, 1.5);
\node at (1.2, 1.25) {$\widehat{m}$};
\draw [knot] (0, 1.5) to[out=90,in=-90](-1.2, 3);
\draw [knot] (-1.2, 1.5) to[out=90,in=-90](0, 3);

\draw (1.45, 3) rectangle (-.25, 3.5);
\node at (.6, 3.25) {$\varphi$};
\draw (1.2, 3.5) -- (1.2, 4.3);
\draw (-1.2, 3) -- (-1.2, 4.3);

\draw (1.45, 4.3) rectangle (-1.45, 4.8);
\node at (0, 4.55) {$\rho_M$};
}$\hspace{1 mm} = \hspace{1 mm}
$\tikzmath{
\draw (-1.2, 1.5) -- (-1.2, 4.3);
\draw (0, 2) -- (0, 3);
\draw (1.2, 0) -- (1.2, 3);

\draw (-.25, 1.5) rectangle (.25, 2);
\node at (0, 1.75) {$\widehat{m}$};
\draw [knot] (0, 0) to[out=90,in=-90](-1.2, 1.5);
\draw [knot] (-1.2, 0) to[out=90,in=-90](0, 1.5);

\draw (1.45, 3) rectangle (-.25, 3.5);
\node at (.6, 3.25) {$\varphi$};
\draw (1.2, 3.5) -- (1.2, 4.3);

\draw (1.45, 4.3) rectangle (-1.45, 4.8);
\node at (0, 4.55) {$\rho_M$};
}$\hspace{1 mm} + \hspace{1 mm}
$\tikzmath{
\draw (-1.2, 0) -- (-1.2, 1.5);
\draw (0, 0) -- (0, 1.5);
\draw (1.2, 0) -- (1.2, 1.5);
\draw (1.2, 3) -- (1.2, 2);

\draw (1.45, 2) rectangle (.95, 1.5);
\node at (1.2, 1.75) {$\widehat{m}$};
\draw [knot] (0, 1.5) to[out=90,in=-90](-1.2, 3);
\draw [knot] (-1.2, 1.5) to[out=90,in=-90](0, 3);

\draw (1.45, 3) rectangle (-.25, 3.5);
\node at (.6, 3.25) {$\varphi$};
\draw (1.2, 3.5) -- (1.2, 4.3);
\draw (-1.2, 3) -- (-1.2, 4.3);

\draw (1.45, 4.3) rectangle (-1.45, 4.8);
\node at (0, 4.55) {$\rho_M$};
}$\hspace{1 mm} = \hspace{1 mm}
$\tikzmath{
\draw (-1.2, 1.5) -- (-1.2, 4.3);
\draw (1, 0) -- (1, 1.5);

\draw [knot] (0, 0) to[out=90,in=-90](-1.2, 1.5);
\draw [knot] (-1.2, 0) to[out=90,in=-90](0, 1.5);

\draw (1.25, 1.5) rectangle (-.25, 2);
\node at (.5, 1.75) {$\varphi$};
\draw (.85, 3) rectangle (.35, 3.5);
\node at (.6, 3.25) {$\widehat{m}$};
\draw (.6, 2) -- (.6, 3);
\draw (.6, 3.5) -- (.6, 4.3);

\draw (1.05, 4.3) rectangle (-1.45, 4.8);
\node at (-.3, 4.55) {$\rho_M$};
\node at (2, 0) {,};
}$
\\
\\
where the second equality uses that $\varphi$ is compatible with the differentials. The remaining summand
\[
\beta_M\circ(\id_T\otimes \beta_M\otimes \id_T)\circ(\Delta\otimes \id_{sM}\otimes \Delta)
\]
of $\beta_M\circ\widehat\beta_M$ is represented by\\
\\
$\tikzmath{
\draw (-2,-0.3) -- (-2,3);
\draw (0.0,-1.3) -- (0., -.3);
\draw (1.,-0.3) -- (1., 1.2);
\draw (2.,-0.3) -- (2.,4.5);
\node at (-1.5, -1.8) {$(sA)^{\otimes p}$};
\node at (0, -1.8) {$sM$};
\node at (1.5, -1.8) {$(sA)^{\otimes q}$};

\draw (-2.25, -0.3) rectangle (-.75, -.8);
\node at (-1.5, -.55) {$\Delta$};
\draw (.75, -0.3) rectangle (2.25, -.8);
\node at (1.5, -.55) {$\Delta$};
\draw (-1.5, -1.3) -- (-1.5, -.8);
\draw (1.5, -1.3) -- (1.5, -.8);

\draw [knot] (0, -.3) to[out=90,in=-90](-1, 1.2);
\draw [knot] (-1, -.3) to[out=90,in=-90](0., 1.2);

\draw (-.25, 1.2) rectangle (1.25, 1.7);
\node at (0.5, 1.45) {$\varphi$};

\draw (-1.25, 2.5) rectangle (.25, 3.0);
\node at (-0.55, 2.75) {$\rho_M$};
\draw (-1.,1.2) -- (-1.,2.5);
\draw (-0.,1.7) -- (-0.,2.5);

\draw [knot] (-1., 3) to[out=90,in=-90](-2., 4.5);
\draw [knot] (-2., 3) to[out=90,in=-90](-1., 4.5);

\draw (-1.25, 4.5) rectangle (2.25, 5);
\node at (0.5, 4.75) {$\varphi$};

\draw (-2.25, 5.8) rectangle (2.25, 6.3);
\node at (0, 6.05) {$\rho_M$};
\draw (-2, 4.5) -- (-2, 5.8);
\draw (2,5) -- (2, 5.8);
}$ \hspace{1 mm} = \hspace{1 mm}
$\tikzmath{
\draw (-2, -1.3) -- (-2, -1.5);
\draw (-.5, -1.3) -- (-.5, -1.5);
\draw (1.5, -1.5) -- (1.5, .7);

\draw [knot] (-.5, -1.3) to[out=90,in=-90](-2, .7);
\draw [knot] (-2, -1.3) to[out=90,in=-90](-.5, .7);
\draw (-2, .7) -- (-2, 4.3);

\draw (-1.25, .7) rectangle (.25, 1.2);
\node at (-.5, .95) {$\Delta$};
\draw (2.25, .7) rectangle (.75, 1.2);
\node at (1.5, .95) {$\Delta$};

\draw (-1, 1.2) -- (-1, 2.5);
\draw (0.0, 1.2) -- (0., 2);
\draw (1., 1.2) -- (1., 2);
\draw (2., 1.2) -- (2., 4.3);

\draw (-.25, 2) rectangle (1.25, 2.5);
\node at (.5, 2.25) {$\varphi$};

\draw [knot] (-1, 2.5) to[out=90,in=-90](.5, 4.3);
\draw [knot] (.5, 2.5) to[out=90,in=-90](-1, 4.3);

\draw (.25, 4.3) rectangle (2.25, 4.8);
\node at (1.25, 4.55) {$\varphi$};
\draw (2, 4.8) -- (2, 5.6);

\draw (-.75, 4.3) rectangle (-2.25, 4.8);
\node at (-1.5, 4.55) {$\rho_M$};
\draw (-2, 4.8) -- (-2, 5.6);

\draw (-2.25, 5.6) rectangle (2.25, 6.1);
\node at (0, 5.85) {$\rho_M$};
}$\hspace{1 mm} = \hspace{1 mm}
$\tikzmath{
\draw (-2, 0) -- (-2, -.1);
\draw (-.5, 0) -- (-.5, -.1);
\draw (1.5, 0) -- (1.5, -.1);
\draw [knot] (-2, 0) to[out=90,in=-90](-.5, 1.5);
\draw [knot] (-.5, 0) to[out=90,in=-90](-2, 1.5);
\draw (1.5, 0) -- (1.5, 1.5);

\draw (-1.25, 1.5) rectangle (.25, 2);
\node at (-.5, 1.75) {$\Delta$};
\draw (2.25, 1.5) rectangle (.75, 2);
\node at (1.5, 1.75) {$\Delta$};

\draw [knot] (-1, 2) to[out=90,in=-90](0, 3.5);
\draw [knot] (0, 2) to[out=90,in=-90](-1, 3.5);
\draw (1, 2) -- (1, 3.5);
\draw (2, 2) -- (2, 5);

\draw [knot] (0, 3.5) to[out=90,in=-90](1, 5);
\draw [knot] (1, 3.5) to[out=90,in=-90](0, 5);
\draw (-1, 3.5) -- (-1, 5);

\draw (-1.25, 5) rectangle (.25, 5.5);
\node at (-.5, 5.25) {$\varphi$};
\draw (2.25, 5) rectangle (.75, 5.5);
\node at (1.5, 5.25) {$\varphi$};
\draw (-2, 1.5) -- (-2, 6);
\draw (-.5, 5.5) -- (-.5, 6);
\draw (2, 5.5) -- (2, 7);

\draw (-2.25, 6) rectangle (-.25, 6.5);
\node at (-1.25, 6.25) {$\rho_M$};
\draw (-2, 6.5) -- (-2, 7);

\draw (-2.25, 7) rectangle (2.25, 7.5);
\node at (0, 7.25) {$\rho_M$};
}$\hspace{1 mm} = \hspace{1 mm}
$\tikzmath{
\draw (-1.2, -.5) -- (-1.2, -.2);
\draw (0, -.5) -- (0, -.2);
\draw (1.2, -.5) -- (1.2, 1.5);

\draw [knot] (-1.2, -.2) to[out=90,in=-90](0, 1.5);
\draw [knot] (0, -.2) to[out=90,in=-90](-1.2, 1.5);
\draw (-1.2, 1.5) -- (-1.2, 4.1);
\draw (-.25, 2) rectangle (1.45, 1.5);
\node at (.6, 1.75) {$\varphi$};

\draw (.6, 2) -- (.6, 2.8);
\draw (-.25, 2.8) rectangle (1.45, 3.3);
\node at (.6, 3.05) {$\Delta$};

\draw (1.2, 3.3) -- (1.2, 5.4);
\draw (0, 3.3) -- (0, 4.1);

\draw (.25, 4.1) rectangle (-1.45, 4.6);
\node at (-.6, 4.35) {$\rho_M$};
\draw (-1.2, 4.6) -- (-1.2, 5.4);
\draw (1.45, 5.4) rectangle (-1.45, 5.9);
\node at (0, 5.65) {$\rho_M$};
\node at (2, -.5) {,};
}$
\\
\\
where the third equality follows because $\varphi$ is a coalgebra morphism. Applying $\rho_M\circ \widehat\rho_M=0$ to the resulting diagrams gives $\beta_M\circ \widehat\beta_M=0$.

Now let $N$ be another right $A_{\infty}$-module over $A$, and let $f\in \Hom(sM\otimes T^c(sA),sN)$. Define $e_\varphi(f)\in \Hom(T^c(sA)\otimes sM\otimes T^c(sA),sN)$ by
$$e_\varphi(f)=\tikzmath{
\draw (-.8, .8) --(-.8, 1);
\draw (0, .8) --(0, 1);
\draw (.8, .8) --(.8, 2.5);

\node at (-1, .5) {$(sA)^{\otimes p}$};
\node at (0, .5) {$sM$};
\node at (1, .5) {$(sA)^{\otimes q}$};

\draw [knot] (0, 1) to[out=90,in=-90](-.8, 2.5);
\draw [knot] (-.8, 1) to[out=90,in=-90](0, 2.5);

\draw (-.25, 2.5) rectangle (.95, 3);
\node at (0.35, 2.75) {$\varphi$};

\draw (-.8, 2.5) -- (-.8, 4);
\draw (.8, 3) -- (.8, 4);

\draw (-.95, 4) rectangle (.95, 4.5);
\node at (0, 4.25) {$f$};
\node at (2, .5) {.};
}$$
Clearly $e_\varphi(\id_M)=\id_M$.
We next check that, for $g\in \Hom(sN\otimes T^c(sA),sK)$,
\[
e_\varphi( g\overset{\infty}{\circ} f) = e_\varphi(g)\overset{\infty}{\circ} e_\varphi(f).\]
Using the definition of $e_{\varphi}$, the left-hand side is represented by
$$\tikzmath{
\draw (-1, .8) --(-1, 1);
\draw (0, .8) --(0, 1);
\draw (1, .8) --(1, 3.4);

\node at (-1, .5) {$(sA)^{\otimes p}$};
\node at (0, .5) {$sM$};
\node at (1, .5) {$(sA)^{\otimes q}$};
\draw [knot] (0, 1) to[out=90,in=-90](-1, 2.3);
\draw [knot] (-1, 1) to[out=90,in=-90](0, 2.3);
\draw (0, 2.3) --(0, 3.4);
\draw (-.25, 3.4) rectangle (1.25, 3.9);
\node at (0.5, 3.65) {$\varphi$};
\draw (-1, 2.3) --(-1, 5);
\draw (.5, 3.9) --(.5, 5);
\draw (-1.25, 5) rectangle (1.25, 5.7);
\node at (0, 5.35) {$g\overset{\infty}{\circ} f$};
}\hspace{1 mm} = \hspace{1 mm}\tikzmath{
\draw (-1, .8) --(-1, 1);
\draw (0, .8) --(0, 1);
\draw (1, .8) --(1, 2.3);
\node at (2, .8) {,};

\draw [knot] (0, 1) to[out=90,in=-90](-1, 2.3);
\draw [knot] (-1, 1) to[out=90,in=-90](0, 2.3);
\draw (-.25, 2.3) rectangle (1.25, 2.8);
\node at (0.5, 2.55) {$\varphi$};
\draw (.5, 2.8) --(.5, 3.5);
\draw (-1, 2.3) --(-1, 4.8);
\draw (-.25, 3.5) rectangle (1.25, 4);
\node at (0.5, 3.75) {$\Delta$};
\draw (0, 4) --(0, 4.8);
\draw (1, 4) --(1, 5.9);
\draw (.25, 4.8) rectangle (-1.25, 5.3);
\node at (-0.5, 5.05) {$f$};
\draw (-.5, 5.3) --(-.5, 5.9);
\draw (-.75, 5.9) rectangle (1.25, 6.4);
\node at (0.25, 6.15) {$g$};
}$$
while $e_\varphi(g)\overset{\infty}{\circ} e_\varphi(f)$ equals
$$\tikzmath{
\draw (-1.5, -1.3) -- (-1.5, -.8);
\draw (1.5, -1.3) -- (1.5, -.8);
\node at (-1.5, -1.8) {$(sA)^{\otimes p}$};
\node at (0, -1.8) {$sM$};
\node at (1.5, -1.8) {$(sA)^{\otimes q}$};

\draw (-2.25, -0.3) rectangle (-.75, -.8);
\node at (-1.5, -.55) {$\Delta$};
\draw (.75, -0.3) rectangle (2.25, -.8);
\node at (1.5, -.55) {$\Delta$};
\draw (-2,-0.3) -- (-2, 2.2);
\draw (0.0, -1.3) -- (0., .7);
\draw (-1.,-0.3) -- (-1., .7);
\draw (1.,-0.3) -- (1., .7);
\draw (2.,-0.3) -- (2., 2.2);
\draw (-1.25, .7) rectangle (1.25, 1.2);
\node at (0, .95) {$e_\varphi(f)$};
\draw (0.0, 1.2) -- (0., 2.2);
\draw (-2.25, 2.7) rectangle (2.25, 2.2);
\node at (0, 2.45) {$e_\varphi(g)$};

}\hspace{1 mm} = \hspace{1 mm}\tikzmath{
\draw (-1.5, -1.3) -- (-1.5, -.8);
\draw (1.5, -1.3) -- (1.5, -.8);

\draw (-2.25, -0.3) rectangle (-.75, -.8);
\node at (-1.5, -.55) {$\Delta$};
\draw (.75, -0.3) rectangle (2.25, -.8);
\node at (1.5, -.55) {$\Delta$};
\draw (-2,-0.3) -- (-2, 2.8);
\draw (0.0, -1.3) -- (0., -.3);
\draw [knot] (0, -.3) to[out=90,in=-90](-1, 1);
\draw [knot] (-1, -.3) to[out=90,in=-90](0, 1);
\draw (1., -0.3) -- (1., 1);

\draw (-.25, 1) rectangle (1.25, 1.5);
\node at (0.5, 1.25) {$\varphi$};
\draw (-1., 1) -- (-1., 2.3);
\draw (.5, 1.5) -- (.5, 2.3);
\draw (.75, 2.3) rectangle (-1.25, 2.8);
\node at (-.25, 2.55) {$f$};
\draw [knot] (-.5, 2.8) to[out=90,in=-90](-2, 4.3);
\draw [knot] (-2, 2.8) to[out=90,in=-90](-.5, 4.3);
\draw (2.,-0.3) -- (2., 4.3);

\draw (-.75, 4.3) rectangle (2.25, 4.8);
\node at (0.7, 4.55) {$\varphi$};
\draw (-2, 4.3) -- (-2, 5.6);
\draw (2, 4.8) -- (2, 5.6);
\draw (-2.25, 5.6) rectangle (2.25, 6.1);
\node at (0, 5.85) {$g$};
}\hspace{1 mm} = \hspace{1 mm}\tikzmath{
\draw (-1, .8) --(-1, 1);
\draw (0, .8) --(0, 1);
\draw (1, .8) --(1, 2.3);
\node at (2, .8) {,};

\draw [knot] (0, 1) to[out=90,in=-90](-1, 2.3);
\draw [knot] (-1, 1) to[out=90,in=-90](0, 2.3);
\draw (-.25, 2.3) rectangle (1.25, 2.8);
\node at (0.5, 2.55) {$\varphi$};
\draw (.5, 2.8) --(.5, 3.5);
\draw (-1, 2.3) --(-1, 4.8);
\draw (-.25, 3.5) rectangle (1.25, 4);
\node at (0.5, 3.75) {$\Delta$};
\draw (0, 4) --(0, 4.8);
\draw (1, 4) --(1, 5.9);
\draw (.25, 4.8) rectangle (-1.25, 5.3);
\node at (-0.5, 5.05) {$f$};
\draw (-.5, 5.3) --(-.5, 5.9);
\draw (-.75, 5.9) rectangle (1.25, 6.4);
\node at (0.25, 6.15) {$g$};
}$$
where the second equality follows because $\varphi$ is a coalgebra morphism. 
Therefore
\[
e_\varphi(g\overset{\infty}{\circ} f)=e_\varphi(g)\overset{\infty}{\circ}e_\varphi(f).
\]

It remains to show that $e_\varphi$ commutes with the differentials, i.e. $e_\varphi(\delta(f))=\delta(e_\varphi(f))$.
On the one hand, by the definitions of $e_{\varphi}$ and $\delta(f)$, the expression $e_\varphi(\delta(f))$ is represented by
$$\tikzmath{
\draw (-1, .8) --(-1, 1);
\draw (0, .8) --(0, 1);
\draw (1, .8) --(1, 3.4);

\node at (-1, .5) {$(sA)^{\otimes p}$};
\node at (0, .5) {$sM$};
\node at (1, .5) {$(sA)^{\otimes q}$};
\draw [knot] (0, 1) to[out=90,in=-90](-1, 2.3);
\draw [knot] (-1, 1) to[out=90,in=-90](0, 2.3);
\draw (0, 2.3) --(0, 3.4);
\draw (-.25, 3.4) rectangle (1.25, 3.9);
\node at (0.5, 3.65) {$\varphi$};
\draw (-1, 2.3) --(-1, 5);
\draw (.5, 3.9) --(.5, 5);
\draw (-1.25, 5) rectangle (1.25, 5.5);
\node at (0, 5.25) {$\delta(f)$};
}\hspace{1 mm} = \hspace{1 mm}\tikzmath{
\draw (-1, .8) --(-1, 1);
\draw (0, .8) --(0, 1);
\draw (1, .8) --(1, 2.3);

\draw [knot] (0, 1) to[out=90,in=-90](-1, 2.3);
\draw [knot] (-1, 1) to[out=90,in=-90](0, 2.3);
\draw (-.25, 2.3) rectangle (1.25, 2.8);
\node at (0.5, 2.55) {$\varphi$};
\draw (1, 2.8) --(1, 4);
\draw (-1, 2.3) --(-1, 4);
\draw (-1.25, 4) rectangle (1.25, 4.7);
\node at (0, 4.35) {$\widehat{f}$};
\draw (1, 4.7) --(1, 5.9);
\draw (-1, 4.7) --(-1, 5.9);
\draw (-1.25, 5.9) rectangle (1.25, 6.4);
\node at (0., 6.15) {$\rho_N$};
}\hspace{1 mm} - (-1)^{|f|} \hspace{1 mm}\tikzmath{
\draw (-1, .8) --(-1, 1);
\draw (0, .8) --(0, 1);
\draw (1, .8) --(1, 2.3);

\draw [knot] (0, 1) to[out=90,in=-90](-1, 2.3);
\draw [knot] (-1, 1) to[out=90,in=-90](0, 2.3);
\draw (-.25, 2.3) rectangle (1.25, 2.8);
\node at (0.5, 2.55) {$\varphi$};
\draw (1, 2.8) --(1, 4);
\draw (-1, 2.3) --(-1, 4);
\draw (-1.25, 4) rectangle (1.25, 4.7);
\node at (0, 4.35) {$\widehat{\rho}_M$};
\draw (1, 4.7) --(1, 5.9);
\draw (-1, 4.7) --(-1, 5.9);
\draw (-1.25, 5.9) rectangle (1.25, 6.4);
\node at (0., 6.15) {$f$};
}\qquad\qquad\qquad\qquad$$
$$\qquad\qquad\qquad\qquad
\hspace{1 mm} = \hspace{1 mm}\tikzmath{
\draw (-1, .8) --(-1, 1);
\draw (0, .8) --(0, 1);
\draw (1, .8) --(1, 2.3);

\draw [knot] (0, 1) to[out=90,in=-90](-1, 2.3);
\draw [knot] (-1, 1) to[out=90,in=-90](0, 2.3);
\draw (-.25, 2.3) rectangle (1.25, 2.8);
\node at (0.5, 2.55) {$\varphi$};
\draw (.5, 2.8) --(.5, 3.5);
\draw (-1, 2.3) --(-1, 4.8);
\draw (-.25, 3.5) rectangle (1.25, 4);
\node at (0.5, 3.75) {$\Delta$};
\draw (0, 4) --(0, 4.8);
\draw (1, 4) --(1, 5.9);
\draw (.25, 4.8) rectangle (-1.25, 5.3);
\node at (-0.5, 5.05) {$f$};
\draw (-.5, 5.3) --(-.5, 5.9);
\draw (-.75, 5.9) rectangle (1.25, 6.4);
\node at (0.25, 6.15) {$\rho_N$};
}\hspace{1 mm} - (-1)^{|f|} \hspace{1 mm}\tikzmath{
\draw (-1, .8) --(-1, 1);
\draw (0, .8) --(0, 1);
\draw (1, .8) --(1, 2.3);

\draw [knot] (0, 1) to[out=90,in=-90](-1, 2.3);
\draw [knot] (-1, 1) to[out=90,in=-90](0, 2.3);
\draw (-.25, 2.3) rectangle (1.25, 2.8);
\node at (0.5, 2.55) {$\varphi$};
\draw (1, 2.8) --(1, 4);
\draw (-1, 2.3) --(-1, 4.7);
\draw (.75, 4) rectangle (1.25, 4.7);
\node at (1, 4.35) {$\widehat{m}$};
\draw (1, 4.7) --(1, 5.9);
\draw (-1, 4.7) --(-1, 5.9);
\draw (-1.25, 5.9) rectangle (1.25, 6.4);
\node at (0., 6.15) {$f$};
}\hspace{1 mm} - (-1)^{|f|} \hspace{1 mm}\tikzmath{
\draw (-1, .8) --(-1, 1);
\draw (0, .8) --(0, 1);
\draw (1, .8) --(1, 2.3);
\node at (2, .8) {,};

\draw [knot] (0, 1) to[out=90,in=-90](-1, 2.3);
\draw [knot] (-1, 1) to[out=90,in=-90](0, 2.3);
\draw (-.25, 2.3) rectangle (1.25, 2.8);
\node at (0.5, 2.55) {$\varphi$};
\draw (.5, 2.8) --(.5, 3.5);
\draw (-1, 2.3) --(-1, 4.8);
\draw (-.25, 3.5) rectangle (1.25, 4);
\node at (0.5, 3.75) {$\Delta$};
\draw (0, 4) --(0, 4.8);
\draw (1, 4) --(1, 5.9);
\draw (.25, 4.8) rectangle (-1.25, 5.3);
\node at (-0.5, 5.05) {$\rho_M$};
\draw (-.5, 5.3) --(-.5, 5.9);
\draw (-.75, 5.9) rectangle (1.25, 6.4);
\node at (0.25, 6.15) {$f$};
}$$
where the second equality follows from the definition of the extension $\widehat{f}$ as a right comodule morphism and the extension $\widehat{\rho_M}$ as a right comodule coderivation.
On the other hand, the expression $\delta(e_\varphi(f))$ is represented by
$$\tikzmath{
\draw (-1, .8) --(-1, 2.3);
\draw (0, .8) --(0, 2.3);
\draw (1, .8) --(1, 2.3);

\node at (-1, .5) {$(sA)^{\otimes p}$};
\node at (0, .5) {$sM$};
\node at (1, .5) {$(sA)^{\otimes q}$};
\draw (-1.25, 2.3) rectangle (1.25, 3);
\node at (0, 2.65) {$\widehat{e_\varphi(f)}$};
\draw (., 3) --(., 3.9);
\draw (-1.25, 3.9) rectangle (1.25, 4.4);
\node at (0, 4.15) {$\beta_N$};
}\hspace{1 mm} - (-1)^{|f|} \hspace{1 mm}\tikzmath{
\draw (-1, .8) --(-1, 2.3);
\draw (0, .8) --(0, 2.3);
\draw (1, .8) --(1, 2.3);

\node at (-1, .5) {$(sA)^{\otimes p}$};
\node at (0, .5) {$sM$};
\node at (1, .5) {$(sA)^{\otimes q}$};
\draw (-1.25, 2.3) rectangle (1.25, 3);
\node at (0, 2.65) {$\widehat{\beta}_M$};
\draw (., 3) --(., 3.9);
\draw (-1.25, 3.9) rectangle (1.25, 4.4);
\node at (0, 4.15) {$e_\varphi(f)$};
}\hspace{1 mm} = \hspace{1 mm}\tikzmath{
\draw (-2,-0.3) -- (-2,3);
\draw (0.0,-1.3) -- (0., -.3);
\draw (1.,-0.3) -- (1., 1.2);
\draw (2.,-0.3) -- (2.,4.5);
\draw (-1.5, -1.3) -- (-1.5, -.8);
\draw (1.5, -1.3) -- (1.5, -.8);

\draw (-2.25, -0.3) rectangle (-.75, -.8);
\node at (-1.5, -.55) {$\Delta$};
\draw (.75, -0.3) rectangle (2.25, -.8);
\node at (1.5, -.55) {$\Delta$};
\draw [knot] (0, -.3) to[out=90,in=-90](-1, 1.2);
\draw [knot] (-1, -.3) to[out=90,in=-90](0., 1.2);
\draw (-.25, 1.2) rectangle (1.25, 1.7);
\node at (.5, 1.45) {$\varphi$};
\draw (-1., 2.5) -- (-1., 1.2);
\draw (0.5, 2.5) -- (0.5, 1.7);
\draw (.75, 2.5) rectangle (-1.25, 3);
\node at (-.25, 2.75) {$f$};
\draw [knot] (-.3, 3) to[out=90,in=-90](-2, 4.5);
\draw [knot] (-2, 3) to[out=90,in=-90](-.3, 4.5);
\draw (-.55, 4.5) rectangle (2.25, 5);
\node at (.85, 4.75) {$\varphi$};
\draw (-2, 4.5) -- (-2, 5.8);
\draw (.85, 5) -- (.85, 5.8);
\draw (-2.25, 5.8) rectangle (2.25, 6.3);
\node at (., 6.05) {$\rho_N$};
}$$
$$\hspace{1 mm} - (-1)^{|f|} \hspace{1 mm}\tikzmath{
\draw (-1, .8) --(-1, 1);
\draw (0, .8) --(0, 1);
\draw (1, .8) --(1, 4.2);

\draw [knot] (0, 1) to[out=90,in=-90](-1, 2.3);
\draw [knot] (-1, 1) to[out=90,in=-90](0, 2.3);
\draw (-.25, 2.3) rectangle (.25, 3);
\node at (0, 2.65) {$\widehat{m}$};
\draw (-.25, 4.2) rectangle (1.25, 4.7);
\node at (0.5, 4.45) {$\varphi$};
\draw (0, 3) --(0, 4.2);
\draw (-1, 2.3) --(-1, 5.9);
\draw (1, 4.7) --(1, 5.9);
\draw (-1.25, 5.9) rectangle (1.25, 6.4);
\node at (0., 6.15) {$f$};
}\hspace{1 mm} - (-1)^{|f|} \hspace{1 mm}\tikzmath{
\draw (-2,-0.3) -- (-2,3);
\draw (0.0,-1.3) -- (0., -.3);
\draw (1.,-0.3) -- (1., 1.2);
\draw (2.,-0.3) -- (2.,4.5);
\draw (-1.5, -1.3) -- (-1.5, -.8);
\draw (1.5, -1.3) -- (1.5, -.8);

\draw (-2.25, -0.3) rectangle (-.75, -.8);
\node at (-1.5, -.55) {$\Delta$};
\draw (.75, -0.3) rectangle (2.25, -.8);
\node at (1.5, -.55) {$\Delta$};
\draw [knot] (0, -.3) to[out=90,in=-90](-1, 1.2);
\draw [knot] (-1, -.3) to[out=90,in=-90](0., 1.2);
\draw (-.25, 1.2) rectangle (1.25, 1.7);
\node at (.5, 1.45) {$\varphi$};
\draw (-1., 2.5) -- (-1., 1.2);
\draw (0.5, 2.5) -- (0.5, 1.7);
\draw (.75, 2.5) rectangle (-1.25, 3);
\node at (-.25, 2.75) {$\rho_M$};
\draw [knot] (-.3, 3) to[out=90,in=-90](-2, 4.5);
\draw [knot] (-2, 3) to[out=90,in=-90](-.3, 4.5);
\draw (-.55, 4.5) rectangle (2.25, 5);
\node at (.85, 4.75) {$\varphi$};
\draw (-2, 4.5) -- (-2, 5.8);
\draw (.85, 5) -- (.85, 5.8);
\draw (-2.25, 5.8) rectangle (2.25, 6.3);
\node at (., 6.05) {$f$};
}\hspace{1 mm} - (-1)^{|f|} \hspace{1 mm}\tikzmath{
\draw (-1, .8) --(-1, 1);
\draw (0, .8) --(0, 1);
\draw (1, .8) --(1, 2.3);
\node at (2, .8) {,};

\draw [knot] (0, 1) to[out=90,in=-90](-1, 2.3);
\draw [knot] (-1, 1) to[out=90,in=-90](0, 2.3);
\draw (1.25, 2.3) rectangle (.75, 3);
\node at (1, 2.65) {$\widehat{m}$};
\draw (-.25, 4.2) rectangle (1.25, 4.7);
\node at (0.5, 4.45) {$\varphi$};
\draw (0, 2.3) --(0, 4.2);
\draw (-1, 2.3) --(-1, 5.9);
\draw (1, 3) --(1, 4.2);
\draw (1, 4.7) --(1, 5.9);
\draw (-1.25, 5.9) rectangle (1.25, 6.4);
\node at (0., 6.15) {$f$};
}
$$
where the second equality follows from the defining formulas of $e_{\varphi(f)}$ and $\beta_M$ associated with the right $A_{\infty}$-module $M$.
Since $\varphi$ is a morphism of dg coalgebras, the two expressions agree. Hence $e_\varphi(\delta(f))=\delta(e_\varphi(f))$, and $e_\varphi$ is a dg functor.
\end{proof}

\begin{rem}\label{rem:question}
It is natural to ask when an $A_\infty$-algebra admits a dg coalgebra homomorphism
\[
T^c(sA)^{\op}\otimes T^c(sA)\longrightarrow T^c(sA).
\]
In the next section, we show that every $B_\infty$-algebra has such a homomorphism; see Proposition \ref{prop:dualitybinfinity}.
 \end{rem}

\section{\texorpdfstring{$B_{\infty}$}{}-algebras and an induction functor}\label{section:Binfinity}

The purpose of this section is to extract from a $B_\infty$-structure on $A$ a concrete induction functor from right $A_\infty$-modules to $A_\infty$-bimodules. We begin by recalling the cofree dg Hopf algebra attached to a $B_\infty$-algebra. We then isolate two elementary identities, Lemma \ref{lem:cofreehopfidentity} and Proposition \ref{prop:concatenationidentity}. 


\begin{defn}[{\cite[Definition 5.2]{GJ}}]
A {\it $B_{\infty}$-algebra structure} on a graded vector space $A$ is a dg bialgebra structure $(T^c(sA), \Delta, \widehat{m}, \widehat{\mu})$ such that $1_\Bbbk \in \Bbbk = (sA)^{\otimes 0}$ is the unit for $\widehat{\mu}$ and $\widehat{m}(1_\Bbbk)=0$.
\end{defn}

In particular, a $B_{\infty}$-algebra has an underlying $A_{\infty}$-algebra structure induced by the dg coalgebra $(T^c(sA),\Delta,\widehat m)$.
Since the tensor coalgebra is cofree, the multiplication $\widehat{\mu}$ is uniquely determined by a collection $(\mu^{p,q})_{p,q\geq 0}$ of degree-zero maps
\begin{align*}
\mu^{p, q} & \colon (sA)^{\otimes p} \bigotimes (sA)^{\otimes q} \to sA
\end{align*}
satisfying the usual compatibility relations; see \cite[Subsection 2.2]{Vor}.
In particular, $\mu^{1,0}=\mathbf{1}_{sA}=\mu^{0,1}$ and $\mu^{0,k}=0=\mu^{k,0}$ for $k\neq 1$.
Throughout this section we assume that the underlying $A_\infty$-algebra of $A$ is strictly unital, with unit $e_A$, and that for $p,q\geq 1$
\[
\mu^{p,q}(sa_{1,p}\bigotimes sa_{p+1,p+q})=0
\]
whenever $a_i=e_A$ for some $1\leq i\leq p+q$. We write $(A,m^n;\mu^{p,q})$ for such a $B_\infty$-algebra. The symbol $\bigotimes$ is used only to separate the two tensor coalgebra factors in the domain of $\mu^{p,q}$.

\begin{rem}\label{rem:widehatmu}
The coalgebra map $\widehat\mu$ is recovered from the maps $\mu^{p,q}$ by the formula
\begin{align*}
\widehat \mu(sa_{1, p} \bigotimes sb_{1,q}) = {} & \sum (-1)^{\epsilon} \mu^{i_1, j_1}(sa_{1, i_1} \bigotimes sb_{1, j_1}) \otimes \mu^{i_2, j_2}(sa_{i_1+1,i_1+i_2} \bigotimes sb_{j_1+1,j_1+j_2}) \\
 & \otimes \dotsb \otimes \mu^{i_k, j_k} (sa_{i_1+\dotsb+i_{k-1}+1, i_1+\cdots+i_k}\bigotimes sb_{j_1+\dotsb+j_{k-1}+1, j_1+\cdots+j_k}),
\end{align*}
where the sum is taken over all sequences of nonnegative integers $\{i_1,i_2,\dotsc,i_k\}$ and $\{j_1,j_2,\dotsc,j_k\}$ such that $i_1+\dotsb+i_k=p$ and $j_1+\dotsb+j_k=q$. This is a finite sum because $\mu^{0,0}=0$.

This formula gives the following recursive expression of $\widehat{\mu}$, which will be used later:
\begin{align}\label{align:widehatmurecursive}
\begin{aligned}
\widehat \mu (sa_{1, p} \bigotimes sb_{1,q})
= {} \sum_{l=0}^p \sum_{j_{l+1}=1}^q & (-1)^\epsilon sa_{1,l} \otimes \mu^{i_{l+1}, j_{l+1}}(sa_{l+1, l+i_{l+1}} \bigotimes sb_{1, j_{l+1}}) \otimes \\
& \widehat \mu(sa_{l+i_{l+1}+1, p} \bigotimes sb_{j_{l+1}+1, q}).
\end{aligned}
\end{align}
To deduce this identity from the preceding formula, single out the first index $l$ such that $j_{l+1}\neq 0$. This forces $j_1=j_2=\dotsb=j_l=0$, and hence $i_1=i_2=\dotsb=i_l=1$, since $\mu^{k,0}=0$ for $k>1$.

The signs in these identities are determined by the usual Koszul rule. More precisely, we use the following block permutation of variables; see \cite[Section 5.2]{CLW}. Let $r\geq 1$ and $l,n\geq 0$.
For any two sequences of nonnegative integers $(l_1, l_2, \dots, l_r)$ and $(n_1, n_2, \dots, n_r)$ satisfying $l=l_1+\dots+l_r$ and $n=n_1+\dots+n_r$, we define a $\Bbbk$-linear map
\begin{align*}
\tau_{(l_1,\dots, l_r;n_1, \dots, n_r)}\colon (sA)^{\otimes l}\bigotimes (sA)^{\otimes n} \rightarrow & ((sA)^{\otimes l_1}\bigotimes (sA)^{\otimes n_1})\otimes\cdots\otimes ((sA)^{\otimes l_r}\bigotimes (sA)^{\otimes n_r})\\
(sa_{1,l})\bigotimes (sb_{1,n}) \mapsto & (-1)^{\epsilon} (sa_{1,l_1}\bigotimes sb_{1,n_1}) \otimes (sa_{l_1+1, l_2} \bigotimes sb_{n_1+1, n_2})\\
& \otimes \dotsb\otimes (sa_{l_1+\cdots + l_{r-1}+1, l}\bigotimes sb_{n_1+\cdots + n_{r-1}+1, n}),
\end{align*}
where $\epsilon=\sum_{i=0}^{r-2} (|sb_{n_1+\cdots+n_i+1}|+\cdots+|sb_{n_1+\cdots+n_{i+1}}|)(|sa_{l_1+\cdots+l_{i+1}+1}|+\cdots+|sa_l|)$.

\end{rem}

The dg bialgebra $(T^c(sA),\Delta,\widehat m,\widehat\mu)$ is connected with respect to the tensor-length filtration, and hence it admits an antipode
$S\colon T^c(sA)\to T^c(sA)$; see, for example, \cite[Theorem 5.10]{CLW}. We refer to \cite{GRT} for a recent discussion of the relationship between dg Hopf algebras and $B_\infty$-algebras.
Thus $(T^c(sA),\Delta,\widehat m,\widehat\mu,S)$ is a dg Hopf algebra whose underlying coalgebra is cofree. We shall use the following standard Hopf identities:
\begin{align}
\Delta\circ (\Delta\otimes \id) & =\Delta \circ (\id\otimes \Delta) \, ,\label{coassociativity}\\
\widehat{\mu}\circ (\widehat{\mu}\otimes \id) & =\widehat{\mu}\circ (\id\otimes \widehat{\mu}) \, ,\label{associativity}\\
\qquad\Delta \circ \widehat{\mu}& = (\widehat{\mu}\otimes \widehat{\mu})\circ(\id\otimes \tau\otimes \id)\circ(\Delta\otimes \Delta) \, , \label{Delta:algebramorphism} \\
 \Delta^{\op} \circ S &= (S\otimes S)\circ \Delta \, , \label{S:anti-coalg} \\
S\circ \widehat{\mu} & =\widehat{\mu}^{\opp} \circ (S\otimes S) \, , \label{S:anti-alg}\\
S\circ \widehat{m} & =\widehat{m}\circ S\, , \label{S:differential}\\
 \widehat{\mu}(S\otimes \id) \Delta (sa_{1,n}) &= \varepsilon(sa_{1,n})=0 \quad \text{for $ sa_{1,n}\in (sA)^{\otimes n}$ with $n\geq$ 1}.\label{S:antipodeaxiom}
\end{align}
Similarly, $\widehat{\mu}(\id\otimes S)\Delta(sa_{1,n})=0$ for $n\geq 1$.
For simplicity, set
\[
\Delta^{(2)}:=\Delta\circ(\Delta\otimes\id)=\Delta\circ(\id\otimes\Delta).
\]
We will use graphical presentations of these identities below.

The antipode is determined inductively by $S(1_\Bbbk)=1_\Bbbk$ and
\[S(sa_{1,n})=\begin{cases}
-sa_1 & \text{for $n=1,$}\\
-sa_{1,n} -\sum\limits_{i=1}^{n-1} \widehat{\mu} (sa_{1,i} \bigotimes S(sa_{i+1,n})) & \text{for $n>1.$}
\end{cases}
\]

We denote the $n$-th projection of $S$ onto $sA$ by
\[
S_n \colon (sA)^{\otimes n} \hookrightarrow T^c(sA) \xrightarrow{S} T^c(sA) \xrightarrow{\mathrm{proj}} sA.
\]
In particular, $S_0=0$. Since $T^c(sA)$ is cofree as a coalgebra, the antipode is reconstructed from its components by
\begin{align*}
S(sa_{1,n}) ={} \sum_{\substack{r\geq 1\\ i_1+\dotsb + i_r=n}} (-1)^{\epsilon} S_{i_r}( sa_{i_1+\dotsb+i_{r-1}+1, i_1+\dotsb+i_{r}}) \smallotimes \dotsb\smallotimes S_{i_2}(sa_{i_1+1, i_1+i_2}) \smallotimes S_{i_1} (sa_{1,i_1})
\end{align*}
for any $n>0$. Here the sign $\epsilon$ is given by the usual Koszul rule.

We now record two elementary Hopf-algebra identities needed later. They do not depend on the special brace operations defining a $B_\infty$-algebra: the same statements hold for any dg Hopf algebra $T^c(V)$ whose underlying coalgebra is the tensor coalgebra, with deconcatenation coproduct and antipode $S$. We state them in the notation $V=sA$.

\begin{lem}\label{lem:cofreehopfidentity}
Let $(T^c(sA),\Delta,\widehat m,\widehat\mu,S)$ be any cofree dg Hopf algebra. For any fixed $sa_{1,i}, sb_{1,j}, sc_{1,k}\in T^c(sA)$, we have 
\begin{align*}
& \sum_{t=0}^i (-1)^{\epsilon} (\id\otimes\widehat{\mu})(\id\otimes \id \otimes  S) (\Delta\otimes \id)(\widehat{\mu}(S(sa_{t+1,i})\bigotimes sb_{1,j}) \otimes \widehat{\mu}(S(sa_{1,t})\bigotimes sc_{1,k})) \\
= {} & \sum_{l=0}^j \widehat{\mu}(S(sa_{1,i})\bigotimes sb_{1,l})\otimes \widehat{\mu}(S(sb_{l+1, j})\bigotimes sc_{1,k}),
\end{align*}
where $\epsilon= |sa_{1,t}|(|sa_{t+1,i}|+|sb_{1,j}|)$.
Equivalently, we have the following graphical presentation
\begin{figure}[H]
\centering$$\tikzmath{
\draw (-1.5, .4) --(-1.5, 1.2);
\draw (0, 0.4) --(0, 2.5);
\draw (1, 0.4) --(1, 4.5);
\node at (-1.5, 0) {$T^c(sA)$};
\node at (-.1, 0) {$T^c(sA)$};
\node at (1.3, 0) {$T^c(sA)$};

\draw (-2.25, 1.2) rectangle (-.75, 1.7);
\node at (-1.5, 1.45) {$\Delta$};
\draw (-1, 1.7) --(-1, 2.5);
\draw (-2, 1.7) --(-2, 3);
\draw (-1.6, 2.5) rectangle (.6, 3);
\node at (-.5, 2.75) {$\widehat{\mu}(S\otimes \id)$};

\draw [knot] (-.5, 3) to[out=90,in=-90](-1.5, 4.5);
\draw [knot] (-2, 3) to[out=90,in=-90](0, 4.5);

\draw (-2.25, 4.5) rectangle (-.75, 5);
\node at (-1.5, 4.75) {$\Delta$};
\draw (-.35, 4.5) rectangle (1.85, 5);
\node at (.75, 4.75) {$\widehat{\mu}(S\otimes \id)$};
\draw (.5, 5) --(.5, 5.8);
\draw (-.25, 6.3) --(-.25, 6.7);
\draw (-1, 5) --(-1, 5.8);
\draw (-2, 5) --(-2, 6.7);
\draw (1., 5.8) rectangle (-1.5, 6.3);
\node at (-.25, 6.05) {$\widehat{\mu}(S\otimes \id)$};
}\hspace{1 mm} = \hspace{1 mm}\tikzmath{
\draw (-1.5, 0) --(-1.5, 2.3);
\draw (1.5, 0) --(1.5, 3.6);
\draw (0, 0) --(0, 1);
\node at (-1.5, -.4) {$T^c(sA)$};
\node at (-.1, -.4) {$T^c(sA)$};
\node at (1.3, -.4) {$T^c(sA)$};
\node at (2, 0) {.};

\draw (-1.25, 2.3) rectangle (-1.75, 2.8);
\node at (-1.5, 2.55) {$S$};
\draw (-1.5, 2.8) --(-1.5, 3.6);
\draw (-.75, 1.5) rectangle (.75, 1);
\node at (0, 1.25) {$\Delta$};
\draw (-.5, 1.5) --(-.5, 3.6);
\draw (.5, 2.3) --(.5, 1.5);

\draw (-1.75, 3.6) rectangle (-.25, 4.1);
\node at (-1, 3.85) {$\widehat{\mu}$};
\draw (.25, 2.3) rectangle (.75, 2.8);
\node at (.5, 2.55) {$S$};
\draw (-1, 4.1) --(-1, 4.5);
\draw (.5, 2.8) --(.5, 3.6);
\draw (1.75, 3.6) rectangle (.25, 4.1);
\node at (1, 3.85) {$\widehat{\mu}$};
\draw (1, 4.1) --(1, 4.5);
}$$
\end{figure}
\end{lem}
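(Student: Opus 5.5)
The plan is to prove the identity in Sweedler notation, using only the Hopf identities \eqref{coassociativity}--\eqref{S:antipodeaxiom} and the companion axiom $\widehat\mu(\id\otimes S)\Delta=\varepsilon$. The special shape of $T^c(sA)$ plays no role beyond these. Write $x=sa_{1,i}$, $y=sb_{1,j}$, $z=sc_{1,k}$, and $\Delta(x)=\sum x_{(1)}\otimes x_{(2)}$, so that $x_{(1)}=sa_{1,t}$ and $x_{(2)}=sa_{t+1,i}$. Denote $\widehat\mu$ by juxtaposition. The sign $(-1)^{\epsilon}$ in the statement is exactly the Koszul sign of the crossing in the left diagram. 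Reading that diagram from bottom to top, the left-hand side is
\[
\sum w_{(1)}\otimes S(w_{(2)})\,\bigl(S(x_{(1)})\,z\bigr),\qquad w=S(x_{(2)})\,y,
\]
and the right-hand side is $\sum S(x)\,y_{(1)}\otimes S(y_{(2)})\,z$.

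First I expand $\Delta(w)$. Since $\Delta$ is multiplicative \eqref{Delta:algebramorphism} and $S$ is an anti-coalgebra map \eqref{S:anti-coalg}, that is, $\Delta S(u)=S(u_{(2)})\otimes S(u_{(1)})$, we get
\[
\Delta(w)=\sum S(x_{(2)'' })\,y_{(1)}\otimes S(x_{(2)'})\,y_{(2)} .
\]
After coassociativity \eqref{coassociativity} this reads $\sum S(x_{(3)})y_{(1)}\otimes S(x_{(2)})y_{(2)}$, where $x_{(1)}\otimes x_{(2)}\otimes x_{(3)}=\Delta^{(2)}(x)$. Next, $S$ is an anti-algebra map \eqref{S:anti-alg}, so $S\bigl(S(x_{(2)})y_{(2)}\bigr)=\pm S(y_{(2)})\,S^2(x_{(2)})$. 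Associativity \eqref{associativity} then lets me rewrite the second tensor factor as
\[
S(y_{(2)})\,\bigl(S^2(x_{(2)})\,S(x_{(1)})\bigr)\,z .
\]

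The key cancellation comes next. Applying \eqref{S:anti-alg} once more gives $S^2(x_{(2)})S(x_{(1)})=\pm S\bigl(x_{(1)}S(x_{(2)})\bigr)$. Summed against the remaining leg $x_{(3)}$, the inner Sweedler sum $\sum x_{(1)}S(x_{(2)})$ equals $\varepsilon(\cdot)1$ by the right antipode axiom. Counitality then collapses $x_{(1)}\otimes x_{(2)}\otimes x_{(3)}$ to $x$ in the outer slot. This step never uses $S^2=\id$, which is good, since $S$ need not be involutive here. What remains is $\sum S(x)y_{(1)}\otimes S(y_{(2)})z$, which is the right-hand side. The same computation can be written graphically as a chain of diagram moves: slide $\Delta$ through $\widehat\mu$, slide $S$ through $\Delta$ and through $\widehat\mu$, reassociate, and cancel the resulting antipode bubble.

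I expect the main obstacle to be sign bookkeeping. Each use of \eqref{S:anti-coalg}, \eqref{S:anti-alg}, and \eqref{Delta:algebramorphism} introduces a Koszul sign from a crossing. I must check that the total sign on each surviving term is exactly $(-1)^{\epsilon}$ with $\epsilon=|sa_{1,t}|(|sa_{t+1,i}|+|sb_{1,j}|)$, and that the signs attached to the cancelling terms match so that the antipode axiom applies verbatim. To handle this I would carry out the argument purely in the string-diagram calculus fixed in the introduction. There every identity used is an equality of diagrams in the symmetric monoidal category of complexes, so the signs are produced automatically by crossings, and coherence for symmetric monoidal categories guarantees that the isotopic rearrangements give identical signs. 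The component formula, with the explicit $\epsilon$, is recovered only at the end by evaluating the crossing in the left diagram on $x_{(1)}\otimes x_{(2)}\otimes y$.
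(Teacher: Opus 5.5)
Your argument is correct and is essentially the paper's proof: expand $\Delta$ of $\widehat\mu(S(x_{(2)})\bigotimes y)$ by \eqref{Delta:algebramorphism}, move $S$ with \eqref{S:anti-coalg} and \eqref{S:anti-alg}, reassociate, and cancel an antipode bubble after coassociativity, with all signs coming from crossings in the graphical calculus. The only difference is the final cancellation. The paper uses \eqref{S:anti-coalg} in the folding direction, rewriting $(S\otimes S)\Delta(x)$ as $\Delta^{\op}(S(x))$, and then applies the left antipode axiom \eqref{S:antipodeaxiom} to the legs of $S(x)$, so no $S^2$ ever appears. You instead unfold $\Delta(S(x_{(2)}))$, obtain $S^2(x_{(2)})$, refold it via \eqref{S:anti-alg}, and apply the right antipode axiom to $x$; both routes are valid and neither needs $S^2=\id$.
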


\begin{proof}
This is a formal consequence of the Hopf identities \eqref{Delta:algebramorphism}, \eqref{S:anti-coalg}, \eqref{S:anti-alg}, \eqref{associativity}, \eqref{coassociativity}, and \eqref{S:antipodeaxiom}. In graphical form, the calculation is 
$$\tikzmath{
\draw (-1.5, 0) --(-1.5, 1.2);
\draw (0, 0) --(0, 3.8);
\draw (1, 0) --(1, 7.1);
\node at (-1.5, -.4) {$T^c(sA)$};
\node at (-.1, -.4) {$T^c(sA)$};
\node at (1.3, -.4) {$T^c(sA)$};

\draw (-2.25, 1.2) rectangle (-.75, 1.7);
\node at (-1.5, 1.45) {$\Delta$};
\draw (-1, 1.7) --(-1, 2.5);
\draw (-2, 1.7) --(-2, 4.3);
\draw (-1.25, 2.5) rectangle (-.75, 3);
\node at (-1, 2.75) {$S$};
\draw (-1, 3) --(-1, 3.8);
\draw (-1.25, 3.8) rectangle (.25, 4.3);
\node at (-.5, 4.05) {$\widehat{\mu}$};
\draw [knot] (-.5, 4.3) to[out=90,in=-90](-1.5, 5.8);
\draw [knot] (-2, 4.3) to[out=90,in=-90](0, 5.8);
\draw (-1.5, 5.8) --(-1.5, 7.1);
\draw (-2.25, 7.1) rectangle (-.75, 7.6);
\node at (-1.5, 7.35) {$\Delta$};
\draw (-2, 7.6) --(-2, 10.6);

\draw (.25, 5.8) rectangle (-.25, 6.3);
\node at (0, 6.05) {$S$};
\draw (0, 6.3) --(0, 7.1);
\draw (-.25, 7.1) rectangle (1.25, 7.6);
\node at (.5, 7.35) {$\widehat{\mu}$};

\draw (-1, 7.6) --(-1, 8.4);
\draw (-1.25, 8.4) rectangle (-.75, 8.9);
\node at (-1, 8.65) {$S$};
\draw (-1, 8.9) --(-1, 9.7);
\draw (.75, 9.7) rectangle (-1.25, 10.2);
\node at (-.25, 9.95) {$\widehat{\mu}$};
\draw (.5, 7.6) --(.5, 9.7);
\draw (-.25, 10.2) --(-.25, 10.6);
}\hspace{-2 mm} \overset{\eqref{Delta:algebramorphism}}{=}\hspace{2 mm}\tikzmath{
\draw (-1.5, 0) --(-1.5, 1.2);
\draw (1.5, 0) --(1.5, 3.8);
\draw (3, 0) --(3, 9.1);

\draw (-2.25, 1.2) rectangle (-.75, 1.7);
\node at (-1.5, 1.45) {$\Delta$};
\draw (-2, 1.7) --(-2, 6.3);
\draw (-1, 1.7) --(-1, 2.5);
\draw (-1.25, 2.5) rectangle (-.75, 3);
\node at (-1, 2.75) {$S$};
\draw (-1, 3.8) --(-1, 3);
\draw (-1.25, 3.8) rectangle (.25, 4.3);
\node at (-.5, 4.05) {$\Delta$};
\draw (2.25, 3.8) rectangle (.75, 4.3);
\node at (1.5, 4.05) {$\Delta$};
\draw (-1, 4.3) --(-1, 5.8);
\draw (2, 4.3) --(2, 5.8);
\draw [knot] (1, 4.3) to[out=90,in=-90](0, 5.8);
\draw [knot] (0, 4.3) to[out=90,in=-90](1, 5.8);
\draw (.25, 5.8) rectangle (-1.25, 6.3);
\node at (-.5, 6.05) {$\widehat{\mu}$};
\draw (.75, 5.8) rectangle (2.25, 6.3);
\node at (1.5, 6.05) {$\widehat{\mu}$};

\draw [knot] (-0.5, 6.3) to[out=90,in=-90](-.5, 11.3);
\draw [knot] (1.5, 6.3) to[out=90,in=-90](1, 9.1);
\draw [knot] (-2, 6.3) to[out=90,in=-90](2, 7.8);
\draw (2.25, 7.8) rectangle (1.75, 8.3);
\node at (2, 8.05) {$S$};
\draw (2, 8.3) --(2, 9.1);
\draw (3.25, 9.1) rectangle (1.75, 9.6);
\node at (2.5, 9.35) {$\widehat{\mu}$};
\draw (1.25, 9.1) rectangle (.75, 9.6);
\node at (1, 9.35) {$S$};
\draw (1, 9.6) --(1, 10.4);
\draw (2.5, 9.6) --(2.5, 10.4);
\draw (3.25, 10.4) rectangle (.75, 10.9);
\node at (2, 10.65) {$\widehat{\mu}$};
\draw (2, 10.9) --(2, 11.3);
}\hspace{2 mm}\overset{\eqref{S:anti-coalg}}{\underset{\eqref{S:anti-alg}}{=}}
\hspace{2 mm}\tikzmath{
\draw (-1.5, 0.8) --(-1.5, 1.2);
\draw (1.5, 0.8) --(1.5, 4.5);
\draw (3, 0.8) --(3, 10.5);

\draw (-1.25, 1.2) rectangle (-1.75, 1.7);
\node at (-1.5, 1.45) {$S$};
\draw (-1.5, 1.7) --(-1.5, 2.5);
\draw (-2.25, 2.5) rectangle (-.75, 3);
\node at (-1.5, 2.75) {$\Delta$};
\draw (-2, 4.5) --(-2, 9);
\draw [knot] (-1, 3) to[out=90,in=-90](-2, 4.5);
\draw [knot] (-2, 3) to[out=90,in=-90](-1, 4.5);

\draw (-1.25, 4.5) rectangle (.25, 5);
\node at (-.5, 4.75) {$\Delta$};
\draw (2.25, 4.5) rectangle (.75, 5);
\node at (1.5, 4.75) {$\Delta$};
\draw (-1, 5) --(-1, 6.5);
\draw (2, 5) --(2, 6.5);
\draw [knot] (1, 5) to[out=90,in=-90](0, 6.5);
\draw [knot] (0, 5) to[out=90,in=-90](1, 6.5);
\draw (.25, 6.5) rectangle (-1.25, 7);
\node at (-.5, 6.75) {$\widehat{\mu}$};
\draw (-.5, 7) --(-.5, 9);
\draw (1.25, 6.5) rectangle (.75, 7);
\node at (1, 6.75) {$S$};
\draw (2.25, 6.5) rectangle (1.75, 7);
\node at (2, 6.75) {$S$};
\draw [knot] (2, 7) to[out=90,in=-90](1, 8.5);
\draw [knot] (1, 7) to[out=90,in=-90](2, 8.5);
\draw (.75, 8.5) rectangle (2.25, 9);
\node at (1.5, 8.75) {$\widehat{\mu}$};

\draw [knot] (-.5, 9) to[out=90,in=-90](-2, 10.5);
\draw [knot] (1.5, 9) to[out=90,in=-90](0, 10.5);
\draw [knot] (-2, 9) to[out=90,in=-90](1.5, 10.5);
\draw (3.25, 10.5) rectangle (1.25, 11);
\node at (2.25, 10.75) {$\widehat{\mu}$};
\draw (0, 10.5) --(0, 11.4);
\draw (2.25, 11) --(2.25, 11.4);
\draw (-.25, 11.4) rectangle (2.5, 11.9);
\node at (1.15, 11.65) {$\widehat{\mu}$};
\draw (-2, 10.5) --(-2, 12.1);
\draw (1.15, 11.9) --(1.15, 12.1);
}$$
$$\hspace{1 mm} \overset{\eqref{associativity}}{=} \hspace{1 mm}\tikzmath{
\draw (-1.5, 0.8) --(-1.5, 1.2);
\draw (1.5, 0.8) --(1.5, 4.5);
\draw (3, 0.8) --(3, 11.4);

\draw (-1.25, 1.2) rectangle (-1.75, 1.7);
\node at (-1.5, 1.45) {$S$};
\draw (-1.5, 1.7) --(-1.5, 2.5);
\draw (-2.25, 2.5) rectangle (-.75, 3);
\node at (-1.5, 2.75) {$\Delta$};
\draw (-2, 4.5) --(-2, 9);
\draw [knot] (-1, 3) to[out=90,in=-90](-2, 4.5);
\draw [knot] (-2, 3) to[out=90,in=-90](-1, 4.5);

\draw (-1.25, 4.5) rectangle (.25, 5);
\node at (-.5, 4.75) {$\Delta$};
\draw (2.25, 4.5) rectangle (.75, 5);
\node at (1.5, 4.75) {$\Delta$};
\draw (-1, 5) --(-1, 6.5);
\draw (2, 5) --(2, 6.5);
\draw [knot] (1, 5) to[out=90,in=-90](0, 6.5);
\draw [knot] (0, 5) to[out=90,in=-90](1, 6.5);
\draw (.25, 6.5) rectangle (-1.25, 7);
\node at (-.5, 6.75) {$\widehat{\mu}$};
\draw (-.5, 7) --(-.5, 9);
\draw (1.25, 6.5) rectangle (.75, 7);
\node at (1, 6.75) {$S$};
\draw (2.25, 6.5) rectangle (1.75, 7);
\node at (2, 6.75) {$S$};
\draw [knot] (2, 7) to[out=90,in=-90](1, 8.5);
\draw [knot] (1, 7) to[out=90,in=-90](2, 8.5);
\draw (.75, 8.5) rectangle (2.25, 9);
\node at (1.5, 8.75) {$\widehat{\mu}$};

\draw [knot] (-.5, 9) to[out=90,in=-90](-2, 10.5);
\draw [knot] (1.5, 9) to[out=90,in=-90](0, 10.5);
\draw [knot] (-2, 9) to[out=90,in=-90](1.5, 10.5);
\draw (1.75, 10.5) rectangle (-.25, 11);
\node at (.75, 10.75) {$\widehat{\mu}$};
\draw (.75, 11) --(.75, 11.4);
\draw (3.25, 11.4) rectangle (.5, 11.9);
\node at (1.875, 11.65) {$\widehat{\mu}$};
\draw (-2, 10.5) --(-2, 12.1);
\draw (1.875, 11.9) --(1.875, 12.1);
}\hspace{1 mm} \overset{\eqref{associativity}}{=} \hspace{1 mm}\tikzmath{
\draw (-1.5, 1.4) --(-1.5, 1.2);
\draw (1.5, 1.4) --(1.5, 4.5);
\draw (3, 1.4) --(3, 11.8);

\draw (-1.25, 1.4) rectangle (-1.75, 1.9);
\node at (-1.5, 1.65) {$S$};
\draw (-1.5, 1.9) --(-1.5, 2.5);
\draw (-2.25, 2.5) rectangle (-.75, 3);
\node at (-1.5, 2.75) {$\Delta$};
\draw (-2, 4.5) --(-2, 8.5);
\draw [knot] (-1, 3) to[out=90,in=-90](-2, 4.5);
\draw [knot] (-2, 3) to[out=90,in=-90](-1, 4.5);

\draw (-1.25, 4.5) rectangle (.25, 5);
\node at (-.5, 4.75) {$\Delta$};
\draw (2.25, 4.5) rectangle (.75, 5);
\node at (1.5, 4.75) {$\Delta$};
\draw (-1, 5) --(-1, 6.5);
\draw (2, 5) --(2, 6.5);
\draw [knot] (1, 5) to[out=90,in=-90](0, 6.5);
\draw [knot] (0, 5) to[out=90,in=-90](1, 6.5);
\draw (.25, 6.5) rectangle (-1.25, 7);
\node at (-.5, 6.75) {$\widehat{\mu}$};
\draw (-.5, 7) --(-.5, 8.5);
\draw (1.25, 6.5) rectangle (.75, 7);
\node at (1, 6.75) {$S$};
\draw (2.25, 6.5) rectangle (1.75, 7);
\node at (2, 6.75) {$S$};
\draw [knot] (2, 7) to[out=90,in=-90](1, 8.5);
\draw [knot] (1, 7) to[out=90,in=-90](2, 8.5);

\draw [knot] (-.5, 8.5) to[out=90,in=-90](-2, 10);
\draw [knot] (1, 8.5) to[out=90,in=-90](0, 10);
\draw [knot] (2, 8.5) to[out=90,in=-90](1, 10);
\draw [knot] (-2, 8.5) to[out=90,in=-90](2, 10);
\draw (-2, 12.5) --(-2, 10);
\draw (0, 10.9) --(0, 10);
\draw (.75, 10.5) rectangle (2.25, 10);
\node at (1.5, 10.25) {$\widehat{\mu}$};
\draw (1.5, 10.5) --(1.5, 10.9);
\draw (-.25, 10.9) rectangle (2.25, 11.4);
\node at (1, 11.15) {$\widehat{\mu}$};
\draw (1, 11.4) --(1, 11.8);
\draw (3.25, 11.8) rectangle (-.25, 12.3);
\node at (1.5, 12.05) {$\widehat{\mu}$};
\draw (1.5, 12.3) --(1.5, 12.5);
}\hspace{1 mm} \overset{(\ref{coassociativity})}{=} \hspace{1 mm}\tikzmath{
\draw (-1.5, .8) --(-1.5, 1);
\draw (1.5, .8) --(1.5, 4.5);
\draw (3, .8) --(3, 11.8);

\draw (-1.25, 1) rectangle (-1.75, 1.5);
\node at (-1.5, 1.25) {$S$};
\draw (-1.5, 1.5) --(-1.5, 2);

\draw (-2.25, 2.5) rectangle (-.75, 2);
\node at (-1.5, 2.25) {$\Delta$};
\draw (-2, 2.5) --(-2, 3.5);
\draw (-1, 2.5) --(-1, 3);
\draw (-1.25, 3.5) rectangle (.25, 3);
\node at (-.5, 3.25) {$\Delta$};
\draw [knot] (0, 3.5) to[out=90,in=-90](-2, 5);
\draw [knot] (-1, 3.5) to[out=90,in=-90](0, 5);
\draw [knot] (-2, 3.5) to[out=90,in=-90](-1, 5);
\draw (-2, 5) --(-2, 8.5);
\draw (2.25, 4.5) rectangle (.75, 5);
\node at (1.5, 4.75) {$\Delta$};
\draw (-1, 5) --(-1, 6.5);
\draw (2, 5) --(2, 6.5);
\draw [knot] (1, 5) to[out=90,in=-90](0, 6.5);
\draw [knot] (0, 5) to[out=90,in=-90](1, 6.5);
\draw (.25, 6.5) rectangle (-1.25, 7);
\node at (-.5, 6.75) {$\widehat{\mu}$};
\draw (-.5, 7) --(-.5, 8.5);
\draw (1.25, 6.5) rectangle (.75, 7);
\node at (1, 6.75) {$S$};
\draw (2.25, 6.5) rectangle (1.75, 7);
\node at (2, 6.75) {$S$};
\draw [knot] (2, 7) to[out=90,in=-90](1, 8.5);
\draw [knot] (1, 7) to[out=90,in=-90](2, 8.5);

\draw [knot] (-.5, 8.5) to[out=90,in=-90](-2, 10);
\draw [knot] (1, 8.5) to[out=90,in=-90](0, 10);
\draw [knot] (2, 8.5) to[out=90,in=-90](1, 10);
\draw [knot] (-2, 8.5) to[out=90,in=-90](2, 10);
\draw (-2, 12.5) --(-2, 10);
\draw (0, 10.9) --(0, 10);
\draw (.75, 10.5) rectangle (2.25, 10);
\node at (1.5, 10.25) {$\widehat{\mu}$};
\draw (1.5, 10.5) --(1.5, 10.9);
\draw (-.25, 10.9) rectangle (2.25, 11.4);
\node at (1, 11.15) {$\widehat{\mu}$};
\draw (1, 11.4) --(1, 11.8);
\draw (3.25, 11.8) rectangle (-.25, 12.3);
\node at (1.5, 12.05) {$\widehat{\mu}$};
\draw (1.5, 12.3) --(1.5, 12.5);
}$$
$\hspace{2 mm} \overset{\eqref{S:antipodeaxiom}}{=} \hspace{1 mm}\tikzmath{
\draw (-1.5, 0) --(-1.5, 2.3);
\draw (1.5, 0) --(1.5, 3.6);
\draw (0, 0) --(0, 1);

\draw (-1.25, 2.3) rectangle (-1.75, 2.8);
\node at (-1.5, 2.55) {$S$};
\draw (-1.5, 2.8) --(-1.5, 3.6);
\draw (-.75, 1.5) rectangle (.75, 1);
\node at (0, 1.25) {$\Delta$};
\draw (-.5, 1.5) --(-.5, 3.6);
\draw (.5, 2.3) --(.5, 1.5);

\draw (-1.75, 3.6) rectangle (-.25, 4.1);
\node at (-1, 3.85) {$\widehat{\mu}$};
\draw (.25, 2.3) rectangle (.75, 2.8);
\node at (.5, 2.55) {$S$};
\draw (-1, 4.1) --(-1, 4.5);
\draw (.5, 2.8) --(.5, 3.6);
\draw (1.75, 3.6) rectangle (.25, 4.1);
\node at (1, 3.85) {$\widehat{\mu}$};
\draw (1, 4.1) --(1, 4.5);
\node at (2, 0) {.};
}$
\end{proof}

For the tensor coalgebra, denote by
\[
c\colon T^c(sA)\otimes T^c(sA)\longrightarrow T^c(sA)
\]
the concatenation map, namely
\[
c(sa_{1,p}\bigotimes sb_{1,r})=sa_{1,p}\otimes sb_{1,r}.
\]
Thus $c(1_\Bbbk\bigotimes sb_{1,r})=sb_{1,r}=c(sb_{1,r}\bigotimes 1_\Bbbk)$. We also write
\[
c^{(2)}=c\circ(c\otimes\id)=c\circ(\id\otimes c).
\]

The following identities express elementary compatibilities between this concatenation map, the differential $\widehat m$, the Hopf multiplication $\widehat\mu$, and the antipode. Again, they are formal for any cofree dg Hopf algebra and will be used in the proof of Lemma \ref{lemm:Hwithunit}.
\begin{prop}\label{prop:concatenationidentity}
Let $(A,m^n;\mu^{p,q})$ be a $B_\infty$-algebra, and let $S$ be the antipode of the associated cofree dg Hopf algebra $(T^c(sA),\Delta,\widehat m,\widehat\mu)$. Then the following identities hold:
\begin{equation}\label{equ:mwithc}
\tikzmath{
\draw (0, -.5) --(0, 1.2);
\draw (1, -.5) --(1, 1.2);
\node at (., -1) {$sA$};
\node at (1, -1) {$T^c(sA)$};
\draw (1.25, 1.2) rectangle (-.25, 1.7);
\node at (.5, 1.45) {$c$};
\draw (.5, 1.7) -- (.5, 2.5);
\draw (1.25, 2.5) rectangle (-.25, 3);
\node at (.5, 2.75) {$\widehat{m}$};
}\hspace{1 mm} = \hspace{1 mm} \tikzmath{
\draw (0, -.5) --(0, 2.5);
\draw (1, -.5) --(1, 1.2);
\node at (., -1) {$sA$};
\node at (1, -1) {$T^c(sA)$};
\draw (1.25, 1.2) rectangle (.75, 1.7);
\node at (1, 1.45) {$\widehat{m}$};
\draw (1, 1.7) -- (1, 2.5);
\draw (1.25, 2.5) rectangle (-.25, 3);
\node at (.5, 2.75) {$c$};
}\hspace{1 mm} + \hspace{1 mm}
\tikzmath{
\draw (-2, 0) --(-2, 2.5);
\draw (-.5, 0) --(-.5, 1.2);
\node at (-2, -.5) {$sA$};
\node at (-.5, -.5) {$T^c(sA)$};
\draw (-1.25, 1.2) rectangle (.25, 1.7);
\node at (-.5, 1.45) {$\Delta$};
\draw (-1, 1.7) --(-1, 2.5);
\draw (0, 1.7) --(0, 3.8);
\draw (-2.25, 2.5) rectangle (-.75, 3);
\node at (-1.5, 2.75) {$m\circ c$};
\draw (-1.5, 3) -- (-1.5, 3.8);
\draw (-2.25, 3.8) rectangle (.25, 4.3);
\node at (-1, 4.05) {$c$};
\node at (.5, 0) {,};
}
\end{equation}
\begin{equation}\label{equ:muwithc}
\tikzmath{
\draw (-1, -.5) --(-1, 2.5);
\draw (0, -.5) --(0, 1.2);
\draw (1, -.5) --(1, 1.2);
\node at (-1, -1) {$T^c(sA)$};
\node at (., -1) {$sA$};
\node at (1, -1) {$T^c(sA)$};

\draw (1.25, 1.2) rectangle (-.25, 1.7);
\node at (.5, 1.45) {$c$};
\draw (.5, 1.7) -- (.5, 2.5);
\draw (-1.25, 2.5) rectangle (1.25, 3);
\node at (0, 2.75) {$\widehat{\mu}$};
}
\hspace{1 mm} = \hspace{1 mm}
\tikzmath{
\draw (-2, 0) --(-2, 1.2);
\draw (0, 0) --(0, 2.5);
\draw (1.5, 0) --(1.5, 1.2);
\node at (-2, -.5) {$T^c(sA)$};
\node at (-.2, -.5) {$sA$};
\node at (1, -.5) {$T^c(sA)$};

\draw (-3.25, 1.2) rectangle (-.75, 1.7);
\node at (-2, 1.45) {$\Delta^{(2)}$};
\draw (2.25, 1.2) rectangle (.75, 1.7);
\node at (1.5, 1.45) {$\Delta$};
\draw (2, 1.7) -- (2, 4.5);
\draw (1, 1.7) -- (1, 2.5);
\draw (-1, 1.7) -- (-1, 3);
\draw (-2, 1.7) -- (-2, 4.5);
\draw (-3, 1.7) -- (-3, 5.8);
\draw (-.25, 2.5) rectangle (1.25, 3);
\node at (.5, 2.75) {$c$};
\draw [knot] (1, 3) to[out=90,in=-90](-1, 4.5);
\draw [knot] (-1, 3) to[out=90,in=-90](1, 4.5);
\draw (.75, 4.5) rectangle (2.25, 5);
\node at (1.5, 4.75) {$\widehat{\mu}$};
\draw (-.75, 4.5) rectangle (-2.25, 5);
\node at (-1.5, 4.75) {$\mu$};
\draw (-1.5, 5) -- (-1.5, 5.8);
\draw (1.5, 5) -- (1.5, 5.8);
\draw (-3.25, 5.8) rectangle (2.25, 6.3);
\node at (-.5, 6.05) {$c^{(2)}$};
\node at (2.5, 0) {,};
}
\end{equation}
and
\begin{equation}\label{equ:mpq}
\tikzmath{
\draw (-1.5, 0) --(-1.5, 1.2);
\draw (0, 0) --(0, 1.7);
\draw (1, 0) --(1, 4.5);

\node at (-1.5, -.5) {$(sA)^{\otimes p}$};
\node at (-.2, -.5) {$sA$};
\node at (1, -.5) {$(sA)^{\otimes m}$};

\draw (-2.25, 1.2) rectangle (-.75, 1.7);
\node at (-1.5, 1.45) {$\Delta$};
\draw (-2, 5.6) -- (-2, 1.7);
\draw [knot] (0, 1.7) to[out=90,in=-90](-1, 3.2);
\draw [knot] (-1, 1.7) to[out=90,in=-90](0, 3.2);
\draw (-1, 5.6) -- (-1, 3.2);
\draw (0, 4.5) -- (0, 3.7);
\draw (-.25, 3.2) rectangle (.25, 3.7);
\node at (0, 3.45) {$S$};
\draw (1.25, 4.5) rectangle (-.25, 5);
\node at (.5, 4.75) {$\widehat{\mu}$};

\draw (-1.25, 5.6) rectangle (.75, 6.1);
\node at (-.25, 5.85) {$c$};
\draw (.5, 5.6) -- (.5, 5);
\draw (-.25, 6.1) -- (-.25, 6.7);
\draw (-2, 5.6) -- (-2, 6.7);
\draw (-2.25, 6.7) rectangle (0, 7.2);
\node at (-1.125, 6.95) {$\widehat{\mu}$};
}\hspace{1 mm} = \hspace{1 mm} \tikzmath{
\draw (-3, 1.7) --(-3, 7.7);
\draw (-2, .8) --(-2, 1.2);
\draw (0, .8) --(0, 1.7);
\draw (1.5, .8) --(1.5, 1.2);

\node at (-2, .3) {$(sA)^{\otimes p}$};
\node at (-.2, .3) {$sA$};
\node at (1.5, .3) {$(sA)^{\otimes m}$};

\draw (-3.25, 1.2) rectangle (-.75, 1.7);
\node at (-2, 1.45) {$\Delta^{(2)}$};
\draw (2.25, 1.2) rectangle (.75, 1.7);
\node at (1.5, 1.45) {$\Delta$};
\draw (1, 4.5) -- (1, 1.7);
\draw (-2, 5.6) -- (-2, 1.7);
\draw [knot] (0, 1.7) to[out=90,in=-90](-1, 3.2);
\draw [knot] (-1, 1.7) to[out=90,in=-90](0, 3.2);
\draw (-1, 5.6) -- (-1, 3.2);
\draw (0, 4.5) -- (0, 3.7);
\draw (-.25, 3.2) rectangle (.25, 3.7);
\node at (0, 3.45) {$S$};
\draw (1.25, 4.5) rectangle (-.25, 5);
\node at (.5, 4.75) {$\widehat{\mu}$};

\draw (-1.25, 5.6) rectangle (.75, 6.1);
\node at (-.25, 5.85) {$c$};
\draw (.5, 5.6) -- (.5, 5);
\draw (-.25, 6.1) -- (-.25, 6.7);
\draw (-2, 5.6) -- (-2, 6.7);
\draw (-2.25, 6.7) rectangle (0, 7.2);
\node at (-1.125, 6.95) {$\mu$};
\draw (2, 1.7) -- (2, 7.7);
\draw (-1.125, 7.2) -- (-1.125, 7.7);
\draw (-3.25, 7.7) rectangle (2.25, 8.2);
\node at (-.5, 7.95) {$c^{(2)}$};
\node at (2.5, 0.8) {.};
}
\end{equation}
\end{prop}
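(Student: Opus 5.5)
The plan is to prove the three identities in order, each directly from the explicit formulas for the coderivation $\widehat m$ and the coalgebra map $\widehat\mu$ on the cofree coalgebra. Identity \eqref{equ:mpq} will then be reduced to \eqref{equ:muwithc} plus the Hopf identities \eqref{Delta:algebramorphism}--\eqref{S:antipodeaxiom}. Throughout I would use Sweedler-type notation $\Delta(X)=X'\otimes X''$, with signs fixed by the Koszul rule.

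For \eqref{equ:mwithc}, evaluate both sides on $sa\otimes sb_{1,r}$ using \eqref{align:widehatm}. The summands of $\widehat m(sa\otimes sb_{1,r})$ split according to whether the segment on which $m^t$ acts avoids the first letter $sa$ or begins with it.
\begin{itemize}
\item Segments avoiding $sa$ give $sa\otimes\widehat m(sb_{1,r})$. The sign $(-1)^{|sa|}$ is produced by passing the degree-one map $\widehat m$ past $sa$, which is exactly the first diagram on the right.
\item Segments beginning with $sa$ give $m^{i+1}(sa\otimes sb_{1,i})\otimes sb_{i+1,r}$. Summing over $i$ is precisely $c\circ((m\circ c)\otimes\id)\circ(\id\otimes\Delta)$, since $\Delta$ is deconcatenation; the term $i=0$ uses $1_\Bbbk\otimes sb_{1,r}$.
\end{itemize}

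For \eqref{equ:muwithc}, use the closed formula for $\widehat\mu$ in Remark \ref{rem:widehatmu}, applied to $Y\bigotimes(sa\otimes Z)$. In each summand, let the letter $sa$ lie in the $b$-part of the first block with nonzero $j$-index.
\begin{itemize}
\item As in the derivation of \eqref{align:widehatmurecursive}, all earlier blocks have $j=0$, hence $i=1$. They contribute an initial segment $Y^{(1)}$ of $Y$ unchanged.
\item The distinguished block contributes $\mu(Y^{(2)}\bigotimes sa\otimes Z')$ with $Z'$ an initial segment of $Z$.
\item The remaining blocks reassemble into $\widehat\mu(Y^{(3)}\bigotimes Z'')$.
\end{itemize}
Summing over all splittings of $Y$ into three pieces and of $Z$ into two pieces yields exactly $c^{(2)}\circ(\id\otimes\mu\otimes\widehat\mu)$ applied to $\Delta^{(2)}(Y)$ and $\Delta(Z)$, with the crossing shown in the diagram. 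The sign is the block-permutation sign $\tau_{(\dots)}$ of Remark \ref{rem:widehatmu}.

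For \eqref{equ:mpq}, write the left side as $\sum\widehat\mu\bigl(Y'\bigotimes sa\otimes Z\bigr)$, where $Z=\widehat\mu(S(Y'')\bigotimes W)$ and $W\in(sA)^{\otimes m}$. The computation proceeds as follows.
\begin{enumerate}
\item Apply \eqref{equ:muwithc} to get
\[
\sum (Y')^{(1)}\otimes\mu\bigl((Y')^{(2)}\bigotimes sa\otimes Z'\bigr)\otimes\widehat\mu\bigl((Y')^{(3)}\bigotimes Z''\bigr).
\]
\item Compute $\Delta(Z)$ using \eqref{Delta:algebramorphism} and \eqref{S:anti-coalg}:
\[
\Delta(Z)=\sum\widehat\mu\bigl(S(Y''_{(2)})\bigotimes W'\bigr)\otimes\widehat\mu\bigl(S(Y''_{(1)})\bigotimes W''\bigr).
\]
\item Use associativity \eqref{associativity} to rewrite the last factor as $\widehat\mu\bigl(\widehat\mu((Y')^{(3)}\bigotimes S(Y''_{(1)}))\bigotimes W''\bigr)$.
\item Use coassociativity \eqref{coassociativity} to regroup $(Y')^{(3)}\otimes Y''_{(1)}$ as $\Delta$ of a single tensor factor of $\Delta^{(3)}(Y)$.
\item By \eqref{S:antipodeaxiom} in the form $\widehat\mu(\id\otimes S)\Delta=\varepsilon$, this factor collapses to $1_\Bbbk$, and then $\widehat\mu(1\bigotimes W'')=W''$.
\end{enumerate}
What survives is $\sum Y^{(1)}\otimes\mu\bigl(Y^{(2)}\bigotimes sa\otimes\widehat\mu(S(Y^{(3)})\bigotimes W')\bigr)\otimes W''$, which is exactly the right-hand side. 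This is the same pattern as the proof of Lemma \ref{lem:cofreehopfidentity}, and I would present it graphically in that style.

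The main obstacle I expect is the last step. First, one must keep track of the Koszul signs coming from the crossings, and check that they match the block-permutation signs $\tau_{(\dots)}$. Second, one must handle the boundary terms correctly: where $(Y')^{(3)}$ or $Y''_{(1)}$ is $1_\Bbbk$, and where $Z'$ is empty, so that the $\mu^{p,0}$ vanishing conventions apply. Only then does the antipode cancellation apply cleanly, leaving the single surviving term.
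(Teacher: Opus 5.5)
Your proposal is correct and follows the paper's proof. Identities \eqref{equ:mwithc} and \eqref{equ:muwithc} are read off from the coderivation formula \eqref{align:widehatm} and from the block decomposition underlying \eqref{align:widehatmurecursive}. For \eqref{equ:mpq}, the paper's graphical chain is exactly your sequence: \eqref{equ:muwithc}, then \eqref{Delta:algebramorphism} with \eqref{S:anti-coalg}, associativity, coassociativity, and finally $\widehat\mu(\id\otimes S)\Delta=\varepsilon$ collapsing the regrouped factor. The boundary issue you anticipated with $\mu^{p,0}$ does not actually arise, because the letter $sa$ always lies in the distinguished $\mu$-block. The Koszul signs are handled automatically by the graphical calculus.
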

\begin{proof}
Formula \eqref{equ:mwithc} follows from the coderivation formula \eqref{align:widehatm}:
 $$\widehat m (sa \otimes sb_{1, r}) = sa \otimes \widehat m (sb_{1,r}) + \sum_{k=0}^r m^{k+1} (sa \otimes sb_{1, k}) \otimes sb_{k+1, r}.$$
Formula \eqref{equ:muwithc} follows from the graphical presentation of the recursive formula \eqref{align:widehatmurecursive}.

It remains to prove \eqref{equ:mpq}. Using only the bialgebra identity, the anti-coalgebra and anti-algebra properties of the antipode, associativity, coassociativity, and the antipode axiom, we obtain
$$
\tikzmath{
\draw (-1.5, 0) --(-1.5, 1.2);
\draw (0, 0) --(0, 1.7);
\draw (1, 0) --(1, 4.5);

\node at (-1.5, -.5) {$(sA)^{\otimes p}$};
\node at (-.2, -.5) {$sA$};
\node at (1, -.5) {$(sA)^{\otimes m}$};

\draw (-2.25, 1.2) rectangle (-.75, 1.7);
\node at (-1.5, 1.45) {$\Delta$};
\draw (-2, 5.6) -- (-2, 1.7);
\draw [knot] (0, 1.7) to[out=90,in=-90](-1, 3.2);
\draw [knot] (-1, 1.7) to[out=90,in=-90](0, 3.2);
\draw (-1, 5.6) -- (-1, 3.2);
\draw (0, 4.5) -- (0, 3.7);
\draw (-.25, 3.2) rectangle (.25, 3.7);
\node at (0, 3.45) {$S$};
\draw (1.25, 4.5) rectangle (-.25, 5);
\node at (.5, 4.75) {$\widehat{\mu}$};

\draw (-1.25, 5.6) rectangle (.75, 6.1);
\node at (-.25, 5.85) {$c $};
\draw (.5, 5.6) -- (.5, 5);
\draw (-.25, 6.1) -- (-.25, 6.7);
\draw (-2, 5.6) -- (-2, 6.7);
\draw (-2.25, 6.7) rectangle (0, 7.2);
\node at (-1.125, 6.95) {$\widehat{\mu}$};
}\hspace{-1 mm} \overset{(\ref{equ:muwithc})}{=} \hspace{1 mm} \tikzmath{
\draw (-1.75, 0) --(-1.75, 1.2);
\draw (0, 0) --(0, 1.7);
\draw (1, 0) --(1, 4.5);


\draw (-2.75, 1.2) rectangle (-.75, 1.7);
\node at (-1.75, 1.45) {$\Delta$};
\draw (-2.5, 5.6) -- (-2.5, 1.7);
\draw [knot] (0, 1.7) to[out=90,in=-90](-1, 3.2);
\draw [knot] (-1, 1.7) to[out=90,in=-90](0, 3.2);
\draw (-1, 6.9) -- (-1, 3.2);
\draw (0, 4.5) -- (0, 3.7);
\draw (-.25, 3.2) rectangle (.25, 3.7);
\node at (0, 3.45) {$S$};
\draw (1.25, 4.5) rectangle (-.25, 5);
\node at (.5, 4.75) {$\widehat{\mu}$};
\draw (.5, 5)-- (.5, 5.6);

\draw (-4.25, 5.6) rectangle (-1.75, 6.1);
\node at (-3, 5.85) {$\Delta^{(2)}$};
\draw (1.25, 5.6) rectangle (-.25, 6.1);
\node at (.5, 5.85) {$\Delta$};
\draw (1, 6.1) -- (1, 8.9);
\draw (0, 6.1) -- (0, 6.9);
\draw (-2, 6.1) -- (-2, 7.4);
\draw (-3, 6.1) -- (-3, 8.9);
\draw (-4, 6.1) -- (-4, 10);
\draw (-1.25, 6.9) rectangle (.25, 7.4);
\node at (-.5, 7.15) {$c$};
\draw [knot] (0, 7.4) to[out=90,in=-90](-2, 8.9);
\draw [knot] (-2, 7.4) to[out=90,in=-90](0, 8.9);
\draw (-.25, 8.9) rectangle (1.25, 9.4);
\node at (.5, 9.15) {$\widehat{\mu}$};
\draw (-1.75, 8.9) rectangle (-3.25, 9.4);
\node at (-2.5, 9.15) {$\mu$};
\draw (-2.5, 9.4) -- (-2.5, 10);
\draw (.5, 9.4) -- (.5, 10);
\draw (-4.25, 10) rectangle (1.25, 10.5);
\node at (-1.5, 10.25) {$c^{(2)}$};
}\hspace{2 mm} \overset{\eqref{Delta:algebramorphism}}{=}\hspace{2 mm}\tikzmath{
\draw (-1.75, 0) --(-1.75, .6);
\draw (0, 0) --(0, 1.1);
\draw (2.5, 0) --(2.5, 3.9);

\draw (-2.75, .6) rectangle (-.75, 1.1);
\node at (-1.75, .85) {$\Delta$};
\draw (-2.5, 5.6) -- (-2.5, 1.1);
\draw [knot] (0, 1.1) to[out=90,in=-90](-1, 2.6);
\draw [knot] (-1, 1.1) to[out=90,in=-90](0, 2.6);
\draw (-1, 6.9) -- (-1, 2.6);
\draw (0, 3.9) -- (0, 3.1);
\draw (-.25, 2.6) rectangle (.25, 3.1);
\node at (0, 2.85) {$S$};
\draw (1.25, 3.9) rectangle (-.25, 4.4);
\node at (.5, 4.15) {$\Delta$};
\draw (1.75, 3.9) rectangle (3.25, 4.4);
\node at (2.5, 4.15) {$\Delta$};
\draw [knot] (2, 4.4) to[out=90,in=-90](1, 5.6);
\draw [knot] (1, 4.4) to[out=90,in=-90](2, 5.6);
\draw (1.25, 5.6) rectangle (-.25, 6.1);
\node at (.5, 5.85) {$\widehat{\mu}$};
\draw (1.75, 5.6) rectangle (3.25, 6.1);
\node at (2.5, 5.85) {$\widehat{\mu}$};
\draw (0, 5.6) -- (0, 4.4);
\draw (3, 5.6) -- (3, 4.4);

\draw (-4.25, 5.6) rectangle (-1.75, 6.1);
\node at (-3, 5.85) {$\Delta^{(2)}$};
\draw (2, 6.1) -- (2, 8.9);
\draw (0, 6.1) -- (0, 6.9);
\draw (-2, 6.1) -- (-2, 7.4);
\draw (-3, 6.1) -- (-3, 8.9);
\draw (-4, 6.1) -- (-4, 10);
\draw (-1.25, 6.9) rectangle (.25, 7.4);
\node at (-.5, 7.15) {$c$};
\draw [knot] (0, 7.4) to[out=90,in=-90](-2, 8.9);
\draw [knot] (-2, 7.4) to[out=90,in=-90](0, 8.9);
\draw (-.25, 8.9) rectangle (2.25, 9.4);
\node at (1, 9.15) {$\widehat{\mu}$};
\draw (-1.75, 8.9) rectangle (-3.25, 9.4);
\node at (-2.5, 9.15) {$\mu$};
\draw (-2.5, 9.4) -- (-2.5, 10);
\draw (1, 9.4) -- (1, 10);
\draw (-4.25, 10) rectangle (2.25, 10.5);
\node at (-1, 10.25) {$c^{(2)}$};
}
$$
$$\hspace{2 mm}\overset{\eqref{S:anti-coalg}}{=}\hspace{2 mm}\tikzmath{
 \draw (-1.75, -.4) --(-1.75, .2);
\draw (0, -.4) --(0, .7);
\draw (2.5, -.4) --(2.5, 3.9);

\draw (-2.75, .2) rectangle (-.75, .7);
\node at (-1.75, .45) {$\Delta$};
\draw (-2.5, 5.6) -- (-2.5, .7);
\draw [knot] (0, .7) to[out=90,in=-90](-1, 2.2);
\draw [knot] (-1, .7) to[out=90,in=-90](0, 2.2);
\draw (-1, 6.9) -- (-1, 2.2);
\draw (-.25, 2.2) rectangle (1.25, 2.7);
\node at (.5, 2.45) {$\Delta$};
\draw [knot] (1, 2.7) to[out=90,in=-90](0, 3.9);
\draw [knot] (0, 2.7) to[out=90,in=-90](1, 3.9);
\draw (1.25, 3.9) rectangle (.75, 4.4);
\node at (1, 4.15) {$S$};
\draw (.25, 3.9) rectangle (-.25, 4.4);
\node at (0, 4.15) {$S$};
\draw (1.75, 3.9) rectangle (3.25, 4.4);
\node at (2.5, 4.15) {$\Delta$};
\draw [knot] (2, 4.4) to[out=90,in=-90](1, 5.6);
\draw [knot] (1, 4.4) to[out=90,in=-90](2, 5.6);
\draw (1.25, 5.6) rectangle (-.25, 6.1);
\node at (.5, 5.85) {$\widehat{\mu}$};
\draw (1.75, 5.6) rectangle (3.25, 6.1);
\node at (2.5, 5.85) {$\widehat{\mu}$};
\draw (0, 5.6) -- (0, 4.4);
\draw (3, 5.6) -- (3, 4.4);

\draw (-4.25, 5.6) rectangle (-1.75, 6.1);
\node at (-3, 5.85) {$\Delta^{(2)}$};
\draw (2, 6.1) -- (2, 8.9);
\draw (0, 6.1) -- (0, 6.9);
\draw (-2, 6.1) -- (-2, 7.4);
\draw (-3, 6.1) -- (-3, 8.9);
\draw (-4, 6.1) -- (-4, 10);
\draw (-1.25, 6.9) rectangle (.25, 7.4);
\node at (-.5, 7.15) {$c$};
\draw [knot] (0, 7.4) to[out=90,in=-90](-2, 8.9);
\draw [knot] (-2, 7.4) to[out=90,in=-90](0, 8.9);
\draw (-.25, 8.9) rectangle (2.25, 9.4);
\node at (1, 9.15) {$\widehat{\mu}$};
\draw (-1.75, 8.9) rectangle (-3.25, 9.4);
\node at (-2.5, 9.15) {$\mu$};
\draw (-2.5, 9.4) -- (-2.5, 10);
\draw (1, 9.4) -- (1, 10);
\draw (-4.25, 10) rectangle (2.25, 10.5);
\node at (-1, 10.25) {$c^{(2)}$};
}\hspace{2mm} \overset{\eqref{associativity}}{=} \hspace{2mm}\tikzmath{
\draw (-1.75, -.4) --(-1.75, .2);
\draw (0, -.4) --(0, .7);
\draw (2.5, -.4) --(2.5, 3.9);

\draw (-2.75, .2) rectangle (-.75, .7);
\node at (-1.75, .45) {$\Delta$};
\draw (-2.5, 5.6) -- (-2.5, .7);
\draw [knot] (0, .7) to[out=90,in=-90](-1, 2.2);
\draw [knot] (-1, .7) to[out=90,in=-90](0, 2.2);
\draw (-1, 6.7) -- (-1, 2.2);
\draw (-.25, 2.2) rectangle (1.25, 2.7);
\node at (.5, 2.45) {$\Delta$};
\draw [knot] (1, 2.7) to[out=90,in=-90](0, 3.9);
\draw [knot] (0, 2.7) to[out=90,in=-90](1, 3.9);
\draw (1.25, 3.9) rectangle (.75, 4.4);
\node at (1, 4.15) {$S$};
\draw (.25, 3.9) rectangle (-.25, 4.4);
\node at (0, 4.15) {$S$};
\draw (1.75, 3.9) rectangle (3.25, 4.4);
\node at (2.5, 4.15) {$\Delta$};
\draw [knot] (2, 4.4) to[out=90,in=-90](1, 5.6);
\draw [knot] (1, 4.4) to[out=90,in=-90](2, 5.6);
\draw (1.25, 5.6) rectangle (-.25, 6.1);
\node at (.5, 5.85) {$\widehat{\mu}$};
\draw (0, 5.6) -- (0, 4.4);
\draw (3, 8.9) -- (3, 4.4);

\draw (-4.25, 5.6) rectangle (-1.75, 6.1);
\node at (-3, 5.85) {$\Delta^{(2)}$};
\draw (0, 6.1) -- (0, 6.7);
\draw (-2, 6.1) -- (-2, 7.2);
\draw (-3, 6.1) -- (-3, 8.9);
\draw (-4, 6.1) -- (-4, 10);
\draw (-1.25, 6.7) rectangle (.25, 7.2);
\node at (-.5, 6.95) {$c$};
\draw [knot] (-1, 7.2) to[out=90,in=-90](-2, 8.9);
\draw [knot] (-2, 7.2) to[out=90,in=-90](.5, 7.9);
\draw (.25, 7.9) rectangle (2.25, 8.4);
\node at (1.25, 8.15) {$\widehat{\mu}$};
\draw (2, 8.4) -- (2, 8.9);
\draw (2, 5.6) -- (2, 7.9);
\draw (1.75, 8.9) rectangle (3.25, 9.4);
\node at (2.5, 9.15) {$\widehat{\mu}$};
\draw (-1.75, 8.9) rectangle (-3.25, 9.4);
\node at (-2.5, 9.15) {$\mu$};
\draw (-2.5, 9.4) -- (-2.5, 10);
\draw (2.5, 9.4) -- (2.5, 10);
\draw (-4.25, 10) rectangle (3.25, 10.5);
\node at (-.5, 10.25) {$c^{(2)}$};
}
$$
$\hspace{8 mm} \overset{(\ref{coassociativity})}{=} \hspace{1 mm} \tikzmath{
\draw (2.5, -2) --(2.5, 3.9);
\draw (1, 1.2) --(1, -2);
\draw (-2.5, -1.7) --(-2.5, -2);

\draw (-4.25, -1.2) rectangle (-.75, -1.7);
\node at (-2.5, -1.45) {$\Delta^{(2)}$};
\draw (-3, -1.2) -- (-3, 8.9);
\draw (-4, -1.2) -- (-4, 10);
\draw (-1, -1.2) --(-1, -.5);

\draw (-1.25, 0) rectangle (.25, -.5);
\node at (-.5, -.25) {$\Delta$};
\draw (-1, 0) --(-1, .7);
\draw (0, 0) --(0, 1.2);
\draw (-2.25, .7) rectangle (-.75, 1.2);
\node at (-1.5, .95) {$\Delta$};
\draw (-2, 1.2) --(-2, 7.2);
\draw [knot] (1, 1.2) to[out=90,in=-90](-1, 2.7);
\draw [knot] (-1, 1.2) to[out=90,in=-90](0, 2.7);
\draw [knot] (0, 1.2) to[out=90,in=-90](1, 2.7);
\draw (-1, 2.7) --(-1, 6.7);

\draw [knot] (1, 2.7) to[out=90,in=-90](0, 3.9);
\draw [knot] (0, 2.7) to[out=90,in=-90](1, 3.9);
\draw (1.25, 3.9) rectangle (.75, 4.4);
\node at (1, 4.15) {$S$};
\draw (.25, 3.9) rectangle (-.25, 4.4);
\node at (0, 4.15) {$S$};
\draw (1.75, 3.9) rectangle (3.25, 4.4);
\node at (2.5, 4.15) {$\Delta$};
\draw [knot] (2, 4.4) to[out=90,in=-90](1, 5.6);
\draw [knot] (1, 4.4) to[out=90,in=-90](2, 5.6);
\draw (1.25, 5.6) rectangle (-.25, 6.1);
\node at (.5, 5.85) {$\widehat{\mu}$};
\draw (0, 5.6) -- (0, 4.4);
\draw (3, 8.9) -- (3, 4.4);

\draw (0, 6.1) -- (0, 6.7);
\draw (-2, 6.1) -- (-2, 7.2);

\draw (-1.25, 6.7) rectangle (.25, 7.2);
\node at (-.5, 6.95) {$c$};
\draw [knot] (-1, 7.2) to[out=90,in=-90](-2, 8.9);
\draw [knot] (-2, 7.2) to[out=90,in=-90](.5, 7.9);
\draw (.25, 7.9) rectangle (2.25, 8.4);
\node at (1.25, 8.15) {$\widehat{\mu}$};
\draw (2, 8.4) -- (2, 8.9);
\draw (2, 5.6) -- (2, 7.9);
\draw (1.75, 8.9) rectangle (3.25, 9.4);
\node at (2.5, 9.15) {$\widehat{\mu}$};
\draw (-1.75, 8.9) rectangle (-3.25, 9.4);
\node at (-2.5, 9.15) {$\mu$};
\draw (-2.5, 9.4) -- (-2.5, 10);
\draw (2.5, 9.4) -- (2.5, 10);
\draw (-4.25, 10) rectangle (3.25, 10.5);
\node at (-.5, 10.25) {$c^{(2)}$};
}
\hspace{2 mm} \overset{(\ref{S:antipodeaxiom})}{=} \hspace{2 mm} \tikzmath{
\draw (-3, 1.7) --(-3, 7.7);
\draw (-2, .8) --(-2, 1.2);
\draw (0, .8) --(0, 1.7);
\draw (1.5, .8) --(1.5, 1.2);

\node at (-2, .3) {$(sA)^{\otimes p}$};
\node at (-.2, .3) {$sA$};
\node at (1.5, .3) {$(sA)^{\otimes m}$};

\draw (-3.25, 1.2) rectangle (-.75, 1.7);
\node at (-2, 1.45) {$\Delta^{(2)}$};
\draw (2.25, 1.2) rectangle (.75, 1.7);
\node at (1.5, 1.45) {$\Delta$};
\draw (1, 4.5) -- (1, 1.7);
\draw (-2, 5.6) -- (-2, 1.7);
\draw [knot] (0, 1.7) to[out=90,in=-90](-1, 3.2);
\draw [knot] (-1, 1.7) to[out=90,in=-90](0, 3.2);
\draw (-1, 5.6) -- (-1, 3.2);
\draw (0, 4.5) -- (0, 3.7);
\draw (-.25, 3.2) rectangle (.25, 3.7);
\node at (0, 3.45) {$S$};
\draw (1.25, 4.5) rectangle (-.25, 5);
\node at (.5, 4.75) {$\widehat{\mu}$};

\draw (-1.25, 5.6) rectangle (.75, 6.1);
\node at (-.25, 5.85) {$c$};
\draw (.5, 5.6) -- (.5, 5);
\draw (-.25, 6.1) -- (-.25, 6.7);
\draw (-2, 5.6) -- (-2, 6.7);
\draw (-2.25, 6.7) rectangle (0, 7.2);
\node at (-1.125, 6.95) {$\mu$};
\draw (2, 1.7) -- (2, 7.7);
\draw (-1.125, 7.2) -- (-1.125, 7.7);
\draw (-3.25, 7.7) rectangle (2.25, 8.2);
\node at (-.5, 7.95) {$c^{(2)}$};
\node at (2.5, 0.8) {.};
}$
\end{proof}


We now return to the $B_\infty$-algebra $A$ and use the preceding Hopf-algebra identities to construct the induction functor. The next result answers the question raised in Remark \ref{rem:question}.

\begin{prop}\label{prop:dualitybinfinity}
Let $(A,m^n;\mu^{p,q})$ be a $B_\infty$-algebra, and let $S$ be the antipode of $(T^c(sA),\Delta,\widehat m,\widehat\mu)$. Then there exists a homomorphism of dg coalgebras
\[
\varphi\colon T^c(sA)^{\op}\otimes T^c(sA)\longrightarrow T^c(sA)
\]
given by
$$\varphi = \tikzmath{
\draw (1, .5) --(1, 3);

\node at (-.2, 0) {$T^c(sA)$};
\node at (1.2, 0) {$T^c(sA)$};
\node at (1.5, .5) {.};
\draw (0, .5) -- (0,1.6);

\draw (-.25, 1.6) rectangle (.25, 2.1);
\node at (0, 1.85) {$S$};

\draw (0, 2.1) -- (0,3);

\draw (-.25, 3) rectangle (1.25, 3.5);
\node at (0.5, 3.25) {$\widehat{\mu}$};
}$$

Consequently, we obtain a dg functor $\iota\colon \EC^{\RR}_\infty(A)\to \EC^{\BB}_\infty(A)$, and hence a triangulated functor $\iota\colon \ED^{\RR}_\infty(A)\to \ED^{\BB}_\infty(A)$.
\end{prop}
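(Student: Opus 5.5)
The map $\varphi=\widehat\mu\circ(S\otimes\id)$ has to satisfy three things: it commutes with the differentials, it is compatible with the coproducts (where the domain carries the tensor-product coalgebra structure of $(T^c(sA),\Delta^{\op})$ and $(T^c(sA),\Delta)$), and it is compatible with the counits. Once these are established, Lemma \ref{lem:bi-rightduality} applied to $\varphi$ gives the dg functor $\iota=e_\varphi\colon\EC^{\RR}_\infty(A)\to\EC^{\BB}_\infty(A)$.

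\emph{Differential.} The differential on $T^c(sA)^{\op}\otimes T^c(sA)$ is $\widehat m\otimes\id+\id\otimes\widehat m$. Since $\widehat\mu$ is a chain map, $\widehat m\circ\widehat\mu=\widehat\mu\circ(\widehat m\otimes\id+\id\otimes\widehat m)$. By \eqref{S:differential}, $S\widehat m=\widehat m S$. Together these give $\widehat m\circ\varphi=\varphi\circ(\widehat m\otimes\id+\id\otimes\widehat m)$; the only sign to check is that $S$ has degree $0$, so no Koszul sign appears.

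\emph{Counit.} The counit satisfies $\varepsilon\circ S=\varepsilon$, and $\varepsilon$ is multiplicative for the bialgebra. Hence $\varepsilon\circ\varphi=\varepsilon\otimes\varepsilon$.

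\emph{Coproduct.} This is the main step. I need
\[
\Delta\circ\varphi=(\varphi\otimes\varphi)\circ(\id\otimes\tau\otimes\id)\circ(\Delta^{\op}\otimes\Delta).
\]
Apply \eqref{Delta:algebramorphism} to get $\Delta\circ\widehat\mu\circ(S\otimes\id)=(\widehat\mu\otimes\widehat\mu)(\id\otimes\tau\otimes\id)(\Delta S\otimes\Delta)$. Then use \eqref{S:anti-coalg} in the form $\Delta\circ S=\tau\circ(S\otimes S)\circ\Delta=(S\otimes S)\circ\Delta^{\op}$, which follows from \eqref{S:anti-coalg} since $\tau^2=\id$. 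Substituting yields $(\widehat\mu\otimes\widehat\mu)(\id\otimes\tau\otimes\id)\bigl((S\otimes S)\Delta^{\op}\otimes\Delta\bigr)$. Moving the two copies of $S$ past $\tau$ (they have degree zero, so no signs arise) gives $(\varphi\otimes\varphi)(\id\otimes\tau\otimes\id)(\Delta^{\op}\otimes\Delta)$, as required.

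In string-diagram form this is a two-step rewrite: push $\Delta$ through $\widehat\mu$, then through $S$. I expect the main obstacle to be the sign and ordering bookkeeping. One has to confirm that the opposite coproduct on the first factor is exactly what absorbs the flip produced by the anti-coalgebra property of $S$, and that the Koszul sign from $\tau$ in the tensor-product coalgebra matches the one in \eqref{Delta:algebramorphism}. Getting this wrong would make the identity hold only up to swapping the two outputs, which is precisely why the domain must be $T^c(sA)^{\op}\otimes T^c(sA)$ rather than $T^c(sA)\otimes T^c(sA)$.

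\emph{Passing to derived categories.} It remains to check that $e_\varphi$ preserves strict unitality. This follows from the standing assumption that $\mu^{p,q}$ vanishes on inputs containing $e_A$, together with the recursive formula for $S$, which shows that each $S_n$ also kills $e_A$ for $n\ge2$. A dg functor between pretriangulated dg categories induces a triangulated functor on $\mathrm H^0$, so we obtain $\iota\colon\ED^{\RR}_\infty(A)\to\ED^{\BB}_\infty(A)$.
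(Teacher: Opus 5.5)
Your proof is correct and follows the paper's argument. Compatibility with the differential comes from $\widehat m\circ\widehat\mu=\widehat\mu\circ(\widehat m\otimes\id+\id\otimes\widehat m)$ together with $S\circ\widehat m=\widehat m\circ S$. Compatibility with the coproducts comes from pushing $\Delta$ through $\widehat\mu$ by \eqref{Delta:algebramorphism} and then through $S$ by $\Delta\circ S=(S\otimes S)\circ\Delta^{\op}$, and Lemma \ref{lem:bi-rightduality} then gives the functor. Your counit and strict-unitality checks are extra details that the paper leaves implicit; it treats the unit condition only in the remark following the proposition.
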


\begin{proof}
First we show that $\varphi$ is compatible with the differentials:
$$\tikzmath{
\draw (1, .6) --(1, 3);
\draw (0, .6) -- (0,1.8);

\draw (-.25, 1.8) rectangle (.25, 2.3);
\node at (0, 2.05) {$S$};

\draw (0, 2.3) -- (0,3);

\draw (-.25, 3) rectangle (1.25, 3.5);
\node at (0.5, 3.25) {$\widehat{\mu}$};

\draw (.5, 3.5) -- (.5, 4.1);

\draw (-.25, 4.1) rectangle (1.25, 4.6);
\node at (0.5, 4.35) {$\widehat{m}$};
}\hspace{1 mm} = \hspace{1 mm}
\tikzmath{
\draw (1, .0) --(1, 3.5);
\draw (0, .) -- (0, .9);

\draw (-.25, 0.9) rectangle (.25, 1.4);
\node at (0, 1.15) {$S$};

\draw (0, 1.4) -- (0, 2.2);
\draw (-.25, 2.2) rectangle (.25, 2.7);
\node at (0, 2.45) {$\widehat{m}$};

\draw (0, 2.7) -- (0, 3.5);

\draw (-.25, 3.5) rectangle (1.25, 4);
\node at (0.5, 3.75) {$\widehat{\mu}$};
}\hspace{1 mm} + \hspace{1 mm}
\tikzmath{
\draw (1, .0) --(1, 1.7);
\draw (0, .) -- (0, 1.7);

\draw (-.25, 1.7) rectangle (.25, 2.2);
\node at (0, 1.95) {$S$};
\draw (.75, 1.7) rectangle (1.25, 2.2);
\node at (1, 1.95) {$\widehat{m}$};

\draw (0, 2.2) -- (0, 3.5);
\draw (1, 2.2) -- (1, 3.5);

\draw (-.25, 3.5) rectangle (1.25, 4);
\node at (0.5, 3.75) {$\widehat{\mu}$};
}\hspace{1 mm} = \hspace{1 mm}
\tikzmath{
\draw (1, .0) --(1, 3.5);
\draw (0, .) -- (0, .9);

\draw (-.25, 0.9) rectangle (.25, 1.4);
\node at (0, 1.15) {$\widehat{m}$};

\draw (0, 1.4) -- (0, 2.2);
\draw (-.25, 2.2) rectangle (.25, 2.7);
\node at (0, 2.45) {$S$};

\draw (0, 2.7) -- (0, 3.5);

\draw (-.25, 3.5) rectangle (1.25, 4);
\node at (0.5, 3.75) {$\widehat{\mu}$};
}\hspace{1 mm} + \hspace{1 mm}
\tikzmath{
\draw (1, .0) --(1, 1.7);
\draw (0, .) -- (0, 1.7);
\node at (1.5, 0) {,};

\draw (-.25, 1.7) rectangle (.25, 2.2);
\node at (0, 1.95) {$S$};
\draw (.75, 1.7) rectangle (1.25, 2.2);
\node at (1, 1.95) {$\widehat{m}$};

\draw (0, 2.2) -- (0, 3.5);
\draw (1, 2.2) -- (1, 3.5);

\draw (-.25, 3.5) rectangle (1.25, 4);
\node at (0.5, 3.75) {$\widehat{\mu}$};
}$$
Here the first equality follows from
$\widehat{m}\circ\widehat{\mu}=\widehat{\mu}\circ(\widehat{m}\otimes\id+\id\otimes\widehat{m})$, and the second follows from $S\circ\widehat{m}=\widehat{m}\circ S$.

Next we prove compatibility with the coproducts, namely
\[
\Delta\circ\varphi=(\varphi\otimes\varphi)\circ(\id\otimes\tau\otimes\id)(\Delta^{\mathrm{op}}\otimes\Delta).
\]
Indeed,
$$\tikzmath{
\draw (1, 1) --(1, 3);
\draw (0, 1) -- (0, 1.8);

\draw (-.25, 1.8) rectangle (.25, 2.3);
\node at (0, 2.05) {$S$};

\draw (0, 2.3) -- (0,3);

\draw (-.25, 3) rectangle (1.25, 3.5);
\node at (0.5, 3.25) {$\widehat{\mu}$};

\draw (.5, 3.5) -- (.5, 4.1);

\draw (-.25, 4.1) rectangle (1.25, 4.6);
\node at (0.5, 4.35) {$\Delta$};
}\hspace{1 mm} = \hspace{1 mm}
\tikzmath{
\draw (2, 0.4) --(2, 2.5);
\draw (0, 0.4) -- (0, 1.1);

\draw (-.25, 1.1) rectangle (.25, 1.6);
\node at (0, 1.35) {$S$};
\draw (0, 1.6) -- (0, 2.5);

\draw (-.75, 2.5) rectangle (.75, 3);
\node at (0, 2.75) {$\Delta$};
\draw (1.25, 2.5) rectangle (2.75, 3);
\node at (2, 2.75) {$\Delta$};

\draw (-.5, 3) -- (-.5, 4.5);
\draw [knot] (1.5, 3) to[out=90,in=-90](.5, 4.5);
\draw [knot] (.5, 3) to[out=90,in=-90](1.5, 4.5);
\draw (2.5, 3) -- (2.5, 4.5);

\draw (-.75, 4.5) rectangle (.75, 5);
\node at (0, 4.75) {$\widehat{\mu}$};
\draw (1.25, 4.5) rectangle (2.75, 5);
\node at (2, 4.75) {$\widehat{\mu}$};
}\hspace{1 mm} = \hspace{1 mm}
\tikzmath{
\draw (2, 0.4) --(2, 1.2);
\draw (0, 0.4) -- (0, 1.2);

\draw (-.75, 1.2) rectangle (.75, 1.7);
\node at (0, 1.45) {$\Delta^{\op}$};
\draw (1.25, 1.2) rectangle (2.75, 1.7);
\node at (2, 1.45) {$\Delta$};

\draw (.5, 1.7) -- (.5, 2.7);
\draw (-.5, 1.7) -- (-.5, 2.7);
\draw (1.5, 1.7) -- (1.5, 3.2);
\draw (2.5, 1.7) -- (2.5, 4.5);
\draw (-.75, 2.7) rectangle (-.25, 3.2);
\node at (-.5, 2.95) {$S$};
\draw (.75, 2.7) rectangle (.25, 3.2);
\node at (.5, 2.95) {$S$};
\draw [knot] (1.5, 3.2) to[out=90,in=-90](.5, 4.5);
\draw [knot] (.5, 3.2) to[out=90,in=-90](1.5, 4.5);
\draw (-.5, 3.2) -- (-.5, 4.5);

\draw (-.75, 4.5) rectangle (.75, 5);
\node at (0, 4.75) {$\widehat{\mu}$};
\draw (1.25, 4.5) rectangle (2.75, 5);
\node at (2, 4.75) {$\widehat{\mu}$};
}\hspace{1 mm} = \hspace{1 mm}
\tikzmath{
\draw (2, 0.5) --(2, 1.2);
\draw (0, 0.5) -- (0, 1.2);
\node at (3, 0.5) {,};

\draw (-.75, 1.2) rectangle (.75, 1.7);
\node at (0, 1.45) {$\Delta^{\op}$};
\draw (1.25, 1.2) rectangle (2.75, 1.7);
\node at (2, 1.45) {$\Delta$};

\draw (-.5, 1.7) -- (-.5, 3);
\draw (2.5, 1.7) -- (2.5, 4.5);
\draw (-.75, 3) rectangle (-.25, 3.5);
\node at (-.5, 3.25) {$S$};
\draw (1.75, 3.5) rectangle (1.25, 3);
\node at (1.5, 3.25) {$S$};
\draw [knot] (1.5, 1.7) to[out=90,in=-90](.5, 3);
\draw [knot] (.5, 1.7) to[out=90,in=-90](1.5, 3);
\draw (-.5, 3.5) -- (-.5, 4.5);
\draw (.5, 3) -- (.5, 4.5);
\draw (1.5, 3.5) -- (1.5, 4.5);

\draw (-.75, 4.5) rectangle (.75, 5);
\node at (0, 4.75) {$\widehat{\mu}$};
\draw (1.25, 4.5) rectangle (2.75, 5);
\node at (2, 4.75) {$\widehat{\mu}$};
}
$$
For the first equality we use
$\Delta\circ\widehat{\mu}=(\widehat{\mu}\otimes\widehat{\mu})\circ(\id\otimes\tau\otimes\id)\circ(\Delta\otimes\Delta)$; for the second, we use $\Delta\circ S=(S\otimes S)\circ\Delta^{\op}$.

The second assertion now follows from Lemma \ref{lem:bi-rightduality}; in other words, $\iota=e_\varphi$. Explicitly, a right $A_\infty$-module $(M,\rho_M^n)$ is sent to the $A_\infty$-bimodule
\begin{align}\label{align:iotabimodule}
\iota((M, \rho_M^n))=(M, \beta_M^{p,q})\,,
\end{align}
where the graded maps
$\beta_M^{p, q} \colon (sA)^{\otimes p} \otimes sM \otimes (sA)^{\otimes q} \to sM,\ p, q \geq 0$
are given by
\\
\[\tikzmath{
\draw (-.8, .8) --(-.8, 1);
\draw (0, .8) --(0, 1);
\draw (.8, .8) --(.8, 3);

\node at (2, .5) {,};
\node at (-1, .5) {$(sA)^{\otimes p}$};
\node at (0, .5) {$sM$};
\node at (1, .5) {$(sA)^{\otimes q}$};
\draw [knot] (0, 1) to[out=90,in=-90](-.8, 2);
\draw [knot] (-.8, 1) to[out=90,in=-90](0, 2);
\draw (-.25, 2) rectangle (.25, 2.5);
\node at (0, 2.25) {$S$};

\draw (-.8, 2) -- (-.8, 4);
\draw (0, 2.5) -- (0,3);
\draw (-.15, 3) rectangle (.95, 3.5);
\node at (0.4, 3.25) {$\widehat{\mu}$};
\draw (.8, 3.5) -- (.8, 4);
\draw (-.95, 4) rectangle (.95, 4.5);
\node at (0, 4.25) {$\rho$};
}\]
where we use the recursive formula
$S(sa_{1,n})=-sa_{1,n}-\sum\limits_{i=1}^{n-1}\widehat{\mu}(sa_{1,i}\bigotimes S(sa_{i+1,n}))$.
\end{proof}

\begin{rem}
If $(M,\rho_M^n)$ is a right $A_\infty$-module, then $\iota(M)$ also gives $M$ a left $A_\infty$-module structure. If $M$ is strictly unital, then so is $\iota(M)$. Indeed,
\[
\beta_M^{1,0}(s1_A\otimes sx)=(-1)^{|sx|}\rho^2(sx\otimes S(s1_A))=(-1)^{|x|}\rho^2(sx\otimes s1_A)=sx,
\]
where we use $S(sa)=-sa$ for $sa\in sA$ and $\rho^2(sx\otimes s1_A)=(-1)^{|x|}sx$ for any $x\in M$; 
see Remark \ref{rem:unitalright}.
\end{rem}

\begin{rem}\label{rem:iotafaithfunonfull}
Recall the forgetful functor $F^{\RR}\colon \ED^{\BB}_\infty(A)\to \ED^{\RR}_\infty(A)$ from Remark \ref{rem:forgetfulfunctor}. Since $F^{\RR}\circ\iota=\id$, the functor $\iota\colon \ED^{\RR}_\infty(A)\to \ED^{\BB}_\infty(A)$ is faithful. However, $\iota$ is not full in general; see Remark \ref{rem:iotanotful}. This lack of fullness creates a technical obstacle: although we define a tensor product on $\ED^{\RR}_\infty(A)$ by using the tensor product on $\ED^{\BB}_\infty(A)$ and the forgetful functor $F^{\RR}$, the monoidal axioms cannot be transferred formally along $\iota$. This is why the unit and associativity constraints are checked by explicit $A_\infty$-bimodule quasi-isomorphisms. For further discussion, see Remark \ref{rem:monoidalbrace}.
\end{rem}

\vspace{0.5cm}
The next step is to show that the induced bimodule $\iota(A)$ is quasi-isomorphic to the regular bimodule $A$;
this will later provide the unit constraint for the monoidal structure.
\begin{thm}\label{thm:iotabracemodule}
Let $(A, m^n; \mu^{p, q})$ be a $B_\infty$-algebra.
Then $\iota(A)$ is quasi-isomorphic to $A$ as $A_\infty$-bimodules.
\end{thm}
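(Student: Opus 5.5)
The plan is to write down an explicit morphism of $A_\infty$-bimodules
\[
f\colon \iota(A)\longrightarrow A
\]
in $\EC^{\BB}_\infty(A)$ with $f^{0,0}=\id_{sA}$. Then $f$ is automatically a quasi-isomorphism, and it remains only to check $\delta(f)=0$.

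Recall from \eqref{align:iotabimodule} that, on $\iota(A)$, the left inputs $sa_{1,p}$ are first passed through the antipode, then multiplied by $\widehat\mu$ with the right inputs $sb_{1,q}$, and the result is fed into $m$ after $sx$. On the regular bimodule $A$, the structure maps are simply $m^{p+q+1}(sa_{1,p}\otimes sx\otimes sb_{1,q})$. The discrepancy is that the left inputs have been moved to the right through the product $\widehat\mu$. In a $B_\infty$-algebra, $\widehat\mu$ provides exactly the homotopies witnessing this kind of commutativity, since the brace-type components $\mu^{p,1}$ give a homotopy between $m^2$ and its opposite. So I would take the higher components of $f$ to be built from $\widehat\mu$, the antipode and the concatenation map $c$. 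Concretely, I would try
\[
f^{p,q}(sa_{1,p}\otimes sx\otimes sb_{1,q})=\mathrm{proj}_{sA}\,\widehat\mu\Bigl(sa_{1,p}^{(1)}\bigotimes c\bigl(sx\otimes \widehat\mu(S(sa_{1,p}^{(2)})\bigotimes sb_{1,q})\bigr)\Bigr),
\]
where $sa^{(1)}\otimes sa^{(2)}$ denotes the summands of $\Delta(sa_{1,p})$, with the Koszul signs inserted. This is the shape appearing in \eqref{equ:mpq}. For $p=0$ the inner expression is $sx\otimes sb_{1,q}$, and the projection gives $\mu^{0,1+q}$, so $f^{0,q}=0$ for $q\geq 1$ and $f^{0,0}=\id$, as required. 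Thus $f$ is the identity on the underlying right module and only corrects the left action. Strict unitality of $f$ would follow from the unit assumption on the $\mu^{p,q}$ (presumably Lemma \ref{lemm:Hwithunit} records this).

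The verification of $\beta_A\circ\widehat f=f\circ\widehat\beta_{\iota(A)}$ is where the earlier identities enter.
\begin{enumerate}
\item The terms involving $\widehat m$ (the differential parts of $\widehat\beta$ and $\beta_A=m$) I would handle with \eqref{equ:mwithc}, the derivation property $\widehat m\widehat\mu=\widehat\mu(\widehat m\otimes\id+\id\otimes\widehat m)$, and $S\widehat m=\widehat m S$.
\item The terms where $\widehat\mu$ is applied to a concatenation I would expand with \eqref{equ:muwithc}.
\item The composite structures coming from $\widehat f$ on the left of $m$ I would reorganise with \eqref{equ:mpq} and with Lemma \ref{lem:cofreehopfidentity}. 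That lemma is precisely the statement that iterating $\widehat\mu(S\otimes\id)$ along a coproduct splits the way the bimodule composition law requires.
\end{enumerate}
After these rewritings, the remaining terms should cancel pairwise, with the antipode axiom \eqref{S:antipodeaxiom} killing the leftover terms in which a coproduct factor meets its own antipode.

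The main obstacle is guessing the correct $f^{p,q}$ with exactly the right ordering and signs, and then matching the many terms of $\delta(f)$ diagrammatically. If my candidate fails, I would first determine $f^{1,0}$ by hand from the $n=2$ component of the morphism equation, expecting $f^{1,0}(sa\otimes sx)=\pm\mu^{1,1}(sa\bigotimes sx)$, which is the homotopy for commutativity. I would then build $f^{p,q}$ recursively, solving the morphism equation in tensor-length order, and read off the closed form.
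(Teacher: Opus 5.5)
Your $f$ is exactly the paper's $I_A\colon\iota(A)\to A$ (its components are the formula recorded in Lemma~\ref{lemm:Hwithunit}), with $f^{0,0}=\id$. The paper verifies $I_A\circ\widehat\beta_{\iota(A)}=\beta_A\circ\widehat I_A$ with essentially your toolkit: the coderivation/derivation properties of $\widehat m$, $S\widehat m=\widehat m S$, \eqref{equ:mwithc}, the compatibility of $\widehat\mu(S\otimes\id)$ with $\Delta$ from Proposition~\ref{prop:dualitybinfinity} (Lemma~\ref{lem:cofreehopfidentity} is not needed here), and finally \eqref{equ:mpq}, which already packages \eqref{equ:muwithc} and the antipode axiom. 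One small correction: Lemma~\ref{lemm:Hwithunit} concerns the compatibility of $\psi$ with $I_A$ and the left unitor, not strict unitality of $f$, which follows directly from the unit assumption on the $\mu^{p,q}$.
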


\begin{proof}
Define an $A_\infty$-bimodule morphism $I_A\colon\iota(A)\to A$ as follows. Its component
$I^{p,q}_A\colon (sA)^{\otimes p}\otimes s(\iota(A))\otimes (sA)^{\otimes q}\longrightarrow sA$
is represented by the following graph:
\vspace{0.7cm}
$$\tikzmath[scale=0.91]{
\draw (-1.5, .9) --(-1.5, 1.2);
\draw (0, .9) --(0, 1.7);
\draw (1, .9) --(1, 4.5);

\node at (-1.5, .5) {$(sA)^{\otimes p}$};
\node at (-.1, .5) {$s(\iota(A))$};
\node at (1.3, .5) {$(sA)^{\otimes q}$};

\draw (-2.25, 1.2) rectangle (-.75, 1.7);
\node at (-1.5, 1.45) {$\Delta$};
\draw (-2, 5.6) -- (-2, 1.7);
\draw [knot] (0, 1.7) to[out=90,in=-90](-1, 3.2);
\draw [knot] (-1, 1.7) to[out=90,in=-90](0, 3.2);
\draw (-1, 5.6) -- (-1, 3.2);
\draw (0, 4.5) -- (0, 3.7);
\draw (-.25, 3.2) rectangle (.25, 3.7);
\node at (0, 3.45) {$S$};
\draw (1.25, 4.5) rectangle (-.25, 5);
\node at (.5, 4.75) {$\widehat{\mu}$};

\draw (-1.25, 5.6) rectangle (.75, 6.1);
\node at (-.25, 5.85) {$c$};
\draw (.5, 5.6) -- (.5, 5);
\draw (-.25, 6.1) -- (-.25, 6.7);
\draw (-2, 5.6) -- (-2, 6.7);
\draw (-2.25, 6.7) rectangle (0, 7.2);
\node at (-1.125, 6.95) {$\mu$};
\node at (1.5, 0.9) {.};
\vspace{1cm}
}$$
In particular, $I_A^{0,0}=\id_A$. Hence it remains to verify that $I_A$ is a morphism of $A_\infty$-bimodules, namely 
$ I_A\circ\widehat{\beta}_{\iota(A)}=\beta_A\circ\widehat I_A$.

The term $I_A\circ\widehat{\beta}_{\iota(A)}$ on the left side is:
\vspace{0.7cm}
$$\tikzmath{
\draw (-1.5, -.4) --(-1.5, .9);
\draw (-0.25, -.4) --(-.25, 2.4);
\draw (1, -.4) --(1, 2.4);

\node at (-1.5, -.8) {$(sA)^{\otimes p}$};
\node at (-.1, -.8) {$s(\iota(A))$};
\node at (1.3, -.8) {$(sA)^{\otimes q}$};
\draw (-1.75, 1.4) rectangle (-1.25, .9);
\node at (-1.5, 1.15) {$\widehat{m}$};
\draw (-1.5, 1.4) -- (-1.5, 2.4);
\draw (-1.75, 2.4) rectangle (1.25, 2.9);
\node at (-.25, 2.65) {$I_A$};
}\hspace{-1 mm} + \hspace{2 mm}\tikzmath{
\draw (-1.5, -.4) --(-1.5, -.1);
\draw (0, -.4) --(0, 1.2);
\draw (1.5, -.4) --(1.5, -.1);

\node at (-1.5, -.8) {$(sA)^{\otimes p}$};
\node at (-.1, -.8) {$s(\iota(A))$};
\node at (1.3, -.8) {$(sA)^{\otimes q}$};

\draw (-2.25, -.1) rectangle (-.75, .4);
\node at (-1.5, .15) {$\Delta$};
\draw (2.25, -.1) rectangle (.75, .4);
\node at (1.5, .15) {$\Delta$};
\draw (-1, .4) --(-1, 1.2);
\draw (1, .4) --(1, 1.2);
\draw (-1.25, 1.2) rectangle (1.25, 1.7);
\node at (0, 1.45) {$\beta_{\iota(A)}$};
\draw (0, 1.7) -- (0, 2.4);
\draw (-2, .4) -- (-2, 2.4);
\draw (2, .4) -- (2, 2.4);
\draw (-2.25, 2.4) rectangle (2.25, 2.9);
\node at (0, 2.65) {$I_A$};
}\hspace{2 mm} + \hspace{-1 mm}\tikzmath{
\draw (1.5, -.4) --(1.5, .9);
\draw (.25, -.4) --(.25, 2.4);
\draw (-1, -.4) --(-1, 2.4);

\node at (1.6, -.8) {$(sA)^{\otimes p}$};
\node at (.1, -.8) {$s(\iota(A))$};
\node at (-1.3, -.8) {$(sA)^{\otimes q}$};
\draw (1.75, 1.4) rectangle (1.25, .9);
\node at (1.5, 1.15) {$\widehat{m}$};
\draw (1.5, 1.4) -- (1.5, 2.4);
\draw (1.75, 2.4) rectangle (-1.25, 2.9);
\node at (.25, 2.65) {$I_A$};
}
$$
\vspace{-1.7cm}
\begin{equation}\label{equ:mudelta}
= \hspace{3 mm}\tikzmath{
\draw (-1.5, -.4) --(-1.5, -.1);
\draw (0, -.4) --(0, 1.7);
\draw (1, -.4) --(1, 4.5);

\draw (-1.75, .4) rectangle (-1.25, -.1);
\node at (-1.5, .15) {$\widehat{m}$};
\draw (-1.5, .4) -- (-1.5, 1.2);
\draw (-2.25, 1.2) rectangle (-.75, 1.7);
\node at (-1.5, 1.45) {$\Delta$};
\draw (-2, 5.6) -- (-2, 1.7);
\draw [knot] (0, 1.7) to[out=90,in=-90](-1, 3.2);
\draw [knot] (-1, 1.7) to[out=90,in=-90](0, 3.2);
\draw (-1, 5.6) -- (-1, 3.2);
\draw (0, 4.5) -- (0, 3.7);
\draw (-.25, 3.2) rectangle (.25, 3.7);
\node at (0, 3.45) {$S$};
\draw (1.25, 4.5) rectangle (-.25, 5);
\node at (.5, 4.75) {$\widehat{\mu}$};

\draw (-1.25, 5.6) rectangle (.75, 6.1);
\node at (-.25, 5.85) {$c$};
\draw (.5, 5.6) -- (.5, 5);
\draw (-.25, 6.1) -- (-.25, 6.7);
\draw (-2, 5.6) -- (-2, 6.7);
\draw (-2.25, 6.7) rectangle (0, 7.2);
\node at (-1.125, 6.95) {$\mu$};
}\hspace{1 mm} + \hspace{2 mm}\tikzmath{
\draw (-1.5, -.4) --(-1.5, -.1);
\draw (0, -.4) --(0, .4);
\draw (1.5, -.4) --(1.5, -.1);

\draw (-2.25, -.1) rectangle (-.75, .4);
\node at (-1.5, .15) {$\Delta$};
\draw (-2, .4) -- (-2, 5.6);
\draw (2, .4) -- (2, 8.9);
\draw (2.25, -.1) rectangle (.75, .4);
\node at (1.5, .15) {$\Delta$};
\draw (1, .4) --(1, 3.2);
\draw [knot] (0, .4) to[out=90,in=-90](-1, 1.9);
\draw [knot] (-1, .4) to[out=90,in=-90](0, 1.9);
\draw (-.25, 1.9) rectangle (.25, 2.4);
\node at (0, 2.15) {$S$};
\draw (-1, 1.9) -- (-1, 4.3);
\draw (0, 2.4) -- (0, 3.2);
\draw (.5, 3.7) -- (.5, 4.3);
\draw (1.25, 3.2) rectangle (-.25, 3.7);
\node at (.5, 3.45) {$\widehat{\mu}$};
\draw (-1.25, 4.3) rectangle (.75, 4.8);
\node at (-.25, 4.55) {$m\circ c$};
\draw (0, 4.8) -- (0, 6.1);

\draw (-2.25, 5.6) rectangle (-.75, 6.1);
\node at (-1.5, 5.85) {$\Delta$};
\draw [knot] (0, 6.1) to[out=90,in=-90](-1, 7.6);
\draw [knot] (-1, 6.1) to[out=90,in=-90](0, 7.6);
\draw (-1, 10) -- (-1, 7.6);
\draw (0, 8.9) -- (0, 8.1);
\draw (-.25, 7.6) rectangle (.25, 8.1);
\node at (0, 7.85) {$S$};
\draw (2.25, 8.9) rectangle (-.25, 9.4);
\node at (1, 9.15) {$\widehat{\mu}$};

\draw (-1.25, 10) rectangle (.75, 10.5);
\node at (-.25, 10.25) {$c$};
\draw (.5, 9.4) -- (.5, 10);
\draw (-.25, 10.5) -- (-.25, 11.1);
\draw (-2, 6.1) -- (-2, 11.1);
\draw (-2.25, 11.1) rectangle (0, 11.6);
\node at (-1.125, 11.35) {$\mu$};
}\hspace{2 mm} + \hspace{1 mm}\tikzmath{
\draw (-1.5, -.4) --(-1.5, 1.2);
\draw (0, -.4) --(0, 1.7);
\draw (1, -.4) --(1, -.1);
\draw (1, .4) --(1, 4.5);

\draw (.75, .4) rectangle (1.25, -.1);
\node at (1, .15) {$\widehat{m}$};
\draw (-2.25, 1.2) rectangle (-.75, 1.7);
\node at (-1.5, 1.45) {$\Delta$};
\draw (-2, 5.6) -- (-2, 1.7);
\draw [knot] (0, 1.7) to[out=90,in=-90](-1, 3.2);
\draw [knot] (-1, 1.7) to[out=90,in=-90](0, 3.2);
\draw (-1, 5.6) -- (-1, 3.2);
\draw (0, 4.5) -- (0, 3.7);
\draw (-.25, 3.2) rectangle (.25, 3.7);
\node at (0, 3.45) {$S$};
\draw (1.25, 4.5) rectangle (-.25, 5);
\node at (.5, 4.75) {$\widehat{\mu}$};

\draw (-1.25, 5.6) rectangle (.75, 6.1);
\node at (-.25, 5.85) {$c$};
\draw (.5, 5.6) -- (.5, 5);
\draw (-.25, 6.1) -- (-.25, 6.7);
\draw (-2, 5.6) -- (-2, 6.7);
\draw (-2.25, 6.7) rectangle (0, 7.2);
\node at (-1.125, 6.95) {$\mu$};
\node at (1.5, -.4) {.};
}
\end{equation}
The formula \eqref{equ:mudelta} follows from the definition of $\beta_{\iota(A)}$ and $I_A$.
Since $\widehat m$ is a coderivation, that is,
$\Delta\circ\widehat m=(\widehat m\otimes\id+\id\otimes\widehat m)\circ\Delta$, the first term in \eqref{equ:mudelta} becomes:
$$\tikzmath{
\draw (-1.5, -.4) --(-1.5, -.1);
\draw (0, -.4) --(0, 1.7);
\draw (1, -.4) --(1, 4.5);

\draw (-2.25, .4) rectangle (-.75, -.1);
\node at (-1.5, .15) {$\Delta$};
\draw (-1.75, 1.2) rectangle (-2.25, 1.7);
\node at (-2, 1.45) {$\widehat{m}$};
\draw (-2, .4) -- (-2, 1.2);
\draw (-1, .4) -- (-1, 1.7);
\draw (-2, 5.6) -- (-2, 1.7);
\draw [knot] (0, 1.7) to[out=90,in=-90](-1, 3.2);
\draw [knot] (-1, 1.7) to[out=90,in=-90](0, 3.2);
\draw (-1, 5.6) -- (-1, 3.2);
\draw (0, 4.5) -- (0, 3.7);
\draw (-.25, 3.2) rectangle (.25, 3.7);
\node at (0, 3.45) {$S$};
\draw (1.25, 4.5) rectangle (-.25, 5);
\node at (.5, 4.75) {$\widehat{\mu}$};

\draw (-1.25, 5.6) rectangle (.75, 6.1);
\node at (-.25, 5.85) {$c$};
\draw (.5, 5.6) -- (.5, 5);
\draw (-.25, 6.1) -- (-.25, 6.7);
\draw (-2, 5.6) -- (-2, 6.7);
\draw (-2.25, 6.7) rectangle (0, 7.2);
\node at (-1.125, 6.95) {$\mu$};
}\hspace{1 mm} + \hspace{2 mm}\tikzmath{
\draw (-1.5, -.4) --(-1.5, -.1);
\draw (0, -.4) --(0, 1.7);
\draw (1, -.4) --(1, 4.5);

\draw (-2.25, .4) rectangle (-.75, -.1);
\node at (-1.5, .15) {$\Delta$};
\draw (-.75, 1.2) rectangle (-1.25, 1.7);
\node at (-1, 1.45) {$\widehat{m}$};
\draw (-1, .4) -- (-1, 1.2);
\draw (-2, .4) -- (-2, 1.7);
\draw (-2, 5.6) -- (-2, 1.7);
\draw [knot] (0, 1.7) to[out=90,in=-90](-1, 3.2);
\draw [knot] (-1, 1.7) to[out=90,in=-90](0, 3.2);
\draw (-1, 5.6) -- (-1, 3.2);
\draw (0, 4.5) -- (0, 3.7);
\draw (-.25, 3.2) rectangle (.25, 3.7);
\node at (0, 3.45) {$S$};
\draw (1.25, 4.5) rectangle (-.25, 5);
\node at (.5, 4.75) {$\widehat{\mu}$};

\draw (-1.25, 5.6) rectangle (.75, 6.1);
\node at (-.25, 5.85) {$c$};
\draw (.5, 5.6) -- (.5, 5);
\draw (-.25, 6.1) -- (-.25, 6.7);
\draw (-2, 5.6) -- (-2, 6.7);
\draw (-2.25, 6.7) rectangle (0, 7.2);
\node at (-1.125, 6.95) {$\mu$};
}\hspace{2 mm} \overset{\eqref{S:differential}}{=} \hspace{2 mm}\tikzmath{
\draw (-1.5, -.4) --(-1.5, -.1);
\draw (0, -.4) --(0, 1.7);
\draw (1, -.4) --(1, 4.5);

\draw (-2.25, .4) rectangle (-.75, -.1);
\node at (-1.5, .15) {$\Delta$};
\draw (-1.75, 1.2) rectangle (-2.25, 1.7);
\node at (-2, 1.45) {$\widehat{m}$};
\draw (-2, .4) -- (-2, 1.2);
\draw (-1, .4) -- (-1, 1.7);
\draw (-2, 5.6) -- (-2, 1.7);
\draw [knot] (0, 1.7) to[out=90,in=-90](-1, 3.2);
\draw [knot] (-1, 1.7) to[out=90,in=-90](0, 3.2);
\draw (-1, 5.6) -- (-1, 3.2);
\draw (0, 4.5) -- (0, 3.7);
\draw (-.25, 3.2) rectangle (.25, 3.7);
\node at (0, 3.45) {$S$};
\draw (1.25, 4.5) rectangle (-.25, 5);
\node at (.5, 4.75) {$\widehat{\mu}$};

\draw (-1.25, 5.6) rectangle (.75, 6.1);
\node at (-.25, 5.85) {$c$};
\draw (.5, 5.6) -- (.5, 5);
\draw (-.25, 6.1) -- (-.25, 6.7);
\draw (-2, 5.6) -- (-2, 6.7);
\draw (-2.25, 6.7) rectangle (0, 7.2);
\node at (-1.125, 6.95) {$\mu$};
}\hspace{1 mm} + \hspace{2 mm}\tikzmath{
\draw (-1.5, -.4) --(-1.5, -.1);
\draw (0, -.4) --(0, 1.7);
\draw (1, -.4) --(1, 4.5);

\draw (-2.25, .4) rectangle (-.75, -.1);
\node at (-1.5, .15) {$\Delta$};
\draw (-.75, 1.2) rectangle (-1.25, 1.7);
\node at (-1, 1.45) {$S$};
\draw (-1, .4) -- (-1, 1.2);
\draw (-2, .4) -- (-2, 1.7);
\draw (-2, 5.6) -- (-2, 1.7);
\draw [knot] (0, 1.7) to[out=90,in=-90](-1, 3.2);
\draw [knot] (-1, 1.7) to[out=90,in=-90](0, 3.2);
\draw (-1, 5.6) -- (-1, 3.2);
\draw (0, 4.5) -- (0, 3.7);
\draw (-.25, 3.2) rectangle (.25, 3.7);
\node at (0, 3.45) {$\widehat{m}$};
\draw (1.25, 4.5) rectangle (-.25, 5);
\node at (.5, 4.75) {$\widehat{\mu}$};

\draw (-1.25, 5.6) rectangle (.75, 6.1);
\node at (-.25, 5.85) {$c$};
\draw (.5, 5.6) -- (.5, 5);
\draw (-.25, 6.1) -- (-.25, 6.7);
\draw (-2, 5.6) -- (-2, 6.7);
\draw (-2.25, 6.7) rectangle (0, 7.2);
\node at (-1.125, 6.95) {$\mu$};
\node at (1.5, -.4) {.};
}
$$
Using
$\widehat{\mu}(\widehat{m}\otimes\id+\id\otimes\widehat{m})=\widehat{m}\circ\widehat{\mu}$ together with \eqref{equ:mwithc} and the compatibility of $\widehat{\mu}(S\otimes \id)$ with $\Delta$ by Proposition \ref{prop:dualitybinfinity}, the expression \eqref{equ:mudelta} becomes
$$\tikzmath{
\draw (-1.5, -.4) --(-1.5, -.1);
\draw (0, -.4) --(0, 1.7);
\draw (1, -.4) --(1, 4.5);

\draw (-2.25, .4) rectangle (-.75, -.1);
\node at (-1.5, .15) {$\Delta$};
\draw (-1, .4) -- (-1, 1.7);
\draw (-2, 5.6) -- (-2, .4);
\draw [knot] (0, 1.7) to[out=90,in=-90](-1, 3.2);
\draw [knot] (-1, 1.7) to[out=90,in=-90](0, 3.2);
\draw (-1, 5.6) -- (-1, 3.2);
\draw (0, 4.5) -- (0, 3.7);
\draw (-.25, 3.2) rectangle (.25, 3.7);
\node at (0, 3.45) {$S$};
\draw (1.25, 4.5) rectangle (-.25, 5);
\node at (.5, 4.75) {$\widehat{\mu}$};

\draw (-1.25, 5.6) rectangle (.75, 6.1);
\node at (-.25, 5.85) {$c$};
\draw (.5, 5.6) -- (.5, 5);
\draw (-.25, 6.1) -- (-.25, 6.7);
\draw (-2, 5.6) -- (-2, 6.7);
\draw (-2.25, 6.7) rectangle (0, 7.2);
\node at (-1.125, 6.95) {$m\circ \widehat{\mu}$};
}\hspace{1 mm} \overset{(\ref{equ:mpq})}{=} \hspace{1 mm} \tikzmath{
\draw (-3, 1.7) --(-3, 7.7);
\draw (-2, .8) --(-2, 1.2);
\draw (0, .8) --(0, 1.7);
\draw (1.5, .8) --(1.5, 1.2);

\draw (-3.25, 1.2) rectangle (-.75, 1.7);
\node at (-2, 1.45) {$\Delta^{(2)}$};
\draw (2.25, 1.2) rectangle (.75, 1.7);
\node at (1.5, 1.45) {$\Delta$};
\draw (1, 4.5) -- (1, 1.7);
\draw (-2, 5.6) -- (-2, 1.7);
\draw [knot] (0, 1.7) to[out=90,in=-90](-1, 3.2);
\draw [knot] (-1, 1.7) to[out=90,in=-90](0, 3.2);
\draw (-1, 5.6) -- (-1, 3.2);
\draw (0, 4.5) -- (0, 3.7);
\draw (-.25, 3.2) rectangle (.25, 3.7);
\node at (0, 3.45) {$S$};
\draw (1.25, 4.5) rectangle (-.25, 5);
\node at (.5, 4.75) {$\widehat{\mu}$};

\draw (-1.25, 5.6) rectangle (.75, 6.1);
\node at (-.25, 5.85) {$c$};
\draw (.5, 5.6) -- (.5, 5);
\draw (-.25, 6.1) -- (-.25, 6.7);
\draw (-2, 5.6) -- (-2, 6.7);
\draw (-2.25, 6.7) rectangle (0, 7.2);
\node at (-1.125, 6.95) {$\mu$};
\draw (2, 1.7) -- (2, 7.7);
\draw (-1.125, 7.2) -- (-1.125, 7.7);
\draw (-3.25, 7.7) rectangle (2.25, 8.2);
\node at (-.5, 7.95) {$m\circ c^{(2)}$};
\node at (2.5, 0.8) {.};
}
$$
This is precisely $\beta_A\circ\widehat I_A$.
\end{proof}

We also need the following definition of the opposite $B_{\infty}$-algebra of any given $B_{\infty}$-algebra; see \cite[Definition 5.5]{CLW}.
\begin{defn}\label{defnopposite}
Let $(A, m^n; \mu^{p, q})$ be a $B_{\infty}$-algebra. The {\it opposite $B_{\infty}$-algebra} of $A$ is defined to be the $B_{\infty}$-algebra
$(A, m^n; (\mu^{p, q})^{\rm opp})$, where $(\mu^{p, q})^{\rm opp}\colon (sA)^{\otimes q} \otimes (sA)^{\otimes p} \to sA$ is given by
$$
(\mu^{p, q})^{\rm opp} (sa_{1, q} \otimes sb_{1, p}) = (-1)^{\left((|a_1|-1)+\dotsb+(|a_q|-1)\right)\left((|b_1|-1)+\dotsb+(|b_p|-1)\right) } \mu^{p, q}(sb_{1, p} \otimes sa_{1, q}).
$$
\end{defn}

We simply write the opposite $B_\infty$-algebra $(A, m^n; (\mu^{p, q})^{\rm opp})$ as $A^{\rm opp}$.
We denote the dg bialgebra associated with $(A, m^n; (\mu^{p, q})^{\rm opp})$ by $(T^c(sA), \Delta, \widehat{m}, \widehat{\mu}^{\opp})$.
Since $\widehat{\mu}^{\rm opp}=\widehat{\mu^{\rm opp}}$, the dg bialgebra associated with $A^{\rm opp}$ is also given by $(T^c(sA), \Delta, \widehat{m}, \widehat{\mu}^{\rm opp})$. In particular, $A^{\rm opp}$ and $A$ have the same underlying $A_\infty$-algebra structure. As $B_\infty$-algebras, we have $(A^{\rm opp})^{\rm opp} = A$.


\section{Monoidal triangulated categories arising from $B_\infty$-algebras}\label{section:monoidaltri}

This section proves the monoidal part of the main theorem. Let $(A,m^n;\mu^{p,q})$ be a $B_\infty$-algebra. Proposition \ref{prop:dualitybinfinity} gives an induction functor
\[
\iota\colon \ED^{\RR}_\infty(A)\longrightarrow \ED^{\BB}_\infty(A)
\]
from right $A_\infty$-modules to $A_\infty$-bimodules. We use this functor to define a tensor product on the derived category of right modules by
\[
M\boxtimes_A N=M\overset{\infty}{\otimes}_A\iota(N).
\]
The main point is that this tensor product is not obtained by formally transferring the monoidal structure on bimodules. Since $\iota$ is not full in general, the associativity and unit constraints must be constructed and checked explicitly.

By a {\it monoidal triangulated category} we mean a monoidal category $\EC$ in the sense of \cite[Definition 2.1.1]{EGNO} which is also triangulated, and whose tensor product
\[
-\otimes -\colon \EC\times \EC\longrightarrow \EC
\]
is exact in each variable; see, for example, \cite[Definition 2.1]{NVY02}.

\begin{thm}\label{thm:monoidalcatofrightmod}
Let $(A,m^n;\mu^{p,q})$ be a $B_\infty$-algebra. Then $\ED^{\RR}_\infty(A)$ is a monoidal triangulated category with unit object $A$. For right $A_\infty$-modules $M$ and $N$, the tensor product is
\begin{align}
M \boxtimes_A N := M\overset{\infty}{\otimes}_A \iota(N).
\end{align}
\end{thm}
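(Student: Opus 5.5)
We build the tensor product on the dg level first and then pass to $\mathrm H^0$. The functor $M\boxtimes_A N=M\overset{\infty}{\otimes}_A\iota(N)$ is the composite of the dg functor $\iota$ of Proposition \ref{prop:dualitybinfinity} with the dg bifunctor $-\overset{\infty}{\otimes}_A-$ of Remark \ref{rem:tensormorphism}, restricted to a right module in the first slot as in Remark \ref{rem:tensorfunctor}. It is therefore a dg bifunctor $\EC^{\RR}_\infty(A)\times\EC^{\RR}_\infty(A)\to\EC^{\RR}_\infty(A)$. Because it is dg in each variable, it preserves cones and shifts. It also preserves quasi-isomorphisms, since quasi-isomorphisms coincide with homotopy equivalences by \cite[Theorem 4.2.1]{Kel01}. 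Hence it descends to a bifunctor on $\ED^{\RR}_\infty(A)$ that is exact in each variable.

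For the unit constraints we proceed as follows.
\begin{itemize}
\item \emph{Right unit.} We use Theorem \ref{thm:iotabracemodule}: the bimodule quasi-isomorphism $I_A\colon\iota(A)\to A$ gives $M\boxtimes_A A=M\overset{\infty}{\otimes}_A\iota(A)\simeq M\overset{\infty}{\otimes}_A A\simeq M$. The last step is the standard bar-resolution quasi-isomorphism $M\otimes T^c(sA)\otimes A\to M$ (the unit isomorphism in \cite{Fer12}), applied to the right-module part.
\item \emph{Left unit.} $A\boxtimes_A N=A\overset{\infty}{\otimes}_A\iota(N)\simeq F^{\RR}(\iota(N))=N$. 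This uses the left unit isomorphism of $\ED^{\BB}_\infty(A)$ together with $F^{\RR}\circ\iota=\id$.
\end{itemize}
Both isomorphisms are natural, because they come from natural bimodule maps.

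The heart of the proof is associativity. The key claim is a natural $A_\infty$-bimodule quasi-isomorphism
\[
J_{M,N}\colon \iota(M)\overset{\infty}{\otimes}_A\iota(N)\longrightarrow \iota(M\boxtimes_A N).
\]
We construct it in the same style as $I_A$. The first component $J^{0,0}$ is the identity on $M\otimes T\otimes N$. The higher components $J^{p,q}$ are given graphically. One distributes the left inputs $(sA)^{\otimes p}$ by $\Delta$. One part is fed through the twisted action $\widehat\mu(S\otimes\id)$ on $M$, exactly as in $\beta_{\iota(M)}$. The remaining part is moved past $T^c(sA)$ by means of $\widehat\mu(S\otimes\id)$ followed by $\Delta$. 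This is the point where Lemma \ref{lem:cofreehopfidentity} is used: it rewrites $\widehat\mu(S(a)\bigotimes b)\otimes\widehat\mu(S(a)\bigotimes c)$ as a deconcatenation of $b$, which is exactly the form needed for the middle tensor factor. The right inputs $(sA)^{\otimes q}$ act only on $N$.

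To verify $J\circ\widehat\beta=\beta\circ\widehat J$ we split both sides by terms: those involving $\widehat m$ on the outer inputs, those involving $\beta_M$, $\beta_N$, and those involving $\widehat m$ on the middle bar factor. We match them using the following identities:
\begin{itemize}
\item $\varphi=\widehat\mu(S\otimes\id)$ is a dg coalgebra map (Proposition \ref{prop:dualitybinfinity});
\item Proposition \ref{prop:concatenationidentity}, which handles the interaction of concatenation with $\widehat m$ and $\widehat\mu$;
\item Lemma \ref{lem:cofreehopfidentity};
\item the antipode axiom \eqref{S:antipodeaxiom}, which kills the cross terms.
\end{itemize}
Since $J^{0,0}=\id$, the map $J$ is automatically a quasi-isomorphism.

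Granting $J$, associativity follows. We have
\[
(M\boxtimes N)\boxtimes L=(M\overset{\infty}{\otimes}\iota(N))\overset{\infty}{\otimes}\iota(L)\cong M\overset{\infty}{\otimes}(\iota(N)\overset{\infty}{\otimes}\iota(L))\overset{J}{\simeq} M\overset{\infty}{\otimes}\iota(N\boxtimes L),
\]
where the middle step uses the strict associativity of $\overset{\infty}{\otimes}_A$ on the underlying spaces $M\otimes T\otimes N\otimes T\otimes L$. The pentagon and triangle axioms then reduce to two facts. First, the bimodule tensor product is coherent \cite[Proposition 2]{Fer12}. Second, $J$ is compatible with itself, namely $J_{M\boxtimes N,L}\circ(J_{M,N}\overset{\infty}{\otimes}\id)$ agrees with $J_{M,N\boxtimes L}\circ(\id\overset{\infty}{\otimes}J_{N,L})$, and $J$ is compatible with $I_A$. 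Because all these maps have identity linear part, we expect these compatibilities to hold strictly, or at worst up to homotopy. Checking this with the Hopf identities (coassociativity, associativity, the anti-(co)algebra properties of $S$) is routine but lengthy.

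The main obstacle is writing down the correct higher components of $J$ and checking the bimodule-morphism equation, in particular the sign and Koszul bookkeeping in the middle-factor terms. If a strict formula fails, we would first try to make $J$ the composite of an explicit map with its inverse direction, modelled on $I_A$ with $c$ replaced by the middle bar factor. Only after that would we prove that $J$ is an isomorphism in $\ED^{\BB}_\infty(A)$ through a filtration argument by tensor length.
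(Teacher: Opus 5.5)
Your overall architecture matches the paper's. The unitors are $l_{\iota(N)}$ and $r_M\circ(\widehat{\id\overset{\infty}{\otimes}_A I_A})$, the associator is $(\id_M\overset{\infty}{\otimes}_A J)\circ\widetilde a$, and the pentagon and triangle reduce to the two compatibilities you name, which are the identity \eqref{equation:associativity} and Lemma \ref{lemm:Hwithunit}. The gap is that the step carrying the content, constructing $J$ and proving it is a bimodule morphism, is not carried out, and your sketch of it cannot work.

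In Lemma \ref{lem:isomorphismmn} the paper takes $J^{p,q}=0$ for $q>0$ and $J^{p,0}(sa_{1,p}\otimes sx\otimes sc_{1,k}\otimes y)=(-1)^{|sa_{1,p}||sx|}\,sx\otimes\widehat{\mu}\bigl(S(sa_{1,p})\bigotimes sc_{1,k}\bigr)\otimes y$. That is, all left inputs are absorbed into the middle bar factor via $\varphi=\widehat{\mu}\circ(S\otimes\id)$, and nothing acts on $M$. In your version a $\Delta$-piece of the left inputs is ``fed through the twisted action on $M$'', which puts a $\rho_M$ inside a component of $J$. Since $\rho_M$ has degree $1$ and $\Delta$, $\widehat{\mu}$, $S$, $c$ have degree $0$, no such term can occur in a degree-$0$ morphism. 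Modelling $J$ on $I_A$ is the misleading analogy here: $I_A$ can combine left inputs with $x$ through the degree-$0$ map $\mu$, but a general right module has no degree-$0$ operation with $A$.

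The $\rho_M$-terms actually appear on both sides of $J\circ\widehat{\beta}=\beta\circ\widehat{J}$. They match because $\varphi$ is a coalgebra map, the $\widehat m$-terms match because $\varphi$ commutes with differentials, and the $\beta_{\iota(N)}$-terms match by Lemma \ref{lem:cofreehopfidentity}. So that lemma verifies the morphism equation, and later the pentagon identity; it does not define $J$.

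Your justification of the coherence conditions is also not valid. Equal linear parts do not make two $A_\infty$-morphisms equal or even homotopic, since a closed morphism with vanishing linear part can represent a nonzero Hochschild class. Moreover, $\id\overset{\infty}{\otimes}_A J$ does not have identity linear part, because the components $J^{u,0}$ act on the middle bar factor.

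The paper proves both compatibilities as strict equalities:
\begin{itemize}
\item the pentagon, via coassociativity and Lemma \ref{lem:cofreehopfidentity};
\item the triangle, via associativity of $\widehat\mu$, the anti-multiplicativity \eqref{S:anti-alg} of $S$, and \eqref{equ:mpq}.
\end{itemize}
Your fallback, showing some map is invertible by a tensor-length filtration, would not rescue this. The monoidal axioms need a specific natural map satisfying these identities, not just an abstract isomorphism in $\ED^{\BB}_\infty(A)$.
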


\begin{rem}\label{rem:monoidalbrace}
As mentioned in Remark \ref{rem:iotafaithfunonfull}, the functor $\iota$ is generally not fully faithful. Therefore the monoidal structure on $\ED^{\RR}_\infty(A)$ is not a formal consequence of the monoidal structure on $\ED^{\BB}_\infty(A)$. The monoidal axioms for $\boxtimes_A$ must be verified directly. In particular, the unit axiom requires an $A_\infty$-bimodule quasi-isomorphism $\iota(A)\simeq A$.
\end{rem}

We first prove the compatibility of $\iota$ with the tensor product. This is the key input for the associativity constraint: the bimodules $\iota(M)\overset{\infty}{\otimes}_A \iota(N)$ and $\iota(M\overset{\infty}{\otimes}_A \iota(N))$ must be identified up to $A_{\infty}$-quasi-isomorphism.
\begin{lem}\label{lem:isomorphismmn}
Let $(A,m^n;\mu^{p,q})$ be a $B_\infty$-algebra. For any right $A_\infty$-modules $(M,\rho_M^n)$ and $(N,\rho_N^n)$ over $A$, there is an $A_\infty$-bimodule quasi-isomorphism
\[
 \psi\colon \iota(M)\overset{\infty}{\otimes}_A \iota(N)\xrightarrow{\simeq} \iota(M\overset{\infty}{\otimes}_A \iota(N)).
\]
\end{lem}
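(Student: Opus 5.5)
We would construct $\psi$ explicitly in the same spirit as the morphism $I_A$ in Theorem~\ref{thm:iotabracemodule}. The two bimodules share the underlying graded space $M\otimes T^c(sA)\otimes N$. So it suffices to produce an $A_\infty$-bimodule morphism $\psi$ with $\psi^{0,0}=\id$; such a $\psi$ is automatically a quasi-isomorphism.

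\textbf{Comparing the two structures.} On $\iota(M)\overset{\infty}{\otimes}_A\iota(N)$:
\begin{itemize}
\item the right action is through $\beta^{0,q}_{\iota(N)}$, which equals $\rho_N$, since $\widehat\mu(S(1)\otimes c)=c$;
\item the left action is through $\beta^{p,0}_{\iota(M)}$, which feeds $S(a)$ into $\rho_M$ at $x$.
\end{itemize}
On $\iota(M\overset{\infty}{\otimes}_A\iota(N))$:
\begin{itemize}
\item the right action is again given by $\rho_N$ on the last factor, together with $\widehat m$ acting on the middle bar part;
\item the left action moves $a$ through the whole block $x\otimes b\otimes y$, so that $S(a)$, multiplied via $\widehat\mu$ into $c$, ends up acting on $N$.
\end{itemize}
Hence the two structures agree for $p=0$ and differ only in how the left inputs are distributed.

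\textbf{Definition of $\psi$.} We set $\psi^{p,q}=0$ for $q>0$ and $\psi^{0,0}=\id$. For $p\geq 1$ we define $\psi^{p,0}$ by the following graph:
\begin{itemize}
\item apply $\Delta$ to $sa_{1,p}$;
\item move the first tensor factor $a'$ past $sx\otimes sb_{1,l}$ and form $\widehat\mu(S(a')\otimes sb_{1,l})$;
\item apply $\Delta$ to this product;
\item let the first resulting factor act on $x$ through $\rho_M$;
\item keep the second resulting factor as the new middle bar element;
\item finally let the remaining factor $a''$ of $\Delta(sa_{1,p})$ enter $y$ through $\widehat\mu(S(a'')\otimes -)$ and $\rho_N$, which is how the left action of $\iota(M\overset{\infty}{\otimes}_A\iota(N))$ acts on $N$.
\end{itemize}
The expression $\Delta\bigl(\widehat\mu(S(a)\otimes b)\bigr)$ paired with $\widehat\mu(S(a)\otimes c)$ is exactly the configuration handled by Lemma~\ref{lem:cofreehopfidentity}. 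That lemma lets us trade "conjugating the bar word $b$ by $a$, then splitting it" for "splitting $a$, then conjugating".

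\textbf{Verifying the morphism equation.} We would check $\psi\circ\widehat\beta=\beta\circ\widehat\psi$ componentwise, organised by the types of terms in $\widehat\beta_{\iota(M)\overset{\infty}{\otimes}_A\iota(N)}$. These are:
\begin{enumerate}
\item the $\widehat m$-terms on the left inputs, on the middle bar and on the right inputs;
\item the $\rho_M$-terms;
\item the $\rho_N$-terms.
\end{enumerate}
The tools are as follows.
\begin{itemize}
\item The $\widehat m$-terms are handled as in Proposition~\ref{prop:dualitybinfinity} and Theorem~\ref{thm:iotabracemodule}. We use that $\widehat m$ is a coderivation, $S\widehat m=\widehat m S$, $\widehat m$ is a derivation of $\widehat\mu$, and identity \eqref{equ:mwithc}.
\item The terms where $\rho_M$ is hit twice are matched using that $\widehat\mu(S\otimes\id)$ is a coalgebra map (Proposition~\ref{prop:dualitybinfinity}), followed by $\rho_M\circ\widehat\rho_M=0$.
\item The terms mixing the action on $N$ with the middle bar are matched using Lemma~\ref{lem:cofreehopfidentity} and associativity \eqref{associativity}.
\item The terms involving $c$ are matched using \eqref{equ:muwithc} and \eqref{equ:mpq}, as in the proof of Theorem~\ref{thm:iotabracemodule}.
\end{itemize}

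\textbf{Expected obstacle.} The main difficulty will be the $\rho_N$-terms, where a left input must travel across $x$ and the bar word $b$ into $N$. There the two sides must be reconciled through a double application of $\Delta$ and the antipode identities. The Koszul signs must also be tracked consistently through the isomorphism $\zeta$ in the definition of $\overset{\infty}{\otimes}_A$. Our first approach would be to reduce this case to Lemma~\ref{lem:cofreehopfidentity} combined with the antipode axiom \eqref{S:antipodeaxiom}.
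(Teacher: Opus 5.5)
\textbf{There is a genuine gap: your definition of $\psi^{p,0}$ is wrong.}

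\emph{Degree.} In the shifted convention, the components of an $A_\infty$-bimodule morphism must be degree-$0$ maps $(sA)^{\otimes p}\otimes sM\otimes T^c(sA)\otimes N\to sM\otimes T^c(sA)\otimes N$. In your graph, $\Delta$, $S$, $\widehat\mu$, the Koszul swaps and the cancelling pair $s,s^{-1}$ all have degree $0$. But $\rho_M$ and $\rho_N$ each have degree $1$, so the map you describe has degree $2$. It cannot be a component of a closed degree-$0$ morphism, and sign bookkeeping will not fix that. The description is also internally unclear: $a'$ is moved past $sx\otimes sb_{1,l}$ and then placed to the left of $sb_{1,l}$ in $\widehat\mu(S(a')\otimes sb_{1,l})$.

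\emph{What $\psi$ has to do.} The arity-$(1,0)$ part of the morphism equation says that $\psi^{1,0}$ is a homotopy between two left actions. The source's left action acts on $M$ through $\beta_{\iota(M)}$. The target's left action acts on $N$ through $\rho_{M\overset{\infty}{\otimes}_A\iota(N)}$. The homotopy that does this is insertion of the left input into the middle bar word, as in the usual proof that $xa\otimes y\sim x\otimes ay$ in a bar construction. It is not an application of the actions themselves. The module structure maps should enter only through $\widehat\beta$ on the two sides of the equation. The terms you anticipate ``where $\rho_M$ is hit twice'', to be cancelled via $\rho_M\circ\widehat\rho_M=0$, do not occur.

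\textbf{The map that works} is the one in the paper: $\psi^{p,q}=0$ for $q>0$, $\psi^{0,0}=\id$, and
\[
\psi^{p,0}(sa_{1,p}\otimes sx\otimes sc_{1,k}\otimes y)=(-1)^{|sa_{1,p}||sx|}\,sx\otimes\widehat\mu\bigl(S(sa_{1,p})\otimes sc_{1,k}\bigr)\otimes y .
\]
That is, move $sa_{1,p}$ past $sx$ and multiply it into the middle bar by $\varphi=\widehat\mu(S\otimes\id)$. For $q=0$ the terms then match as follows.
\begin{enumerate}
\item The $\widehat m$-terms match because $\varphi$ commutes with $\widehat m$.
\item The $\rho_M$-terms match because $\Delta\circ\varphi=(\varphi\otimes\varphi)(\id\otimes\tau\otimes\id)(\Delta^{\op}\otimes\Delta)$. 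Hence the initial segment of $\widehat\mu(S(sa_{1,p})\otimes sc_{1,k})$ on which $\rho_M$ acts is exactly what the source's left action produces.
\item In the $\rho_N$-terms, $\beta_{\iota(N)}$ applies $S$ to a tail of $\widehat\mu(S(sa_{t+1,p})\otimes sc_{1,k})$, while the target's left action appends $S(sa_{1,t})$. These match by Lemma \ref{lem:cofreehopfidentity}.
\end{enumerate}
For $q>0$, only terms of $\rho_N$-type survive on either side, and the same lemma closes the argument.

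So your toolkit is largely right, although \eqref{equ:muwithc} and \eqref{equ:mpq} are not needed here; they serve the unit constraint. You also correctly spotted that $\Delta\bigl(\widehat\mu(S(a)\otimes b)\bigr)$ next to $\widehat\mu(S(a)\otimes c)$ is the configuration of Lemma \ref{lem:cofreehopfidentity}. But that configuration arises in $\beta\circ\widehat\psi$, not inside $\psi$.
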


\begin{proof}
We define maps
$$\psi^{p,q}:(sA)^{\otimes p} \otimes ( \iota(M)\overset{\infty}{\otimes}_A \iota(N)) \otimes (sA)^{\otimes q} \rightarrow \iota(M\overset{\infty}{\otimes}_A \iota(N))$$
as follows. If $q>0$, set $\psi^{p,q}=0$. If $q=0$, define
$$\psi^{p,0}\colon (sA)^{\otimes p} \otimes s M \otimes T^c(sA) \otimes N \to sM\otimes T^c(sA) \otimes N $$
by
\begin{align*}
\psi^{p,0}(sa_{1,p}\otimes sx\otimes sc_{1,k} \otimes y)= (-1)^{|sa_{1,p}||sx|} sx\otimes \widehat{\mu}(S(sa_{1,p})\otimes sc_{1,k})\otimes y,
\end{align*}
 Diagrammatically, this is
$$\psi^{p,0}= \tikzmath{
\draw (-1, .6) --(-1, 1.5);
\draw (0, .6) --(0, 1.5);
\draw (1, .6) --(1, 4.2);
\draw (2, .6) --(2, 5.3);
\node at (2.5, -.1) {.};

\node at (-1, 0) {$(sA)^{\otimes p}$};
\node at (0, 0) {$sM$};
\node at (1, 0) {$T^c(sA)$};
\node at (2, 0) {$N$};

\draw [knot] (0, 1.5) to[out=90,in=-90](-1, 3);
\draw [knot] (-1, 1.5) to[out=90,in=-90](0, 3);
\draw (-1, 3) rectangle (-1, 5.3);

\draw (-.25, 3) rectangle (.25, 3.5);
\node at (0, 3.25) {$S$};
\draw (0, 3.5) -- (0,4.2);
\draw (-.25, 4.2) rectangle (1.25, 4.7);
\node at (0.5, 4.45) {$\widehat{\mu}$};

\draw (.5, 4.7) -- (.5, 5.3);
}$$
Here we use the natural identification $s(M\otimes (sA)^{\otimes m}\otimes N) \simeq sM\otimes (sA)^{\otimes m} \otimes N$.

Since $\psi^{0,0}=\id_{sM\otimes T^c(sA) \otimes N }$, it suffices to prove that $\psi$ is an $A_{\infty}$-bimodule morphism, namely
\begin{align}\label{align:Hbimodule}
\psi \circ \widehat{\beta}_{\iota(M)\overset{\infty}{\otimes}_A \iota(N)} = \beta_{\iota(M\overset{\infty}{\otimes}_A \iota(N))}\circ\widehat{\psi}.
\end{align}
Both sides of \eqref{align:Hbimodule} are evaluated on $(sA)^{\otimes p}\otimes sM\otimes (sA)^{\otimes m}\otimes N \otimes (sA)^{\otimes q}$. We prove the identity in the following two cases: $q=0$ and $q>0$.

For $q=0$, the left-hand side of the equation \eqref{align:Hbimodule} equals
$$\tikzmath{
\draw (-1, 0) --(-1, 2);
\draw (0, 0) --(0, 4);
\draw (1, 0) --(1, 4);
\draw (2, 0) --(2, 4);

\node at (-1, -.5) {$(sA)^{\otimes p}$};
\node at (0, -.5) {$sM$};
\node at (1, -.5) {$(sA)^{\otimes m}$};
\node at (2, -.5) {$N$};

\draw (-1.25, 2) rectangle (-.75, 2.5);
\node at (-1, 2.25) {$\widehat{m}$};
\draw (-1, 2.5) -- (-1, 4);
\draw (-1.25, 4) rectangle (2.25, 4.5);
\node at (0.5, 4.25) {$\psi$};
}\hspace{1 mm} + \hspace{1 mm}
\tikzmath{
\draw (-1.5, 0) --(-1.5, 1.2);
\draw (0, 0) --(0, 2.5);
\draw (1, 0) --(1, 2.5);
\draw (2, 0) --(2, 2.5);
\node at (2.5, -.6) {,};

\node at (-1.5, -.5) {$(sA)^{\otimes p}$};
\node at (-.3, -.5) {$sM$};
\node at (1, -.5) {$(sA)^{\otimes m}$};
\node at (2, -.5) {$N$};

\draw (-2.25, 1.2) rectangle (-.75, 1.7);
\node at (-1.5, 1.45) {$\Delta$};
\draw (-2, 1.7) -- (-2, 4);
\draw (-1, 1.7) -- (-1, 2.5);
\draw (-1.25, 2.5) rectangle (2.25, 3.2);
\node at (0.5, 2.82) {$\beta_{\iota(M)\overset{\infty}{\otimes}_A \iota(N)}$};
\draw (-2.25, 4) rectangle (2.25, 4.5);
\node at (0, 4.25) {$\psi$};
\draw (-.8, 3.2) -- (-.8, 4);
\draw (.5, 3.2) -- (.5, 4);
\draw (1.8, 3.2) -- (1.8, 4);
}
$$
The second summand in the diagram is further equal to
$$\tikzmath{
\draw (-1, 0) --(-1, 4);
\draw (0, 0) --(0, 4);
\draw (1, 0) --(1, 2);
\draw (2, 0) --(2, 4);
\node at (-1, -.3) {$(sA)^{\otimes p}$};
\node at (0, -.3) {$sM$};
\node at (1, -.3) {$(sA)^{\otimes m}$};
\node at (2, -.3) {$N$};

\draw (1.25, 2) rectangle (.75, 2.5);
\node at (1, 2.25) {$\widehat{m}$};
\draw (1, 2.5) -- (1, 4);
\draw (-1.25, 4) rectangle (2.25, 4.5);
\node at (0.5, 4.25) {$\psi$};
}\hspace{1 mm} + \hspace{1 mm}\tikzmath{
\draw (-1.5, 0) --(-1.5, 1.2);
\draw (0, 0) --(0, 2.7);
\draw (1.5, 0) --(1.5, 1.2);
\draw (3, 0) --(3, 4);

\draw (-2.25, 1.2) rectangle (-.75, 1.7);
\node at (-1.5, 1.45) {$\Delta$};
\draw (2.25, 1.2) rectangle (.75, 1.7);
\node at (1.5, 1.45) {$\Delta$};
\draw (-2, 1.7) -- (-2, 4);
\draw (-1, 1.7) -- (-1, 2.7);
\draw (2, 1.7) -- (2, 4);
\draw (1, 1.7) -- (1, 2.7);
\draw (-1.25, 2.7) rectangle (1.25, 3.2);
\node at (0, 2.92) {$\beta_{\iota(M)}$};
\draw (-2.25, 4) rectangle (3.25, 4.5);
\node at (0.5, 4.25) {$\psi$};
\draw (., 3.2) -- (., 4);
}\hspace{1 mm} - \hspace{1 mm}\tikzmath{
\draw (-1, 0) --(-1, 5.2);
\draw (0, 0) --(0, 5.2);
\draw (1.5, 0) --(1.5, 1.2);
\draw (3, 0) --(3, 1.2);
\node at (3.5, 0) {,};

\draw (2.25, 1.2) rectangle (.75, 1.7);
\node at (1.5, 1.45) {$\Delta$};
\draw (3.25, 1.2) rectangle (2.75, 1.7);
\node at (3, 1.45) {$s$};
\draw (1, 1.7) --(1, 5.2);

\draw (3.25, 3.2) rectangle (1.75, 2.7);
\node at (2.5, 2.92) {$\beta_{\iota(N)}$};
\draw (2, 1.7) --(2, 2.7);
\draw (3, 1.7) --(3, 2.7);
\draw (2.5, 3.2) --(2.5, 4);
\draw (2.8, 4) rectangle (2.2, 4.5);
\node at (2.5, 4.25) {$s^{-1}$};
\draw (2.5, 4.5) --(2.5, 5.2);

\draw (-1.25, 5.7) rectangle (3.25, 5.2);
\node at (1, 5.45) {$\psi$};
}$$
where the lowercase $s$ is the shift functor.
By the definition of $\psi$, the left-hand side of \eqref{align:Hbimodule} becomes
\begin{figure}[H]
\centering
$$\tikzmath{
\draw (-1, 0.4) --(-1, 2);
\draw (0, 0.4) --(0, 2.5);
\draw (1, 0.4) --(1, 5.3);
\draw (2, 0.4) --(2, 6.2);

\draw (-1.25, 2) rectangle (-.75, 2.5);
\node at (-1, 2.25) {$\widehat{m}$};
\draw [knot] (0, 2.5) to[out=90,in=-90](-1, 4);
\draw [knot] (-1, 2.5) to[out=90,in=-90](0, 4);
\draw (-.25, 4) rectangle (.25, 4.5);
\node at (0, 4.25) {$S$};
\draw (0, 4.5) -- (0,5.3);
\draw (-.25, 5.3) rectangle (1.25, 5.8);
\node at (0.5, 5.55) {$\widehat{\mu}$};
\draw (-1, 4) -- (-1, 6.2);
\draw (0.5, 5.8) -- (0.5, 6.2);
}\hspace{1 mm} + \hspace{1 mm}
\tikzmath{
\draw (-1, 0.4) --(-1, 2.5);
\draw (0, 0.4) --(0, 2.5);
\draw (1, 0.4) --(1, 2);
\draw (2, 0.4) --(2, 6.2);

\draw (1.25, 2) rectangle (.75, 2.5);
\node at (1, 2.25) {$\widehat{m}$};
\draw (1, 2.5) -- (1, 5.3);
\draw [knot] (0, 2.5) to[out=90,in=-90](-1, 4);
\draw [knot] (-1, 2.5) to[out=90,in=-90](0, 4);
\draw (-.25, 4) rectangle (.25, 4.5);
\node at (0, 4.25) {$S$};
\draw (0, 4.5) -- (0,5.3);
\draw (-.25, 5.3) rectangle (1.25, 5.8);
\node at (0.5, 5.55) {$\widehat{\mu}$};
\draw (-1, 4) -- (-1, 6.2);
\draw (0.5, 5.8) -- (0.5, 6.2);
}\hspace{1 mm} + \hspace{1 mm}\tikzmath{
\draw (-1.5, 0.7) --(-1.5, 1.2);
\draw (0, 0.7) --(0, 1.7);
\draw (1.5, 0.7) --(1.5, 1.2);
\draw (3, 0.7) --(3, 10);

\draw (-2.25, 1.2) rectangle (-.75, 1.7);
\node at (-1.5, 1.45) {$\Delta$};
\draw (2.25, 1.2) rectangle (.75, 1.7);
\node at (1.5, 1.45) {$\Delta$};

\draw [knot] (0, 1.7) to[out=90,in=-90](-1, 3.2);
\draw [knot] (-1, 1.7) to[out=90,in=-90](0, 3.2);
\draw (-1, 3.2) --(-1, 5.8);
\draw (-2, 1.7) --(-2, 6.3);
\draw (1, 1.7) --(1, 4.5);
\draw (2, 1.7) --(2, 9.1);

\draw (-.25, 3.2) rectangle (.25, 3.7);
\node at (0, 3.45) {$S$};
\draw (0, 3.7) --(0, 4.5);
\draw (-.25, 4.5) rectangle (1.25, 5);
\node at (0.5, 4.75) {$\widehat{\mu}$};
\draw (.5, 5) --(.5, 5.8);
\draw (-1.25, 5.8) rectangle (.75, 6.3);
\node at (-.25, 6.05) {$\rho_{M}$};

\draw [knot] (-1, 6.3) to[out=90,in=-90](-1, 10);
\draw [knot] (-2, 6.3) to[out=90,in=-90](1, 7.8);

\draw (1.25, 7.8) rectangle (.75, 8.3);
\node at (1, 8.05) {$S$};
\draw (1, 8.3) --(1, 9.1);
\draw (.75, 9.1) rectangle (2.25, 9.6);
\node at (1.5, 9.35) {$\widehat{\mu}$};
\draw (1.5, 9.6) --(1.5, 10);
}\hspace{1 mm} - \hspace{1 mm}\tikzmath{
\draw (-1, 0.7) --(-1, 6.3);
\draw (0, 0.7) --(0, 6.3);
\draw (1.5, 0.7) --(1.5, 1.2);
\draw (3, 0.7) --(3, 1.2);
\node at (3.5, 0.8) {.};

\draw (2.25, 1.2) rectangle (.75, 1.7);
\node at (1.5, 1.45) {$\Delta$};
\draw (3.25, 1.2) rectangle (2.75, 1.7);
\node at (3, 1.45) {$s$};
\draw (1, 1.7) --(1, 9.1);
\draw [knot] (3, 1.7) to[out=90,in=-90](2, 3.2);
\draw [knot] (2, 1.7) to[out=90,in=-90](3, 3.2);

\draw (3.25, 3.2) rectangle (2.75, 3.7);
\node at (3, 3.45) {$S$};
\draw (2, 3.2) --(2, 4.5);
\draw (3, 3.7) --(3, 4.5);
\draw (3.25, 4.5) rectangle (1.75, 5);
\node at (2.5, 4.75) {$\rho_N$};
\draw (2.5, 5) --(2.5, 5.8);
\draw (2.2, 5.8) rectangle (2.8, 6.3);
\node at (2.5, 6.05) {$s^{-1}$};

\draw [knot] (0, 6.3) to[out=90,in=-90](-1, 7.8);
\draw [knot] (-1, 6.3) to[out=90,in=-90](0, 7.8);
\draw (-1, 7.8) --(-1, 10);
\draw (2.5, 6.3) --(2.5, 10);
\draw (-.25, 7.8) rectangle (.25, 8.3);
\node at (0, 8.05) {$S$};
\draw (0, 8.3) --(0, 9.1);
\draw (-.25, 9.1) rectangle (1.25, 9.6);
\node at (0.5, 9.35) {$\widehat{\mu}$};
\draw (0.5, 10) --(0.5, 9.6);
}$$
\caption{The left-hand side of \eqref{align:Hbimodule} when $q=0$.}
\label{fig:LHS}
\end{figure}

For $q=0$ the right-hand side of \eqref{align:Hbimodule} equals
$$\tikzmath{
\draw (-1.5, 0.6) --(-1.5, 1.2);
\draw (0, 0.6) --(0, 2.5);
\draw (1, 0.6) --(1, 2.5);
\draw (2, 0.6) --(2, 2.5);
\node at (2.6, -.) {.};

\node at (-1.5, .1) {$(sA)^{\otimes p}$};
\node at (-.3, .1) {$sM$};
\node at (1, .1) {$(sA)^{\otimes m}$};
\node at (2, .1) {$N$};

\draw (-2.25, 1.2) rectangle (-.75, 1.7);
\node at (-1.5, 1.45) {$\Delta$};
\draw (-2, 1.7) -- (-2, 4);
\draw (-1, 1.7) -- (-1, 2.5);
\draw (-1.25, 2.5) rectangle (2.25, 3);
\node at (0.5, 2.75) {$\psi$};
\draw (-2.25, 4) rectangle (2.25, 4.7);
\node at (0, 4.35) {$\beta_{\iota(M\overset{\infty}{\otimes}_A \iota(N))}$};
\draw (-.8, 3) -- (-.8, 4);
\draw (.5, 3) -- (.5, 4);
\draw (1.8, 3) -- (1.8, 4);
}$$
More specifically, we have
\begin{figure}[H]
\centering
$$\tikzmath{
\draw (-1.5, 0.4) --(-1.5, 1.2);
\draw (0, 0.4) --(0, 1.7);
\draw (1, 0.4) --(1, 4.5);
\draw (2, 0.4) --(2, 5);

\draw (-2.25, 1.2) rectangle (-.75, 1.7);
\node at (-1.5, 1.45) {$\Delta$};
\draw [knot] (0, 1.7) to[out=90,in=-90](-1, 3.2);
\draw [knot] (-1, 1.7) to[out=90,in=-90](0, 3.2);
\draw (-2, 1.7) --(-2, 5);
\draw (-1, 3.2) --(-1, 5);
\draw (-.25, 3.2) rectangle (.25, 3.7);
\node at (0, 3.45) {$S$};
\draw (0, 3.7) --(0, 4.5);
\draw (-.25, 4.5) rectangle (1.25, 5);
\node at (0.5, 4.75) {$\widehat{\mu}$};
\draw [knot] (-1, 5) to[out=90,in=-90](-1, 7.8);
\draw [knot] (0.5, 5) to[out=90,in=-90](.5, 7.8);
\draw [knot] (2, 5) to[out=90,in=-90](2, 7.8);
\draw [knot] (-2, 5) to[out=90,in=-90](3, 6.5);

\draw (3.25, 6.5) rectangle (2.75, 7);
\node at (3, 6.75) {$S$};
\draw (3, 7) --(3, 7.8);
\draw (-1.25, 7.8) rectangle (3.25, 8.5);
\node at (1, 8.15) {$\beta_{M\overset{\infty}{\otimes}_A \iota(N)}$};
}\hspace{1 mm} = \hspace{1 mm}\tikzmath{
\draw (-1, 0.4) --(-1, 1.2);
\draw (0, 0.4) --(0, 1.2);
\draw (1, 0.4) --(1, 4);
\draw (2, 0.4) --(2, 7.3);

\draw [knot] (0, 1.2) to[out=90,in=-90](-1, 2.7);
\draw [knot] (-1, 1.2) to[out=90,in=-90](0, 2.7);
\draw (-.25, 3.2) rectangle (.25, 2.7);
\node at (0, 2.95) {$S$};
\draw (0, 3.2) --(0, 4);
\draw (-.25, 4.5) rectangle (1.25, 4);
\node at (0.5, 4.25) {$\widehat{\mu}$};
\draw (-1, 2.7) --(-1, 7.3);
\draw (.5, 4.5) --(.5, 5.3);
\draw (-.25, 5.3) rectangle (1.25, 5.8);
\node at (0.5, 5.55) {$\widehat{m}$};
\draw (.5, 5.8) --(.5, 7.3);

}\hspace{1 mm} + \hspace{1 mm}\tikzmath{
\draw (-1, 0.4) --(-1, 1.2);
\draw (0, 0.4) --(0, 1.2);
\draw (1, 0.4) --(1, 4);
\draw (2, 0.4) --(2, 7.3);

\draw [knot] (0, 1.2) to[out=90,in=-90](-1, 2.7);
\draw [knot] (-1, 1.2) to[out=90,in=-90](0, 2.7);
\draw (-.25, 3.2) rectangle (.25, 2.7);
\node at (0, 2.95) {$S$};
\draw (0, 3.2) --(0, 4);
\draw (-.25, 4.5) rectangle (1.25, 4);
\node at (0.5, 4.25) {$\widehat{\mu}$};
\draw (-1, 2.7) --(-1, 6.4);
\draw (.5, 4.5) --(.5, 5.3);
\draw (-.25, 5.3) rectangle (1.25, 5.8);
\node at (0.5, 5.55) {$\Delta$};
\draw (., 5.8) --(., 6.4);
\draw (1, 5.8) --(1, 7.3);

\draw (-1.25, 6.4) rectangle (.25, 6.9);
\node at (-.5, 6.65) {$\rho_M$};
\draw (-.5, 6.9) --(-.5, 7.3);
}\hspace{1 mm} - \hspace{1 mm}\tikzmath{
\draw (-1.5, 0.4) --(-1.5, 1.2);
\draw (0, 0.4) --(0, 1.7);
\draw (1, 0.4) --(1, 4.5);
\draw (2, 0.4) --(2, 4.5);
\node at (3, 0) {.};

\draw (-2.25, 1.2) rectangle (-.75, 1.7);
\node at (-1.5, 1.45) {$\Delta$};
\draw [knot] (0, 1.7) to[out=90,in=-90](-1, 3.2);
\draw [knot] (-1, 1.7) to[out=90,in=-90](0, 3.2);
\draw (-2, 1.7) --(-2, 5);
\draw (-1, 3.2) --(-1, 5);
\draw (-.25, 3.2) rectangle (.25, 3.7);
\node at (0, 3.45) {$S$};
\draw (0, 3.7) --(0, 4.5);
\draw (-.25, 4.5) rectangle (1.25, 5);
\node at (0.5, 4.75) {$\widehat{\mu}$};
\draw [knot] (-1, 5) to[out=90,in=-90](-1, 8.7);
\draw [knot] (0.5, 5) to[out=90,in=-90](.5, 6.5);
\draw [knot] (2, 5) to[out=90,in=-90](2, 7.8);
\draw [knot] (-2, 5) to[out=90,in=-90](3, 6.5);

\draw (2.25, 4.5) rectangle (1.75, 5);
\node at (2, 4.75) {$s$};
\draw (3.25, 6.5) rectangle (2.75, 7);
\node at (3, 6.75) {$S$};
\draw (-.25, 6.5) rectangle (1.25, 7);
\node at (.5, 6.75) {$\Delta$};
\draw (0, 7) --(0, 8.7);
\draw (1, 7) --(1, 7.8);
\draw (3, 7) --(3, 7.8);
\draw (3.25, 7.8) rectangle (.75, 8.3);
\node at (2, 8.05) {$s^{-1}\circ\beta_{\iota(N)}$};
\draw (2, 8.3) --(2, 8.7);
}$$
\caption{The right-hand side of \eqref{align:Hbimodule} when $q=0$.}
\label{fig:RHS}
\end{figure}

We now compare the summands in Figures \ref{fig:LHS} and \ref{fig:RHS}. Observe that:
\begin{itemize}
\item The first two summands in Figure \ref{fig:LHS} combine to give the first summand in Figure \ref{fig:RHS}, because $\varphi=\widehat{\mu}(S\otimes\id)$ is compatible with $\widehat{m}$.
\item The third summand in Figure \ref{fig:LHS} coincides with the second summand in Figure \ref{fig:RHS}, since $\Delta\varphi= (\varphi\otimes\varphi)(\id\otimes\tau\otimes\id)(\Delta^{\op}\otimes\Delta)$.
\item Finally, using Lemma \ref{lem:cofreehopfidentity}, the last summand in Figure \ref{fig:RHS} can be simplified as follows:
\end{itemize}
$$\tikzmath{
\draw (-1.5, 0) --(-1.5, 1.2);
\draw (0, 0) --(0, 1.7);
\draw (1, 0) --(1, 4.5);
\draw (2, 0) --(2, 4.5);

\draw (-2.25, 1.2) rectangle (-.75, 1.7);
\node at (-1.5, 1.45) {$\Delta$};
\draw [knot] (0, 1.7) to[out=90,in=-90](-1, 3.2);
\draw [knot] (-1, 1.7) to[out=90,in=-90](0, 3.2);
\draw (-2, 1.7) --(-2, 5);
\draw (-1, 3.2) --(-1, 5);
\draw (-.25, 3.2) rectangle (.25, 3.7);
\node at (0, 3.45) {$S$};
\draw (0, 3.7) --(0, 4.5);
\draw (-.25, 4.5) rectangle (1.25, 5);
\node at (0.5, 4.75) {$\widehat{\mu}$};
\draw (2.25, 4.5) rectangle (1.75, 5);
\node at (2, 4.75) {$s$};
\draw [knot] (-1, 5) to[out=90,in=-90](-1, 11.7);
\draw [knot] (0.5, 5) to[out=90,in=-90](.5, 6.5);
\draw [knot] (2, 5) to[out=90,in=-90](2, 7);
\draw [knot] (-2, 5) to[out=90,in=-90](3, 6.5);

\draw (3.25, 6.5) rectangle (2.75, 7);
\node at (3, 6.75) {$S$};
\draw (-.25, 6.5) rectangle (1.25, 7);
\node at (.5, 6.75) {$\Delta$};
\draw (0, 7) --(0, 11.7);
\draw [knot] (2, 7) to[out=90,in=-90](1, 8.5);
\draw [knot] (1, 7) to[out=90,in=-90](2, 8.5);
\draw (3, 7) --(3, 9.8);
\draw (2.25, 8.5) rectangle (1.75, 9);
\node at (2, 8.75) {$S$};
\draw (2, 9) --(2, 9.8);
\draw (1.75, 9.8) rectangle (3.25, 10.3);
\node at (2.5, 10.05) {$\widehat{\mu}$};
\draw (1, 8.5) --(1, 10.7);
\draw (2.5, 10.3) --(2.5, 10.7);
\draw (3.25, 10.7) rectangle (.75, 11.3);
\node at (2, 10.95) {$s^{-1}\circ\rho_{\iota(N)}$};
\draw (2, 11.3) --(2, 11.7);
}\hspace{1 mm} = \hspace{1 mm}\tikzmath{
\draw (-1, 0) --(-1, 6.3);
\draw (0, 0) --(0, 6.3);
\draw (1.5, 0) --(1.5, 1.2);
\draw (3, 0) --(3, 1.2);
\node at (3.5, 0) {,};

\draw (2.25, 1.2) rectangle (.75, 1.7);
\node at (1.5, 1.45) {$\Delta$};
\draw (3.25, 1.2) rectangle (2.75, 1.7);
\node at (3, 1.45) {$s$};
\draw (1, 1.7) --(1, 9.1);
\draw [knot] (3, 1.7) to[out=90,in=-90](2, 3.2);
\draw [knot] (2, 1.7) to[out=90,in=-90](3, 3.2);

\draw (3.25, 3.2) rectangle (2.75, 3.7);
\node at (3, 3.45) {$S$};
\draw (2, 3.2) --(2, 4.5);
\draw (3, 3.7) --(3, 4.5);
\draw (3.25, 4.5) rectangle (1.75, 5);
\node at (2.5, 4.75) {$\rho_N$};
\draw (2.5, 5) --(2.5, 5.8);
\draw (2.2, 5.8) rectangle (2.8, 6.3);
\node at (2.5, 6.05) {$s^{-1}$};

\draw [knot] (0, 6.3) to[out=90,in=-90](-1, 7.8);
\draw [knot] (-1, 6.3) to[out=90,in=-90](0, 7.8);
\draw (-1, 7.8) --(-1, 10);
\draw (2.5, 6.3) --(2.5, 10);
\draw (-.25, 7.8) rectangle (.25, 8.3);
\node at (0, 8.05) {$S$};
\draw (0, 8.3) --(0, 9.1);
\draw (-.25, 9.1) rectangle (1.25, 9.6);
\node at (0.5, 9.35) {$\widehat{\mu}$};
\draw (0.5, 10) --(0.5, 9.6);
}$$
which is the last summand of Figure \ref{fig:LHS}.
Thus the two sides of \eqref{align:Hbimodule} agree when $q=0$.

Now assume $q>0$. In this case, the left-hand side of \eqref{align:Hbimodule} is
$$\tikzmath{
\draw (-1, 0) --(-1, 2);
\draw (0, 0) --(0, 4);
\draw (1, 0) --(1, 4);
\draw (2, 0) --(2, 4);
\draw (3, 0) --(3, 4);

\node at (-1, -.5) {$(sA)^{\otimes p}$};
\node at (0, -.5) {$sM$};
\node at (1, -.5) {$(sA)^{\otimes m}$};
\node at (2, -.5) {$N$};
\node at (3, -.5) {$(sA)^{\otimes q}$};

\draw (-1.25, 2) rectangle (-.75, 2.5);
\node at (-1, 2.25) {$\widehat{m}$};
\draw (-1, 2.5) -- (-1, 4);
\draw (-1.25, 4) rectangle (3.25, 4.5);
\node at (1, 4.25) {$\psi$};
}\hspace{1 mm} + \hspace{1 mm}\tikzmath{
\draw (-1, 0) --(-1, 4);
\draw (0, 0) --(0, 4);
\draw (1, 0) --(1, 4);
\draw (2, 0) --(2, 4);
\draw (3, 0) --(3, 2);

\node at (-1, -.5) {$(sA)^{\otimes p}$};
\node at (0, -.5) {$sM$};
\node at (1, -.5) {$(sA)^{\otimes m}$};
\node at (2, -.5) {$N$};
\node at (3, -.5) {$(sA)^{\otimes q}$};

\draw (3.25, 2) rectangle (2.75, 2.5);
\node at (3, 2.25) {$\widehat{m}$};
\draw (3, 2.5) -- (3, 4);
\draw (-1.25, 4) rectangle (3.25, 4.5);
\node at (1, 4.25) {$\psi$};
}\hspace{1 mm} + \hspace{1 mm}\tikzmath{
\draw (-2.5, 0) --(-2.5, 1.2);
\draw (-1, 0) --(-1, 2.5);
\draw (0, 0) --(0, 2.5);
\draw (1, 0) --(1, 2.5);
\draw (2.5, 0) --(2.5, 1.2);
\node at (3.5, -.6) {.};

\node at (-2.5, -.5) {$(sA)^{\otimes p}$};
\node at (-1.3, -.5) {$sM$};
\node at (0, -.5) {$(sA)^{\otimes m}$};
\node at (1, -.5) {$N$};
\node at (2.5, -.5) {$(sA)^{\otimes q}$};

\draw (-3.25, 1.2) rectangle (-1.75, 1.7);
\node at (-2.5, 1.45) {$\Delta$};
\draw (3.25, 1.2) rectangle (1.75, 1.7);
\node at (2.5, 1.45) {$\Delta$};
\draw (-3, 1.7) -- (-3, 4);
\draw (-2, 1.7) -- (-2, 2.5);
\draw (3, 1.7) -- (3, 4);
\draw (2, 1.7) -- (2, 2.5);
\draw (-2.25, 2.5) rectangle (2.25, 3.2);
\node at (0, 2.82) {$\beta_{\iota(M)\overset{\infty}{\otimes}_A \iota(N)}$};
\draw (-3.25, 4) rectangle (3.25, 4.5);
\node at (0, 4.25) {$\psi$};
\draw (., 3.2) -- (., 4);
\draw (-2, 3.2) -- (-2, 4);
\draw (2, 3.2) -- (2, 4);
}
$$
By the definition of $\psi$, the first two summands vanish, and the third summand is equal to
$$\tikzmath{
\draw (-2, 0) --(-2, 2.8);
\draw (-1, 0) --(-1, 1.2);
\draw (0, 0) --(0, 1.2);
\draw (1, 0) --(1, 1.2);
\draw (2, 0) --(2, 1.2);

\node at (-2.5, -.5) {$(sA)^{\otimes p}$};
\node at (-1.3, -.5) {$sM$};
\node at (0, -.5) {$(sA)^{\otimes m}$};
\node at (1, -.5) {$N$};
\node at (2.5, -.5) {$(sA)^{\otimes q}$};
\draw (-1.25, 1.2) rectangle (2.25, 1.9);
\node at (0.5, 1.52) {$\beta_{\iota(M)\overset{\infty}{\otimes}_A \iota(N)}$};
\draw (-2.25, 2.8) rectangle (2.25, 3.3);
\node at (-., 3.05) {$\psi$};
\draw (-1, 1.9) -- (-1, 2.8);
\draw (.5, 1.9) -- (.5, 2.8);
\draw (2, 1.9) -- (2, 2.8);
}\hspace{1 mm} = \hspace{1 mm}- \hspace{1 mm}\tikzmath{
\draw (-2, 0.4) --(-2, 1.7);
\draw (-1, 0.4) --(-1, 1.7);
\draw (0.5, 0.4) --(0.5, 1.2);
\draw (2, 0.4) --(2, 1.2);
\draw (3, 0.4) --(3, 3.2);

\draw (-.25, 1.2) rectangle (1.25, 1.7);
\node at (.5, 1.45) {$\Delta$};
\draw (1.75, 1.2) rectangle (2.25, 1.7);
\node at (2, 1.45) {$s$};
\draw [knot] (-1, 1.7) to[out=90,in=-90](-2, 3.2);
\draw [knot] (-2, 1.7) to[out=90,in=-90](-1, 3.2);
\draw (-.75, 3.2) rectangle (-1.25, 3.7);
\node at (-1, 3.45) {$S$};
\draw (2, 3.2) --(2, 1.7);
\draw (1, 3.2) --(1, 1.7);
\draw (0, 1.7) --(0, 4.5);
\draw (3.25, 3.2) rectangle (.75, 3.7);
\node at (2, 3.45) {$s^{-1}\circ\beta_{\iota(N)}$};
\draw (-1.25, 4.5) rectangle (.25, 5);
\node at (-0.5, 4.75) {$\widehat{\mu}$};
\draw (-1, 3.7) --(-1, 4.5);
\draw (-2, 3.2) --(-2, 5.5);
\draw (-.5, 5) --(-.5, 5.5);
\draw (2, 3.7) --(2, 5.5);
}
\hspace{1 mm} = \hspace{1 mm}- \hspace{1 mm}\tikzmath{
\draw (-2, 0.4) --(-2, 1.7);
\draw (-1, 0.4) --(-1, 1.7);
\draw (0.5, 0.4) --(0.5, 1.2);
\draw (2, 0.4) --(2, 1.2);
\draw (3, 0.4) --(3, 4.5);
\node at (3.5, .) {.};

\draw (-.25, 1.2) rectangle (1.25, 1.7);
\node at (.5, 1.45) {$\Delta$};
\draw (1.75, 1.2) rectangle (2.25, 1.7);
\node at (2, 1.45) {$s$};
\draw [knot] (-1, 1.7) to[out=90,in=-90](-2, 3.2);
\draw [knot] (-2, 1.7) to[out=90,in=-90](-1, 3.2);
\draw [knot] (2, 1.7) to[out=90,in=-90](1, 3.2);
\draw [knot] (1, 1.7) to[out=90,in=-90](2, 3.2);
\draw (1.75, 3.2) rectangle (2.25, 3.7);
\node at (2, 3.45) {$S$};
\draw (-.75, 3.2) rectangle (-1.25, 3.7);
\node at (-1, 3.45) {$S$};
\draw (2, 3.7) --(2, 4.5);
\draw (3.25, 4.5) rectangle (1.75, 5);
\node at (2.5, 4.75) {$\widehat{\mu}$};
\draw (-1, 3.7) --(-1, 4.5);
\draw (0, 1.7) --(0, 4.5);
\draw (-1.25, 4.5) rectangle (.25, 5);
\node at (-0.5, 4.75) {$\widehat{\mu}$};

\draw (-2, 3.2) --(-2, 6.7);
\draw (-.5, 5) --(-.5, 6.7);
\draw (1, 3.2) --(1, 5.8);
\draw (2.5, 5) --(2.5, 5.8);
\draw (2.75, 5.8) rectangle (.75, 6.3);
\node at (1.75, 6.05) {$s^{-1}\circ\rho_N$};
\draw (1.75, 6.3) -- (1.75, 6.7);
}
$$
Similarly, the right-hand side of \eqref{align:Hbimodule} becomes
$$\tikzmath{
\draw (-2.5, 0.4) --(-2.5, 1.2);
\draw (-1, 0.4) --(-1, 2.5);
\draw (0, 0.4) --(0, 2.5);
\draw (1, 0.4) --(1, 2.5);
\draw (2.5, 0.4) --(2.5, 1.2);

\node at (-2.5, -.1) {$(sA)^{\otimes p}$};
\node at (-1.3, -.1) {$sM$};
\node at (0, -.1) {$(sA)^{\otimes m}$};
\node at (1, -.1) {$N$};
\node at (2.5, -.1) {$(sA)^{\otimes q}$};

\draw (-3.25, 1.2) rectangle (-1.75, 1.7);
\node at (-2.5, 1.45) {$\Delta$};
\draw (3.25, 1.2) rectangle (1.75, 1.7);
\node at (2.5, 1.45) {$\Delta$};
\draw (-3, 1.7) -- (-3, 3.8);
\draw (-2, 1.7) -- (-2, 2.5);
\draw (2, 1.7) -- (2, 2.5);
\draw (3, 1.7) -- (3, 3.8);
\draw (2.25, 2.5) rectangle (-2.25, 3);
\node at (0, 2.75) {$\psi$};
\draw (-2, 3) -- (-2, 3.8);
\draw (0, 3) -- (0, 3.8);
\draw (2, 3) -- (2, 3.8);
\draw (-3.25, 4.5) rectangle (3.25, 3.8);
\node at (0, 4.12) {$\beta_{\iota(M\overset{\infty}{\otimes}_A \iota(N))}$};
}\hspace{1 mm} = \hspace{1 mm}\tikzmath{
\draw (-2.5, 0.4) --(-2.5, 1.2);
\draw (-1, 0.4) --(-1, 2.5);
\draw (0, 0.4) --(0, 2.5);
\draw (1, 0.4) --(1, 2.5);
\draw (2, 0.4) --(2, 3.8);

\draw (-3.25, 1.2) rectangle (-1.75, 1.7);
\node at (-2.5, 1.45) {$\Delta$};
\draw (-3, 1.7) -- (-3, 3.8);
\draw (-2, 1.7) -- (-2, 2.5);
\draw (1.25, 2.5) rectangle (-2.25, 3);
\node at (-.5, 2.75) {$\psi$};
\draw (-2, 3) -- (-2, 3.8);
\draw (-0.5, 3) -- (-.5, 3.8);
\draw (1, 3) -- (1, 3.8);
\draw (-3.25, 4.5) rectangle (2.25, 3.8);
\node at (-.5, 4.12) {$\beta_{\iota(M\overset{\infty}{\otimes}_A \iota(N))}$};
}\hspace{1 mm} = \hspace{1 mm}\tikzmath{
\draw (-1.5, 0.8) --(-1.5, 1.2);
\draw (0, 0.8) --(0, 1.7);
\draw (1, 0.8) --(1, 4.5);
\draw (2, 0.8) --(2, 5);
\draw (4, 0.8) --(4, 7.8);

\draw (-2.25, 1.2) rectangle (-.75, 1.7);
\node at (-1.5, 1.45) {$\Delta$};
\draw [knot] (0, 1.7) to[out=90,in=-90](-1, 3.2);
\draw [knot] (-1, 1.7) to[out=90,in=-90](0, 3.2);
\draw (-2, 1.7) --(-2, 5);
\draw (-1, 3.2) --(-1, 5);
\draw (-.25, 3.2) rectangle (.25, 3.7);
\node at (0, 3.45) {$S$};
\draw (0, 3.7) --(0, 4.5);
\draw (-.25, 4.5) rectangle (1.25, 5);
\node at (0.5, 4.75) {$\widehat{\mu}$};
\draw [knot] (-1, 5) to[out=90,in=-90](-1, 9);
\draw [knot] (0.5, 5) to[out=90,in=-90](.5, 9);
\draw [knot] (2, 5) to[out=90,in=-90](2, 9);
\draw [knot] (-2, 5) to[out=90,in=-90](3, 6.5);

\draw (3.25, 6.5) rectangle (2.75, 7);
\node at (3, 6.75) {$S$};
\draw (3, 7) --(3, 7.8);
\draw (2.75, 7.8) rectangle (4.25, 8.3);
\node at (3.5, 8.05) {$\widehat{\mu}$};
\draw (3.5, 9) --(3.5, 8.3);
\draw (-1.25, 9) rectangle (4.25, 9.7);
\node at (1.5, 9.32) {$\beta_{M\overset{\infty}{\otimes}_A \iota(N)}$};
}$$
$$ \hspace{1 mm} = \hspace{1 mm} - \hspace{1 mm}\tikzmath{
\draw (-1.5, 0.8) --(-1.5, 1.2);
\draw (0, 0.8) --(0, 1.7);
\draw (1, 0.8) --(1, 4.5);
\draw (2, 0.8) --(2, 4.5);
\draw (4, 0.8) --(4, 7.8);

\draw (-2.25, 1.2) rectangle (-.75, 1.7);
\node at (-1.5, 1.45) {$\Delta$};
\draw [knot] (0, 1.7) to[out=90,in=-90](-1, 3.2);
\draw [knot] (-1, 1.7) to[out=90,in=-90](0, 3.2);
\draw (-2, 1.7) --(-2, 5);
\draw (-1, 3.2) --(-1, 5);
\draw (-.25, 3.2) rectangle (.25, 3.7);
\node at (0, 3.45) {$S$};
\draw (0, 3.7) --(0, 4.5);
\draw (-.25, 4.5) rectangle (1.25, 5);
\node at (0.5, 4.75) {$\widehat{\mu}$};
\draw (2.25, 4.5) rectangle (1.75, 5);
\node at (2, 4.75) {$s$};
\draw [knot] (-1, 5) to[out=90,in=-90](-1, 11.7);
\draw [knot] (0.5, 5) to[out=90,in=-90](.5, 6.5);
\draw [knot] (2, 5) to[out=90,in=-90](2, 7);
\draw [knot] (-2, 5) to[out=90,in=-90](3, 6.5);

\draw (3.25, 6.5) rectangle (2.75, 7);
\node at (3, 6.75) {$S$};
\draw (3, 7) --(3, 7.8);
\draw (2.75, 7.8) rectangle (4.25, 8.3);
\node at (3.5, 8.05) {$\widehat{\mu}$};
\draw (-.25, 6.5) rectangle (1.25, 7);
\node at (.5, 6.75) {$\Delta$};
\draw (0, 7) --(0, 11.7);
\draw [knot] (2, 7) to[out=90,in=-90](1, 8.5);
\draw [knot] (1, 7) to[out=90,in=-90](2, 8.5);
\draw (2.25, 8.5) rectangle (1.75, 9);
\node at (2, 8.75) {$S$};
\draw (3.5, 8.3) --(3.5, 9.8);
\draw (2, 9) --(2, 9.8);
\draw (1.75, 9.8) rectangle (3.75, 10.3);
\node at (2.75, 10.05) {$\widehat{\mu}$};
\draw (1, 8.5) --(1, 10.7);
\draw (2.5, 10.3) --(2.5, 10.7);
\draw (3.75, 10.7) rectangle (.75, 11.3);
\node at (2.25, 10.95) {$s^{-1}\circ\rho_{N}$};
\draw (2.5, 11.3) --(2.5, 11.7);
}\hspace{1 mm} = \hspace{1 mm}- \hspace{1 mm}\tikzmath{
\draw (-2, 0.5) --(-2, 1.7);
\draw (-1, 0.5) --(-1, 1.7);
\draw (0.5, 0.5) --(0.5, 1.2);
\draw (2, 0.5) --(2, 1.2);
\draw (3, 0.5) --(3, 4.5);
\node at (3.5, 0.5) {.};

\draw (-.25, 1.2) rectangle (1.25, 1.7);
\node at (.5, 1.45) {$\Delta$};
\draw (1.75, 1.2) rectangle (2.25, 1.7);
\node at (2, 1.45) {$s$};
\draw [knot] (-1, 1.7) to[out=90,in=-90](-2, 3.2);
\draw [knot] (-2, 1.7) to[out=90,in=-90](-1, 3.2);
\draw [knot] (2, 1.7) to[out=90,in=-90](1, 3.2);
\draw [knot] (1, 1.7) to[out=90,in=-90](2, 3.2);
\draw (1.75, 3.2) rectangle (2.25, 3.7);
\node at (2, 3.45) {$S$};
\draw (-.75, 3.2) rectangle (-1.25, 3.7);
\node at (-1, 3.45) {$S$};
\draw (2, 3.7) --(2, 4.5);
\draw (3.25, 4.5) rectangle (1.75, 5);
\node at (2.5, 4.75) {$\widehat{\mu}$};
\draw (-1, 3.7) --(-1, 4.5);
\draw (0, 1.7) --(0, 4.5);
\draw (-1.25, 4.5) rectangle (.25, 5);
\node at (-0.5, 4.75) {$\widehat{\mu}$};

\draw (-2, 3.2) --(-2, 6.7);
\draw (-.5, 5) --(-.5, 6.7);
\draw (1, 3.2) --(1, 5.8);
\draw (2.5, 5) --(2.5, 5.8);
\draw (2.75, 5.8) rectangle (.75, 6.3);
\node at (1.75, 6.05) {$s^{-1}\circ\rho_N$};
\draw (1.75, 6.3) -- (1.75, 6.7);
}
$$
This is the same expression obtained from the left-hand side. Hence \eqref{align:Hbimodule} holds also for $q>0$, and $\psi$ is an $A_\infty$-bimodule quasi-isomorphism.
\end{proof}

\begin{rem}
The same computation shows that the $A_\infty$-bimodule morphism
$$\psi\colon \iota(M)\overset{\infty}{\otimes}_A \iota(N) \to \iota(M\overset{\infty}{\otimes}_A \iota(N))$$
is functorial. More precisely, for any $f\in \Hom(T^c(sA) \otimes sM , sM')$ and $g\in \Hom(T^c(sA) \otimes sN , sN')$, the following diagram commutes in $\EC^{\RR}_\infty(A)$; compare Remark \ref{rem:tensormorphism}. Here composition is as in formula \eqref{align:compositionmorphisms}:
$$
\xymatrix{
\iota(M)\overset{\infty}{\otimes}_A \iota(N)\ar[r]^{\psi} \ar[d]^{\iota(f)\overset{\infty}{\otimes}_A \iota(g)} & \iota(M\overset{\infty}{\otimes}_A \iota(N)) \ar[d]^{\iota(f\overset{\infty}{\otimes}_A \iota(g))}\\
\iota(M')\overset{\infty}{\otimes}_A \iota(N') \ar[r]^{\psi} & \iota(M'\overset{\infty}{\otimes}_A \iota(N')) .
}$$
\end{rem}

We now prove associativity of the tensor product.
\begin{prop}\label{prop:asso-rightmodule}
Let $(A,m^n;\mu^{p,q})$ be a $B_\infty$-algebra. Then the tensor product $\boxtimes_A$ is associative. More precisely, for right $A_\infty$-modules $M$, $N$, and $K$, there is a natural isomorphism in $\ED^{\RR}_\infty(A)$
$$a_{M,N,K}\colon (M \boxtimes_A N)\boxtimes_A K \rightarrow M \boxtimes_A (N\boxtimes_A K),$$
and these isomorphisms satisfy the pentagon axiom: for all right $A_\infty$-modules $M$, $N$, $K$, and $W$, the diagram
\[
\xymatrix{
& ((M\boxtimes_A N)\boxtimes_A K)\boxtimes_A W \ar[dl]^{\; a_{M,N,K}\boxtimes_A \id_W}\ar[rd]_{a_{M\boxtimes_A N, K, W}\quad} & \\
(M\boxtimes_A (N \boxtimes_A K))\ar[d]^{a_{M, N\boxtimes_A K, W}} \boxtimes_A W & & (M\boxtimes_A N) \boxtimes_A (K \boxtimes_A W)\ar[d]^{a_{M, N, K\boxtimes_A W}} \\
M\boxtimes_A ((N \boxtimes_A K) \boxtimes_A W)\ar[rr]^{\id_M\boxtimes_A a_{N,K,W}} & & M\boxtimes_A (N \boxtimes_A( K \boxtimes_A W))
 }
\]
is commutative.
\end{prop}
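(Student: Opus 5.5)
We will build $a_{M,N,K}$ by combining the strict associativity of the $A_\infty$-tensor product of bimodules with the quasi-isomorphism $\psi$ of Lemma \ref{lem:isomorphismmn}. Unwinding the definitions gives
\[
(M\boxtimes_A N)\boxtimes_A K=(M\overset{\infty}{\otimes}_A\iota(N))\overset{\infty}{\otimes}_A\iota(K)
\]
and
\[
M\boxtimes_A(N\boxtimes_A K)=M\overset{\infty}{\otimes}_A\iota(N\overset{\infty}{\otimes}_A\iota(K)).
\]
Both have underlying graded space a subquotient-free piece of $M\otimes T\otimes N\otimes T\otimes K$, with $T=T^c(sA)$. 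The first step is to check that the canonical identification
\[
(M\overset{\infty}{\otimes}_A\iota(N))\overset{\infty}{\otimes}_A\iota(K)\cong M\overset{\infty}{\otimes}_A(\iota(N)\overset{\infty}{\otimes}_A\iota(K))
\]
is a strict isomorphism of right $A_\infty$-modules. This is the usual associativity of the bar-type tensor product, up to the sign twist $\zeta$, and it underlies \cite[Proposition 2]{Fer12}. Here $M$ is viewed as a bimodule with trivial left action, or simply as a right module tensored on the right. We then define
\[
a_{M,N,K}=(\id_M\overset{\infty}{\otimes}_A\psi_{N,K})\circ(\text{canonical iso}),
\]
where $\psi_{N,K}\colon\iota(N)\overset{\infty}{\otimes}_A\iota(K)\to\iota(N\boxtimes_A K)$ is the morphism of Lemma \ref{lem:isomorphismmn}.

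Since $\psi^{0,0}=\id$, the first component of $a_{M,N,K}$ is the identity, so $a$ is a quasi-isomorphism and hence an isomorphism in $\ED^{\RR}_\infty(A)$. For naturality, we use functoriality of $M\overset{\infty}{\otimes}_A-$ on pre-morphisms, identity \eqref{align:tensorcomposition}, and the functoriality of $\psi$ recorded in the remark following Lemma \ref{lem:isomorphismmn}. Together these give commutative squares already at the level of $\EC$, so the squares commute in $\ED$.

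For the pentagon, we will work at chain level and aim to show that both composites
\[
((M\boxtimes N)\boxtimes K)\boxtimes W\longrightarrow M\boxtimes(N\boxtimes(K\boxtimes W))
\]
are equal, modulo the canonical reassociations, to $\id_M\overset{\infty}{\otimes}_A\Psi$. Here $\Psi$ is the bimodule morphism
\[
\Psi\colon \iota(N)\overset{\infty}{\otimes}\iota(K)\overset{\infty}{\otimes}\iota(W)\to\iota(N\boxtimes(K\boxtimes W)),
\]
built from two instances of $\psi$. The pentagon then reduces to a ``hexagon-free'' coherence for $\psi$:
\[
\psi_{N,K\boxtimes W}\circ(\id_{\iota N}\overset{\infty}{\otimes}\psi_{K,W})=\iota(\id_N\overset{\infty}{\otimes}\psi_{K,W})\circ\psi_{N,K}\ \text{(after reassociation)},
\]
and similarly with $\psi_{N\boxtimes K,W}\circ(\psi_{N,K}\overset{\infty}{\otimes}\id)$. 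We will verify this using the explicit formula $\psi^{p,0}(sa\otimes sx\otimes sc\otimes y)=\pm sx\otimes\widehat\mu(S(sa)\otimes sc)\otimes y$ together with $\psi^{p,q}=0$ for $q>0$. Both sides then amount to acting by $\widehat\mu(S(sa_{1,p})\otimes -)$ on the first $T$-factor. The needed compatibility of the two routes comes from $\Delta\circ\varphi=(\varphi\otimes\varphi)(\id\otimes\tau\otimes\id)(\Delta^{\op}\otimes\Delta)$ (Proposition \ref{prop:dualitybinfinity}) and the associativity of $\widehat\mu$ with $S$ anti-multiplicative. Equivalently, $\varphi(a,\varphi(b,c))=\varphi(\widehat\mu^{\op}(a,b),c)$-type identities hold, and these are what make iterated $\iota$ coherent.

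The main obstacle is this pentagon/coherence step. The $\psi$'s are only bimodule morphisms with nontrivial higher components, so composites such as $\id_M\overset{\infty}{\otimes}\psi$ mix components through $\widehat\psi$ and signs from $\zeta$. We would first try to prove strict equality at chain level by comparing components on $(sA)^{\otimes p}\otimes sN\otimes T\otimes K\otimes T\otimes W$, using string diagrams and Lemma \ref{lem:cofreehopfidentity}. If strict equality fails, we would fall back on showing that the two composites differ by a map whose $(0,0)$ component vanishes and is a boundary, and hence are homotopic. Exactness in each variable is separate and routine, since $\iota$ and $\overset{\infty}{\otimes}_A$ are dg functors between pretriangulated categories.
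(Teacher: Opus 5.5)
Your plan follows the paper's proof: $a_{M,N,K}=(\id_M\overset{\infty}{\otimes}_A\psi)\circ\widetilde a$, naturality from the functoriality of $\widetilde a$ and $\psi$, and the pentagon reduced to a coherence identity for $\psi$ checked with string diagrams. Two points need correcting.

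First, your argument that $a_{M,N,K}$ is invertible is wrong as stated. The first component of $\id_M\overset{\infty}{\otimes}_A\psi$ is not the identity. By Remark \ref{rem:tensormorphism} it sends $sx\otimes sb_{1,q}\otimes y$ to
\[
\sum_{j=0}^{q}\pm\, sx\otimes sb_{1,j}\otimes s^{-1}\psi^{q-j,0}(sb_{j+1,q}\otimes sy).
\]
For $j<q$ the component $\psi^{q-j,0}$ is nonzero: it replaces the bar factor of $y$ by $\widehat\mu(S(sb_{j+1,q})\bigotimes -)$ applied to it. The conclusion still holds, for either of two reasons. The map is the identity plus terms that strictly shorten the bar factor between $M$ and $N$, so it is bijective. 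Alternatively, argue as the paper does: $\psi$ is a quasi-isomorphism, hence a homotopy equivalence in $\EC^{\BB}_\infty(A)$ by \cite[Theorem 4.2.1]{Kel01}, and \eqref{align:tensorcomposition} turns a homotopy inverse of $\psi$ into one for $\id_M\overset{\infty}{\otimes}_A\psi$.

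Second, the coherence identity you display has mismatched sources: $\psi_{N,K}$ only sees $\iota(N)\overset{\infty}{\otimes}_A\iota(K)$. What must be shown compares the route through three copies of $\psi$ with the route through two:
\[
\iota(\id_N\overset{\infty}{\otimes}_A\psi_{K,W})\circ\iota(\widetilde a)\circ\psi_{N\boxtimes_A K,W}\circ(\psi_{N,K}\overset{\infty}{\otimes}_A\id)=\psi_{N,K\boxtimes_A W}\circ(\id\overset{\infty}{\otimes}_A\psi_{K,W})\circ\widetilde a .
\]
This holds strictly, so you do not need the homotopy fallback. Since $\psi^{p,q}=0$ for $q>0$, it suffices to evaluate with no right inputs. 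After using coassociativity to split off the part of the first bar variable that stays next to $M$, the two sides are exactly the two sides of Lemma \ref{lem:cofreehopfidentity}; this is \eqref{equation:associativity} in the paper.

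The ingredients you name, compatibility of $\varphi$ with $\Delta$ and anti-multiplicativity of $S$, do not suffice on their own. On the three-$\psi$ side the first bar variable feeds two separate copies of $\varphi$. The resulting factor of the form $\widehat\mu(\id\otimes S)\Delta$ must be collapsed by the antipode axiom \eqref{S:antipodeaxiom}, and that is the last step in the proof of that lemma.
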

\begin{proof}
Define $a_{M,N,K}$ to be the composite \[
 \xymatrix@C=4pc{
 (M\overset{\infty}{\otimes}_A \iota(N)) \overset{\infty}{\otimes}_A \iota(K) \ar[r]^-{\widetilde a} & M\overset{\infty}{\otimes}_A (\iota(N) \overset{\infty}{\otimes}_A \iota(K)) \ar[r]^{\id_M \overset{\infty}{\otimes}_A \psi} & M \overset{\infty}{\otimes}_A \iota(N\overset{\infty}{\otimes}_A \iota(K)),
 }
\] where $\widetilde a$ is the natural associativity map for $A_\infty$-bimodules from \cite[Corollary 1]{Fer12}, and $\psi$ is the quasi-isomorphism of Lemma \ref{lem:isomorphismmn}.
Since $\widetilde a$ and $\psi$ are natural, the naturality of $a$ follows immediately.

It remains to show that $a_{M,N,K}$ induces an isomorphism in $\ED^{\RR}_\infty(A)$. For this it is enough to show that $\id_M\overset{\infty}{\otimes}_A\psi$ is a homotopy equivalence. The morphism $\psi$ is a quasi-isomorphism, and a quasi-isomorphism in $\EC^{\BB}_\infty(A)$ is a homotopy equivalence; see, for example, \cite[Theorem 4.2.1]{Kel01}. Thus there is a morphism $\psi'$ such that
\[
\psi \overset{\infty}{\circ} \psi'\simeq \id,
\qquad
\psi' \overset{\infty}{\circ} \psi \simeq \id.
\]
By the functoriality \eqref{align:tensorcomposition} of $\overset{\infty}{\otimes}_A$, we obtain
\[
(\id_M \overset{\infty}{\otimes}_A \psi ) \overset{\infty}{\circ} (\id_M \overset{\infty}{\otimes}_A \psi') \simeq \id,
\qquad
(\id_M \overset{\infty}{\otimes}_A \psi') \overset{\infty}{\circ} (\id_M \overset{\infty}{\otimes}_A \psi ) \simeq \id.
\]
Therefore $\id_M\overset{\infty}{\otimes}_A\psi$ is a homotopy equivalence.

Since $\iota(a_{N,K,W})=a_{N,K,W}$, expanding the definition of $a$ reduces the pentagon axiom to the commutativity of the following diagram.
\[
\xymatrix{
((M\overset{\infty}{\otimes}_A \iota(N)) \overset{\infty}{\otimes}_A \iota(K))\overset{\infty}{\otimes}_A \iota(W)\ar[d]^{\; \widetilde a\overset{\infty}{\otimes}_A \id}\ar[rr]_{\widetilde a} & & (M\overset{\infty}{\otimes}_A \iota(N))\overset{\infty}{\otimes}_A ((\iota(K)\overset{\infty}{\otimes}_A \iota(W))\ar[d]^{(\id\overset{\infty}{\otimes}_A \id)\overset{\infty}{\otimes}_A \psi} \\
(M\overset{\infty}{\otimes}_A ( \iota(N)\overset{\infty}{\otimes}_A \iota(K))) \overset{\infty}{\otimes}_A \iota(W)\ar[d]^{(\id\overset{\infty}{\otimes}_A \psi)\overset{\infty}{\otimes}_A\id} & & (M\overset{\infty}{\otimes}_A \iota(N))\overset{\infty}{\otimes}_A \iota(K\overset{\infty}{\otimes}_A \iota(W))\ar[d]^{\widetilde{a}} \\
(M\overset{\infty}{\otimes}_A \iota(N\overset{\infty}{\otimes}_A \iota(K)))\overset{\infty}{\otimes}_A \iota(W)\ar[d]^{\widetilde{a}} & & M\overset{\infty}{\otimes}_A (\iota(N)\overset{\infty}{\otimes}_A \iota(K\overset{\infty}{\otimes}_A \iota(W))) \ar[d]^{\id\overset{\infty}{\otimes}_A \psi} \\
M\overset{\infty}{\otimes}_A (\iota(N\overset{\infty}{\otimes}_A \iota(K))\overset{\infty}{\otimes}_A \iota(W))\ar[d]^{\id\overset{\infty}{\otimes}_A \psi} & &M\overset{\infty}{\otimes}_A \iota(N\overset{\infty}{\otimes}_A \iota(K\overset{\infty}{\otimes}_A \iota(W))) \\
M\overset{\infty}{\otimes}_A \iota((N\overset{\infty}{\otimes}_A \iota(K))\overset{\infty}{\otimes}_A \iota(W))\ar[rr]^{\id\overset{\infty}{\otimes}_A \widetilde{a}} & & M\overset{\infty}{\otimes}_A \iota(N\overset{\infty}{\otimes}_A (\iota(K)\overset{\infty}{\otimes}_A \iota(W)))\ar[u]_{\id\overset{\infty}{\otimes}_A (\id \overset{\infty}{\otimes}_A \psi)}
 }
\]
It suffices to verify the commutativity when evaluated on $((sM\otimes T^c(sA)\otimes \iota(N))\otimes T^c(sA)\otimes \iota(K))\otimes T^c(sA)\otimes \iota(W)$. The required equality is exactly the following pentagon identity for $\psi$:
\begin{equation}\label{equation:associativity}
\tikzmath{
\draw (-1.5, 0) --(-1.5, .3);
\draw (0, 0) --(0, 2.8);
\draw (1, 0) --(1, 7);
\node at (-1.5, -.3) {$T^c(sA)$};
\node at (-.1, -.3) {$T^c(sA)$};
\node at (1.25, -.3) {$T^c(sA)$};

\draw (-.75, .3) rectangle (-2.25, .8);
\node at (-1.5, .55) {$\Delta$};
\draw (-1, .8) --(-1, 1.6);
\draw (-1.25, 1.6) rectangle (-.75, 2.1);
\node at (-1, 1.85) {$S$};
\draw (-1, 2.1) --(-1, 2.8);
\draw (-1.25, 2.8) rectangle (.25, 3.3);
\node at (-.5, 3.05) {$\widehat{\mu}$};
\draw (-.5, 3.3) --(-.5, 4.3);
\draw (-2, .8) --(-2, 3.8);
\draw (-2.75, 3.8) rectangle (-1.25, 4.3);
\node at (-2, 4.05) {$\Delta$};
\draw (-2.6, 4.3) --(-2.6, 11.6);
\draw [knot] (-.5, 4.3) to[out=90,in=-90](-1.5, 5.8);
\draw [knot] (-1.5, 4.3) to[out=90,in=-90](-.5, 5.8);
\draw (-.25, 5.8) rectangle (-.75, 6.3);
\node at (-.5, 6.05) {$S$};
\draw (-1.5, 5.8) --(-1.5, 8.3);
\draw (-.5, 6.3) --(-.5, 7);
\draw (1.25, 7) rectangle (-.75, 7.5);
\node at (.25, 7.25) {$\widehat{\mu}$};
\draw (-1.6, 8.3) rectangle (-.1, 8.8);
\node at (-.85, 8.55) {$\Delta$};
\draw (-1.5, 8.8) -- (-1.5, 11.6);
\draw (-.35, 8.8) -- (-.35, 9.6);
\draw (-.6, 9.6) rectangle (-.1, 10.1);
\node at (-.35, 9.85) {$S$};
\draw (-.35, 10.1) --(-.35, 10.8);
\draw (-.6, 10.8) rectangle (.9, 11.3);
\node at (.15, 11.05) {$\widehat{\mu}$};
\draw (.65, 7.5) -- (.65, 10.8);
\draw (.15, 11.3) -- (.15, 11.6);
}\hspace{1 mm} = \hspace{1 mm} \tikzmath{
\draw (-2.5, 0) --(-2.5, .3);
\draw (0, 0) --(0, 3.8);
\draw (1, 0) --(1, 7);

\draw (-1.75, .3) rectangle (-3.25, .8);
\node at (-2.5, .55) {$\Delta$};
\draw (-2, .8) --(-2, 1.3);
\draw (-3, .8) --(-3, 11.6);
\draw (-.75, 1.3) rectangle (-2.25, 1.8);
\node at (-1.5, 1.55) {$\Delta$};
\draw (-2, 1.8) --(-2, 4.3);
\draw (-1, -.3) --(-1, 1.3);
\draw (-1, 1.8) --(-1, 2.6);
\draw (-1.25, 2.6) rectangle (-.75, 3.1);
\node at (-1, 2.85) {$S$};
\draw (-1, 3.1) --(-1, 3.8);
\draw (-1.25, 3.8) rectangle (.25, 4.3);
\node at (-.5, 4.05) {$\widehat{\mu}$};
\draw [knot] (-.5, 4.3) to[out=90,in=-90](-1.75, 5.8);
\draw [knot] (-2, 4.3) to[out=90,in=-90](-.5, 5.8);
\draw (-.25, 5.8) rectangle (-.75, 6.3);
\node at (-.5, 6.05) {$S$};
\draw (-.5, 6.3) --(-.5, 7);
\draw (1.25, 7) rectangle (-.75, 7.5);
\node at (.25, 7.25) {$\widehat{\mu}$};
\draw (-2., 8.3) rectangle (-.25, 8.8);
\node at (-1.125, 8.55) {$\Delta$};
\draw (-1.75, 5.8) -- (-1.75, 8.3);
\draw (-1.75, 8.8) -- (-1.75, 11.6);
\draw (-.5, 8.8) -- (-.5, 9.6);
\draw (-.75, 9.6) rectangle (-.25, 10.1);
\node at (-.5, 9.85) {$S$};
\draw (-.5, 10.1) --(-.5, 10.8);
\draw (-.75, 10.8) rectangle (1.25, 11.3);
\node at (.25, 11.05) {$\widehat{\mu}$};
\draw (1, 7.5) -- (1, 10.8);
\draw (.25, 11.3) -- (.25, 11.6);
}\hspace{1 mm} = \hspace{1 mm}\tikzmath{
\draw (-2, 0) --(-2, .3);
\draw (0, 0) --(0, .3);
\draw (1.5, 0) --(1.5, 2.8);
\node at (2, 0) {,};

\draw (-.75, .3) rectangle (.75, .8);
\node at (0, .55) {$\Delta$};
\draw (.5, .8) --(.5, 1.6);
\draw (-1.5, .8) --(-1.5, 1.6);
\draw (-.5, .8) --(-.5, 2.8);
\draw (.75, 1.6) rectangle (.25, 2.1);
\node at (0.5, 1.85) {$S$};
\draw (-2.75, .3) rectangle (-1.25, .8);
\node at (-2, .55) {$\Delta$};
\draw (-1.25, 1.6) rectangle (-1.75, 2.1);
\node at (-1.5, 1.85) {$S$};
\draw (.5, 2.1) --(.5, 2.8);
\draw (-1.5, 2.1) --(-1.5, 2.8);
\draw (-2.5, .8) --(-2.5, 3.6);
\draw (.25, 2.8) rectangle (1.75, 3.3);
\node at (1, 3.05) {$\widehat{\mu}$};
\draw (-.25, 2.8) rectangle (-1.75, 3.3);
\node at (-1, 3.05) {$\widehat{\mu}$};
\draw (-1, 3.3) --(-1, 3.6);
\draw (1, 3.3) --(1, 3.6);
}
\end{equation}
Here the first equality follows from coassociativity of $\Delta$, and the second follows from Lemma \ref{lem:cofreehopfidentity}.
The leftmost term in \eqref{equation:associativity} is precisely the composite along the lower-left path of the diagram, involving three copies of $\psi$. The rightmost term is the composite along the upper-right path, involving two copies of $\psi$. The elements of $M$, $N$, and $K$ do not affect this comparison, since the relevant maps act as the identity on them.
\end{proof}

We next verify the triangle axiom for $\boxtimes_A$. Recall the left and right unit isomorphisms
\[
l_X\colon A\overset{\infty}{\otimes}_A X\rightarrow X,
\qquad
r_X\colon X\overset{\infty}{\otimes}_A A\rightarrow X
\]
for $X \in \ED^{\BB}_\infty(A)$; see \cite[Proposition 2]{Fer12}.

\begin{lem}\label{lemm:Hwithunit}
Let $(A,m^n;\mu^{p,q})$ be a $B_{\infty}$-algebra. Then the following diagram commutes in the derived category $\ED^{\BB}_\infty(A)$ of $A_\infty$-bimodules, with composition as in \eqref{align:compositionmorphisms}:
$$
\xymatrix{
\iota(A)\overset{\infty}{\otimes}_A \iota(N)\ar[r]^-{\psi} \ar[d]^-{I_A\overset{\infty}{\otimes}_A\id} & \iota(A\overset{\infty}{\otimes}_A \iota(N)) \ar[d]^-{\iota(l_{\iota(N)})}\\
A\overset{\infty}{\otimes}_A \iota(N) \ar[r]^-{l_{\iota(N)}} & \iota(N)
}$$
where $I_A\colon \iota(A)\rightarrow A$ is the $A_\infty$-bimodule isomorphism defined in Theorem \ref{thm:iotabracemodule}, whose components are
 \[I^{p,q}_A(sa_{1,p}\otimes sx\otimes sb_{1,q})=\sum_{k=0}^p (-1)^{|sx||sa_{k+1,p}|}\mu(sa_{1,k}\bigotimes (sx\otimes \widehat{\mu}(S(sa_{k+1,p})\bigotimes sb_{1,q}))).\]
\end{lem}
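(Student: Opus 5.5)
I will show that both composites are equal on the nose, as pre-morphisms in $\EC^{\BB}_\infty(A)$; this is stronger than commutativity in $\ED^{\BB}_\infty(A)$. The plan is to write both sides as explicit maps
\[
T\otimes sA\otimes T\otimes N\otimes T\longrightarrow sN,
\qquad T=T^c(sA),
\]
evaluate them on $sa_{1,p}\otimes sx\otimes sc_{1,k}\otimes y\otimes sb_{1,q}$, and compare the resulting string diagrams using the Hopf identities already recorded.

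\textbf{Step 1: the left unitor.} First I will recall the left unitor $l_X\colon A\overset{\infty}{\otimes}_A X\to X$ from \cite{Fer12}. Up to the shift conventions it should be given by
\[
l_X(sa_{1,p}\otimes sx\otimes sc_{1,k}\otimes y\otimes sb_{1,q})=\beta_X^{p+1+k,\,q}(sa_{1,p}\otimes sx\otimes sc_{1,k}\otimes sy\otimes sb_{1,q}),
\]
so the $sA$-strand and the $T$-strand are simply absorbed into the left action.

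\textbf{Step 2: the lower-left path.} For $X=\iota(N)$, the left action is $\beta_{\iota(N)}=\rho_N\circ(\text{swap})\circ(\id\otimes\widehat\mu(S\otimes\id))$. Hence $l_{\iota(N)}$ becomes
\[
\rho_N\Big(y\otimes\widehat\mu\big(S(sa_{1,p}\otimes sx\otimes sc_{1,k})\bigotimes sb_{1,q}\big)\Big),
\]
with the appropriate Koszul sign. Precomposing with $\widehat{I_A\overset{\infty}{\otimes}_A\id}$ inserts $I_A$ on an initial piece $sa_{1,i}\otimes sx\otimes sc_{1,j}$, producing one new tensor factor $I_A(\cdots)\in sA$. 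The $b$-inputs do not enter $I_A$ in this composite, since the right-hand inputs of $I_A$ come from $c$ here.

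\textbf{Step 3: the upper-right path.} Here $\psi$ vanishes unless the right-hand $A$-block is empty. So this composite is
\[
\iota(l_{\iota(N)})\circ\widehat\psi,
\qquad
\iota(l)=l\circ(\text{swap})\circ\widehat\mu(S\otimes\id).
\]
Feeding $\psi^{u,0}$, which replaces $sc$ by $\widehat\mu(S(sa)\bigotimes sc)$, into this yields a diagram in which the input $sx\in sA$ of $\iota(A)$ is concatenated with the output of $\widehat\mu(S\otimes\id)$ before being fed into $S$ and then $\rho_N$.

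\textbf{Step 4: matching the two sides.} This is the main obstacle. Both sides should reduce to an expression of the form
\[
\rho_N\big(y\otimes\widehat\mu(S(\cdots)\bigotimes sb)\big),
\]
and it remains to identify the arguments. The key is to expand $S$ applied to a concatenation $c(\text{initial }a)\otimes sx\otimes(\cdots)$. I will use the anti-multiplicativity \eqref{S:anti-alg} together with the recursive description of $\widehat\mu$ in \eqref{align:widehatmurecursive}, in the concatenation form \eqref{equ:muwithc}, to rewrite the piece $S(c(sa\otimes sx\otimes\cdots))$. Identity \eqref{equ:mpq}, now with $\mu$ in place of $m$, does exactly this, and it is what makes the components of $I_A$ (the terms $\mu(sa_{1,k}\bigotimes(sx\otimes\widehat\mu(S(sa)\bigotimes sb)))$) appear. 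After that, Lemma~\ref{lem:cofreehopfidentity} and coassociativity should collapse the double $\widehat\mu(S\otimes\id)$ appearing on the upper-right path into the single one on the lower-left path. This is the same mechanism as in the proof of \eqref{equation:associativity}, where $\Delta\varphi=(\varphi\otimes\varphi)(\id\otimes\tau\otimes\id)(\Delta^{\op}\otimes\Delta)$ was used.

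\textbf{Step 5: bookkeeping.} The sign and unit bookkeeping, including the case $q>0$ where only $\psi^{p,0}$ contributes, I expect to be routine.

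If exact equality fails in some component, I will fall back to proving that the difference of the two composites is $\delta$ of an explicit homotopy. That suffices, since the claim is only commutativity in $\ED^{\BB}_\infty(A)$.
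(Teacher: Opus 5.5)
Your plan is essentially the paper's proof. Both composites are computed on the nose, and the components of $I_A$ appear when \eqref{equ:mpq} is applied inside $S$, with \eqref{S:anti-alg} used to move $S$ across $\widehat{\mu}$.

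One correction to Step 4 concerns the collapse step. Write $sa_{1,p}=a'\otimes a''$, where $a''$ is the terminal segment fed into $\psi$. The upper-right path then carries the term $\widehat{\mu}\bigl(S(sx\otimes\widehat{\mu}(S(a'')\bigotimes sc_{1,k}))\bigotimes\widehat{\mu}(S(a')\bigotimes sb_{1,q})\bigr)$. This is collapsed by associativity \eqref{associativity} followed by \eqref{S:anti-alg}, giving $\pm\widehat{\mu}\bigl(S\bigl(\widehat{\mu}(a'\bigotimes(sx\otimes\widehat{\mu}(S(a'')\bigotimes sc_{1,k})))\bigr)\bigotimes sb_{1,q}\bigr)$. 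Then \eqref{equ:mpq} rewrites the argument of $S$ as the concatenation containing $I_A$, which is exactly the lower-left composite.

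Lemma~\ref{lem:cofreehopfidentity} plays no role here. It concerns a $T^c(sA)\otimes T^c(sA)$-valued expression obtained after applying $\Delta$, so it does not apply to this single-output term. You also never need to expand $S$ on a concatenation, since \eqref{equ:mpq} identifies the arguments of $S$ directly.

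A small indexing slip: $\widehat{I_A\overset{\infty}{\otimes}_A\id}$ feeds a terminal segment of the $a$'s into $I_A$, not the initial segment $sa_{1,i}$.
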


\begin{proof}
It is enough to evaluate the two composites on $(sA)^{\otimes p}\otimes (sA\otimes (sA)^{\otimes m}\otimes N)\otimes (sA)^{\otimes q}$.
For $l_{\iota(N)}\overset{\infty}{\circ} (I_A\overset{\infty}{\otimes}_A \id)= l_{\iota(N)}\circ (\widehat{I_A\overset{\infty}{\otimes}_A \id})$, we have

$$\tikzmath{
\draw (-3, 1.7) --(-3, 7.7);
\draw (-2, .8) --(-2, 1.2);
\draw (0, .8) --(0, 1.7);
\draw (1.5, .8) --(1.5, 1.2);
\draw (3, .8) -- (3, 7.7);
\draw (4, .8) -- (4, 7.7);
\node at (-1.5, .3) {$(sA)^{\otimes p}$};
\node at (-.2, .3) {$sA$};
\node at (1.5, .3) {$(sA)^{\otimes m}$};
\node at (3, .3) {$N$};
\node at (4, .3) {$(sA)^{\otimes q}$};

\draw (-3.25, 1.2) rectangle (-.75, 1.7);
\node at (-2, 1.45) {$\Delta^{(2)}$};
\draw (2.25, 1.2) rectangle (.75, 1.7);
\node at (1.5, 1.45) {$\Delta$};
\draw (1, 4.5) -- (1, 1.7);
\draw (-2, 5.6) -- (-2, 1.7);
\draw [knot] (0, 1.7) to[out=90,in=-90](-1, 3.2);
\draw [knot] (-1, 1.7) to[out=90,in=-90](0, 3.2);
\draw (-1, 5.6) -- (-1, 3.2);
\draw (0, 4.5) -- (0, 3.7);
\draw (-.25, 3.2) rectangle (.25, 3.7);
\node at (0, 3.45) {$S$};
\draw (1.25, 4.5) rectangle (-.25, 5);
\node at (.5, 4.75) {$\widehat{\mu}$};

\draw (-1.25, 5.6) rectangle (.75, 6.1);
\node at (-.25, 5.85) {$c$};
\draw (.5, 5.6) -- (.5, 5);
\draw (-.25, 6.1) -- (-.25, 6.7);
\draw (-2, 5.6) -- (-2, 6.7);
\draw (-2.25, 6.7) rectangle (0, 7.2);
\node at (-1.125, 6.95) {$\mu$};
\draw (2, 1.7) -- (2, 7.7);
\draw (-1.125, 7.2) -- (-1.125, 7.7);
\draw (-3.25, 7.7) rectangle (4.25, 8.2);
\node at (.5, 7.95) {$l_{\iota(N)}$};
}\hspace{1 mm} = \hspace{1 mm} - \hspace{1 mm}\tikzmath{
\draw (-3, 1.7) --(-3, 8);
\draw (-2, .8) --(-2, 1.2);
\draw (0, .8) --(0, 1.7);
\draw (1.5, .8) --(1.5, 1.2);
\draw (3, .8) -- (3, 8);
\draw (4, .8) -- (4, 10);
\node at (-1.5, .3) {$(sA)^{\otimes p}$};
\node at (-.2, .3) {$sA$};
\node at (1.5, .3) {$(sA)^{\otimes m}$};
\node at (3, .3) {$N$};
\node at (4, .3) {$(sA)^{\otimes q}$};

\draw (-3.25, 1.2) rectangle (-.75, 1.7);
\node at (-2, 1.45) {$\Delta^{(2)}$};
\draw (2.25, 1.2) rectangle (.75, 1.7);
\node at (1.5, 1.45) {$\Delta$};
\draw (1, 4.5) -- (1, 1.7);
\draw (-2, 5.6) -- (-2, 1.7);
\draw [knot] (0, 1.7) to[out=90,in=-90](-1, 3.2);
\draw [knot] (-1, 1.7) to[out=90,in=-90](0, 3.2);
\draw (-1, 5.6) -- (-1, 3.2);
\draw (0, 4.5) -- (0, 3.7);
\draw (-.25, 3.2) rectangle (.25, 3.7);
\node at (0, 3.45) {$S$};
\draw (1.25, 4.5) rectangle (-.25, 5);
\node at (.5, 4.75) {$\widehat{\mu}$};

\draw (-1.25, 5.6) rectangle (.75, 6.1);
\node at (-.25, 5.85) {$c$};
\draw (.5, 5.6) -- (.5, 5);
\draw (-.25, 6.1) -- (-.25, 6.7);
\draw (-2, 5.6) -- (-2, 6.7);
\draw (-2.25, 6.7) rectangle (0, 7.2);
\node at (-1.125, 6.95) {$\mu$};
\draw (2, 1.7) -- (2, 8);
\draw (-1.125, 7.2) -- (-1.125, 8);
\draw (-3.25, 8) rectangle (2.25, 8.5);
\node at (-.5, 8.25) {$S\circ c^{(2)}$};
\draw (3.25, 8.5) rectangle (2.75, 8);
\node at (3, 8.25) {$s$};
\draw [knot] (3, 8.5) to[out=90,in=-90](0, 10);
\draw [knot] (0, 8.5) to[out=90,in=-90](3, 10);
\draw (0, 10) -- (0, 11.3);
\draw (4.25, 10) rectangle (2.75, 10.5);
\node at (3.5, 10.25) {$\widehat{\mu}$};
\draw (3.5, 10.5) -- (3.5, 11.3);
\draw (-.25, 11.3) rectangle (4.25, 11.8);
\node at (2, 11.55) {$\rho_N$};
}$$
where the equality uses the explicit formula of $l_{\iota(N)}$ from \cite[Proposition 2]{Fer12}, and
$c\colon T^c(sA) \otimes T^c(sA) \to T^c(sA)$ denotes the concatenation map; see the notation before Proposition \ref{prop:concatenationidentity}. For $\iota(l_{\iota(N)})\overset{\infty}{\circ} \psi= \iota(l_{\iota(N)})\circ\widehat{\psi}$
$$\tikzmath{
\draw (-1.5, 0) --(-1.5, 1.2);
\draw (0, 0) --(0, 1.7);
\draw (1, 0) --(1, 4.5);
\draw (2, 0) --(2, 5);
\draw (3, 0) --(3, 6.5);

\node at (-1.5, -.5) {$(sA)^{\otimes p}$};
\node at (-.2, -.5) {$sA$};
\node at (1, -.5) {$(sA)^{\otimes m}$};
\node at (2, -.5) {$N$};
\node at (3, -.5) {$(sA)^{\otimes q}$};

\draw (-2.25, 1.2) rectangle (-.75, 1.7);
\node at (-1.5, 1.45) {$\Delta$};
\draw (-2, 4.5) -- (-2, 1.7);
\draw [knot] (0, 1.7) to[out=90,in=-90](-1, 3.2);
\draw [knot] (-1, 1.7) to[out=90,in=-90](0, 3.2);
\draw (-1, 5) -- (-1, 3.2);
\draw (0, 4.5) -- (0, 3.7);
\draw (-.25, 3.2) rectangle (.25, 3.7);
\node at (0, 3.45) {$S$};
\draw (1.25, 4.5) rectangle (-.25, 5);
\node at (.5, 4.75) {$\widehat{\mu}$};
\draw (-1.75, 4.5) rectangle (-2.25, 5);
\node at (-2, 4.75) {$S$};

\draw [knot] (2, 5) to[out=90,in=-90](1, 6.5);
\draw [knot] (.5, 5) to[out=90,in=-90](-.5, 6.5);
\draw [knot] (-1, 5) to[out=90,in=-90](-2, 6.5);
\draw [knot] (-2, 5) to[out=90,in=-90](2, 6.5);
\draw (1.75, 6.5) rectangle (3.25, 7);
\node at (2.5, 6.75) {$\widehat{\mu}$};
\draw (-2, 6.5) -- (-2, 7.6);
\draw (-.5, 6.5) -- (-.5, 7.6);
\draw (1, 6.5) -- (1, 7.6);
\draw (2.5, 7.6) -- (2.5, 7);
\draw (-2.25, 7.6) rectangle (3.25, 8.2);
\node at (.5, 7.9) {$l_{\iota(N)}$};
}\hspace{1 mm} = \hspace{1 mm} - \hspace{1 mm}\tikzmath{
\draw (-1.5, 0) --(-1.5, 1.2);
\draw (0, 0) --(0, 1.7);
\draw (1, 0) --(1, 4.5);
\draw (2, 0) --(2, 5);
\draw (3, 0) --(3, 6.5);

\node at (-1.5, -.5) {$(sA)^{\otimes p}$};
\node at (-.2, -.5) {$sA$};
\node at (1, -.5) {$(sA)^{\otimes m}$};
\node at (2, -.5) {$N$};
\node at (3, -.5) {$(sA)^{\otimes q}$};

\draw (-2.25, 1.2) rectangle (-.75, 1.7);
\node at (-1.5, 1.45) {$\Delta$};
\draw (-2, 4.5) -- (-2, 1.7);
\draw [knot] (0, 1.7) to[out=90,in=-90](-1, 3.2);
\draw [knot] (-1, 1.7) to[out=90,in=-90](0, 3.2);
\draw (-1, 5) -- (-1, 3.2);
\draw (0, 4.5) -- (0, 3.7);
\draw (-.25, 3.2) rectangle (.25, 3.7);
\node at (0, 3.45) {$S$};
\draw (1.25, 4.5) rectangle (-.25, 5);
\node at (.5, 4.75) {$\widehat{\mu}$};
\draw (-1.75, 4.5) rectangle (-2.25, 5);
\node at (-2, 4.75) {$S$};

\draw [knot] (2, 5) to[out=90,in=-90](1, 6.5);
\draw [knot] (.5, 5) to[out=90,in=-90](-.5, 6.5);
\draw [knot] (-1, 5) to[out=90,in=-90](-2, 6.5);
\draw [knot] (-2, 5) to[out=90,in=-90](2, 6.5);
\draw (1.75, 6.5) rectangle (3.25, 7);
\node at (2.5, 6.75) {$\widehat{\mu}$};
\draw (-2.25, 6.5) rectangle (-.25, 7);
\node at (-1.25, 6.75) {$S\circ c$};
\draw (.75, 6.5) rectangle (1.25, 7);
\node at (1, 6.75) {$s$};
\draw [knot] (3, 7) to[out=90,in=-90](3, 8.5);
\draw [knot] (1, 7) to[out=90,in=-90](-1.25, 8.5);
\draw [knot] (-1.25, 7) to[out=90,in=-90](1, 8.5);

\draw (-1.25, 8.5) -- (-1.25, 9.6);
\draw (.75, 8.5) rectangle (3.25, 9);
\node at (2, 8.75) {$\widehat{\mu}$};
\draw (2, 9) -- (2, 9.6);
\draw (-1.5, 10.1) rectangle (3.25, 9.6);
\node at (.875, 9.85) {$\rho_N$};
}
$$

$$\hspace{1 mm} \overset{\eqref{associativity}}{=} \hspace{1 mm}- \hspace{1 mm}\tikzmath{
\draw (-1.5, 0) --(-1.5, 1.2);
\draw (0, 0) --(0, 1.7);
\draw (1, 0) --(1, 4.5);
\draw (2, 0) --(2, 7.6);
\draw (3, 0) --(3, 9.6);

\node at (-1.5, -.5) {$(sA)^{\otimes p}$};
\node at (-.2, -.5) {$sA$};
\node at (1, -.5) {$(sA)^{\otimes m}$};
\node at (2, -.5) {$N$};
\node at (3, -.5) {$(sA)^{\otimes q}$};

\draw (-2.25, 1.2) rectangle (-.75, 1.7);
\node at (-1.5, 1.45) {$\Delta$};
\draw (-2, 5.6) -- (-2, 1.7);
\draw [knot] (0, 1.7) to[out=90,in=-90](-1, 3.2);
\draw [knot] (-1, 1.7) to[out=90,in=-90](0, 3.2);
\draw (-1, 5.6) -- (-1, 3.2);
\draw (0, 4.5) -- (0, 3.7);
\draw (-.25, 3.2) rectangle (.25, 3.7);
\node at (0, 3.45) {$S$};
\draw (1.25, 4.5) rectangle (-.25, 5);
\node at (.5, 4.75) {$\widehat{\mu}$};

\draw (-1.75, 5.6) rectangle (-2.25, 6.1);
\node at (-2, 5.85) {$S$};
\draw (-1.25, 5.6) rectangle (.75, 6.1);
\node at (-.25, 5.85) {$S\circ c$};
\draw (.5, 5.6) -- (.5, 5);
\draw [knot] (-.25, 6.1) to[out=90,in=-90](-2, 7.6);
\draw [knot] (-2, 6.1) to[out=90,in=-90](-.25, 7.6);
\draw (-2.25, 7.6) rectangle (0, 8.1);
\node at (-1.125, 7.85) {$\widehat{\mu}$};
\draw (1.75, 7.6) rectangle (2.25, 8.1);
\node at (2, 7.85) {$s$};

\draw [knot] (2, 8.1) to[out=90,in=-90](-.25, 9.6);
\draw [knot] (-.25, 8.1) to[out=90,in=-90](2, 9.6);
\draw (1.75, 9.6) rectangle (3.25, 10.1);
\node at (2.5, 9.85) {$\widehat{\mu}$};
\draw (-.25, 9.6) -- (-.25, 10.6);
\draw (2.5, 10.1) -- (2.5, 10.6);
\draw (-.5, 10.6) rectangle (3.25, 11.1);
\node at (1.375, 10.85) {$\rho_N$};
}\hspace{1 mm} \overset{\eqref{S:anti-alg}}{=} \hspace{1 mm} - \hspace{1 mm}\tikzmath{
\draw (-1.5, 0) --(-1.5, 1.2);
\draw (0, 0) --(0, 1.7);
\draw (1, 0) --(1, 4.5);
\draw (2, 0) --(2, 7.8);
\draw (3, 0) --(3, 9.6);
\node at (3.8, -.6) {,};

\node at (-1.5, -.5) {$(sA)^{\otimes p}$};
\node at (-.2, -.5) {$sA$};
\node at (1, -.5) {$(sA)^{\otimes m}$};
\node at (2, -.5) {$N$};
\node at (3, -.5) {$(sA)^{\otimes q}$};

\draw (-2.25, 1.2) rectangle (-.75, 1.7);
\node at (-1.5, 1.45) {$\Delta$};
\draw (-2, 5.6) -- (-2, 1.7);
\draw [knot] (0, 1.7) to[out=90,in=-90](-1, 3.2);
\draw [knot] (-1, 1.7) to[out=90,in=-90](0, 3.2);
\draw (-1, 5.6) -- (-1, 3.2);
\draw (0, 4.5) -- (0, 3.7);
\draw (-.25, 3.2) rectangle (.25, 3.7);
\node at (0, 3.45) {$S$};
\draw (1.25, 4.5) rectangle (-.25, 5);
\node at (.5, 4.75) {$\widehat{\mu}$};

\draw (-1.25, 5.6) rectangle (.75, 6.1);
\node at (-.25, 5.85) {$c $};
\draw (.5, 5.6) -- (.5, 5);
\draw (-.25, 6.1) -- (-.25, 6.7);
\draw (-2, 5.6) -- (-2, 6.7);
\draw (-2.25, 6.7) rectangle (0, 7.2);
\node at (-1.125, 6.95) {$\widehat{\mu}$};
\draw (-1.125, 7.2) -- (-1.125, 7.8);
\draw (-1.375, 7.8) rectangle (-.875, 8.3);
\node at (-1.125, 8.05) {$S$};
\draw (1.75, 7.8) rectangle (2.25, 8.3);
\node at (2, 8.05) {$s$};

\draw [knot] (2, 8.3) to[out=90,in=-90](-1.125, 9.6);
\draw [knot] (-1.125, 8.3) to[out=90,in=-90](2, 9.6);
\draw (1.75, 9.6) rectangle (3.25, 10.1);
\node at (2.5, 9.85) {$\widehat{\mu}$};
\draw (-1.125, 9.6) -- (-1.125, 10.6);
\draw (2.5, 10.1) -- (2.5, 10.6);
\draw (-1.375, 10.6) rectangle (3.25, 11.1);
\node at (.9375, 10.85) {$\rho_N$};
}$$
After applying \eqref{equ:mpq}, this expression is equal to $\iota(l_{\iota(N)})\circ (\widehat{I_A\overset{\infty}{\otimes}_A \id})$. Hence the diagram commutes.
\end{proof}

Define the left and right unit isomorphisms by
\[
\ell_M= l_{\iota(M)} \colon A \boxtimes_A M \to M
\]
and
\[
\gamma_M= r_M\overset{\infty}{\circ} (\id\overset{\infty}{\otimes}_A I_A)=r_M\circ(\widehat{\id\overset{\infty}{\otimes}_A I_A}) \colon M \boxtimes_A A \to M.
\]
Then the triangle axiom holds in $\ED^{\RR}_{\infty}(A)$.

\begin{prop}\label{prop:triangle-rightmodule}
Let $(A,m^n;\mu^{p,q})$ be a $B_{\infty}$-algebra, and let $(M,\rho^n_M)$ and $(N,\rho^n_N)$ be right $A_{\infty}$-modules. Then the following diagram commutes:
$$
\xymatrix{
(M\boxtimes_A A)\boxtimes_A N \ar[rr]^{a_{M,A,N}}\ar[rd]_{\gamma_M\boxtimes_A \id}& & M\boxtimes_A (A\boxtimes_A N) \ar[ld]^{\id\boxtimes_A\ell_N}\\
 & M\boxtimes_A N &
 }
$$
\end{prop}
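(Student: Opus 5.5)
We will unwind the definitions of all three arrows and reduce the triangle to two facts proved earlier: the triangle axiom for the monoidal category $(\ED^{\BB}_\infty(A),\overset{\infty}{\otimes}_A,A)$ of \cite[Proposition 2]{Fer12}, and the compatibility of $\psi$ with the unit constraints in Lemma \ref{lemm:Hwithunit}.

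\emph{Step 1: expand the composites.} By definition $a_{M,A,N}=(\id_M\overset{\infty}{\otimes}_A\psi)\overset{\infty}{\circ}\widetilde a$, where $\widetilde a$ is the bimodule associator for $(M\overset{\infty}{\otimes}_A\iota(A))\overset{\infty}{\otimes}_A\iota(N)$ and $\psi\colon \iota(A)\overset{\infty}{\otimes}_A\iota(N)\to\iota(A\overset{\infty}{\otimes}_A\iota(N))$. Also $\id\boxtimes_A\ell_N=\id_M\overset{\infty}{\otimes}_A\iota(l_{\iota(N)})$. Thus the upper-right composite is
\[
\id_M\overset{\infty}{\otimes}_A\bigl(\iota(l_{\iota(N)})\overset{\infty}{\circ}\psi\bigr)\overset{\infty}{\circ}\widetilde a ,
\]
using the functoriality \eqref{align:tensorcomposition}. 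Since $\gamma_M=r_M\overset{\infty}{\circ}(\id\overset{\infty}{\otimes}_A I_A)$, the lower-left composite is
\[
\bigl(r_M\overset{\infty}{\circ}(\id_M\overset{\infty}{\otimes}_A I_A)\bigr)\overset{\infty}{\otimes}_A\id_{\iota(N)}
=(r_M\overset{\infty}{\otimes}_A\id)\overset{\infty}{\circ}\bigl((\id_M\overset{\infty}{\otimes}_A I_A)\overset{\infty}{\otimes}_A\id\bigr).
\]
Here we regard $M$ as a right module tensored over $A$, which is legitimate because $\overset{\infty}{\otimes}_A$ only uses the right structure of the left factor.

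\emph{Step 2: apply Lemma \ref{lemm:Hwithunit}.} Rewrite $\iota(l_{\iota(N)})\overset{\infty}{\circ}\psi=l_{\iota(N)}\overset{\infty}{\circ}(I_A\overset{\infty}{\otimes}_A\id)$ in $\ED^{\BB}_\infty(A)$. Since $\id_M\overset{\infty}{\otimes}_A-$ preserves homotopies by \eqref{align:tensorcomposition}, the upper-right composite becomes
\[
(\id_M\overset{\infty}{\otimes}_A l_{\iota(N)})\overset{\infty}{\circ}\bigl(\id_M\overset{\infty}{\otimes}_A(I_A\overset{\infty}{\otimes}_A\id)\bigr)\overset{\infty}{\circ}\widetilde a .
\]
Naturality of $\widetilde a$ with respect to the bimodule morphism $I_A$ moves it past the associator, giving
\[
(\id_M\overset{\infty}{\otimes}_A l_{\iota(N)})\overset{\infty}{\circ}\widetilde a_{M,A,\iota(N)}\overset{\infty}{\circ}\bigl((\id_M\overset{\infty}{\otimes}_A I_A)\overset{\infty}{\otimes}_A\id\bigr).
\]
\emph{Step 3: the bimodule triangle axiom.} The triangle axiom in $\ED^{\BB}_\infty(A)$ from \cite{Fer12}, applied to $M$ and $\iota(N)$, states
\[
(\id_M\overset{\infty}{\otimes}_A l_{\iota(N)})\overset{\infty}{\circ}\widetilde a=r_M\overset{\infty}{\otimes}_A\id .
\]
This makes the two composites equal. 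Strictly, $M$ is only a right module, so this triangle is needed for a right module on the left. Fer's explicit formulas for $\widetilde a$, $l$, $r$ only involve the right action of the first factor, so they apply verbatim. Alternatively, the identity can be checked directly on components as in the proof of Proposition \ref{prop:asso-rightmodule}.

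\emph{Main obstacle.} The substantive input is Lemma \ref{lemm:Hwithunit}, which is already proved. The remaining delicate point is checking that naturality of $\widetilde a$ and the Fer triangle identity hold for a right module $M$ in the first slot, and that no sign issue arises from the shift $\zeta$. If that transfer proves awkward, I would instead evaluate both sides on $sM\otimes T^c(sA)\otimes A\otimes T^c(sA)\otimes N\otimes T^c(sA)$ graphically. There only $I_A$, $\psi$ and the explicit unit maps appear, and the comparison reduces to \eqref{equ:mpq} and the antipode axiom \eqref{S:antipodeaxiom}.
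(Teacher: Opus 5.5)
Your proposal is correct and matches the paper's proof: the paper likewise expands the triangle into a diagram whose left square commutes by naturality of $\widetilde a$, whose right square is Lemma \ref{lemm:Hwithunit} tensored with $\id_M$, and whose lower triangle is the triangle axiom in $\ED^{\BB}_\infty(A)$. Your extra remarks, that $\id_M\overset{\infty}{\otimes}_A-$ preserves homotopies and that $M$ sits in the first slot only as a right module, make explicit points the paper leaves implicit.
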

\begin{proof}
Expanding the diagram using the definition of $\boxtimes_A$, it remains to prove that the outer pentagon in the following diagram commutes:
$$\xymatrix{
(M\overset{\infty}{\otimes}_A \iota(A))\overset{\infty}{\otimes}_A \iota(N) \ar[r]^{\widetilde a}\ar[d]_{(\id\overset{\infty}{\otimes}_A I_A)\overset{\infty}{\otimes}_A\id}& M\overset{\infty}{\otimes}_A (\iota(A)\overset{\infty}{\otimes}_A \iota(N))\ar[r]^{\id\overset{\infty}{\otimes}_A\psi}\ar[d]_{\id\overset{\infty}{\otimes}_A (I_A\overset{\infty}{\otimes}_A\id)} & M\overset{\infty}{\otimes}_A \iota(A\overset{\infty}{\otimes}_A \iota(N))\ar[d]^{\id\overset{\infty}{\otimes}_A\iota(l)} \\
(M\overset{\infty}{\otimes}_A A)\overset{\infty}{\otimes}_A\iota(N)\ar[r]^{\widetilde a} \ar@/_2pc/
[rr]_{r\overset{\infty}{\otimes}_A \id}& M\overset{\infty}{\otimes}_A (A\overset{\infty}{\otimes}_A \iota(N))\ar[r]^{\id\overset{\infty}{\otimes}_A l}& M\overset{\infty}{\otimes}_A \iota(N)
 }$$
where composition is as in \eqref{align:compositionmorphisms}.
The functoriality of $\widetilde a$ implies the commutativity of the left square. Lemma \ref{lemm:Hwithunit} implies the commutativity of the right square. The triangle axiom in $\ED^{\BB}_{\infty}(A)$ implies the commutativity of the lower triangle.
Thus the outer pentagon commutes.
\end{proof}

We now prove Theorem \ref{thm:monoidalcatofrightmod}.
\begin{proof}[Proof of Theorem \ref{thm:monoidalcatofrightmod}]
The associativity and triangle axioms are given by Propositions \ref{prop:asso-rightmodule} and \ref{prop:triangle-rightmodule}. Exactness follows from the triangulated functoriality of the $A_\infty$-tensor product together with the induction functor $\iota$ from Proposition \ref{prop:dualitybinfinity}. Moreover, by Remark \ref{rem:tensormorphism}, the tensor product $\boxtimes_A$ admits an internal Hom and hence commutes with coproducts.
\end{proof}

\subsection{Brace $B_\infty$-algebras}
We now record a useful special class of $B_\infty$-algebras: the brace $B_\infty$-algebras.
\begin{defn}\label{defn:brace}
A $B_\infty$-algebra $(A,m^n;\mu^{p,q})$ is called a brace $B_\infty$-algebra if $m^n=0$ for $n>2$ and $\mu^{p,q}=0$ for $p>1$. Here $\mu^{0,1}=\id_{sA}=\mu^{1,0}$ and $\mu^{0,0}=0$.

In particular, the underlying $A_\infty$-algebra is a dg algebra. We shall write a brace $B_\infty$-algebra simply as $(A,m^n;\mu^{1,q})$.
\end{defn}

\begin{rem}\label{rem:commtativebrace}
Let $(A,m^n;\mu^{1,q})$ be a brace $B_\infty$-algebra. Then its cohomology algebra $\mathrm H^*(A)$ is necessarily graded commutative. Indeed, we have the identity (see Definition \ref{defn:bracebmodule})
 \begin{align*}
 & m^1(\mu^{1,1} (sa \otimes sb)) + (-1)^{|sa||sb|} m^2(sb \otimes sa) + m^2(sa \otimes sb) \\
 ={} & \mu^{1,1}(m^1(sa) \otimes sb) + (-1)^{|sa|} \mu^{1,1}(sa \otimes m^1(sb)).
 \end{align*}
In particular, if $m^1=0$, then $(A,m^2)$ is a graded commutative algebra; compare Subsection \ref{subsection:commutative}. More generally, $(\mathrm H^*(A),m^2,[-,-])$ is a Gerstenhaber algebra, where the Lie bracket is defined by $$[a, b]=(-1)^{|a|} s^{-1} \mu^{1,1}(sa \otimes sb) -(-1)^{|b|+|sa||sb|} s^{-1} \mu^{1,1}(sb \otimes sa).$$ 
See \cite[Lemma 5.18]{CLW} or \cite[Subsection 5.2]{GJ}.
\end{rem}

\begin{rem}
Despite this restriction, brace $B_\infty$-algebras include many fundamental examples, such as the (singular) Hochschild cochain complex of an associative algebra (see, for example, \cite{GeVo, LoVa, Vor, Wa}) and the Yoneda dg algebra of a Hopf algebra (see Proposition \ref{prop:bracestructure} below).
\end{rem}

\subsubsection{\textbf{\textup{Brace dg modules}}}
We now introduce brace dg modules. They provide a convenient criterion for comparing an ordinary dg bimodule with its induced $A_\infty$-bimodule, and this comparison is essential for the unit constraint in Proposition \ref{prop:triangle-rightmodule}.

\begin{defn}\label{defn:bracebmodule}
Let $(A, m^n; \mu_A^{1, q})$ be a brace $B_{\infty}$-algebra.
A {\it brace $B_{\infty}$-module} over $A$ is a dg bimodule $(M,\beta_M^{0,0},\beta_M^{1,0},\beta_M^{0,1})$ endowed with degree-zero maps
$$\mu_M^{1,q}: sM\otimes (sA)^{\otimes q}\rightarrow sM$$
with $\mu_M^{1,0}=\id_M$, satisfying the following identities:
\begin{enumerate}
\item \label{higher-pre-Jacobi} Higher pre-Jacobi identity:
\begin{align*}
\mu_M^{1,q}\circ (\mu_M^{1,p} \smallotimes \id_{sA}^{\otimes q})=\sum_{r=1}^{p+q}\sum_{\substack{l_1+\cdots+l_r=p\\ n_1+\dotsb + n_r=q} } \mu_M^{1,r} \circ \left (\id_{sM} \smallotimes (\mu_A^{l_1,n_1}\smallotimes \cdots \smallotimes \mu_A^{l_r, n_r}) \circ \tau_{(l_1,\dotsc,l_r; n_1,\dotsc,n_r)}\right),
\end{align*}
where $\tau_{(l_1,\dotsc,l_r; n_1,\dotsc,n_r)}$ is the block permutation, see Remark \ref{rem:widehatmu}.

\item \label{distributivity} Distributivity:
\begin{align*}
\sum_{n_1+n_2=q}\beta_M^{0,1}\circ (\mu_M^{1, n_1}\smallotimes \mu_A^{1,n_2} \circ \tau_{(1,1; n_1,n_2)}) = \mu_M^{1,q}\circ (\beta_M^{0,1}\smallotimes \id^{\otimes q})
\end{align*}
where the equality is evaluated on elements of $(sM\otimes sA) \bigotimes (sA)^{\otimes q}$.
\item \label{higher-homotopy} Higher homotopy:
\begin{align*}
& \beta_M^{0,0}\circ \mu_M^{1,q}+ \beta_M^{0,1}\circ (\mu_M^{1,q-1}\smallotimes \id )+ \beta_M^{1, 0} \circ (\id\smallotimes \mu_M^{1,q-1})\circ\tau_{(0,1; 1,q-1)}\\ ={} & \mu_M^{1,q}\circ ( \beta_M^{0,0}\smallotimes \id^{\smallotimes q})+ \sum_{i=0}^{q-1} \mu_M^{1,q}\circ (\id^{\smallotimes i+1} \smallotimes m^1\smallotimes \id^{\smallotimes q-i-1}) + \sum_{i=0}^{q-2} \mu_M^{1,q-1}\circ ( \id^{\smallotimes i+1}\smallotimes m^2\smallotimes \id^{\smallotimes q-i-2})
\end{align*}
where the equality is evaluated on elements of $sM\bigotimes (sA)^{\otimes q}$, and on the right-hand side the term $\id^{\smallotimes i+1}$ means $\id_{sM} \smallotimes \id^{\smallotimes i}$.
\end{enumerate}
\end{defn}

\begin{rem}
The brace $B_\infty$-algebra $(A, m^n; \mu^{1,q})$ can be regarded as a brace $B_{\infty}$-module over itself via $\beta^{0,0}=m^1$, $\beta^{1,0}=\beta^{0,1}=m^2$, and the higher operations $\mu^{1,q}$.
\end{rem}

\begin{rem}\label{rem:dgainfinitymodules}
 Since a brace $B_\infty$-algebra $A$ is a dg algebra, Remark \ref{rem:Ainfinitydgmodules} gives a triangulated equivalence $\ED^{\RR}_\infty(A)\simeq \ED^{\RR}(A)$. Thus, in this subsection, we may work with right dg modules rather than general $A_\infty$-modules. Moreover, when restricted to dg bimodules, the tensor functor $-\overset{\infty}{\otimes}_A-$ agrees naturally with the derived tensor product $-\otimes_A^{\mathbb{L}}-$.
\end{rem}

A brace module is, in particular, a dg bimodule over the dg algebra $A$. We now consider the opposite $B_\infty$-algebra $A^{\rm opp}$; see Definition \ref{defnopposite}. Recall from \eqref{align:iotabimodule} that the right dg module structure also gives rise to an $A_\infty$-bimodule $\iota(M)$ over the Hopf algebra $(T^c(sA),\Delta,\widehat{m},\widehat{\mu}^{\opp})$. The next theorem shows that the additional brace-module operations provide an explicit $A_\infty$-quasi-isomorphism from $\iota(M)$ back to the original dg bimodule $M$.

\begin{thm}\label{thm:bracemodulegeneral}
Let $(A, m^n; \mu^{1, q})$ be a brace $B_{\infty}$-algebra and $(M, \beta_M; \mu_M^{1,q})$ a brace $B_{\infty}$-module over $A$.
Then the dg bimodule $(M, \beta_M)$ is $A_\infty$-quasi-isomorphic to $\iota(M)$ as $A_\infty$-bimodules.
In particular, $\iota(A)$ is $A_\infty$-quasi-isomorphic to $A$.
\end{thm}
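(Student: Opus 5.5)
Modelled on the proof of Theorem \ref{thm:iotabracemodule}, I will write down an explicit $A_\infty$-bimodule morphism $I_M\colon \iota(M)\to M$ with $I_M^{0,0}=\id_{sM}$. Its first component is the identity, so $I_M$ is automatically a quasi-isomorphism, and the only real task is to check the morphism equation $I_M\circ\widehat\beta_{\iota(M)}=\beta_M\circ\widehat I_M$. Here $\iota(M)$ is formed from the right dg module underlying $M$, using the Hopf algebra $(T^c(sA),\Delta,\widehat m,\widehat\mu^{\opp})$ of the opposite algebra as indicated before the statement. The formula for $I_M$ copies Lemma \ref{lemm:Hwithunit}, with $\mu_M$ in place of the outer $\mu$. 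Because $\mu^{p,q}=0$ for $p>1$, the outer brace operation can absorb only the module element. This suggests
\[
I_M^{p,q}(sa_{1,p}\otimes sx\otimes sb_{1,q})=\pm\,\mu_M\bigl(sx\otimes \widehat\mu(S(sa_{1,p})\bigotimes sb_{1,q})\bigr),
\]
with $\widehat\mu$ and $S$ taken on the appropriate (opposite) side so that $S(sa_{1,p})$ sits to the right of $sx$, as in $\beta_{\iota(M)}$. For a brace algebra, $\widehat\mu(S(\cdot)\bigotimes\cdot)$ is an iterated brace expression, so every component is an explicit finite sum of brace operations. For $p=q=0$ this gives $\mu_M^{1,0}=\id$, and for $p=0,q=1$ it gives $\mu_M^{1,1}$, as expected.

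To verify the morphism equation I evaluate both sides on $(sA)^{\otimes p}\otimes sM\otimes (sA)^{\otimes q}$ and sort the terms of $\widehat\beta_{\iota(M)}$ into three groups.
\begin{enumerate}
\item \emph{Terms with $\widehat m$ on either tensor factor.} Since $\widehat m$ is a coderivation and commutes with $S$ and $\widehat\mu$ (Proposition \ref{prop:dualitybinfinity}), these terms move through $\widehat\mu(S\otimes\id)$ and land as $\widehat m$ acting on the argument of $\mu_M$. The higher homotopy axiom (\ref{higher-homotopy}) of Definition \ref{defn:bracebmodule} then turns them into $\beta_M^{0,0}\circ I_M$, plus $m^2$-terms involving the outermost letters.
\item \emph{Terms in which $\beta_{\iota(M)}$, i.e.\ $\rho_M=\beta_M^{0,1}$ composed with $\widehat\mu(S\otimes\id)$, is followed by $I_M$.} These are handled by the higher pre-Jacobi identity (\ref{higher-pre-Jacobi}), which is precisely the statement that $\mu_M$ defines a right action of the Hopf algebra $(T^c(sA),\widehat\mu)$ on $sM$ compatible with $\widehat\mu$. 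Using associativity \eqref{associativity}, the antipode identities \eqref{S:anti-alg} and \eqref{S:antipodeaxiom}, and Lemma \ref{lem:cofreehopfidentity}, the nested expression $\mu_M(\mu_M(sx\otimes\cdots)\otimes\cdots)$ collapses to a single $\mu_M$. The factor $\rho_M$ is then pulled out by distributivity (\ref{distributivity}), which yields the $\beta_M^{0,1}\circ(I_M\otimes\id)$ terms of the right-hand side.
\item \emph{Left actions $\beta_M^{1,0}$.} These have to come from the $m^2$ summands produced in the first group, together with the $\beta_M^{1,0}$ term of the homotopy axiom. Here the antipode axiom ensures that the surviving term is exactly $\beta_M^{1,0}(sa_1\otimes I_M(\ldots))$.
\end{enumerate}
In effect this is the argument of Theorem \ref{thm:iotabracemodule}, with \eqref{equ:mpq} replaced by the module axioms.

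I expect the main obstacle to be the interaction between the homotopy axiom and the antipode. The homotopy axiom only describes how $\mu_M^{1,q}$ fails to commute with the differential, and each failure produces $m^2$ and $\beta_M^{1,0}$ terms. Showing that, after substituting $S$ recursively, all but the boundary terms telescope away needs a concatenation identity analogous to \eqref{equ:mwithc} and \eqref{equ:mpq}, but for $\mu_M$. I would first prove such a lemma by induction on $p$, using the recursive formula for $S$, and only then do the diagrammatic comparison. The Koszul signs, especially those coming from passing to $A^{\opp}$ and from the crossing in $\beta_{\iota(M)}$, will have to be tracked carefully.

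For the final assertion, I take $M=A$ with $\mu_M=\mu^{1,q}$. This is a brace module over $A$ by the remark after Definition \ref{defn:bracebmodule}, so $\iota(A)\simeq A$ follows directly from the first part.
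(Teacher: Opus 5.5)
\textbf{The proposed $I_M$ is not an $A_\infty$-bimodule morphism.} Your overall plan is the paper's: an explicit $I_M\colon\iota(M)\to M$ with $I_M^{0,0}=\id_{sM}$, modelled on Theorem~\ref{thm:iotabracemodule}. But the formula $I_M^{p,q}=\pm\mu_M\bigl(sx\otimes\widehat\mu(S(sa_{1,p})\bigotimes sb_{1,q})\bigr)$ fails already in arity $(0,1)$, at exactly the value $I_M^{0,1}=\pm\mu_M^{1,1}$ that you call expected.

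Since $\varphi(1\bigotimes sb_{1,q})=sb_{1,q}$, the right actions of $\iota(M)$ and $M$ coincide ($F^{\RR}\circ\iota=\id$). Hence the components $I_M^{0,q}$ of any bimodule morphism must form an endomorphism of the right module $M$. Concretely, the $\beta_M^{0,1}$-terms cancel, and the $(0,1)$-component of $I_M\circ\widehat\beta_{\iota(M)}=\beta_M\circ\widehat I_M$ reads
\[
\beta_M^{0,0}\circ I_M^{0,1}=I_M^{0,1}\circ(\beta_M^{0,0}\otimes\id)+I_M^{0,1}\circ(\id\otimes m^1).
\]
However, the higher homotopy axiom with $q=1$ gives
\[
\beta_M^{0,0}\circ\mu_M^{1,1}-\mu_M^{1,1}\circ(\beta_M^{0,0}\otimes\id)-\mu_M^{1,1}\circ(\id\otimes m^1)=-\beta_M^{0,1}-\beta_M^{1,0}\circ\tau_{(0,1;1,0)}.
\]
The right-hand side is the graded commutator of the two actions, which is nonzero in general. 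This already happens for $M=A$ the Hochschild cochains of a noncommutative algebra, or $A=\Y(\Bbbk,\Bbbk)$ with its concatenation product. Your step (1) hides exactly this: $\mu_M$ does not commute with $\widehat m$ on the $b$-side. The $\beta_M^{0,1},\beta_M^{1,0}$ defect terms it produces in arities $(0,q)$ have nothing to cancel against.

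The correction is needed only on the left, where the left action of $\iota(M)$ (the right action twisted by the antipode) differs from $\beta_M^{1,0}$. In the paper's graph, $\mu_M^{\opp}$ accepts only the single letter $sx$ in its second slot, so everything collapses to
\[
I_M^{p,q}=0 \ (q>0),\qquad I_M^{p,0}(sa_{1,p}\otimes sx)=(\mu_M^{1,p})^{\opp}(sa_{1,p}\bigotimes sx)=\pm\mu_M^{1,p}(sx\otimes sa_{1,p}).
\]
The letters $sa_i$ enter in their original order and without $S$. Your $S(sa_{1,p})$ reverses them at leading order, e.g.\ $S(sa_1\otimes sa_2)=\pm sa_2\otimes sa_1+\mu^{1,1}(sa_1\otimes sa_2)$. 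This also clashes with the homotopy axiom. Applied to $\beta_M^{0,0}\circ I_M^{p,0}$, it peels off the first brace input as a left action, and that term must match $\beta_M^{1,0}(sa_1\otimes I_M^{p-1,0}(sa_{2,p}\otimes sx))$ in $\beta_M\circ\widehat I_M$.

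With the correct $I_M$, no new concatenation lemma for $\mu_M$ is needed. Components with $q\ge 2$ vanish on both sides, since $\beta_M$ is dg and $\beta_{\iota(M)}^{p,q}=0$ for $q\ge 2$ in the brace case. The $q=1$ components are handled by distributivity, and the $q=0$ components by the higher homotopy axiom. The brace relations of $A$ and the antipode terms in $\beta_{\iota(M)}^{p,0}$ and $\beta_{\iota(M)}^{p,1}$ cancel the remaining brace terms.
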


\begin{proof}
The construction of the $A_\infty$-bimodule morphism $I_M\colon \iota(M)\to M$ is analogous to the proof of Theorem \ref{thm:iotabracemodule}. More explicitly, the component
$$I^{p,q}_M: (sA)^{\otimes p}\otimes s(\iota(M))\otimes (sA)^{\otimes q}\rightarrow sM$$
is depicted by the following graph:
$$\tikzmath{
\draw (-1.5, .9) --(-1.5, 1.2);
\draw (0, .9) --(0, 1.7);
\draw (1, .9) --(1, 4.5);

\node at (-1.5, .5) {$(sA)^{\otimes p}$};
\node at (-.1, .5) {$s(\iota(M))$};
\node at (1.3, .5) {$(sA)^{\otimes q}$};

\draw (-2.25, 1.2) rectangle (-.75, 1.7);
\node at (-1.5, 1.45) {$\Delta$};
\draw (-2, 5.6) -- (-2, 1.7);
\draw [knot] (0, 1.7) to[out=90,in=-90](-1, 3.2);
\draw [knot] (-1, 1.7) to[out=90,in=-90](0, 3.2);
\draw (-1, 5.6) -- (-1, 3.2);
\draw (0, 4.5) -- (0, 3.7);
\draw (-.25, 3.2) rectangle (.25, 3.7);
\node at (0, 3.45) {$S$};
\draw (1.25, 4.5) rectangle (-.25, 5);
\node at (.5, 4.75) {$\widehat{\mu}^{\opp}$};

\draw (-1.25, 5.6) rectangle (.75, 6.1);
\node at (-.25, 5.85) {$c$};
\draw (.5, 5.6) -- (.5, 5);
\draw (-.25, 6.1) -- (-.25, 6.7);
\draw (-2, 5.6) -- (-2, 6.7);
\draw (-2.25, 6.7) rectangle (0, 7.3);
\node at (-1.125, 7) {$\mu^{\opp}_M$};
}$$
Here $\widehat \mu^{\opp}$ appears because we are working over the opposite $B_\infty$-algebra. Since $\mu_M^{p,q}=0$ for $p>1$, the map
$$I^{p,0}_M: (sA)^{\otimes p}\otimes s(\iota(M))\rightarrow sM$$
reduces to
$$\tikzmath{
\node at (-1.2, 1.3) {$(sA)^{\otimes p}$};
\node at (0.6, 1.3) {$s(\iota(M))$};
\node at (1.8, 1.2) {,};
\draw [knot] (0, 1.7) to[out=90,in=-90](0, 3.2);
\draw [knot] (-1, 1.7) to[out=90,in=-90](-1, 3.2);
\draw (-1.25, 3.2) rectangle (.25, 3.8);
\node at (-.5, 3.5) {$\mu_M^{\opp}$};
}$$
and $I^{p,q}_M=0$ for $q \neq 0$.
The verification that $I_M$ is an $A_\infty$-bimodule morphism is the same graphical computation as in the proof of Theorem \ref{thm:iotabracemodule}, with the brace-module identities replacing the corresponding brace-algebra identities.
\end{proof}

\begin{rem}\label{rem:iotaMhigherleft}
Although $M$ is a dg bimodule, $\iota(M)$ is generally only an $A_\infty$-bimodule. It is a dg bimodule only in special cases, for instance when the higher brace operations $\mu^{1,q}$ vanish for $q\geq1$; see Proposition \ref{prop:commuativeiota}.
\end{rem}

\section{Koszul duality and tensor products for Hopf algebras}\label{section:koszulduality}
Throughout this section, $H$ is a finite-dimensional Hopf algebra and all $H$-modules are right modules. The coproduct gives the tensor product of two right $H$-modules its diagonal action,
\[
(x\otimes y)h=\sum xh^{(1)}\otimes yh^{(2)}.
\]
For finite-dimensional Hopf algebras, the tensor product of injective modules is again injective. Consequently, the homotopy category $\EK(\mathrm{Inj}\text{-}H)$ of complexes of injective $H$-modules is a monoidal triangulated category; its tensor unit is represented by an injective resolution of the trivial module $\Bbbk$; compare \cite{BeKr}.

Our aim is to describe this tensor product after Koszul duality. The Koszul dual algebra will be a dg model for $\operatorname{RHom}_H(\Bbbk,\Bbbk)$, namely the Yoneda dg algebra $\Y(\Bbbk,\Bbbk)$. The Hopf structure on $H$ supplies this dg algebra with brace operations, and hence with the monoidal structure constructed in Section \ref{section:monoidaltri}. We then compare that monoidal structure with the tensor product on $\EK(\mathrm{Inj}\text{-}H)$. The comparison is lax monoidal in general and becomes a monoidal equivalence on the localizing subcategory generated by the trivial module.

\subsection{The Yoneda dg category}
The construction in this subsection is available for every associative algebra; the Hopf structure will enter only in the next subsection. Recall from \cite{ChWa} the \emph{Yoneda dg category} $\mathcal{Y}=\mathcal{Y}_H$. Its objects are complexes of right $H$-modules. For two such complexes $X$ and $Y$, set
\[
\mathcal{Y}(X,Y)=\prod_{n\geq0}\mathcal{Y}_n(X,Y),
\qquad
\mathcal{Y}_n(X,Y)=\operatorname{Hom}\bigl(X\otimes(sH)^{\otimes n},Y\bigr).
\]
Thus $\mathcal{Y}_n(X,Y)$ consists of cochains with $n$ bar variables, and $\mathcal{Y}_0(X,Y)=\operatorname{Hom}(X,Y)$. The differential has an internal part, coming from the differentials of $X$ and $Y$, and a bar part, coming from the multiplication and module structures. More precisely, it is determined by
\[
\begin{pmatrix}\delta_{\rm in}\\
\delta_{\rm ex}\end{pmatrix}
\colon \mathcal{Y}_n(X,Y)
\longrightarrow
\mathcal{Y}_n(X,Y)\oplus\mathcal{Y}_{n+1}(X,Y),
\]
where
\begin{equation}\label{deltayoneda1}
\begin{aligned}
\delta_{\rm in}(f)(x \otimes sa_{1,n}) ={} & d_Y\bigl(f(x \otimes sa_{1,n})\bigr)-(-1)^{|f|}f(d_X(x) \otimes sa_{1,n}),\\
\delta_{\rm ex}(f)(x\otimes sa_{1,n+1}) ={} & (-1)^{|f|}f(x a_1\otimes sa_{2,n+1})+(-1)^{|f|+n+1}f(x\otimes sa_{1,n})\circ a_{n+1}\\
&+\sum_{i=1}^{n}(-1)^{|f|+i} f(x\otimes sa_{1,i-1}\otimes sa_ia_{i+1}\otimes sa_{i+2,n+1}).
\end{aligned}
\end{equation}
If $X$ and $Y$ are ordinary modules, viewed as stalk complexes in degree $0$, then $\delta_{\rm in}=0$ and this is the usual bar-type differential.

Composition is concatenation of bar variables. For $f\in\mathcal{Y}_n(X,Y)$ and $g\in\mathcal{Y}_m(Y,Z)$, define $g\odot f\in\mathcal{Y}_{m+n}(X,Z)$ by
\begin{equation}\label{equation:yonedaproduct}
(g\odot f)(x\otimes sa_{1,m+n})
 =g\bigl(f(x\otimes sa_{1,n})\otimes sa_{n+1,m+n}\bigr).
\end{equation}
The identity of $X$ is the usual map $\id_X\in\mathcal{Y}_0(X,X)$. In particular,
\[
\id_Y\odot f=f=f\odot\id_X.
\]
The dg algebra $\Y(\Bbbk,\Bbbk)$ is therefore a concrete model for the derived endomorphism algebra of the trivial module, and
\[
H^*\Y(\Bbbk,\Bbbk)\cong\Ext_H^*(\Bbbk,\Bbbk).
\]

\subsection{The brace structure supplied by the coproduct}\label{subsection:binfinitybialgebra}
The counit $\varepsilon_H\colon H\to\Bbbk$ makes $\Bbbk$ the trivial right $H$-module. In this subsection, we use the coproduct of $H$ to construct a brace $B_\infty$-structure on
\[
E:=\Y(\Bbbk,\Bbbk)
\]
and compatible brace-module structures on $\Y(X,Y)$. Only the bialgebra structure of $H$ is needed for these constructions.

Under the identification
\[
\Y_m(\Bbbk,\Bbbk)=\operatorname{Hom}\bigl((sH)^{\otimes m},\Bbbk\bigr),
\]
the product $\odot$ is the usual concatenation, or cup, product: for $f\in\Y_n(\Bbbk,\Bbbk)$ and $g\in\Y_m(\Bbbk,\Bbbk)$,
\[
(g\odot f)(sa_{1,m+n})=f(sa_{1,n})g(sa_{n+1,m+n}).
\]
Its unit is $\id_\Bbbk\in\Y_0(\Bbbk,\Bbbk)$.

Let $X$ and $Y$ be complexes of right $H$-modules. The coproduct gives $\Y(X,Y)$ a natural dg $E$-bimodule structure. For $f\in\Y_n(X,Y)$ and $g\in\Y_m(\Bbbk,\Bbbk)$, define
\begin{align}\label{align:cupproducts}
\begin{aligned}
(g\cup f)(x\otimes sa_{1,m+n})
 & = f(x\otimes sa_{1,n})a^{(2)}_{n+1}\cdots a^{(2)}_{m+n}\,
 g(sa^{(1)}_{n+1}\otimes\cdots\otimes sa^{(1)}_{m+n}),\\
(f\cup g)(x\otimes sa_{1,m+n})
 & = f(xa^{(2)}_1\cdots a^{(2)}_m\otimes sa_{m+1,m+n})
 g(sa^{(1)}_1\otimes\cdots\otimes sa^{(1)}_m).
\end{aligned}
\end{align}
These formulas simply split each relevant element by the coproduct: one tensor factor is evaluated by $g$, while the other acts on the module variable or on the value of $f$. When $m=0$, the two actions reduce to scalar multiplication:
\[
(f\cup k)(x\otimes sa_{1,n})=k f(x\otimes sa_{1,n})=(k\cup f)(x\otimes sa_{1,n})
\qquad(k\in\Bbbk).
\]

We next define the brace operations. For $h\in\Y_n(\Bbbk,\Bbbk)$, let
\[
\widetilde h\colon(sH)^{\otimes n}\longrightarrow H
\]
be the $H$-valued cochain
\[
\widetilde h(sa_1\otimes\cdots\otimes sa_n)
 =\sum h(sa_1^{(1)}\otimes\cdots\otimes sa_n^{(1)})a_1^{(2)}a_2^{(2)}\cdots a_n^{(2)},
\]
where $\Delta(a_i)=\sum a_i^{(1)}\otimes a_i^{(2)}$. For $n=0$, set $\widetilde h=0$. Thus $\widetilde h$ is obtained by applying the coproduct to every input, evaluating $h$ on the first tensor factors, and multiplying the remaining factors in $H$.

For $q\geq0$, define
\[
\mu^{1,q}_{X,Y}\colon
\Y(X,Y)\otimes\Y(\Bbbk,\Bbbk)^{\otimes q}
\longrightarrow\Y(X,Y).
\]
If $f\in\Y_n(X,Y)$ and $g_i\in\Y_{m_i}(\Bbbk,\Bbbk)$, then
\begin{align}\label{align:mupqformular}
\mu^{1,q}_{X,Y}(sf\smallotimes sg_1\smallotimes\dotsb\smallotimes sg_q)
 ={}&\sum(-1)^{\epsilon_q}sf\circ
 \bigl(\id\smallotimes\id^{\smallotimes i_{q+1}}\smallotimes s\circ\g_q
 \smallotimes\cdots \notag\\
 &\hspace{7em}\smallotimes\id^{\smallotimes i_2}\smallotimes s\circ\g_1
 \smallotimes\id^{\smallotimes i_1}\bigr).
\end{align}
where
\[
\epsilon_q=\sum_{j=1}^{q-1}(|g_j|-1)(|g_{j+1}|+\cdots+|g_q|-q+j),
\]
and the sum ranges over
\[
\bigl\{(i_1,\dotsc,i_{q+1})\in\mathbb Z_{\geq0}^{q+1}
 \mid i_1+\cdots+i_{q+1}=n-q\bigr\}.
\]
In other words, the operation inserts the $H$-valued cochains $\widetilde g_1,\dotsc,\widetilde g_q$ into the bar variables of $f$. If there are not enough slots, the result is zero. We set $\mu^{1,0}_{X,Y}=\id$.

\begin{prop}\label{prop:bracestructure}
Let $H$ be a bialgebra. Then $E=\Y(\Bbbk,\Bbbk)$, equipped with the operations above, is a brace $B_\infty$-algebra. For every pair of complexes of right $H$-modules $X$ and $Y$, the dg $E$-bimodule $\Y(X,Y)$ is a brace $B_\infty$-module over $E$.
\end{prop}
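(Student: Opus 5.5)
The plan is to reduce everything to the standard Gerstenhaber--Voronov brace structure on Hochschild-type cochains, transported along the bialgebra structure of $H$. The key observation is that the assignment $h\mapsto \widetilde h$ identifies $E=\Y(\Bbbk,\Bbbk)$ with a dg subalgebra of the Hochschild cochain complex $C^*(H,H)$ (with $H$ viewed as an $H$-bimodule, or more precisely as the module $H$ with the right regular action twisted by the coproduct). I would check that $\widetilde{g\odot f}$ equals the cup product $\widetilde g\cdot \widetilde f$ in $C^*(H,H)$ (this uses multiplicativity of $\Delta$), and that $h\mapsto\widetilde h$ intertwines $\delta_{\rm ex}$ with the Hochschild differential (this uses coassociativity and the counit). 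Granting these, the brace operations on $E$ are $\mu^{1,q}(sf\otimes sg_1\otimes\dots\otimes sg_q)$ obtained by inserting the $\widetilde g_i$ into the slots of $f$, and the key point is that the result, composed with $\varepsilon$ on the output, agrees with the brace $f\{\widetilde g_1,\dots,\widetilde g_q\}$ and moreover $\widetilde{f\{g_1,\dots,g_q\}}=\widetilde f\{\widetilde g_1,\dots,\widetilde g_q\}$. This compatibility is checked directly from the definition of $\widetilde{\ }$ and multiplicativity of $\Delta$.

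With this in place, the three axioms of Definition \ref{defn:bracebmodule} (with $M=E$) become the classical identities for the brace operations on Hochschild cochains: the higher pre-Jacobi identity is the associativity of composing insertions (the block permutation $\tau$ producing the Koszul signs $\epsilon_q$); distributivity is the fact that braces act as derivations with respect to the cup product in the first variable, which here reads $\mu^{1,q}(f\odot f',\dots)=\sum \mu^{1,n_1}(f,\dots)\odot\mu^{1,n_2}(f',\dots)$; the higher homotopy identity is the Gerstenhaber--Voronov homotopy formula $\delta(f\{g_1,\dots,g_q\})-\dots=\pm g_1\cup f\{g_2,\dots\}\pm f\{\dots,g_{q-1}\}\cup g_q\pm\sum f\{\dots,g_i\cup g_{i+1},\dots\}$. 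Since the brace operations vanish for $p>1$ and $E$ is a dg algebra, Definition \ref{defn:brace} together with these identities yields the dg bialgebra $(T^c(sE),\Delta,\widehat m,\widehat\mu)$, i.e.\ a brace $B_\infty$-algebra (as in \cite{GJ,Vor}).

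For $\Y(X,Y)$ I would proceed identically: the same insertion formula \eqref{align:mupqformular} defines $\mu^{1,q}_{X,Y}$, and one verifies first that the cup products \eqref{align:cupproducts} make $\Y(X,Y)$ a dg $E$-bimodule (associativity again from coassociativity and multiplicativity of $\Delta$, compatibility with $\delta_{\rm in}+\delta_{\rm ex}$ by a direct check). Then the three module axioms are proved by the same term-by-term matching as for $E$, the only new terms being those in $\delta_{\rm ex}$ where $a_1$ acts on $x$ or $a_{n+1}$ acts on the output of $f$; when a $\widetilde g$ is inserted at the first or last slot these produce exactly $\beta^{1,0}$ and $\beta^{0,1}$, i.e.\ the cup products $g\cup f$ and $f\cup g$, because $x\cdot\widetilde g(\dots)=\sum x a^{(2)}_1\cdots a^{(2)}_m\, g(a^{(1)}_1,\dots)$ reproduces \eqref{align:cupproducts}.

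The main obstacle is the higher homotopy identity together with signs: one must expand $\delta_{\rm ex}(f\{g_1,\dots,g_q\})$, sort the terms according to whether the internal multiplication $a_ia_{i+1}$ falls inside an inserted block, straddles two adjacent blocks, straddles a block and a free slot of $f$, or lies between free slots, and match each class with $\mu\circ(\delta\otimes\id)$, the $m^2$-terms $g_i\cup g_{i+1}$, or the boundary cup products. I would handle this first for $q=1$ carefully with signs, then argue the general case by the same telescoping, relying on the fact that $\widetilde{\ }$ converts the coproduct bookkeeping into ordinary Hochschild insertions so that the classical sign computation of Gerstenhaber--Voronov applies verbatim.
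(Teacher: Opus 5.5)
Your proposal follows the paper's proof: the assignment $h\mapsto\widetilde h$ is exactly the paper's embedding $\Omega\colon\Y(\Bbbk,\Bbbk)\to C^*(H,H)$, along which $E$ inherits the Gerstenhaber--Voronov brace $B_\infty$-structure. As in the paper, the module statement for $\Y(X,Y)$ then follows from the same insertion computation with one distinguished module-valued cochain.

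One small correction for when you carry out the checks: you have shuffled which bialgebra axioms enter where. Compatibility with the cup product needs no bialgebra axiom, since $\widetilde{g\odot f}(sa_{1,m+n})=\widetilde f(sa_{1,n})\,\widetilde g(sa_{n+1,m+n})$ holds directly. Compatibility with the differential uses multiplicativity of $\Delta$ and the counit. Compatibility with the braces uses multiplicativity together with coassociativity.
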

\begin{proof}
Consider the map
\[
\Omega\colon\Y(\Bbbk,\Bbbk)\longrightarrow C^*(H,H)
\]
defined by
\[
\Omega^n(f)(sa_1\otimes\cdots\otimes sa_n)
 =f(sa_1^{(1)}\otimes\cdots\otimes sa_n^{(1)})a_1^{(2)}\cdots a_n^{(2)};
\]
compare \cite[Proposition 2.3]{LMSS}. This is an embedding of complexes and is compatible with the brace operations. Since the Hochschild cochain complex $C^*(H,H)$ carries its standard brace $B_\infty$-structure \cite{GeVo}, the image of $\Omega$ inherits a brace $B_\infty$-structure, which is precisely the one defined above on $\Y(\Bbbk,\Bbbk)$.

The brace-module identities for $\Y(X,Y)$ are the corresponding insertion identities with one distinguished module-valued cochain. They follow by the same computation, using coassociativity and multiplicativity of the coproduct. We omit the routine sign verification.
\end{proof}

\begin{rem}
On cohomology, the brace structure recovers the Gerstenhaber algebra structure on $\Ext_H^*(\Bbbk,\Bbbk)$ constructed in \cite{FaSo}. Thus the brace $B_\infty$-algebra $\Y(\Bbbk,\Bbbk)$ is a chain-level enhancement of the familiar Gerstenhaber structure on Hopf-algebra cohomology. It may also be viewed as a dual analogue of the brace construction in \cite[Section 5]{Kad}; see also \cite[Section 2]{Yo}.
\end{rem}

Proposition \ref{prop:bracestructure} allows us to apply the general construction of Section \ref{section:monoidaltri}.
\begin{thm}\label{thm:bialgebrainducedbimodules}
Let $H$ be a bialgebra and set $E=\Y(\Bbbk,\Bbbk)$. Then $\ED(E)$ is a closed monoidal triangulated category with tensor product $\boxtimes_E$. Moreover, for every pair $X,Y$ of complexes of right $H$-modules, the naturally defined dg $E$-bimodule $\Y(X,Y)$ is $A_\infty$-isomorphic to its induced bimodule $\iota(\Y(X,Y))$.
\end{thm}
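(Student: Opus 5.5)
The statement has three parts: monoidal triangulated structure, closedness, and the comparison $\Y(X,Y)\simeq\iota(\Y(X,Y))$. All three should follow by feeding Proposition \ref{prop:bracestructure} into the general machinery of Section \ref{section:monoidaltri}. So I would first invoke Proposition \ref{prop:bracestructure} to regard $E=\Y(\Bbbk,\Bbbk)$ as a brace $B_\infty$-algebra. In particular $E$ is a $B_\infty$-algebra whose underlying $A_\infty$-algebra is the dg algebra $(E,\delta,\odot)$. I need to check that the standing assumptions of Section \ref{section:Binfinity} hold. Strict unitality is immediate, with unit $\id_\Bbbk\in\Y_0(\Bbbk,\Bbbk)$. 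The vanishing of $\mu^{1,q}$ on inputs containing the unit holds because $\widetilde{\id_\Bbbk}=0$ by the convention $\widetilde h=0$ for $n=0$, so formula \eqref{align:mupqformular} returns zero whenever some $g_i$ is the unit.

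Theorem \ref{thm:monoidalcatofrightmod} then makes $\ED^{\RR}_\infty(E)$ a monoidal triangulated category with tensor product $\boxtimes_E$ and unit $E$. By Remark \ref{rem:dgainfinitymodules}, since $E$ is a dg algebra, $\ED^{\RR}_\infty(E)\simeq\ED(E)$, and under this identification $\overset{\infty}{\otimes}_E$ agrees with $\otimes^{\mathbb L}_E$ on dg bimodules. This transports the monoidal structure to $\ED(E)$.

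For closedness I would use the internal Hom of Remark \ref{rem:tensormorphism}. For fixed $N$, the functor $-\boxtimes_E N=-\overset{\infty}{\otimes}_E\iota(N)$ followed by $F^{\RR}$ has right adjoint $F^{\RR}\overset{\infty}{\Hom}_E(\iota(N),-)$, restricted appropriately. Concretely, the right $A_\infty$-module $M\overset{\infty}{\otimes}_E\iota(N)$ only uses the left action on $\iota(N)$, so the internal Hom for right modules is $\Hom(T^c(sE)\otimes s\iota(N),sK)$ with its induced right structure. The adjunction isomorphism is the same tensor–hom identity at the level of the dg categories, and it passes to $\mathrm H^0$. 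Alternatively, I would note that $-\boxtimes_E N$ preserves coproducts, as stated in the proof of Theorem \ref{thm:monoidalcatofrightmod}, and invoke Brown representability on the compactly generated category $\ED(E)$ to get right adjoints in each variable. I would take the Brown representability route as the cleaner fallback.

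For the second assertion, Proposition \ref{prop:bracestructure} says $\Y(X,Y)$ is a brace $B_\infty$-module over $E$. Theorem \ref{thm:bracemodulegeneral} then gives the explicit $A_\infty$-bimodule morphism $I_{\Y(X,Y)}\colon\iota(\Y(X,Y))\to\Y(X,Y)$. Its components are $I^{p,0}=\mu^{\opp}_{\Y(X,Y)}$ and $I^{p,q}=0$ for $q>0$, and $I^{0,0}=\id$, so it is even an $A_\infty$-isomorphism, not merely a quasi-isomorphism; this matches the wording of the statement. Here I have to be careful with the opposite convention. Theorem \ref{thm:bracemodulegeneral} is phrased for $\iota$ built from $\widehat\mu^{\opp}$. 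So I would either define $\boxtimes_E$ using $E^{\opp}$, or observe that the brace structure of Proposition \ref{prop:bracestructure} can equally be written with the insertion order reversed so that it fits that convention.

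I expect the main obstacle to be precisely this bookkeeping. The module operation $\mu^{1,q}_{X,Y}$ inserts $E$-cochains into the bar variables of an element of $\Y(X,Y)$, which is a left-type brace action. Theorem \ref{thm:bracemodulegeneral} pairs this with the \emph{right} module $\Y(X,Y)$ to produce the left action of $\iota(\Y(X,Y))$. One must check that the right $E$-action $f\cup g$ from \eqref{align:cupproducts} is the one used to form $\iota$. One must also check that the resulting left action agrees, via $I$, with $g\cup f$ up to the homotopies encoded by the higher homotopy identity in Definition \ref{defn:bracebmodule}. Beyond this convention-matching, everything is a direct citation of earlier results.
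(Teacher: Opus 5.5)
Your proposal is correct and is essentially the paper's proof: the paper combines Proposition~\ref{prop:bracestructure} with Theorem~\ref{thm:monoidalcatofrightmod} for the monoidal part, and with a $\iota$-comparison theorem for the second assertion. The only minor difference in the first part is closedness, which the paper takes from the internal Hom of Remark~\ref{rem:tensormorphism} and you obtain by Brown representability. For the second assertion the paper cites Theorem~\ref{thm:iotabracemodule}, which only gives $\iota(E)\simeq E$; your citation of Theorem~\ref{thm:bracemodulegeneral} is the one that actually covers $\Y(X,Y)$, and you correctly note that $I^{0,0}=\id$ upgrades the comparison to an honest $A_\infty$-isomorphism and that it is phrased via $\widehat\mu^{\opp}$, both of which the paper leaves implicit.
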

\begin{proof}
The first assertion follows from Proposition \ref{prop:bracestructure} and Theorem \ref{thm:monoidalcatofrightmod}. The second follows from Proposition \ref{prop:bracestructure} and Theorem \ref{thm:iotabracemodule}.
\end{proof}

\begin{rem}
No antipode of $H$ is used in Proposition \ref{prop:bracestructure} or Theorem \ref{thm:bialgebrainducedbimodules}; the multiplication, unit, coproduct, and counit are sufficient. The Hopf hypothesis becomes important when we compare with the tensor category of $H$-modules and use injective resolutions below.
\end{rem}

\begin{rem}\label{rem:relativeyoneda}
The construction has a normalized relative version that is often more convenient for computations. Let $E_0\subset H$ be a semisimple subalgebra and write $\overline H=H/(E_0)$ for the quotient $E_0$-$E_0$-bimodule. Recall from \cite[Section 7]{ChWa} that the normalized $E_0$-relative Yoneda complex is
\[
\overline\Y_m(X,Y)
 =\operatorname{Hom}_{E_0\text{-}E_0}
 \bigl(X\otimes_{E_0}(s\overline H)^{\otimes_{E_0}m},Y\bigr),
\]
with the induced bar differential. The natural inclusion
\[
\overline\Y(X,Y)\longrightarrow\Y(X,Y)
\]
is a quasi-isomorphism.

If $E_0$ is also a sub-bialgebra, then the coproduct preserves the relative normalized construction. Hence the brace operations restrict to $\overline\Y(\Bbbk,\Bbbk)$, and
\[
\overline\Y(\Bbbk,\Bbbk)\hookrightarrow\Y(\Bbbk,\Bbbk)
\]
is a quasi-isomorphism of brace $B_\infty$-algebras. This smaller model is the Hopf-algebra analogue of normalized relative bar cochains and will be used in Proposition \ref{prop:braceradical}.
\end{rem}

\subsection{A multiplicative injective resolution of the trivial module}
We now return to the assumption that $H$ is finite-dimensional and set
\[
\mathbb 1:=\Y(H,\Bbbk).
\]
This is the bar-type injective resolution that will represent the tensor unit in $\EK(\mathrm{Inj}\text{-}H)$. Indeed,
\[
\Y_m(H,\Bbbk)=\operatorname{Hom}\bigl(H\otimes(sH)^{\otimes m},\Bbbk\bigr)
\]
is an injective right $H$-module for every $m\geq0$, and the counit defines a coaugmentation
\[
\epsilon\colon\Bbbk\longrightarrow\Y(H,\Bbbk),
\qquad
1_\Bbbk\longmapsto\varepsilon.
\]
Here $\varepsilon\in\Y_0(H,\Bbbk)$ is the counit of $H$. See \cite[Subsection 3.2]{ChWa1} for a general discussion.

The coproduct of $H$ makes this resolution multiplicative. Define
\[
\mu\colon\Y(H,\Bbbk)\otimes\Y(H,\Bbbk)
\longrightarrow\Y(H,\Bbbk)
\]
by
\begin{align*}
&\mu(f\smallotimes g)(a_0\smallotimes sa_1\smallotimes\dotsb\smallotimes sa_{m+n})\\
={}&(-1)^{nm}
 f(a_0^{(1)}\smallotimes sa_1^{(1)}\smallotimes\dotsb\smallotimes sa_m^{(1)})
 g(a_0^{(2)}a_1^{(2)}\dotsb a_m^{(2)}\smallotimes sa_{m+1}\smallotimes\dotsb\smallotimes sa_{m+n}),
\end{align*}
for $f\in\Y_m(H,\Bbbk)$ and $g\in\Y_n(H,\Bbbk)$. The map $\mu$ is compatible with the differential and with the right $H$-action. Thus $\mathbb 1$ is a dg algebra object in $\EK(\mathrm{Inj}\text{-}H)$.

\begin{prop}\label{prop:yhkasso}
The product $\mu$ is associative. Moreover,
\[
\mu\colon\mathbb 1\otimes\mathbb 1\longrightarrow\mathbb 1
\]
is a homotopy equivalence of right dg $H$-modules.
\end{prop}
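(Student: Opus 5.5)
Associativity is a direct check from the formula for $\mu$, and the homotopy equivalence claim I will reduce to a quasi-isomorphism between bounded-below complexes of injective $H$-modules.

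\emph{Associativity.} Take $f\in\Y_m(H,\Bbbk)$, $g\in\Y_n(H,\Bbbk)$, $h\in\Y_r(H,\Bbbk)$ and evaluate both $\mu(\mu(f\otimes g)\otimes h)$ and $\mu(f\otimes\mu(g\otimes h))$ on $a_0\otimes sa_{1,m+n+r}$. In each case the result has the form
\[
f\bigl(a_0^{(1)}\otimes sa^{(1)}_{1,m}\bigr)\,
g\bigl(a_0^{(2)}a_1^{(2)}\cdots a_m^{(2)}\otimes sa^{(1)}_{m+1,m+n}\bigr)\,
h\bigl(a_0^{(3)}a_1^{(3)}\cdots a_{m+n}^{(\cdot)}\otimes sa_{m+n+1,\dots}\bigr).
\]
In one bracketing the product $a_0^{(2)}\cdots a_m^{(2)}$ is split by $\Delta$; in the other each factor is split separately. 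These agree because $\Delta$ is multiplicative and coassociative. The signs $(-1)^{nm+(m+n)r}$ and $(-1)^{nr+m(n+r)}$ are equal, so associativity holds.

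\emph{Homotopy equivalence.} First, $\mathbb 1\otimes\mathbb 1$ with the diagonal action is a bounded-below complex of injective $H$-modules, since a tensor product of injectives over a finite-dimensional Hopf algebra is injective. The same holds for $\mathbb 1$. For bounded-below complexes of injectives, a quasi-isomorphism is a homotopy equivalence, so it suffices to show that $\mu$ is a quasi-isomorphism.

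Since $\Bbbk$ is a field, the coaugmentation $\epsilon\otimes\epsilon\colon\Bbbk\otimes\Bbbk\to\mathbb 1\otimes\mathbb 1$ is a quasi-isomorphism by Künneth. Unwinding the definition at tensor length $0$ gives $\mu(\varepsilon\otimes\varepsilon)(a_0)=\varepsilon(a_0^{(1)})\varepsilon(a_0^{(2)})=\varepsilon(a_0)$. Hence $\mu\circ(\epsilon\otimes\epsilon)=\epsilon$, and the two-out-of-three property shows that $\mu$ is a quasi-isomorphism.

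One part needs care: the preceding text asserts, but does not check, that $\mu$ commutes with the differential $\delta_{\rm ex}$ of \eqref{deltayoneda1} and is $H$-linear for the diagonal action. I will verify this before the argument above, since it is what makes $\mu$ a chain map to which two-out-of-three applies.

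\emph{$H$-linearity.} Acting by $h$ replaces $a_0$ by $ha_0$. This is compatible with $\mu$ because $\Delta(ha_0)=\Delta(h)\Delta(a_0)$ and the $H$-action on $\Y(H,\Bbbk)$ is through the $H$-variable.

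\emph{Compatibility with the differential.} I expect this to be the main technical obstacle. In $\delta_{\rm ex}(\mu(f\otimes g))$, the terms that multiply $a_0a_1$ or $a_ia_{i+1}$ for $i<m$ reproduce $\delta_{\rm ex}(f)$. The terms with $i>m$, together with the last term, reproduce $\delta_{\rm ex}(g)$. The junction term, which merges $a_m$ with $a_{m+1}$, cancels against the term of $\delta_{\rm ex}(f)$ in which $a_{m+1}$ acts on the trivial module through $\varepsilon$: after applying $\Delta$, that $a_{m+1}$ is absorbed into the $H$-argument of $g$. I will track the signs with the factor $(-1)^{nm}$ and the conventions in \eqref{deltayoneda1}.
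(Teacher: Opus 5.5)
Your proposal is correct and follows the paper's proof. Associativity comes from expanding both bracketings and comparing them via coassociativity; multiplicativity of $\Delta$ is also needed, as the paper's expansion implicitly uses. The homotopy equivalence comes from $\mu\circ(\epsilon\otimes\epsilon)=\epsilon$, where you spell out (via K\"unneth, two-out-of-three, and the fact that quasi-isomorphisms between bounded-below complexes of injectives are homotopy equivalences) what the paper compresses into ``$\mu$ lifts the canonical isomorphism $\Bbbk\otimes\Bbbk\cong\Bbbk$ between injective resolutions.''

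One correction to your chain-map sketch concerns the term of $\delta_{\rm ex}(\mu(f\otimes g))$ that merges $a_m a_{m+1}$. It does not cancel; it is matched by the last merge term of $\delta_{\rm ex}(f)$, which sees $a_m^{(1)}a_{m+1}^{(1)}$ by multiplicativity of $\Delta$. The actual cancellation is between the $\varepsilon(a_{m+1})$-term of $\delta_{\rm ex}(f)$ and the leading term $g(a_0^{(2)}\cdots a_m^{(2)}a_{m+1}\otimes sa_{m+2,m+n+1})$ of $\delta_{\rm ex}(g)$. These coincide by the counit axiom and occur with opposite signs.
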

\begin{proof}
Let $f\in\Y_m(H,\Bbbk)$, $g\in\Y_n(H,\Bbbk)$, and $h\in\Y_p(H,\Bbbk)$, and put $k=m+n+p$. Expanding the two parenthesizations gives
\begin{align*}
&(-1)^{(m+n)p}\mu(\mu(f\smallotimes g)\smallotimes h)
 (a_0\smallotimes sa_1\smallotimes\dotsb\smallotimes sa_k)\\
={}&\mu(f\smallotimes g)(a_0^{(1)}\smallotimes sa_1^{(1)}\smallotimes\dotsb\smallotimes sa_{m+n}^{(1)})
 h(a_0^{(2)}\dotsb a_{m+n}^{(2)}\smallotimes sa_{m+n+1}\smallotimes\dotsb\smallotimes sa_k)\\
={}&(-1)^{mn}f(a_0^{(11)}\smallotimes sa_{1,m}^{(11)})
 g(a_0^{(12)}\dotsc a_m^{(12)}\smallotimes sa_{m+1,m+n}^{(1)})
 h(a_0^{(2)}\dotsc a_{m+n}^{(2)}\smallotimes sa_{m+n+1,k}),
\end{align*}
and
\begin{align*}
&(-1)^{m(n+p)}\mu(f\smallotimes\mu(g\smallotimes h))
 (a_0\smallotimes sa_1\smallotimes\dotsb\smallotimes sa_k)\\
={}&f(a_0^{(1)}\smallotimes sa_1^{(1)}\smallotimes\dotsb\smallotimes sa_m^{(1)})
 \mu(g\smallotimes h)(a_0^{(2)}a_1^{(2)}\dotsb a_m^{(2)}\smallotimes sa_{m+1}\smallotimes\dotsb\smallotimes sa_k)\\
={}&(-1)^{np}f(a_0^{(1)}\smallotimes sa_{1,m}^{(1)})
 g(a_0^{(21)}\dotsb a_m^{(21)}\smallotimes sa_{m+1,m+n}^{(1)})\\
&\qquad\cdot
 h(a_0^{(22)}\dotsb a_m^{(22)}a_{m+1}^{(2)}\dotsb a_{m+n}^{(2)}
 \smallotimes sa_{m+n+1,k}).
\end{align*}
Coassociativity identifies these two expressions, proving associativity. Compatibility with the differential follows directly from the bar differential and the fact that $\Delta$ is an algebra map.

The coaugmentation fits into the commutative diagram
\[
\xymatrix{
\mathbb 1\otimes\mathbb 1\ar[r]^-{\mu} & \mathbb 1\\
\Bbbk\otimes\Bbbk\ar[r]^-{\cong}\ar[u]^-{\epsilon\otimes\epsilon} & \Bbbk\ar[u]^-{\epsilon}.
}
\]
Thus $\mu$ is a lift of the canonical isomorphism $\Bbbk\otimes\Bbbk\cong\Bbbk$ between injective resolutions. It is therefore a homotopy equivalence.
\end{proof}

The left cup action from \eqref{align:cupproducts} induces a dg algebra morphism
\[
\Y(\Bbbk,\Bbbk)
\longrightarrow
\operatorname{Hom}_H(\Y(H,\Bbbk),\Y(H,\Bbbk)),
\qquad
f\longmapsto(g\longmapsto f\cup g),
\]
which is a quasi-isomorphism by \cite[Proposition 3.11]{ChWa1}. Thus the Yoneda dg algebra $E=\Y(\Bbbk,\Bbbk)$ is identified, up to quasi-isomorphism, with the dg endomorphism algebra of the chosen injective resolution $\mathbb 1$.

The multiplicative resolution $\mathbb 1$ now yields the comparison map needed for monoidality.
\begin{prop}\label{prop:naturaltrans}
For $X,Y\in\EK(\mathrm{Inj}\text{-}H)$, there is a natural morphism
\[
\widetilde\eta_{X,Y}\colon
\operatorname{Hom}_H(\mathbb 1,X)
\otimes^{\mathbb L}_{\Y(\Bbbk,\Bbbk)}
\operatorname{Hom}_H(\mathbb 1,Y)
\longrightarrow
\operatorname{Hom}_H(\mathbb 1,X\otimes Y).
\]
\end{prop}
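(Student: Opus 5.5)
I would construct $\widetilde\eta_{X,Y}$ at the chain level, as a morphism of right $E$-modules, where $E=\Y(\Bbbk,\Bbbk)$.

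By Remark \ref{rem:dgainfinitymodules}, the derived tensor product agrees with $F(X)\overset{\infty}{\otimes}_E \iota(F(Y))$, where $F=\Hom_H(\mathbb 1,-)$. So it suffices to give a dg map out of a model of $F(X)\otimes_E F(Y)$, and then precompose with a resolution.

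The natural candidate uses the multiplicative structure of $\mathbb 1$ from Proposition \ref{prop:yhkasso}. For $\alpha\in\Hom_H(\mathbb 1,X)$ and $\beta\in\Hom_H(\mathbb 1,Y)$, set
\[
\eta(\alpha\otimes\beta)\colon \mathbb 1\xrightarrow{\ \Delta_{\mathbb 1}\ }\mathbb 1\otimes\mathbb 1\xrightarrow{\ \alpha\otimes\beta\ } X\otimes Y .
\]
Here $\Delta_{\mathbb 1}$ is a diagonal-type map on $\mathbb 1$, and there are two ways to get it.

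The first option is to dualize $\mu$. Since $\mathbb 1=\Y(H,\Bbbk)=\Hom(B(H),\Bbbk)$ is the linear dual of the bar resolution $B(H)=H\otimes T(sH)$, the product $\mu$ is dual to a coproduct-like map $B(H)\to B(H)\otimes B(H)$. Instead of dualizing this, one can use the isomorphism $\mathbb 1\otimes\mathbb 1\cong \Hom(B\otimes B,\Bbbk)$, valid by finite dimensionality in each degree. One also needs some care with products versus sums.

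The simpler option is to use that $\mu$ is a homotopy equivalence and take $\Delta_{\mathbb 1}$ to be a homotopy inverse. This is well defined only in $\EK(\mathrm{Inj}\text{-}H)$, so it would give a map in the derived category. I would take this route, since the statement only asks for a natural morphism in the derived category.

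Concretely, the steps are:
\begin{enumerate}
\item Write $F(X)\otimes^{\mathbb L}_E F(Y)$ as the bar model $F(X)\otimes T^c(sE)\otimes F(Y)$, using $\iota$ for the left action on $F(Y)$.
\item Define the map on the length-zero component by $\alpha\otimes\beta\mapsto(\alpha\otimes\beta)\circ\mu^{-1}$.
\item Define the higher components on $F(X)\otimes (sE)^{\otimes n}\otimes F(Y)$ using the brace-induced structure.
\end{enumerate}

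For step 2 to be compatible with the relations, one needs $(\alpha f)\otimes\beta$ and $\alpha\otimes\iota(f)\beta$ to agree up to the higher terms. Here $E$ acts on $\mathbb 1$ through the cup action and quasi-isomorphism of \cite[Proposition 3.11]{ChWa1}. The key identity is that the cup action of $f\in E$ on $\mathbb 1$, transported through $\mu$, splits via the coproduct of $H$ over the two factors $\mathbb 1\otimes\mathbb 1$. This is exactly the content of the formulas \eqref{align:cupproducts} and the definition of $\mu$.

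The brace operations $\mu^{1,q}$ record the homotopies between "acting on the first factor" and "acting on the second factor". Theorem \ref{thm:bialgebrainducedbimodules} (via Theorem \ref{thm:bracemodulegeneral}) identifies $\iota(F(Y))$ with the genuine dg bimodule $\Y(\mathbb 1,Y)$, which is $A_\infty$-quasi-isomorphic to $F(Y)$. Using this, I would avoid writing higher components by hand. Replace $\iota(F(Y))$ by the dg bimodule $F(Y)$, with its left action induced by cup products. Then define $\eta$ on the ordinary bar resolution $F(X)\otimes_E B(E)\otimes_E F(Y)$ by the formula above, which is $E$-balanced because the two actions agree after $\mu^{-1}$ up to the homotopies provided by the brace structure.

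Naturality in $X$ and $Y$ is then clear, since $\eta$ only postcomposes with $\alpha\otimes\beta$.

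The main obstacle is step 3 and the balancing above: verifying that the left $E$-action on $F(Y)$ induced by $\iota$ is the one for which $\alpha\otimes\beta\mapsto(\alpha\otimes\beta)\circ\mu^{-1}$ is compatible with the $E$-balanced tensor product. This requires checking that $\mu$ intertwines the cup action on the first factor with the $\iota$-twisted action on the second, i.e. a coassociativity and antipode computation in $H$ matching the antipode $S$ of $T^c(sE)$. I would first try to prove this on the strict level for the explicit $\mu$ (not its inverse), by defining the map in the opposite direction on $\Y(\mathbb 1\otimes\mathbb 1,-)$ and then composing with $\mu^*$.
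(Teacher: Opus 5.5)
The map you write down, $\alpha\otimes\beta\mapsto(\alpha\otimes\beta)\circ\mu^{-1}$ with $\mu^{-1}$ the inverse of $\mu$ in $\EK(\mathrm{Inj}\text{-}H)$, is exactly the paper's. But you leave open the one substantive step, namely that this map is balanced over $E$. The mechanism you propose for it (brace homotopies, $\iota$, an antipode computation matching $S$) is not what makes it work.

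The missing observation is that both cup actions of $E$ on $\mathbb 1=\Y(H,\Bbbk)$ are themselves multiplications by $\mu$. For $g\in\Y_m(\Bbbk,\Bbbk)$ let $\bar g=g\cup\varepsilon\in\Y_m(H,\Bbbk)$, i.e.\ $\bar g(a_0\otimes sa_{1,m})=\varepsilon(a_0)g(sa_{1,m})$. The formula for $\mu$ and \eqref{align:cupproducts} then give, for $u\in\mathbb 1$,
\[
g\cup u=\pm\,\mu(u\otimes\bar g),\qquad u\cup g=\pm\,\mu(\bar g\otimes u).
\]
Put $L_g=g\cup-$ and $R_g=-\cup g$. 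Associativity of $\mu$ (Proposition \ref{prop:yhkasso}) gives the strict identity $\mu\circ(L_g\otimes\id)=\pm\,\mu\circ(\id\otimes R_g)$. Hence $(L_g\otimes\id)\circ\mu^{-1}=\pm(\id\otimes R_g)\circ\mu^{-1}$ in $\EK(\mathrm{Inj}\text{-}H)$.

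The right $E$-action on $F(X)$ and the left $E$-action on $F(Y)$ are precomposition with $L_g$ and $R_g$. So this identity is precisely the balancing, and together with transport along $E\xrightarrow{\simeq}\Hom_H(\mathbb 1,\mathbb 1)$ it is the whole proof in the paper. Your closing remark rightly looks for a strict identity about $\mu$ itself. But that identity follows from associativity of $\mu$ alone; no coassociativity-plus-antipode computation is involved.

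The detour through $\iota$ should be dropped. The source in the statement is the ordinary derived tensor product of $F(X)$ with the dg bimodule $F(Y)$, whose left action is the cup action. Braces and $\iota$ enter only later, in Theorem \ref{thm:monoidalfunctor}, to identify $F(X)\boxtimes_E F(Y)$ with this source via Lemma \ref{lem:bracemoduleyhk}. Rewriting the source as $F(X)\overset{\infty}{\otimes}_E\iota(F(Y))$ and then replacing $\iota(F(Y))$ by $F(Y)$ just returns you to the start, so the brace homotopies play no role in the balancing.

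The identification you cite along the way is also wrong. Theorem \ref{thm:bialgebrainducedbimodules} compares $\Y(\mathbb 1,Y)$ with its own induced bimodule, not with $\iota(F(Y))$. Moreover, $\Y(\mathbb 1,Y)$ computes morphisms from $\Bbbk$ to $Y$ in the derived category of $H$-modules, so it is not quasi-isomorphic to $F(Y)=\Hom_H(\mathbb 1,Y)$ in general. It vanishes on acyclic complexes, while for local non-semisimple $H$ there are nonzero acyclic complexes of injectives on which $F$, being an equivalence, does not vanish. The statement you wanted there is Lemma \ref{lem:bracemoduleyhk}.
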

\begin{proof}
In $\EK(\mathrm{Inj}\text{-}H)$, write $\mu^{-1}\colon\mathbb 1\to\mathbb 1\otimes\mathbb 1$ for the inverse of the isomorphism represented by $\mu$. The external tensor product of morphisms, followed by precomposition with $\mu^{-1}$, gives
\[
\operatorname{Hom}_H(\mathbb 1,X)\otimes\operatorname{Hom}_H(\mathbb 1,Y)
\longrightarrow
\operatorname{Hom}_H(\mathbb 1\otimes\mathbb 1,X\otimes Y)
\xrightarrow{\operatorname{Hom}_H(\mu^{-1},X\otimes Y)}
\operatorname{Hom}_H(\mathbb 1,X\otimes Y).
\]
Associativity of $\mu$ implies that this map is balanced over
$\operatorname{Hom}_H(\mathbb 1,\mathbb 1)$. Transporting the action along the quasi-isomorphism
\[
\Y(\Bbbk,\Bbbk)\xrightarrow{\simeq}\operatorname{Hom}_H(\mathbb 1,\mathbb 1)
\]
and passing to the derived balanced tensor product gives $\widetilde\eta_{X,Y}$.
\end{proof}

\subsection{The lax monoidal Koszul duality functor}
Keep the notation
\[
E=\Y(\Bbbk,\Bbbk),
\qquad
\mathbb 1=\Y(H,\Bbbk).
\]
The Koszul duality functor associated with the injective resolution $\mathbb 1$ is
\[
F\colon\EK(\mathrm{Inj}\text{-}H)\longrightarrow\ED(E),
\qquad
F(X)=\operatorname{Hom}_H(\mathbb 1,X).
\]
The preceding subsection gives the ordinary derived tensor comparison. To compare it with the product $\boxtimes_E$, we use the brace-module structure.

Since $\mathbb 1=\Y(H,\Bbbk)$ is a dg $E$-bimodule by \eqref{align:cupproducts}, the complex $F(X)$ inherits a dg $E$-bimodule structure.
\begin{lem}\label{lem:bracemoduleyhk}
For every $X\in\EK(\mathrm{Inj}\text{-}H)$, the dg $E$-bimodule $F(X)$ is a brace $B_\infty$-module over $E$. Consequently,
\[
\iota(F(X))\simeq F(X)
\]
as $A_\infty$-$E$-bimodules.
\end{lem}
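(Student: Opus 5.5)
The plan is to identify $F(X)=\operatorname{Hom}_H(\mathbb 1,X)=\operatorname{Hom}_H(\Y(H,\Bbbk),X)$ with a Yoneda complex of the form $\Y(-,-)$ treated in Proposition \ref{prop:bracestructure}. We then transport the brace-module structure along this identification and finish with Theorem \ref{thm:bracemodulegeneral}.

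\textbf{Step 1: identify $F(X)$ with a Yoneda complex.} Since $H$ is finite-dimensional, $\Y_m(H,\Bbbk)=\operatorname{Hom}(H\otimes(sH)^{\otimes m},\Bbbk)\cong D(H)\otimes D(sH)^{\otimes m}$, where $D=\operatorname{Hom}(-,\Bbbk)$ and $H$ acts through the first factor. Write $DH$ for the right $H$-module $D(H)$; it is injective. We get a degreewise isomorphism
\[
\operatorname{Hom}_H(\Y_m(H,\Bbbk),X)\cong\operatorname{Hom}_H(DH,X)\otimes(sH)^{\otimes m}.
\]
Because $\operatorname{Hom}_H(DH,X)\cong X\otimes_H H^{*}$-type functors are exact on injectives, this should assemble (with products replaced by sums, which is harmless after completing) into a natural isomorphism of complexes
\[
F(X)\cong \Y(\Bbbk',X')
\]
for suitable complexes $\Bbbk',X'$ of $H$-modules. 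The most natural choice is to dualize the bar variables, giving $F(X)\cong \Y(DX^{\vee},\Bbbk)$-type expressions. It may be cleaner instead to use the cup-product description directly: $F(X)=\operatorname{Hom}_H(\mathbb 1,X)$ is itself a Yoneda-type complex in the category $\Y$, namely the morphism complex $\operatorname{Hom}$ from a bar resolution.

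\textbf{Step 1, fallback.} If the isomorphism is awkward, I would define the operations $\mu^{1,q}_{F(X)}$ directly, mimicking \eqref{align:mupqformular}. For $\phi\in F(X)$ and $g_i\in E$, set
\[
\mu^{1,q}(s\phi\otimes sg_1\otimes\cdots\otimes sg_q)=s\phi\circ\bigl(\text{insertion of }\widetilde g_q,\dots,\widetilde g_1\text{ into the bar variables of }\mathbb 1\bigr).
\]
Here each $\widetilde g$ acts on $\Y(H,\Bbbk)$ by precomposition on bar slots. This is $H$-linear because the $H$-action on $\mathbb 1$ only touches the $a_0$ variable.

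\textbf{Step 2: check the brace-module axioms of Definition \ref{defn:bracebmodule}.}
\begin{itemize}
\item \emph{Higher pre-Jacobi identity.} It is inherited from the fact that the insertions $\widetilde g$ compose like the Hochschild braces under $\Omega$ (Proposition \ref{prop:bracestructure}). Precomposition by $\phi$ is compatible with this.
\item \emph{Distributivity.} It follows from multiplicativity of $\Delta$, exactly as for $\Y(X,Y)$.
\item \emph{Higher homotopy identity.} This compares the differential and the cup actions on $F(X)$, which come from those on $\mathbb 1$ via \eqref{align:cupproducts}, against the $m^1,m^2$ terms.
\end{itemize}
Since $F(X)$ has the dg $E$-bimodule structure induced from $\mathbb 1$, all three identities reduce to the corresponding identities for the brace-module $\Y(H,\Bbbk)$ given by Proposition \ref{prop:bracestructure}, transported contravariantly through $\operatorname{Hom}_H(-,X)$. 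I expect this reduction to be formal: precomposition is a dg functor, and the bar insertions commute with the $H$-action.

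\textbf{Step 3: conclude.} Once $F(X)$ is a brace $B_\infty$-module, Theorem \ref{thm:bracemodulegeneral} gives $\iota(F(X))\simeq F(X)$ as $A_\infty$-$E$-bimodules.

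\textbf{Main obstacle.} The main obstacle is Step 2. The left and right cup actions on $\mathbb 1$ are swapped by the contravariance of $\operatorname{Hom}_H(-,X)$, so the braces must be taken with respect to the opposite structure. This is consistent with the appearance of $\widehat\mu^{\opp}$ in Theorem \ref{thm:bracemodulegeneral}. Getting the signs and the order of insertions right there is the point to check first.
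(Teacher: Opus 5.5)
Your operative route (the fallback in Step 1, then Steps 2 and 3) is the paper's proof. There, the braces on $F(X)$ are defined by $\mu^{1,q}(s\psi\otimes sg_1\otimes\dotsb\otimes sg_q)(f)=\pm\psi\bigl(s^{-1}\mu^{1,q}_{H,\Bbbk}(sf\otimes sg_1\otimes\dotsb\otimes sg_q)\bigr)$, the brace-module identities are asserted to follow from those of $\Y(H,\Bbbk)$, and the comparison theorem is applied. You cite the comparison theorem actually needed, Theorem \ref{thm:bracemodulegeneral}; the paper's text invokes Theorem \ref{thm:iotabracemodule}, which only treats $\iota(A)\simeq A$. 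Drop the first half of Step 1. The isomorphism $F(X)\cong\Y(\Bbbk',X')$ is not established, and even an isomorphism of complexes would still have to be matched with the $E$-bimodule and brace structures, which is the entire content of Step 2.

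The genuine gap is your claim that Step 2 is formal. The obstacle you flag is real, and pointing to the $\widehat\mu^{\opp}$ inside the proof of Theorem \ref{thm:bracemodulegeneral} does not resolve it; the paper's short argument passes over the same point. Precomposition reverses the order of successive braces: $\mu_F(\mu_F(\psi;g);h)(f)=\pm\psi\bigl(\mu_{\mathbb 1}(\mu_{\mathbb 1}(f;h);g)\bigr)$. So the pre-Jacobi identity of $\mathbb 1$ gives an identity for $F(X)$ in which the $g$'s are inserted into the $h$'s. That is the identity for the opposite operations $(\mu^{p,q})^{\opp}$, not the higher pre-Jacobi identity of Definition \ref{defn:bracebmodule} over $E$.

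This is not a matter of signs. Take $X=\mathbb 1$, $\psi=\id_{\mathbb 1}$, $g,h\in\Y_1(\Bbbk,\Bbbk)$ and $f\in\Y_1(H,\Bbbk)$, so all two-slot insertions vanish. By \eqref{align:mupqformular} the two sides become $\pm f\circ(\id_H\otimes s\widetilde c)$, with $c(a)=g(a^{(1)})h(a^{(2)})$ on the left and $c(a)=h(a^{(1)})g(a^{(2)})$ on the right. Since $c\mapsto\widetilde c$ is injective and $f$ is arbitrary, the identity fails as soon as $H$ is not cocommutative.

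Distributivity does not transport either. The right $E$-action on $F(X)$ is precomposition with the \emph{left} cup action on $\mathbb 1$, whereas the distributivity axiom for $\mathbb 1$ concerns its right action. Since $E^{\opp}$ is not brace in the sense of Definition \ref{defn:brace}, Theorem \ref{thm:bracemodulegeneral} cannot be quoted as stated.

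To close the gap you have two options:
\begin{enumerate}
\item Construct the comparison map $\iota(F(X))\to F(X)$ directly from the transported operations. This is a mirror image of Theorem \ref{thm:bracemodulegeneral} with the roles of $\widehat\mu$ and $\widehat\mu^{\opp}$ exchanged, and it must be checked against the $\iota$ used to define $\boxtimes_E$.
\item Redefine the braces on $F(X)$, for instance by twisting with the antipode of $T^c(sE)$, so that Definition \ref{defn:bracebmodule} genuinely holds. Then verify distributivity and the higher homotopy identity by hand.
\end{enumerate}
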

\begin{proof}
For $\psi\in\operatorname{Hom}_H(\Y(H,\Bbbk),X)$ and $g_i\in E$, define
\[
\mu^{1,q}(s\psi\otimes sg_1\otimes\dotsb\otimes sg_q)(f)
 =(-1)^\epsilon
 \psi\bigl(s^{-1}\mu_{H,\Bbbk}^{1,q}(sf\otimes sg_1\otimes\dotsb\otimes sg_q)\bigr),
\]
where $\mu_{H,\Bbbk}^{1,q}$ is the brace action on $\Y(H,\Bbbk)$ from \eqref{align:mupqformular} and
\[
\epsilon=|f|(|g_1|+\cdots+|g_q|-q).
\]
Thus the braces on $F(X)$ are obtained by precomposing $\psi$ with the braces on the resolution $\mathbb 1$. The brace-module identities follow from those for $\Y(H,\Bbbk)$. The final assertion is Theorem \ref{thm:iotabracemodule}.
\end{proof}

Let $\operatorname{Loc}(\mathbb 1)$ denote the smallest localizing subcategory of $\EK(\mathrm{Inj}\text{-}H)$ containing $\mathbb 1$; that is, the smallest full triangulated subcategory containing $\mathbb 1$ and closed under coproducts.

\begin{thm}\label{thm:monoidalfunctor}
Let $H$ be a finite-dimensional Hopf algebra. The Koszul duality functor
\[
F\colon\EK(\mathrm{Inj}\text{-}H)\longrightarrow\ED(\Y(\Bbbk,\Bbbk))
\]
is triangulated lax monoidal. Its restriction
\[
F\big|_{\operatorname{Loc}(\Y(H,\Bbbk))}
\colon
\operatorname{Loc}(\Y(H,\Bbbk))
\longrightarrow
\ED(\Y(\Bbbk,\Bbbk))
\]
is a monoidal triangulated equivalence.
\end{thm}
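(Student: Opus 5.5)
The plan is to assemble the lax monoidal structure from three ingredients already in place: the map $\widetilde\eta_{X,Y}$ of Proposition \ref{prop:naturaltrans}, the brace-module comparison of Lemma \ref{lem:bracemoduleyhk}, and the unit comparison $\epsilon\colon \Bbbk\to\mathbb 1$. First, $F$ is triangulated, since $\operatorname{Hom}_H(\mathbb 1,-)$ is a dg functor between pretriangulated dg categories. Next, for $X,Y$ the structure map is the composite
\[
F(X)\boxtimes_E F(Y)=F(X)\overset{\infty}{\otimes}_E\iota(F(Y))\xrightarrow{\ \id\otimes I_{F(Y)}\ }F(X)\overset{\infty}{\otimes}_E F(Y)\simeq F(X)\otimes^{\mathbb L}_E F(Y)\xrightarrow{\widetilde\eta_{X,Y}}F(X\otimes Y).
\]
Here $I_{F(Y)}$ is the quasi-isomorphism of Theorem \ref{thm:bracemodulegeneral}/Lemma \ref{lem:bracemoduleyhk}, and the middle identification is Remark \ref{rem:dgainfinitymodules}. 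For the unit, I take $E\to F(\mathbb 1)=\operatorname{Hom}_H(\mathbb 1,\mathbb 1)$, the quasi-isomorphism induced by the left cup action. Naturality of the structure map in $X$ and $Y$ follows from the naturality of $\widetilde\eta$ and the functoriality of $I$ in the brace-module structure.

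The coherence axioms are checked as follows. For associativity, I would compare the two routes $F(X)\boxtimes F(Y)\boxtimes F(Z)\to F(X\otimes Y\otimes Z)$. The associator $a$ of Proposition \ref{prop:asso-rightmodule} is built from $\widetilde a$ and $\psi$, so the comparison reduces to two facts. The first is that $\widetilde\eta_{X,Y}$ is a morphism of $E$-bimodules, i.e.\ it intertwines $\iota$ via $\psi$ and $I$; this is checked on the generators $\operatorname{Hom}_H(\mathbb 1,-)$, where the brace operations on $F(X\otimes Y)$ come from those on $\mathbb 1$ together with $\mu$. The second is associativity of $\mu$ (Proposition \ref{prop:yhkasso}). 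The unit axioms follow from $\mu\circ(\epsilon\otimes\id)\simeq\id$ together with Lemma \ref{lemm:Hwithunit}.

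For the equivalence on $\operatorname{Loc}(\mathbb 1)$, $F$ preserves coproducts because $\mathbb 1$ is compact in $\EK(\mathrm{Inj}\text{-}H)$ for finite-dimensional $H$ (Krause). It sends the compact generator $\mathbb 1$ to $F(\mathbb 1)\simeq E$, and it is fully faithful on $\mathbb 1$ because $\operatorname{End}(\mathbb 1)\simeq E$. The standard dévissage argument then makes $F|_{\operatorname{Loc}(\mathbb 1)}$ a triangulated equivalence onto $\operatorname{Loc}(E)=\ED(E)$. The tensor product on $\EK(\mathrm{Inj}\text{-}H)$ preserves $\operatorname{Loc}(\mathbb 1)$, since $\mathbb 1\otimes\mathbb 1\simeq\mathbb 1$ and $\otimes$ is exact and coproduct-preserving in each variable. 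It remains to show that the lax structure map is invertible for $X,Y\in\operatorname{Loc}(\mathbb 1)$. Both source and target are exact and coproduct-preserving in each variable, using Remark \ref{rem:tensormorphism} for $\boxtimes_E$. Hence the subcategory of pairs on which the map is an isomorphism is localizing in each variable, and it suffices to check $X=Y=\mathbb 1$. In that case the map is $E\boxtimes_E E\simeq E\to F(\mathbb 1\otimes\mathbb 1)\simeq F(\mathbb 1)$, which is an isomorphism because $\mu$ is a homotopy equivalence (Proposition \ref{prop:yhkasso}) and $\iota(E)\simeq E$ (Theorem \ref{thm:iotabracemodule}).

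The main obstacle is verifying that the composite structure map is compatible with the \emph{induced} left actions, i.e.\ that $\widetilde\eta$ together with $I$ respects the $\iota$-bimodule structures well enough for the associativity square to commute in $\ED(E)$. I would first try to show that $\widetilde\eta$, precomposed with $\mu^{-1}$, strictly intertwines the brace operations $\mu^{1,q}$ on $F(X)\otimes F(Y)$ and $F(X\otimes Y)$, using coassociativity of $\Delta_H$ and multiplicativity of the coproduct. If that holds, naturality of $I$ for brace-module morphisms gives the needed commutativity.
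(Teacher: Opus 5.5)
Your proposal is correct and follows essentially the same route as the paper: the structure map is the brace-module identification $F(X)\boxtimes_E F(Y)\simeq F(X)\otimes^{\mathbb L}_E F(Y)$ composed with $\widetilde\eta_{X,Y}$, the unit is $E\xrightarrow{\simeq}F(\mathbb 1)$, the equivalence on $\operatorname{Loc}(\mathbb 1)$ is derived Morita theory, and invertibility is spread by a localizing-subcategory argument. The differences are minor. The paper fixes an arbitrary $X$ and uses that $\eta_{X,\mathbb 1}$ is the right unit isomorphism, which gives invertibility for all $X$ and all $Y\in\operatorname{Loc}(\mathbb 1)$, whereas you reduce in both variables to $X=Y=\mathbb 1$. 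Your treatment of the associativity coherence, through compatibility of $\widetilde\eta$ with $\iota$, $\psi$ and $I$, is more explicit than the paper's one-line appeal to associativity of $\mu$.
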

\begin{proof}
For $X,Y\in\EK(\mathrm{Inj}\text{-}H)$, Lemma \ref{lem:bracemoduleyhk} and Remark \ref{rem:dgainfinitymodules} give natural isomorphisms
\begin{align*}
F(X)\boxtimes_E F(Y)
 & =\operatorname{Hom}_H(\mathbb 1,X)\boxtimes_E\operatorname{Hom}_H(\mathbb 1,Y)\\
 &\simeq\operatorname{Hom}_H(\mathbb 1,X)
 \overset{\infty}{\otimes}_E
 \iota\bigl(\operatorname{Hom}_H(\mathbb 1,Y)\bigr)\\
 &\simeq\operatorname{Hom}_H(\mathbb 1,X)
 \otimes_E^{\mathbb L}
 \operatorname{Hom}_H(\mathbb 1,Y).
\end{align*}
Composing with Proposition \ref{prop:naturaltrans} yields the structure morphism
\[
\eta_{X,Y}\colon F(X)\boxtimes_E F(Y)\longrightarrow F(X\otimes Y).
\]
The quasi-isomorphism
\[
E\xrightarrow{\simeq}\operatorname{Hom}_H(\mathbb 1,\mathbb 1)=F(\mathbb 1)
\]
provides the unit morphism. Associativity of $\mu$ and the unit property of the coaugmentation imply the lax monoidal coherence relations.

The object $\mathbb 1$ is compact, $F(\mathbb 1)$ is quasi-isomorphic to $E$, and $E$ generates $\ED(E)$. The standard derived Morita argument therefore identifies the restriction of $F$ to $\operatorname{Loc}(\mathbb 1)$ with a triangulated equivalence
\[
\operatorname{Loc}(\mathbb 1)\xrightarrow{\sim}\ED(E).
\]
It remains to show that the lax structure maps are isomorphisms on this subcategory.

Fix $X\in\EK(\mathrm{Inj}\text{-}H)$ and let
\[
\mathcal C_X=\{Y\in\EK(\mathrm{Inj}\text{-}H)\mid
\eta_{X,Y}\text{ is an isomorphism}\}.
\]
Both sides of $\eta_{X,-}$ are exact and preserve coproducts, so $\mathcal C_X$ is a localizing subcategory. Moreover, $\mathbb 1\in\mathcal C_X$, because $F(\mathbb 1)\simeq E$ and $\eta_{X,\mathbb 1}$ is the right unit isomorphism. Hence
\[
\operatorname{Loc}(\mathbb 1)\subseteq\mathcal C_X.
\]
In particular, $\eta_{X,Y}$ is an isomorphism whenever $Y\in\operatorname{Loc}(\mathbb 1)$, and therefore whenever both $X$ and $Y$ belong to $\operatorname{Loc}(\mathbb 1)$. Thus the restricted equivalence is monoidal.
\end{proof}

\begin{rem}
The word ``lax'' is essential. Outside $\operatorname{Loc}(\Y(H,\Bbbk))$, the map $\eta_{X,Y}$ need not be invertible. Subsection \ref{subsec:radicalsquarezero} gives an explicit non-local Hopf algebra for which the source of one structure map is zero and its target is nonzero.
\end{rem}

For a local Hopf algebra, the distinction disappears. Recall that $H$ is \emph{local} if its underlying algebra has a unique maximal ideal. In the finite-dimensional case, the trivial module $\Bbbk$ is then the unique simple module, so its injective resolution generates the entire homotopy category of injectives.
\begin{cor}\label{cor:tensor-triangle}
Let $H$ be a finite-dimensional local Hopf algebra. Then
\[
F\colon\EK(\mathrm{Inj}\text{-}H)\longrightarrow\ED(\Y(\Bbbk,\Bbbk))
\]
is a monoidal triangulated equivalence.
\end{cor}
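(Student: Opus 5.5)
The plan is to deduce the corollary from Theorem \ref{thm:monoidalfunctor}. That theorem already shows $F$ is triangulated lax monoidal on all of $\EK(\mathrm{Inj}\text{-}H)$, and that its restriction to $\operatorname{Loc}(\mathbb 1)$, with $\mathbb 1=\Y(H,\Bbbk)$, is a monoidal triangulated equivalence. So it suffices to prove
\[
\operatorname{Loc}(\mathbb 1)=\EK(\mathrm{Inj}\text{-}H)
\]
when $H$ is local. Once this holds, the restriction in Theorem \ref{thm:monoidalfunctor} is $F$ itself. The lax structure maps $\eta_{X,Y}$ are then isomorphisms for all $X,Y$, and the unit map $E\to F(\mathbb 1)$ is a quasi-isomorphism, so $F$ is strong monoidal.

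To prove the generation statement, I would use Krause's description of $\EK(\mathrm{Inj}\text{-}H)$ for a finite-dimensional (hence noetherian) algebra $H$. This category is compactly generated. Its compact objects are, up to equivalence, the injective resolutions $\mathbf iS$ of finitely generated modules $S$, and this gives an equivalence $\EK(\mathrm{Inj}\text{-}H)^c\simeq \ED^b(\mathrm{mod}\text{-}H)$ (Krause, \emph{The stable derived category of a noetherian scheme}). A localizing subcategory of a compactly generated category that contains a set of compact generators is the whole category. It therefore suffices to show that $\operatorname{Loc}(\mathbb 1)$ contains $\mathbf iS$ for every finite-dimensional module $S$.

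Now I use locality. Since $H$ is local and finite-dimensional, $\Bbbk$ (via the counit) is the unique simple module. Every finite-dimensional $H$-module therefore has a finite composition series with all factors isomorphic to $\Bbbk$. Each short exact sequence $0\to S'\to S\to S''\to 0$ of modules gives an exact triangle $\mathbf iS'\to\mathbf iS\to\mathbf iS''\to$ in $\EK(\mathrm{Inj}\text{-}H)$, since $\mathbf i$ is the composite $\ED^b(\mathrm{mod}\text{-}H)\to\EK(\mathrm{Inj}\text{-}H)$, which is exact. By induction on composition length, $\mathbf iS$ lies in the thick subcategory generated by $\mathbf i\Bbbk$. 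Finally, $\mathbb 1=\Y(H,\Bbbk)$ is an injective resolution of $\Bbbk$ via the coaugmentation $\epsilon$, so $\mathbf i\Bbbk\cong\mathbb 1$ in $\EK(\mathrm{Inj}\text{-}H)$. This gives $\mathbf iS\in\operatorname{Loc}(\mathbb 1)$.

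The main point needing care is the identification of compact generators of $\EK(\mathrm{Inj}\text{-}H)$ with resolutions of finitely generated modules. It is not proved in the paper, so I would cite Krause's theorem rather than reprove it. A minor check is that $\mathbb 1$ is a bounded-below complex of injectives quasi-isomorphic to $\Bbbk$, which makes it isomorphic to $\mathbf i\Bbbk$ in the homotopy category. The example $H=\Bbbk G$ for a $p$-group $G$ in characteristic $p$ is local, which recovers the case stated in Theorem \ref{thmB}.
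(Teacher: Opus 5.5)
Your proposal is correct and takes the paper's route. The paper also deduces the corollary from Theorem~\ref{thm:monoidalfunctor} via $\operatorname{Loc}(\Y(H,\Bbbk))=\EK(\mathrm{Inj}\text{-}H)$ for local $H$; it merely asserts that generation statement, whereas you justify it through Krause's equivalence $\EK(\mathrm{Inj}\text{-}H)^c\simeq\ED^b(\mathrm{mod}\text{-}H)$ and a composition-series induction whose factors are all trivial. One small imprecision: compact objects are resolutions of bounded complexes of finitely generated modules, not just of modules. These lie in the thick closure of the $\mathbf iS$, so your reduction still stands.
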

\begin{proof}
For a local finite-dimensional algebra, the injective resolution of the unique simple module generates $\EK(\mathrm{Inj}\text{-}H)$. Thus
\[
\operatorname{Loc}(\Y(H,\Bbbk))=\EK(\mathrm{Inj}\text{-}H),
\]
and the assertion follows from Theorem \ref{thm:monoidalfunctor}.
\end{proof}

\begin{rem}
 Corollary \ref{cor:tensor-triangle} recovers, by a purely
$A_\infty$-algebraic argument, the monoidal equivalence conjectured
by Krause and previously proved by Benson--Krause using the
$E_\infty$-algebra $C^*(BG;\Bbbk)$. The following results compares the two constructions at
the level of monoidal triangulated categories. We do not construct a direct quasi-isomorphism comparing the chosen brace
$B_\infty$-structure on $E$ with an $E_\infty$-structure on
$C^*(BG;\Bbbk)$. 
\end{rem}

\begin{cor}[Comparison with the Benson--Krause model]
\label{cor:BKcomparison}
Let $G$ be a finite $p$-group and suppose that
$\operatorname{char}(\Bbbk)=p$. Set
\[
E=\Y(\Bbbk,\Bbbk)
\]
for the Hopf algebra $H=\Bbbk G$. Then there is a monoidal
triangulated equivalence
\[
\bigl(\ED(E),\boxtimes_E\bigr)
\simeq^\otimes
\left(
\ED\bigl(C^*(BG;\Bbbk)\bigr),
\otimes^{E_\infty}_{C^*(BG;\Bbbk)}
\right).
\]
\end{cor}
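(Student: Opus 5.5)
The plan is to compose two monoidal triangulated equivalences that both pass through $\EK(\mathrm{Inj}\text{-}\Bbbk G)$ with the diagonal tensor product. First, $\Bbbk G$ is a finite-dimensional Hopf algebra, and it is local when $G$ is a finite $p$-group and $\operatorname{char}(\Bbbk)=p$: the augmentation ideal is nilpotent, so $\Bbbk$ is the unique simple module. Corollary \ref{cor:tensor-triangle} then gives a monoidal triangulated equivalence
\[
F=\Hom_{\Bbbk G}(\mathbb 1,-)\colon \bigl(\EK(\mathrm{Inj}\text{-}\Bbbk G),\otimes_\Bbbk\bigr)\xrightarrow{\ \sim\ }\bigl(\ED(E),\boxtimes_E\bigr).
\]
Here $\otimes_\Bbbk$ carries the diagonal $G$-action, induced by the coproduct $g\mapsto g\otimes g$. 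This is exactly the tensor product used in Section \ref{section:koszulduality}.

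Second, I would invoke the Benson--Krause equivalence \cite[Theorems 4.2 and 7.8]{BeKr}. It is a triangulated equivalence
\[
\Theta_{\mathrm{BK}}\colon \EK(\mathrm{Inj}\text{-}\Bbbk G)\xrightarrow{\sim}\ED\bigl(C^*(BG;\Bbbk)\bigr),
\]
under which the diagonal tensor product corresponds to the $E_\infty$-tensor product over $C^*(BG;\Bbbk)$. In other words, $\Theta_{\mathrm{BK}}$ is a monoidal equivalence $(\EK(\mathrm{Inj}\text{-}\Bbbk G),\otimes_\Bbbk)\simeq^\otimes(\ED(C^*(BG;\Bbbk)),\otimes^{E_\infty})$.

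The composite $\Theta_{\mathrm{BK}}\circ F^{-1}$ is then the required equivalence. A quasi-inverse of a strong monoidal equivalence is again strong monoidal, with inverse structure maps, and composites of monoidal triangulated functors are monoidal triangulated. Exactness of each functor is preserved under these operations, so no further structure needs to be checked.

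The main obstacle is ensuring that the two monoidal structures on $\EK(\mathrm{Inj}\text{-}\Bbbk G)$ really coincide, together with their unit and associativity constraints. I would check that Benson--Krause use the tensor product of complexes of injectives over $\Bbbk$ with diagonal action, with unit an injective resolution of $\Bbbk$. Any two such resolutions are canonically isomorphic in $\EK$, so the unit object $\Y(\Bbbk G,\Bbbk)$ used here matches theirs up to a unique isomorphism compatible with the coaugmentations.

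The associator on both sides is induced by the standard associator of complexes. One does not need to compare $E$ and $C^*(BG;\Bbbk)$ directly, and no $E_\infty$-versus-$B_\infty$ comparison is required. This matches the remark preceding the corollary.

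If the Benson--Krause statement is only on the level of tensor bifunctors up to natural isomorphism, I would transport the coherence data along $\Theta_{\mathrm{BK}}$. Since $\Theta_{\mathrm{BK}}$ is an equivalence, this suffices.
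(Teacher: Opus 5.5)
Your proposal is correct and follows the paper's proof. Both Corollary \ref{cor:tensor-triangle} (applicable since $\Bbbk G$ is local) and \cite[Theorems 4.2 and 7.8]{BeKr} identify their target monoidally with $(\EK(\mathrm{Inj}\text{-}\Bbbk G),\otimes_\Bbbk)$, and you compose one equivalence with a quasi-inverse of the other; your additional checks that the units and associators on $\EK(\mathrm{Inj}\text{-}\Bbbk G)$ agree are extra care that the paper leaves implicit.
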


\begin{proof}
By Corollary \ref{cor:tensor-triangle}, the first category is
monoidally equivalent to
\[
\bigl(\EK(\mathrm{Inj}\text{-}\Bbbk G),\otimes_\Bbbk\bigr).
\]
By \cite[Theorems 4.2 and 7.8]{BeKr}, the same is true for the second
category. Composing one equivalence with a quasi-inverse of the other
gives the assertion.
\end{proof}

\begin{rem}
The compact objects recover the bounded representation-theoretic form of the construction. Since $\mathbb 1=\Y(H,\Bbbk)$ is compact, $F$ preserves coproducts and restricts to
\[
F\colon\ED^b(\mathrm{mod}\text{-}H)
\longrightarrow
\operatorname{per}(\Y(\Bbbk,\Bbbk)),
\qquad
X\longmapsto\Y(\Bbbk,X).
\]
More precisely, for $X\in\ED^b(\mathrm{mod}\text{-}H)$, \cite[Proposition 3.11]{ChWa1} gives a quasi-isomorphism
\[
F(X)=\operatorname{Hom}_H(\Y(H,\Bbbk),\Y(H,X))
\xleftarrow{\simeq}
\Y(\Bbbk,X).
\]
Under this identification, the lax monoidal structure is represented by
\[
\eta_{X,Y}\colon
\Y(\Bbbk,X)\otimes^{\mathbb L}_{\Y(\Bbbk,\Bbbk)}\Y(\Bbbk,Y)
\longrightarrow
\Y(\Bbbk,X\otimes Y),
\]
for $X,Y\in\ED^b(\mathrm{mod}\text{-}H)$.
\end{rem}

\section{Examples and Hopf-algebra phenomena}\label{section:example}
We conclude with three examples that isolate the main features of the construction. The first is a consistency check: for a graded-commutative algebra with the trivial brace structure, the product $\boxtimes$ is the usual derived tensor product. The second shows that, for a non-local Hopf algebra, the Koszul duality functor can be genuinely lax monoidal rather than strong monoidal. The third computes the braces for an elementary $2$-group and shows that the answer depends on the coproduct, even when the underlying algebra and the dg Yoneda algebra are unchanged.

\subsection{The trivial brace structure and graded-commutative algebras}\label{subsection:commutative}
Let $R$ be a graded-commutative $\Bbbk$-algebra. It carries a ``trivial'' brace $B_\infty$-structure $(R,m^2;\mu^{p,q})$: the only nonzero $A_\infty$-operation is the multiplication $m^2$, and
\[
\mu^{0,0}=0,
\qquad
\mu^{0,1}=\id_{sR}=\mu^{1,0},
\qquad
\mu^{p,q}=0
\quad\text{for }(p,q)\notin\{(0,1),(1,0)\}.
\]
See \cite{LoRo} for related constructions.

\begin{rem}
Conversely, suppose that $(A,\{m^n\};\{\mu^{p,q}\})$ is a trivial $B_\infty$-algebra in the above sense: $m^n=0$ for $n\neq2$, and $\mu^{p,q}=0$ unless $(p,q)=(0,1)$ or $(1,0)$. Then Remark \ref{rem:commtativebrace} implies that $(A,m^2)$ is a graded-commutative associative algebra; see \cite[Subsection 3.2]{Kad}.
\end{rem}

For a graded-commutative dg algebra, every right dg module $M$ has its familiar symmetric bimodule structure
\[
a\cdot x\cdot b=(-1)^{|x||a|}xab
\qquad
(a,b\in R,\ x\in M).
\]
Accordingly, $\ED(R)$ has the usual monoidal structure given by the derived tensor product $\otimes_R^{\mathbb L}$. We now verify that the product $\boxtimes_R$ constructed from the trivial brace structure recovers exactly this familiar tensor product.

Because $\mu^{1,0}=\mu^{0,1}=\id_{sR}$, the extended product $\widehat\mu$ on the tensor coalgebra $T^c(sR)$ is the shuffle product; see \cite[Section 1.3]{LoRo}. Explicitly,
\begin{align}\label{align:shfulle}
\widehat\mu
(sa_1\otimes\dotsb\otimes sa_p\bigotimes sa_{p+1}\otimes\dotsb\otimes sa_{p+q})
 =\sum(-1)^\eta sa_{i_1}\otimes\dotsb\otimes sa_{i_{p+q}},
\end{align}
where the sum runs over all $(p,q)$-shuffles $(i_1,\dotsc,i_{p+q})$ and $(-1)^\eta$ is the Koszul sign. The antipode is tensor reversal:
\[
S(sa_1\otimes\dotsb\otimes sa_n)
 =(-1)^\epsilon sa_n\otimes\dotsb\otimes sa_1,
\]
where
\[
\epsilon=\sum_{i=1}^{n}|sa_i|(|sa_1|+\dotsb+|sa_{i-1}|).
\]

Let $(M,\rho_M)$ be a right dg $R$-module, and recall the induced $A_\infty$-bimodule
\[
\iota(M)=(M,\beta_M^{p,q})
\]
from Proposition \ref{prop:dualitybinfinity}.

\begin{prop}\label{prop:commuativeiota}
For every right dg $R$-module $M$, the induced $A_\infty$-bimodule $\iota(M)$ is the symmetric dg $R$-bimodule associated with $M$. Consequently, there is a canonical identification of monoidal triangulated categories
\[
(\ED(R),\boxtimes_R)\simeq(\ED(R),\otimes_R^{\mathbb L}).
\]
\end{prop}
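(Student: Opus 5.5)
The plan is to compute the structure maps $\beta_M^{p,q}$ of $\iota(M)$ directly from the formula in Proposition \ref{prop:dualitybinfinity}. That formula expresses $\beta_M^{p,q}$ as $\rho_M$ applied to $sx\otimes \widehat\mu(S(sa_{1,p})\bigotimes sb_{1,q})$, up to the Koszul sign coming from the crossing. For a right dg module, $\rho_M^n=0$ for $n\geq 3$, so only the components of $\widehat\mu(S(sa_{1,p})\bigotimes sb_{1,q})$ of tensor length $0$ or $1$ contribute. Here $\widehat\mu$ is the shuffle product, and it preserves tensor length. $S$ is tensor reversal, so it also preserves length. Hence $\widehat\mu(S(sa_{1,p})\bigotimes sb_{1,q})$ is homogeneous of length $p+q$. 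It follows that $\beta_M^{p,q}=0$ whenever $p+q\geq 2$.

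The surviving components are few:
\begin{itemize}
\item $\beta^{0,0}=\rho^1$ is the differential.
\item $\beta^{0,1}=\rho^2$ is the original right action.
\item $\beta^{1,0}(sa\otimes sx)=(-1)^{|sa||sx|}\rho^2(sx\otimes S(sa))=-(-1)^{|sa||sx|}\rho^2(sx\otimes sa)$.
\end{itemize}
Unwinding the shift conventions of Example \ref{exm:shiftedandunshifted} for $m^2$ and the analogous convention for $\rho^2$, I would check that this is exactly the shifted form of the left action $a\cdot x=(-1)^{|a||x|}xa$. That left action, together with the right action, is the symmetric bimodule structure. This sign bookkeeping is the only real computation, and I expect it to be the main (though routine) obstacle: one must track the degree $-1$ of $s$ and the extra $(-1)^{|x|}$ in the unshifted/shifted translation.

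For the second assertion, I would note that $\iota(N)$ is now an honest dg bimodule. Then $M\boxtimes_R N=M\overset{\infty}{\otimes}_R\iota(N)$ agrees with $M\otimes^{\mathbb L}_R N_{\mathrm{sym}}$ by Remark \ref{rem:dgainfinitymodules}, naturally in $M$ and $N$. The same length argument applies to morphisms: for a dg morphism $f$, $\iota(f)$ is the same map. It remains to check that the constraints agree. The associator built in Proposition \ref{prop:asso-rightmodule} uses $\psi$ from Lemma \ref{lem:isomorphismmn}, whose components $\psi^{p,0}$ insert $\widehat\mu(S(sa_{1,p})\otimes sc_{1,k})$ into the bar part. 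Under the quasi-isomorphism to the ordinary balanced tensor product, I would argue that the higher components $p\geq1$ contribute only terms that are killed. Equivalently, $\psi$ induces the canonical identification of symmetric bimodules $(M\otimes^{\mathbb L}_R N)_{\mathrm{sym}}$.

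The unit map $I_R$ similarly reduces to the identity, because $\mu^{p,q}$ vanishes except in the trivial cases. Hence the unit constraints are the usual ones, and we obtain a monoidal identification.
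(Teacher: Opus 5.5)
Your proposal is correct and follows the paper's proof. The vanishing of $\rho_M^{n}$ for $n\geq 3$, together with the fact that the shuffle product and $S$ preserve tensor length, kills $\beta_M^{p,q}$ for $p+q\geq 2$. The surviving component $\beta_M^{1,0}(sa\otimes sx)=(-1)^{|sa||sx|+1}\rho_M^2(sx\otimes sa)$ unshifts to the symmetric left action, and $\boxtimes_R$ is then identified with $\otimes^{\mathbb L}_R$ via Remark~\ref{rem:dgainfinitymodules}.

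Your additional check of the associativity and unit constraints goes beyond the paper, which only identifies the two products objectwise, and it is sound. For the trivial braces one has $I_R=\id$. Each $\psi^{p,0}$ with $p\geq 1$ lands in positive bar length, so composing $\psi$ with the strict bar-length-zero projection onto the ordinary tensor product just gives that projection.
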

\begin{proof}
The structure maps of $\iota(M)$ are
\begin{align}\label{align:betapqcommutative}
\begin{aligned}
&\beta_M^{p,q}(sa_1\otimes\cdots\otimes sa_p\otimes sx\otimes sb_1\otimes\cdots\otimes sb_q)\\
={}&\rho_M\left(
sx\otimes
\widehat\mu^{\opp}
\bigl(S(sa_1\otimes\cdots\otimes sa_p)\bigotimes
sb_1\otimes\cdots\otimes sb_q\bigr)
\right)\\
={}&(-1)^\epsilon
\rho_M^{p+q+1}\left(
sx\otimes
\widehat\mu
\bigl(sb_1\otimes\cdots\otimes sb_q\bigotimes
sa_p\otimes\cdots\otimes sa_1\bigr)
\right),
\end{aligned}
\end{align}
where $\widehat\mu$ is the shuffle product in \eqref{align:shfulle}. Since $M$ is a dg module, $\rho_M^i=0$ for $i>2$. Hence
\[
\beta_M^{p,q}=0
\qquad\text{whenever }p+q>1,
\]
so $\iota(M)$ is an ordinary dg bimodule. Its remaining structure maps are
\[
\beta_M^{0,0}=\rho_M^1,
\qquad
\beta_M^{0,1}=\rho_M^2,
\qquad
\beta_M^{1,0}(sa\otimes sm)
 =(-1)^{|sm||sa|+1}\rho_M^2(sm\otimes sa).
\]
Passing to the unshifted convention of Example \ref{exm:shiftedandunshifted}, the induced left action is
\[
a\circ m
 =(-1)^{|a|}\beta_M^{1,0}(sa\otimes sm)
 =(-1)^{|a||m|}m\circ a.
\]
This is precisely the symmetric left action.

For two right dg $R$-modules $M$ and $N$, we therefore have
\[
M\boxtimes_R N
 =M\overset{\infty}{\otimes}_R\iota(N)
 \simeq M\otimes_R^{\mathbb L}\iota(N)
 \simeq M\otimes_R^{\mathbb L}N.
\]
The first equality is the definition of $\boxtimes_R$, the second uses the comparison between the $A_\infty$-tensor product and the derived tensor product for dg modules, and the third uses $\iota(N)=N$ as symmetric bimodules.
\end{proof}

\begin{rem}\label{rem:iotanotful}
Even in this elementary case, the induction functor need not be full. Here
\[
\iota\colon\ED^{\RR}(R)\longrightarrow\ED^{\BB}(R)
\]
is the functor that equips a right dg module with its symmetric bimodule structure. If it were full, then for every $i\geq0$ the natural map
\[
\Ext_R^i(R,R)\longrightarrow\Ext_{R\text{-}R}^i(R,R)
\]
would be an isomorphism. The left-hand side vanishes for $i>0$, whereas the right-hand side is Hochschild cohomology and is generally nonzero; for example, take $R=\Bbbk[y]$ with $|y|=0$. Thus the failure of fullness is already visible in the graded-commutative case.
\end{rem}

\subsection{A non-local Hopf algebra with radical square zero}\label{subsec:radicalsquarezero}
We next give a four-dimensional example showing that the lax monoidal structure of Theorem \ref{thm:monoidalfunctor} need not be strong on all of $\EK(\mathrm{Inj}\text{-}H)$.

Let $H=\Bbbk Q/I$, where
\[
\xymatrix@C=5em@R=5em{
0 \ar@/^1pc/[r]^{\alpha}& 1 \ar@/^1pc/[l]^{\beta}
}
\]
and $I=(\alpha\beta,\beta\alpha)$. Thus
\[
\dim_\Bbbk H=4,
\qquad
1_H=e_0+e_1,
\]
and the Jacobson radical, spanned by $\alpha$ and $\beta$, has square zero.

This algebra admits the following Hopf structure; see \cite[Section 4]{YL}:
\begin{align}\label{algin:coprdocutradical}
\begin{aligned}
\Delta(e_0)&=e_0\otimes e_0+e_1\otimes e_1,\\
\Delta(e_1)&=e_0\otimes e_1+e_1\otimes e_0,\\
\Delta(\alpha)&=e_0\otimes\alpha+e_1\otimes\beta+\alpha\otimes e_0-\beta\otimes e_1,\\
\Delta(\beta)&=e_0\otimes\beta+e_1\otimes\alpha+\beta\otimes e_0-\alpha\otimes e_1,
\end{aligned}
\end{align}
with counit
\[
\varepsilon(e_0)=1,
\qquad
\varepsilon(e_1)=\varepsilon(\alpha)=\varepsilon(\beta)=0,
\]
and antipode
\[
S(e_0)=e_0,\qquad
S(e_1)=e_1,\qquad
S(\alpha)=\beta,\qquad
S(\beta)=-\alpha.
\]

The trivial module is the simple module
\[
S_0=\Bbbk e_0.
\]
Write $P_i=e_iH$ for the two indecomposable projective modules. The lax monoidal structure gives a morphism
\[
\eta_{P_0,P_0}\colon
\Y(S_0,P_0)\boxtimes_{\Y(S_0,S_0)}\Y(S_0,P_0)
\longrightarrow
\Y(S_0,P_0\otimes P_0)
\]
in $\operatorname{per}(\Y(S_0,S_0))$.

\begin{prop}\label{prop:laxmonoidal}
The morphism $\eta_{P_0,P_0}$ is not an isomorphism.
\end{prop}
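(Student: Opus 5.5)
The strategy is to show that the source of $\eta_{P_0,P_0}$ vanishes while the target does not; the remark after Theorem \ref{thm:monoidalfunctor} already announces that this is what happens. Throughout, write $E=\Y(S_0,S_0)$.

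\textbf{Step 1: the source is zero.} Since $P_0$ is projective-injective (a finite-dimensional Hopf algebra is Frobenius), we have
\[
H^*\Y(S_0,P_0)\cong\Ext^*_H(S_0,P_0).
\]
This group vanishes in positive degrees. In degree $0$ it is $\Hom_H(S_0,P_0)$.
\begin{itemize}
\item Compute the socle of $P_0=e_0H$. It is spanned by $\alpha$ or $\beta$, depending on the path convention for right modules.
\item The socle of $e_0H$ is the simple module $S_1$, not $S_0$, because $e_0H=\langle e_0,\alpha\rangle$ and $\alpha e_1=\alpha$ (or the analogous relation in the other convention).
\item Hence $\Hom_H(S_0,P_0)=0$.
\end{itemize}
So $\Y(S_0,P_0)$ is acyclic, hence zero in $\ED(E)$. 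Because $\boxtimes_E$ is exact and is built from $\overset{\infty}{\otimes}_E$ applied to $\iota(-)$, which preserves zero objects, the source $\Y(S_0,P_0)\boxtimes_E\Y(S_0,P_0)$ is zero.

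\textbf{Step 2: the target is nonzero.} Decompose $P_0\otimes P_0$ with the diagonal action coming from \eqref{algin:coprdocutradical}.
\begin{itemize}
\item $P\otimes M$ is projective whenever $P$ is projective, so $P_0\otimes P_0$ is projective of dimension $4$.
\item It is therefore a sum of two indecomposable projectives, each of dimension $2$, and it is isomorphic to $P_0\oplus P_1$ or to $P_0^2$ or $P_1^2$.
\item To decide which, compute $(P_0\otimes P_0)e_1$ as a vector space using $\Delta(e_1)=e_0\otimes e_1+e_1\otimes e_0$. The multiplicities of the $P_i$ are read off from the dimensions of the top. Equivalently, use $\dim\Hom_H(P_0\otimes P_0,S_0)$, which equals $\dim\Hom_H(P_0,S_0\otimes P_0^*)$.
\item Expectation: $P_0\otimes P_0\cong P_0\oplus P_1$. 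Indeed, tensoring a projective with the 2-dimensional module $P_0$ should produce one copy of each projective, by the same symmetry that makes $e_0\otimes e_0+e_1\otimes e_1$ and $e_0\otimes e_1+e_1\otimes e_0$ the images of the two idempotents.
\end{itemize}
Then $\Y(S_0,P_0\otimes P_0)$ has cohomology
\[
\Hom_H(S_0,P_1)\ne 0,
\]
since the socle of $P_1$ is $S_0$. So the target is a nonzero object.

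\textbf{Conclusion and main obstacle.} A morphism from a zero object to a nonzero object is not an isomorphism, which proves the proposition. The main obstacle is the explicit decomposition of $P_0\otimes P_0$ under the twisted coproduct. The signs in $\Delta(\alpha)$ and $\Delta(\beta)$, together with the path-composition convention, must be tracked carefully to determine which indecomposable projectives occur. As a fallback if the decomposition turns out to be $P_0^2$, I would switch to the pair $(P_1,P_1)$ or $(P_0,P_1)$, or I would compute $\Hom_H(S_0,P_0\otimes P_0)$ directly by solving for the vectors annihilated by $\alpha$ and $\beta$ on which $e_0$ acts as the identity.
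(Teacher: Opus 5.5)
Your proposal is correct and follows the paper's argument: the source vanishes because $\Y(S_0,P_0)\simeq 0$ ($P_0$ is injective and its socle is $S_1$), and the target is $\Y(S_0,P_1)\not\simeq 0$ because $P_0\otimes P_0\cong P_0\oplus P_1$, a decomposition the paper simply cites from \cite{YL}. Your expected decomposition is right: with paths composed left to right, $e_0\otimes e_0$ generates a copy of $P_0$, $e_0\otimes\alpha$ generates a copy of $P_1$ since $(e_0\otimes\alpha)\beta=-\alpha\otimes\alpha$, and $\alpha\otimes\alpha$ spans $\Hom_H(S_0,P_0\otimes P_0)$; note, however, that $\dim(P_0\otimes P_0)e_1$ alone cannot decide the decomposition, since $P_0$ and $P_1$ each have one composition factor of each type, so you should use the top or the socle, as in your fallback.
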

\begin{proof}
A finite-dimensional Hopf algebra is self-injective, so $P_0$ and $P_1$ are injective. Directly from the module structure,
\[
\Ext_H^*(S_0,P_0)=\Hom_H(S_0,P_0)=0,
\qquad
\Ext_H^*(S_0,P_1)=\Hom_H(S_0,P_1)=\Bbbk.
\]
Consequently,
\[
\Y(S_0,P_0)\simeq0,
\qquad
\Y(S_0,P_1)\not\simeq0
\]
in $\operatorname{per}(\Y(S_0,S_0))$. On the other hand, the tensor-product rule for $H$ gives
\[
P_0\otimes P_0\simeq P_0\oplus P_1
\]
as right $H$-modules; see \cite[Section 4]{YL}. Therefore
\[
\Y(S_0,P_0\otimes P_0)
 \simeq\Y(S_0,P_0\oplus P_1)
 \simeq\Y(S_0,P_1)
 \not\simeq0.
\]
The source of $\eta_{P_0,P_0}$ is zero, whereas its target is nonzero. Hence $\eta_{P_0,P_0}$ cannot be an isomorphism.
\end{proof}

It is useful to compare this failure with the Koszul dual algebra itself. In this example the Yoneda dg algebra is formal, and even its brace structure is quasi-isomorphic to the trivial one.

\begin{prop}\label{prop:braceradical}
The brace $B_\infty$-algebra $\Y(S_0,S_0)$ is quasi-isomorphic to the trivial brace $B_\infty$-algebra on
\[
\Bbbk[y],
\qquad
|y|=2.
\]
Equivalently, the underlying dg algebra is $\Bbbk[y]$ with zero differential, and
\[
\mu^{p,q}=0
\qquad
\text{for }(p,q)\notin\{(1,0),(0,1)\}.
\]
\end{prop}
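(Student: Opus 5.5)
The plan is to use the normalized relative model of Remark \ref{rem:relativeyoneda} with $E_0=\Bbbk e_0\oplus\Bbbk e_1$. This $E_0$ is semisimple, and by \eqref{algin:coprdocutradical} it is a sub-bialgebra: it is spanned by $e_0,e_1$ and closed under $\Delta$ and $\varepsilon$. Hence $\overline\Y(S_0,S_0)\hookrightarrow\Y(S_0,S_0)$ is a quasi-isomorphism of brace $B_\infty$-algebras, and it suffices to work with $\overline\Y(S_0,S_0)$.

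First, compute $\overline\Y(S_0,S_0)$ explicitly. We have $\overline H=J=\Bbbk\alpha\oplus\Bbbk\beta$ as an $E_0$-bimodule, with $\alpha\in e_0He_1$ and $\beta\in e_1He_0$. Then $(s\overline H)^{\otimes_{E_0}m}$ is spanned by alternating words. Requiring $E_0$-bilinear maps into $S_0=\Bbbk e_0$ leaves, in each degree $m$, exactly one cochain, namely the dual of the word starting and ending at vertex $0$. This forces $m$ even, with words $\alpha\beta\alpha\beta\cdots$ or their reverse, depending on the side conventions. Since $J^2=0$, every product $a_ia_{i+1}$ in the bar differential vanishes in $\overline H$. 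Moreover, $S_0\cdot J=0$ kills the outer terms. So the differential is zero. The cup product is concatenation, so it sends the generator of degree $2k$ times the generator of degree $2l$ to the generator of degree $2k+2l$. Hence $\overline\Y(S_0,S_0)\cong\Bbbk[y]$ with $|y|=2$ and zero differential, and $m^n=0$ for $n\neq 2$.

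Second, compute the braces $\mu^{1,q}$ on this model via \eqref{align:mupqformular}. For this we need $\widetilde g$ for $g=y^k$. Applying $\Delta$ to each letter and using \eqref{algin:coprdocutradical}, the first tensor factors that $g$ can evaluate nonzero must be radical letters, paired with second factors in $E_0$. So $\widetilde g$ takes values in $E_0\subset H$. Its image in $\overline H=H/E_0$ is then zero. Inserting $\widetilde g_1,\dots,\widetilde g_q$ into bar slots of $f$ in the normalized complex therefore gives zero, so $\mu^{1,q}=0$ for $q\ge1$. Together with $\mu^{p,q}=0$ for $p>1$ (brace condition), this is exactly the trivial brace structure on $\Bbbk[y]$.

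The main obstacle is making sure that the restriction of the braces to the normalized relative complex really uses $\widetilde g$ modulo $E_0$. If instead the honest insertion lands in $H$ and only later becomes zero after composition, one must check that the $E_0$-valued part either vanishes by normalization or contributes only by degree. As a fallback, the argument can be done by degree counting. The algebra is concentrated in even degrees, and $\mu^{1,q}$ has degree zero in the shifted convention, so it raises unshifted degree by $-q$. Thus $\mu^{1,q}(sy^{k}\otimes sy^{j_1}\otimes\cdots)$ would land in odd total degree when $q$ is odd, hence vanishes. For even $q$, one would use the explicit $E_0$-valuedness computation above.
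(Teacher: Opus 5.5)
Your proposal is correct and follows essentially the same route as the paper: pass to the normalized $E_0$-relative model of Remark \ref{rem:relativeyoneda}, identify it with $\Bbbk[y]$, $|y|=2$, with zero differential, and show that the braces vanish there. Your reason for $\mu^{1,q}=0$ is what the paper means by appealing to the coproduct formulas. Since $\Delta(J)\subseteq E_0\otimes J+J\otimes E_0$ for $J=\Bbbk\alpha\oplus\Bbbk\beta$, each $\widetilde g$ takes values in $E_0$, and normalized cochains vanish on $E_0$-entries. So the worry in your last paragraph is unfounded, and the degree fallback, which only handles odd $q$, is not needed.
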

\begin{proof}
Let
\[
E_0=\Bbbk Q_0=\Bbbk e_0\oplus\Bbbk e_1\subset H.
\]
This is a semisimple Hopf subalgebra. By Remark \ref{rem:relativeyoneda}, the normalized $E_0$-relative Yoneda complex $\overline\Y(S_0,S_0)$ is a brace $B_\infty$-algebra and the inclusion
\[
\overline\Y(S_0,S_0)\hookrightarrow\Y(S_0,S_0)
\]
is a brace $B_\infty$-quasi-isomorphism.

Since
\[
\overline H\simeq\Bbbk\alpha\oplus\Bbbk\beta=\Bbbk Q_1
\qquad\text{and}\qquad
(s\overline H)^{\otimes_{E_0}n}\simeq s^n\Bbbk Q_n,
\]
we obtain
\[
\overline\Y_n(S_0,S_0)
 =\operatorname{Hom}_{E_0\text{-}E_0}
 \bigl(\Bbbk e_0\otimes_{E_0}\Bbbk Q_n,\Bbbk e_0\bigr)
 \simeq
 \begin{cases}
 \operatorname{Span}_\Bbbk\{(\alpha\beta\cdots\alpha\beta,e_0)\},
   & n\text{ even},\\
 0,& n\text{ odd}.
 \end{cases}
\]
For even $n$, the displayed basis vector is the cochain dual to the unique alternating path of length $n$ from vertex $0$ back to vertex $0$.

Let $y$ be the class represented by $(\alpha\beta,e_0)$. Concatenation of alternating paths gives an isomorphism of dg algebras
\[
\overline\Y(S_0,S_0)\cong\Bbbk[y],
\qquad
|y|=2,
\]
with zero differential. Finally, the coproduct formulas in \eqref{algin:coprdocutradical}, together with \eqref{align:mupqformular}, show that
\[
\mu^{1,q}=0
\qquad(q>0)
\]
on the normalized complex. Hence its brace structure is trivial.
\end{proof}

Thus the obstruction in Proposition \ref{prop:laxmonoidal} is not caused by a complicated Koszul dual algebra: the target is equivalent to the ordinary monoidal category $\ED(\Bbbk[y])$. Rather, it records that the trivial module does not generate all of $\EK(\mathrm{Inj}\text{-}H)$ when $H$ is non-local.

\subsection{Elementary $2$-groups and dependence on the coproduct}
We finish with an example in which the underlying algebra is fixed but two different Hopf structures produce different brace operations.

Let $G=C_2=\{e,\sigma\}$ and assume that $\operatorname{char}(\Bbbk)=2$. Setting
\[
x=\sigma-e
\]
gives an algebra isomorphism
\[
H:=\Bbbk[x]/(x^2)\xrightarrow{\simeq}\Bbbk G.
\]
Transporting the group-algebra Hopf structure to $H$, we obtain
\begin{align}\label{align:coproductkx}
\Delta(x)=1\otimes x+x\otimes1+x\otimes x,
\end{align}
together with
\[
\varepsilon(x)=0,
\qquad
S(x)=x.
\]
The extra term $x\otimes x$ reflects the fact that $1+x=\sigma$ is group-like.

Let
\[
\overline H=H/(\Bbbk\cdot1).
\]
It is one-dimensional, with basis the class of $x$. The normalized Yoneda complex of Remark \ref{rem:relativeyoneda} has
\[
\overline\Y_n(\Bbbk,\Bbbk)
 =\operatorname{Hom}\bigl((s\overline H)^{\otimes n},\Bbbk\bigr),
\]
which is one-dimensional in every degree. Concatenation identifies the underlying dg algebra with
\begin{align}\label{align:isokx}
\overline\Y(\Bbbk,\Bbbk)\cong\Bbbk[y],
\qquad
|y|=1,
\end{align}
and the differential is zero. The coproduct in \eqref{align:coproductkx}, however, gives nontrivial higher braces.

\begin{prop}\label{prop:dualnumberbrace}
Under the identification \eqref{align:isokx}, the brace operations are
\[
\mu^{1,q}
\bigl(sy^{n_0}\bigotimes sy^{n_1}\otimes\dotsb\otimes sy^{n_q}\bigr)
 =
\binom{n_0}{q}n_1n_2\cdots n_q\,
sy^{n_0+n_1+\dotsb+n_q-q},
\]
for $n_0,\dotsc,n_q\geq0$, with the coefficient interpreted in $\Bbbk$. In particular, the operation vanishes if $n_0<q$ or if $n_i=0$ for some $1\leq i\leq q$.
\end{prop}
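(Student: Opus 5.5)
The plan is a direct computation from formula \eqref{align:mupqformular}, using the coproduct \eqref{align:coproductkx} to compute the $H$-valued cochains $\widetilde{g}$. First I fix the normalized basis. Let $y^n\in\overline\Y_n(\Bbbk,\Bbbk)$ be the cochain with $y^n(s\bar x\otimes\dotsb\otimes s\bar x)=1$. Concatenation \eqref{equation:yonedaproduct} shows $y^a\odot y^b=y^{a+b}$, which is the identification \eqref{align:isokx}. Signs can be ignored throughout because $\operatorname{char}\Bbbk=2$. This removes the main bookkeeping burden of \eqref{align:mupqformular}.

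Next I compute $\widetilde{y^m}\colon (s\overline H)^{\otimes m}\to \overline H$. For the input $s\bar x^{\otimes m}$, expand $\Delta(x)=1\otimes x+x\otimes 1+x\otimes x$ in each slot. The cochain $y^m$ is evaluated on the first tensor factors, and in the normalized complex it vanishes unless every first factor is $x$. So only the summands $x\otimes 1$ and $x\otimes x$ contribute in each slot, giving
\[
\widetilde{y^m}(s\bar x^{\otimes m})=\prod_{i=1}^m(1+x)=(1+x)^m=1+mx \pmod{x^2}.
\]
Projecting to $\overline H=H/\Bbbk 1$ leaves $m\bar x$. For $m=0$ we have $\widetilde{y^0}=0$ by convention, which agrees with the factor $m=0$. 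The step I expect to need the most care is justifying that the insertion in \eqref{align:mupqformular} must land in $\overline H$ in the normalized relative model, i.e. that the unit component is discarded. I would invoke Remark \ref{rem:relativeyoneda}: the brace operations restrict to the normalized subcomplex, and the normalized cochain $f$ kills any insertion whose value is $1$.

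Then I evaluate $\mu^{1,q}(sy^{n_0}\otimes sy^{n_1}\otimes\dotsb\otimes sy^{n_q})$. Here $X=Y=\Bbbk$ and $f=y^{n_0}$ has $n_0$ bar slots. By \eqref{align:mupqformular}, the $q$ cochains $\widetilde{y^{n_i}}$ are inserted, in the fixed order given by the formula, into $q$ of the $n_0$ slots, and the remaining $n_0-q$ slots take identity arguments. A choice $(i_1,\dots,i_{q+1})$ with $\sum i_j=n_0-q$ is the same as a choice of $q$ positions among $n_0$, so there are $\binom{n_0}{q}$ such choices, and none when $n_0<q$.

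Each term, evaluated on $s\bar x^{\otimes(n_0-q+\sum n_i)}$, gives $y^{n_0}(s\bar x\otimes\dotsb\otimes s(n_i\bar x)\otimes\dotsb)=\prod n_i$. So each term equals $\prod_i n_i\, y^{n_0+\sum n_i-q}$. Summing over all placements gives $\binom{n_0}{q}n_1\cdots n_q\,sy^{n_0+n_1+\dotsb+n_q-q}$, with coefficient read in $\Bbbk$. The vanishing clauses follow immediately: the sum is empty when $n_0<q$, and some factor $n_i=0$ kills the term otherwise. Finally I check the case $q=0$ against $\mu^{1,0}=\id$.
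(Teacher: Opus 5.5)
Your proposal is correct and follows essentially the same route as the paper. You compute $\widetilde{f}_{n_i}\bigl((sx)^{\otimes n_i}\bigr)=n_i x$ modulo the unit, then count the $\binom{n_0}{q}$ insertion patterns in \eqref{align:mupqformular}, with all signs disappearing because $\operatorname{char}(\Bbbk)=2$; your observation that the surviving coproduct terms multiply to $(1+x)^{n_i}=1+n_ix$, whose unit part is killed by the normalized cochain, is just a compact form of the paper's term-by-term count in \eqref{align:fwidetilde}.
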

\begin{proof}
Let
\[
f_n\in\operatorname{Hom}\bigl((s\overline H)^{\otimes n},\Bbbk\bigr)
\]
be determined by
\[
f_n((sx)^{\otimes n})=1_\Bbbk.
\]
Under \eqref{align:isokx}, the cochain $f_n$ corresponds to $y^n$. We have the following identities 
\begin{align}\label{align:bracekx}
\begin{aligned}
&\mu^{1,q}
(sf_{n_0}\bigotimes sf_{n_1}\smallotimes\dotsb\smallotimes sf_{n_q})
\bigl((sx)^{\smallotimes n_0+\dotsb+n_q-q}\bigr)\\
={}&
\sum f_{n_0}
\bigl(
\id^{\smallotimes i_1}\smallotimes s\widetilde f_{n_1}
\smallotimes\id^{\smallotimes i_2}\smallotimes\dotsb
\smallotimes\id^{\smallotimes i_q}\smallotimes s\widetilde f_{n_q}
\smallotimes\id^{\smallotimes i_{q+1}}
\bigr)
\bigl((sx)^{\smallotimes n_0+\dotsb+n_q-q}\bigr)\\
={}&
\sum n_1n_2\cdots n_q\,
f_{n_0}
\bigl(
(sx)^{\smallotimes i_1}\smallotimes sx
\smallotimes(sx)^{\smallotimes i_2}\smallotimes\dotsb
\smallotimes(sx)^{\smallotimes i_q}\smallotimes sx
\smallotimes(sx)^{\smallotimes i_{q+1}}
\bigr)\\
={}&
\binom{n_0}{q}n_1n_2\cdots n_q,
\end{aligned}
\end{align}
There are no signs because $\operatorname{char}(\Bbbk)=2$. Since $f_{n_0+\dotsb+n_q-q}$ evaluates to $1$ on the displayed tensor, this scalar computation is exactly the identity asserted in the proposition.

The first identity in \eqref{align:bracekx} follows from Formula \eqref{align:mupqformular}. Let us explain the the second and third identities in  \eqref{align:bracekx}. For each $i$, we have 
\begin{align}\label{align:fwidetilde}
\begin{aligned}
\widetilde f_{n_i}((sx)^{\otimes n_i})
 &=
 f_{n_i}(sx^{(1)}\otimes\dotsb\otimes sx^{(1)})
 \,\overline{x^{(2)}\cdots x^{(2)}}\\
 &=
 \sum_{j=1}^{n_i}
 f_{n_i}(sx\otimes\dotsb\otimes sx)
 \underbrace{1\cdots1}_{j-1}x
 \underbrace{1\cdots1}_{n_i-j}\\
 &=n_i x.
\end{aligned}
\end{align}
Indeed, by \eqref{align:coproductkx}, a nonzero contribution requires all first tensor factors to be $x$ and exactly one second tensor factor to be $x$; terms with two such factors vanish because $x^2=0$. Thus the $i$th insertion contributes the factor $n_i$.

Finally, since the sum in \eqref{align:bracekx} is  taken over all sequences of nonnegative integers $\{i_1,i_2,\dotsc,i_{q+1}\}$  such that $i_1+\dotsb+i_{q+1}=n_0-q$.
The cardinality is $\binom{n_0}{q}$. Multiplying the contributions proves \eqref{align:bracekx}.
\end{proof}

\begin{rem}
For an elementary abelian $2$-group
\[
G=(\mathbb Z/2\mathbb Z)^r,
\]
the group algebra is the tensor product of $r$ copies of the rank-one algebra above. Accordingly, the brace $B_\infty$-algebra $\Y(\Bbbk,\Bbbk)$ is obtained from the corresponding tensor product of the rank-one brace algebras.

The same associative algebra $\Bbbk[x]/(x^2)$ also occurs as a restricted universal enveloping algebra. Its Hopf structure has primitive generator
\[
\Delta(x)=1\otimes x+x\otimes1;
\]
see \cite[Section 8]{BIK}. For this coproduct, the $H$-valued cochain in \eqref{align:fwidetilde} vanishes, so all higher operations $\mu^{1,q}$ with $q\geq1$ are zero. Hence the normalized Yoneda algebra is the trivial brace $B_\infty$-algebra of Subsection \ref{subsection:commutative}.

We therefore obtain two brace $B_\infty$-structures on the same graded algebra $\Bbbk[y]$: a nontrivial one from the group-algebra coproduct and a trivial one from the primitive coproduct. They induce different monoidal structures on the same underlying derived category $\ED(\Bbbk[y])$; compare \cite[Remark 14.3]{BeKr} and \cite[Theorem 3.6]{Hov11}. This example makes explicit that the monoidal structure remembers the Hopf coproduct, not merely the associative algebra structure of $H$ or the graded algebra $\Ext_H^*(\Bbbk,\Bbbk)$.
\end{rem}

\vskip 5pt

\noindent {\bf Acknowledgements.}   The authors are very grateful to Xiao-Wu Chen, Hua-Lin Huang, Bernhard Keller, Henning Krause, Jing Yu and Guodong Zhou for many useful comments and discussions. This work was supported by the National Key R\&D Program of China (2024YFA1013802, 2024YFA1013803) and the NSFC (Nos.~12271243, 12371043).

\vskip 5pt 


\end{document}